\newtheorem{theorem}{Theorem}[section] 
\newtheorem{corollary}[theorem]{Corollary} 
\newtheorem{lemma}[theorem]{Lemma} 
\newtheorem{proposition}[theorem]{Proposition} 
\newtheorem{example}[theorem]{Example}
\newtheorem{problem}[theorem]{Problem}  
\newtheorem{remark}[theorem]{Remark}  
\newtheorem{definition}[theorem]{Definition}
\newcommand{\ch}{\mathrm{char}}
\newcommand{\F}{\mathbb{F}}
\newcommand{\C}{\mathbb{C}}
\newcommand{\CC}{\mathcal{C}}
\newcommand{\cCFL}{\mathcal{C\!F\!L}}
\newcommand{\cHFL}{\mathcal{H\!F\! L}}
\newcommand{\CFL}{\mathit{CFL}}
\newcommand{\HFL}{\mathit{HFL}}
\newcommand{\HFK}{\mathit{HFK}}
\newcommand{\Cone}{\mathrm{Cone}}
\newcommand{\s}{\mathfrak{s}}
\newcommand{\ee}{\mathbf{e}}
\newcommand{\ff}{\mathbf{f}}
\newcommand{\sgn}{\mathrm{sgn}}
\newcommand{\lk}{\mathrm{lk}}
\newcommand{\Split}{\mathrm{split}}
\newcommand{\Spin}{\text{Spin}^{c}}
\newcommand{\alphas}{\boldsymbol{\alpha}}
\newcommand{\betas}{\boldsymbol{\beta}}
\newcommand{\gammas}{\boldsymbol{\gamma}}
\newcommand{\deltas}{\boldsymbol{\delta}}
\newcommand{\ovl}{\overline}
\newcommand{\cbHFL}{\boldsymbol{{\mathcal{H\!F\!L}}}}
\newcommand{\cbCFL}{\boldsymbol{\mathcal{C\!F\!L}}}
\newcommand{\bJ}{\boldsymbol{\J}}
\newcommand{\Imm}{\mathrm{Im}}
\newcommand{\cK}{\mathcal{K}}
\newcommand{\Alt}{\mathrm{Alt}}
\newcommand{\HY}{\mathrm{HY}}
\def\L{\mathcal{L}}
\def\gr{\textup{gr}}
\def\w{{\bf{w}}}
\def\z{{\bf{z}}}
\def\d{\mathbf{d}}
\def\x{\mathbf{x}}
\def\y{\mathbf{y}}
\def\k{\mathbf{k}}
\def\m{\mathbf{m}}
\def\bF{\mathcal{F}}
\newcommand{\HHH}{\mathrm{HHH}}
\newcommand{\HH}{\mathbb{H}}
\newcommand{\Z}{\mathbb{Z}}
\newcommand{\R}{\mathbb{R}}
\newcommand{\UU}{\mathbf{U}}
\newcommand{\A}{\mathfrak{A}}
\newcommand{\J}{\mathcal{J}}
\newtheorem{convention}[equation]{Convention}
\author{Akram Alishahi}
\address{A. A. :Department of Mathematics, University of Georgia\\ Athens, GA 30602 }
\email{akram.alishahi@uga.edu}
\author{Eugene Gorsky}
\address{E. G.: Department of Mathematics, University of California Davis, One Shields Avenue, Davis CA 94702, USA}
\email{egorskiy@math.ucdavis.edu}
\author{Beibei Liu}
\address{B.L.: Department of Mathematics, The Ohio State University, 100 Math Tower, 231 West 18th Avenue, Columbus, OH, 43210, USA}
\email{bbliumath@gmail.com}
\title[Splitting maps and integer points]{Splitting maps in link Floer homology and integer points in permutahedra}
\begin{document}

\begin{abstract}
In this paper, we study the skein exact sequence for links via the exact surgery triangle of link Floer homology and compare it with other skein exact sequences given by Ozsv\'ath and Szab\'o. As an application, we use the skein exact sequence to study the splitting number and splitting maps for links. In particular, we associate the splitting maps for the torus link $T(n, n)$ to integer points in the $(n-1)$-dimensional permutahedron, and obtain the link Floer homology of an $n$-component homology nontrivial unlink in $S^{1}\times S^{2}$. 

\end{abstract}

\maketitle

\tableofcontents

\section{Introduction}

In this paper, we study various maps in Heegaard Floer  homology associated to crossing changes in link diagrams. Given such a diagram  with a chosen crossing, we can consider three links $L_+$, $L_-$ and $L_0$ in the three-sphere corresponding to the positive crossing, negative crossing and oriented resolution (see Figure \ref{crossings}). We will always assume that the crossing is between different components of $L_{\pm}$, so that $L_+$ and $L_-$ both have one more component than $L_0$.

\begin{figure}[H]
\centering
\begin{tikzpicture}
    \node[anchor=south west,inner sep=0] at (0,0) {\includegraphics[width=3.0in]{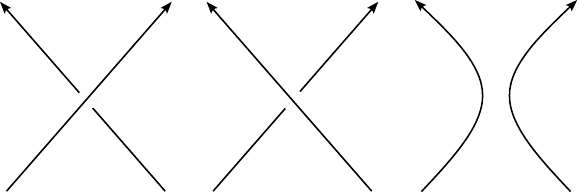}};
    \node[label=above:{$L_+$}] at (1.3,1.5){};
    \node[label=above:{$L_-$}] at (4,1.5){};
    \node[label=above:{$L_0$}] at (6.6,1.5){};
\end{tikzpicture}
\caption{From left to right: positive crossing, negative crossing and oriented resolution.} \label{crossings}
\end{figure}

For various technical reasons, we work with the ``full" version $\cHFL$ of the Heegaard Floer homology with two marked points on each link component over $\F=\Z/2\Z$, developed in \cite{Zemke,Zemke2}. In particular, for a link $L$ with $n$ components  in $S^{3}$, $\cHFL(L)$ is a $\Z\oplus \Z^n$-graded module over $\F[U_1,\ldots,U_n,V_1,\ldots,V_n]$ where all products $U_iV_i$ act by the same operator which we will denote by $\UU$. Sometimes we will need to work with the completion $\cbHFL(L)$ which is a module over the power series ring $\F[[U_1,\ldots,U_n,V_1,\ldots,V_n]]$.

Our first result describes the crossing change maps in this version of Heegaard Floer homology generalizing the maps in \cite{OS2,OS3,OS4} for $\widehat{\HFK}$ and $\HFK^-$.

\begin{theorem}
\label{thm: one crossing intro}
Given a crossing between the components $L_i$ and $L_j$ of an oriented link in the three-sphere, for all $k\in \mathbb{Z}$ corresponding to $\Spin$-structures in certain surgery cobordism shown in Figure \ref{blowup},   there are maps 
\begin{equation}
\label{eq: skein intro}
\psi_k:\cHFL(L_+)\to \cHFL(L_-)\ \mathrm{and}\ \phi_k:\cHFL(L_-)\to \cHFL(L_+)
\end{equation}
satisfying the following equations:
\begin{itemize}
\item[(a)] The maps $\psi_k$ are determined by $\psi_0$ and $\psi_{-1}$:
$$\psi_k=(V_iU_j)^k\UU^{\frac{k(k-1)}{2}}\psi_0\quad \mathrm{for}\ k\ge 0,\quad \psi_k=(V_jU_i)^{-1-k}\UU^{\frac{(k+1)(k+2)}{2}}\psi_{-1}\quad \mathrm{for}\ k\le -1.$$

\item[(b)] We have $V_j\psi_0=V_i\psi_{-1}$ and $U_i\psi_0=U_j\psi_{-1}$.

\item[(c)] The maps $\phi_k$ are determined by $\phi_0$ and $\phi_1$:
$$\phi_k=(U_iU_j)^{k-1}\UU^{\frac{(k-1)(k-2)}{2}}\phi_1\quad \mathrm{for}\ k\ge 1,\quad \phi_k=(V_iV_j)^{-k}\UU^{\frac{k(k+1)}{2}}\phi_{0}\quad \mathrm{for}\ k\le 0.$$

\item[(d)]The maps $\psi_k$ and $\phi_k$ compose as follows:
$$
\phi_0\psi_0=V_i,\ \phi_0\psi_{-1}=V_j, \ \phi_1\psi_0=U_j, \ \phi_1\psi_{-1}=U_i
$$
$$
\psi_0\phi_0=V_i,\ \psi_{-1}\phi_0=V_j,\ \psi_0\phi_1=U_j,\ \psi_{-1}\phi_1=U_i,
$$
The rest of compositions are determined by these.
\end{itemize}
\end{theorem}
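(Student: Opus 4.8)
The plan is to realize $\psi_k$ and $\phi_k$ as the $\Spin^c$-components of link cobordism maps attached to a blow-up cobordism encoding the crossing change, and then to extract (a)--(d) from Zemke's link cobordism grading formula together with a composition-of-cobordisms computation. \emph{Construction.} Push the movie of the crossing change $L_+\rightsquigarrow L_-$ into $S^3\times[0,1]$; generically the two sheets of the cobordism surface belonging to $L_i$ and $L_j$ acquire one transverse double point. Blowing up $S^3\times[0,1]$ at that point resolves the double point and produces a cobordism $W\colon(S^3,L_+)\to(S^3,L_-)$ diffeomorphic to $(S^3\times[0,1])\,\#\,\overline{\C P}^2$ carrying an embedded decorated surface $\Sigma$; this is the cobordism of Figure~\ref{blowup}, the link Floer analogue of those used in \cite{OS2,OS3,OS4}. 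As $H^2(W)\cong\Z$ is generated by the Poincar\'e dual of the exceptional sphere $E$, the set $\Spin^c(W)$ is a $\Z$-torsor; label it so that $\langle c_1(\s_k),E\rangle=2k+1$ and let $\psi_k$ be the component of $F_{W,\Sigma}$ in $\s_k$. Then $F_{W,\Sigma}=\sum_k\psi_k$, an infinite sum which converges in the completion because $\psi_k$ is divisible by an arbitrarily high power of the variables for $|k|$ large --- this is where $\cbHFL$ is needed. Running the same construction with the roles of $L_+$ and $L_-$ interchanged produces the maps $\phi_k$.

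\emph{Parts (a) and (c).} Since $E^2=-1$, the quantity $c_1(\s_k)^2=-(2k+1)^2$ is a concave quadratic in $k$ with maxima at $k=0$ and $k=-1$, so by the grading formula of \cite{Zemke} the $(\Z\oplus\Z^n)$-degree shift of $\psi_k$ equals, relative to $\psi_0$, that of $(V_iU_j)^k\UU^{k(k-1)/2}$ for $k\ge 0$, and relative to $\psi_{-1}$ that of $(V_jU_i)^{-1-k}\UU^{(k+1)(k+2)/2}$ for $k\le -1$. Moreover, changing $\s_k$ by one unit is realized on the chain level by the natural action of the relevant $U$- and $V$-variables, so these equalities of degrees upgrade to equalities of maps; telescoping from the two central structures $\s_0,\s_{-1}$ yields (a). The same computation for the reversed construction gives (c), now with one-step variables $U_iU_j$ and $V_iV_j$ (the resolved double point there sits with the opposite sign, so its exceptional sphere links the two strands differently) and central structures $\s_0,\s_1$.

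\emph{Parts (d) and (b).} Stacking the $\psi$-cobordism on the $\phi$-cobordism and applying the (relative) blow-up formula to absorb the two $\overline{\C P}^2$-summands reduces the composite, in each of the relevant $\Spin^c$ structures $\s_{k'}\,\#\,\s_k$, to the product cobordism $S^3\times[0,1]$ carrying the decorated surface obtained by resolving both double points. Because the two crossing changes undo one another, this surface is a disjoint union of annuli whose decoration differs from the product one only by a single loop encircling exactly one of the four basepoints $w_i,z_i,w_j,z_j$; by the basepoint relations in Zemke's formalism the induced map is multiplication by the corresponding variable, and tracking which basepoint arises as a function of $(k,k')$ yields $\phi_0\psi_0=V_i$, $\phi_0\psi_{-1}=V_j$, $\phi_1\psi_0=U_j$, $\phi_1\psi_{-1}=U_i$ together with the mirror identities for $\psi\phi$; the remaining compositions are then forced by substituting (a) and (c). For (b) one checks directly that after multiplication by $V_j$ (resp.\ $U_i$) the decorated cobordism computing $\psi_0$ becomes isotopic rel boundary to the $V_i$-multiple (resp.\ $U_j$-multiple) of the one computing $\psi_{-1}$, so each identity simply says that its two sides name the same cobordism map; this is consistent with the degree count and with (d).

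\emph{Main obstacle.} The heart of the argument is the $\Spin^c$-bookkeeping in (a) and (c): one must extract from the grading formula the \emph{quadratic} exponent $\binom{k}{2}$ of $\UU$, not merely a linear dependence on $k$, and the correct \emph{linear} exponents of $V_iU_j$, $V_jU_i$, $U_iU_j$, $V_iV_j$. This is delicate because the answer is sensitive to orientation conventions --- in particular to which strand is the overstrand, hence to whether a $U$ or a $V$ is inserted at each step --- and because this is exactly where passing to $\cbHFL$ is forced, so that $\sum_k\psi_k$ makes sense. The parallel subtlety in (d) is the sanity check that absorbing the two blow-ups leaves \emph{no} spurious power of $\UU$ behind, i.e.\ that the composite really does reduce to the product cobordism decorated by a single extra loop around a basepoint and nothing more.
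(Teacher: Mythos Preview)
Your construction of the cobordism and your grading computation are on the right track and match the paper's Propositions~\ref{prop:positive} and~\ref{prop:negative}. The gap is in the sentence ``changing $\s_k$ by one unit is realized on the chain level by the natural action of the relevant $U$- and $V$-variables, so these equalities of degrees upgrade to equalities of maps.'' This is the entire content of (a)--(c), and it is \emph{not} a general fact: for an arbitrary link $L_+$ there is no reason two $R$-module maps $\cHFL(L_+)\to\cHFL(L_-)$ of the same bidegree should coincide, and the usual blow-up formula of \cite{OS-4m} only gives such a relation when the exceptional sphere is disjoint from the surface, which is exactly what fails here. Your arguments for (b) and (d) have the same shape: you assert that certain decorated cobordisms become isotopic, or reduce to a product with one extra basepoint loop, but you do not exhibit the isotopy, and the claim that absorbing two blow-ups leaves behind precisely one loop around one basepoint is doing all the work.

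The paper closes this gap by a localization trick rather than by any $\Spin^c$-shifting principle. One first verifies (a)--(d) by hand in the model case $L_+=O_2$, $L_-=-T(2,2)$ (and the analogous model for $\phi$), where both links are L-space links, so Corollary~\ref{cor: negative definite} says the maps \emph{are} determined by their degrees and the identities become a finite computation (Examples~\ref{ex:negativeHopf}, \ref{ex:Hopf-phi}, \ref{negativeHopf2}). To transfer this to a general $L_\pm$, the paper quasi-stabilizes $L_{i,\pm}$ and $L_{j,\pm}$ with extra basepoints, then factors the stabilized crossing-change cobordism as (births of an auxiliary $O_2$) $\circ$ (the model crossing change on that $O_2$) $\circ$ (band attachments merging the auxiliary unlink back into $L$); see Figures~\ref{fig:bands}--\ref{fig:psi-local}. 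Under this factorization the $\Spin^c$-dependence lives entirely in the middle piece, where it is $\mathrm{Id}\otimes\psi_k^{O}$, so the identities for $\psi_k^{O}$ propagate to $\psi_k$ by functoriality. The same scheme handles $\phi_k$ and the compositions $\phi\psi$, $\psi\phi$ (Propositions~\ref{prop:psi}, \ref{prop:phi}, \ref{prop: psi phi}). If you want to rescue your approach, you would need to prove a genuine link-Floer blow-up formula with the surface meeting the exceptional sphere; absent that, the reduction to the L-space model is what makes the degree argument legitimate.
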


See Section \ref{sec: surgery} for more details and the gradings for all these maps.
In \cite{OS4} Ozsv\'ath and Szab\'o proved a skein exact triangle for $\HFK^-$
$$
\rightarrow \HFK^-(L_+)\rightarrow\HFK^-(L_-)\rightarrow \HFK^-(L_0)\otimes W\rightarrow\ldots
$$
where $W$ is some given bigraded module.

We generalize this as follows.
\begin{theorem}
\label{thm: intro skein}
Given a crossing between the components $L_i$ and $L_j$, there is an exact triangle
\begin{equation}
\label{eq: intro skein}
\rightarrow \cbHFL(L_+)\xrightarrow{\Psi_{ij}}\cbHFL(L_-)\rightarrow H_*(\cbCFL(L_0)\otimes \boldsymbol{\mathcal{K}})\rightarrow\ldots
\end{equation}
where the map $\Psi_{ij}: \cbHFL(L_+)\to \cbHFL(L_-)$ is given by $\Psi_{ij}=\sum_{k\in \Z}(-1)^{k}\psi_k$, and $\boldsymbol{\mathcal{K}}$ is the completion of the module $\mathcal{K}$ defined in \eqref{def:K}. 
\end{theorem}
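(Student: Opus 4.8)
The plan is to derive the exact triangle \eqref{eq: intro skein} from the exact surgery triangle of link Floer homology, applied to a small unknot $c\subset S^3$ encircling the two strands of the chosen crossing. Concretely, isotope $c$ so that it bounds a disk meeting $L_i$ and $L_j$ once each. Then $(S^3,L_+)$ is $\infty$-surgery on $c$; one of the two $\pm1$-surgeries on $c$ is the blow-down realizing the crossing change of Figure~\ref{blowup} and produces $(S^3,L_-)$; and the remaining integral surgery on $c$ produces, after a single handleslide of a strand over $c$, the link $L_0$ (carrying its merged component) inside $S^1\times S^2$, together with the data of the core circle $\mu$ of the surgery solid torus. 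These three surgeries form a triad, so they fit into the exact surgery triangle for the full link Floer homology $\cbHFL$; with the framing and orientation conventions of Figure~\ref{blowup} the triad is ordered so that the triangle reads
\begin{equation*}
\cbHFL(L_+)\longrightarrow\cbHFL(L_-)\longrightarrow\cbHFL\bigl(S^1\times S^2,\ L_0\cup\mu\bigr)\longrightarrow .
\end{equation*}

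Next I would identify the first map with $\Psi_{ij}$. After blowing down the $\pm1$-framed unknot $c$, the cobordism $S^3\times I$ underlying the map $\cbHFL(L_+)\to\cbHFL(L_-)$ is exactly the blow-up cobordism of Figure~\ref{blowup}, and its induced map decomposes as a sum over the $\Spin$-structures on that cobordism, which are indexed by $k\in\Z$; by the construction in Section~\ref{sec: surgery} the summand in the $k$-th $\Spin$-structure is $\psi_k$, so, incorporating the standard sign refinement of the surgery triangle, the first map is $\Psi_{ij}=\sum_{k\in\Z}(-1)^k\psi_k$. That this infinite sum converges --- and hence that the statement must be phrased over the completion $\cbHFL$ rather than $\cHFL$ --- is immediate from Theorem~\ref{thm: one crossing intro}(a): for $k\to+\infty$ the map $\psi_k$ is divisible by $\UU^{k(k-1)/2}$ and for $k\to-\infty$ by $\UU^{(k+1)(k+2)/2}$, so all but finitely many summands lie in any prescribed power of the maximal ideal of $\F[[U_1,\ldots,U_n,V_1,\ldots,V_n]]$.

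It then remains to identify the third term with $H_*(\cbCFL(L_0)\otimes\boldsymbol{\mathcal{K}})$. For this I would present $(S^1\times S^2,L_0\cup\mu)$ by a Heegaard diagram obtained from one for $L_0$ by the standard local modification introducing the $S^1$-factor together with $\mu$; on the chain level this exhibits $\cbCFL(S^1\times S^2,L_0\cup\mu)$ as a tensor product $\cbCFL(L_0)\otimes\boldsymbol{\mathcal{K}}$ over the appropriate completed polynomial ring, where $\boldsymbol{\mathcal{K}}$ is the completion of the module $\mathcal{K}$ of \eqref{def:K}, equipped with the differential recording how the merged component of $L_0$ splits back into $L_i$ and $L_j$. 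Since $\boldsymbol{\mathcal{K}}$ is a genuine complex, the third term is the homology $H_*(\cbCFL(L_0)\otimes\boldsymbol{\mathcal{K}})$ rather than $\cbHFL(L_0)\otimes\boldsymbol{\mathcal{K}}$, and substituting this identification into the triangle of the first step gives \eqref{eq: intro skein}.

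I expect the main obstacle to be twofold. First, one must set up the exact surgery triangle and its cobordism maps within Zemke's ``full'' framework with the correct $\Z\oplus\Z^n$-gradings, the required equivariance over all the variables $U_\bullet,V_\bullet$, and consistent signs; in particular one must verify that the cobordism map genuinely decomposes over the whole $\Z$-family of $\Spin$-structures with summands $\psi_k$, rather than over a single $\Spin$-structure as in the hat theory. Second --- and this is where the module $\mathcal{K}$ of \eqref{def:K} really enters --- one must control the link Floer complex of $L_0\cup\mu$ in $S^1\times S^2$ as a module over the variables of \emph{all} the components of $L_+$, not only those of $L_0$, and check that it is precisely $\cbCFL(L_0)\otimes\boldsymbol{\mathcal{K}}$; the composition identities of Theorem~\ref{thm: one crossing intro}(d) provide a useful consistency check here, since they force the mapping cone of $\Psi_{ij}$ to have the homotopy type of $\cbCFL(L_0)\otimes\boldsymbol{\mathcal{K}}$.
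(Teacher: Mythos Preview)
Your overall architecture is the same as the paper's: apply the surgery exact triangle to a $(-1)$-framed unknot $K$ encircling the crossing, so that $\infty$-, $(-1)$-, and $0$-surgery on $K$ give $(S^3,L_+)$, $(S^3,L_-)$, and $(S^3_0(K),L)$, and then identify the first map with $\Psi_{ij}=\sum_k(-1)^k\psi_k$ by decomposing the $2$-handle cobordism over $\Spin$-structures. The convergence argument you give via Theorem~\ref{thm: one crossing intro}(a) is fine (and indeed is why one must pass to the completion).

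The genuine difference is in how you propose to identify the third term. The paper does \emph{not} attempt a direct Heegaard-diagram description of $\cbCFL(S^3_0(K),L)$ as a tensor product. Instead it observes (Lemma~\ref{lemma:unlinkcone}) that $(S^3_0(K),L)$ is the connected sum $(S^3,L_0)\#(S^1\times S^2,Z_2)$, where $Z_2$ is the $2$-component link of two parallel copies of $S^1\times\{\mathrm{pt}\}$; the local tensor factor is then $\cbCFL(S^1\times S^2,Z_2)$, which is \emph{independent of $L_+$}. So it suffices to compute this factor in a single example, and the paper takes $L_+=T(2,2)$ (the Hopf link), where $\cCFL(L_+)$ has four generators and the cone of $\Psi_{12}$ can be worked out by hand (Section~\ref{skein:Hopf}); this is exactly what produces the explicit complex $\mathcal{K}$ of \eqref{def:K}. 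Your proposed route---build a Heegaard diagram for the $0$-surgered link and read off the tensor-product structure directly---would also work in principle, but the paper's ``universality plus Hopf-link computation'' argument sidesteps the obstacle you flag in your last paragraph and is what actually pins down $\mathcal{K}$. Also, your description of the third link as ``$L_0\cup\mu$'' is not quite the paper's: after $0$-surgery the link $L$ still has $n$ components with \emph{both} $L_i$ and $L_j$ running once around the $S^1$-factor, and the paper's Kirby move (Figure~\ref{kirby1}) identifies this with $L_0\# Z_2$, not with $L_0$ together with a single core circle.
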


Theorem \ref{thm: one crossing intro} implies the following:
\begin{corollary}
\label{cor: tau intro}
We have $\Psi_{ij}=\tau(\psi_0-\psi_{-1})$ where $\tau=1+\ldots$ is an explicit invertible power series in $\F[[U_1,\cdots,U_n,V_1,\cdots,V_n]]$ defined in Lemma \ref{lem: tau}. In particular, the cones of $\Psi_{ij}$ and of $\Psi_{ij}^{0}=\psi_0-\psi_{-1}$ are homotopy equivalent.
\end{corollary}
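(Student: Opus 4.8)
The plan is to compute the infinite sum $\Psi_{ij}=\sum_{k\in\Z}(-1)^k\psi_k$ explicitly by substituting the formulas from Theorem \ref{thm: one crossing intro}(a) and factoring out a common multiple of $\psi_0-\psi_{-1}$. First I would split the sum into the $k\ge 0$ part and the $k\le -1$ part. Using part (a), the $k\ge 0$ terms contribute
\[
\sum_{k\ge 0}(-1)^k (V_iU_j)^k\UU^{\frac{k(k-1)}{2}}\,\psi_0,
\]
and the $k\le -1$ terms, reindexed by $m=-1-k\ge 0$, contribute
\[
-\sum_{m\ge 0}(-1)^m (V_jU_i)^{m}\UU^{\frac{(m+1)(m+2)}{2}}\,\psi_{-1}.
\]
Each series is a well-defined element of $\F[[U_1,\dots,V_n]]$ acting on $\cbHFL$ since the exponents of $\UU$ grow quadratically, so every monomial appears with finite multiplicity; this is exactly why the completed version $\cbHFL$ is needed for the statement to make sense.

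The key step is to use relation (b), $V_j\psi_0=V_i\psi_{-1}$ and $U_i\psi_0=U_j\psi_{-1}$, equivalently $(V_iU_j)\psi_0 = (V_jU_i)\psi_{-1}$ after multiplying the first by $U_j$ (or the second by $V_i$ and recalling $U_iV_i=U_jV_j=\UU$; one checks $U_j V_j \psi_0 = U_j V_i \psi_{-1}$ is not quite it — rather $(V_iU_j)\psi_0$ and $(V_jU_i)\psi_{-1}$ are related by applying $U_j$ to $V_i\psi_{-1}=V_j\psi_0$ and $V_i$ to $U_i\psi_0 = U_j\psi_{-1}$, giving $U_j V_i \psi_{-1} = U_j V_j \psi_0$ and $V_i U_j \psi_{-1} = V_i U_i \psi_0$, hence $(V_iU_j)\psi_0=(U_iV_i)\psi_0=\UU\psi_0$ — wait, that forces me to be careful). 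The clean way is: from (b) one gets that both $\psi_0$ and $\psi_{-1}$ are annihilated, after multiplying by suitable monomials, into a common form; concretely one shows by induction that $(V_iU_j)^k\psi_0$ and $(V_jU_i)^{k}\psi_{-1}$ can each be rewritten so that the whole sum telescopes, and each consecutive difference is a monomial multiple of $\psi_0-\psi_{-1}$. Thus $\Psi_{ij}=\big(\sum c_\alpha \m_\alpha\big)(\psi_0-\psi_{-1})$ for an explicit family of coefficients, and by collecting terms one identifies the scalar power series $\tau=1+\text{(higher order)}$ — the leading term being $1$ because the $k=0$ term of the first sum is $\psi_0$ and the $k=-1$ term of the second is $-(-1)^0\psi_{-1}=-\psi_{-1}$ with coefficient $1$. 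This is the content of Lemma \ref{lem: tau}, which I would invoke for the precise formula for $\tau$.

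Finally, since $\tau = 1 + \dots$ has invertible constant term $1$ in the local ring $\F[[U_1,\dots,V_n]]$, it is a unit, so multiplication by $\tau$ is a chain isomorphism (in fact chain homotopy equivalence) of $\cbHFL(L_-)$ commuting with the differential. Therefore $\Psi_{ij}=\tau\cdot\Psi_{ij}^0$ and the two maps have homotopy equivalent mapping cones: explicitly, $\mathrm{id}\oplus\tau$ gives the homotopy equivalence $\Cone(\Psi_{ij}^0)\xrightarrow{\sim}\Cone(\Psi_{ij})$.

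The main obstacle I anticipate is bookkeeping the telescoping rigorously: one must verify that after using (b) every partial sum differs from a monomial multiple of $\psi_0-\psi_{-1}$ by terms of arbitrarily high $\UU$-order, so that the infinite sum genuinely converges to $\tau(\psi_0-\psi_{-1})$ in the completed ring, and that the resulting $\tau$ is independent of the order of summation. Handling the two geometric-type series with quadratically growing $\UU$-exponents simultaneously, and matching them against the closed form asserted in Lemma \ref{lem: tau}, is the delicate part; everything after that (unitality of $\tau$, the cone equivalence) is formal.
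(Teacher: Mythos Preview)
Your proposal is correct and follows essentially the same route as the paper: the identity $\Psi_{ij}=\tau(\psi_0-\psi_{-1})$ is precisely Lemma~\ref{lem: tau}, and the cone equivalence is then immediate from the invertibility of $\tau=1+\cdots$ in the completed ring (the paper records this as Corollary~\ref{cor: short psi}). One small slip: after reindexing $m=-1-k$ the exponent of $\UU$ should be $\tfrac{m(m-1)}{2}$, not $\tfrac{(m+1)(m+2)}{2}$; the paper's proof of Lemma~\ref{lem: tau} organizes the telescoping via the auxiliary quantities $A_k,B_k,C_k$ and the identity $(V_iU_j+V_iU_i)(\psi_0-\psi_{-1})=(V_iU_j-V_jU_i)\psi_0$, which is the clean replacement for the manipulation you got tangled in.
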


Note that the map $\Psi_{ij}^{0}$ has homological degree $0$ and can be defined without completion.

\begin{remark}
Since we work over the field $\F=\Z/2\Z$, the signs here and below are purely for esthetic reasons. However, we expect all the maps to exist for theories with integer coefficients (similar to \cite{AE-minus}), and conjecture that (up to an overall normalization) the signs would match. See also Section \ref{sec: J cousins} on comparison of the signs with triply graded Khovanov-Rozansky homology.
\end{remark}

Next, we study the compositions of crossing change maps. Since we have two essentially different maps $\psi_0$ and $\psi_{-1}$ (resp. $\phi_0$ and $\phi_1$) for a single crossing change, for a sequence of $r$ crossing changes we have $2^r$ possible associated maps in Heegaard Floer homology of various degrees, some of which may coincide. We determine the degrees of all such maps in Section \ref{sec: splitting maps} and use them to bound splitting numbers for links.

In a striking example, we can take the $n$-component torus link $T(n,n)$, change $\binom{n}{2}$ crossings between different components from positive to negative and obtain the unlink $O_n$. In this case, we are able to completely determine {\bf all} crossing change maps.

\begin{theorem}
\label{thm: intro permutahedron}
If one chooses either $\psi_0$ or $\psi_{-1}$ for each of $\binom{n}{2}$ crossing changes from $T(n,n)$ to $O_n$, the Alexander degrees of the resulting maps correspond to integer points in the permutahedron $P_n$. Any two maps of the same degree coincide, and any integer point in $P_n$ corresponds to an injective map $\cHFL(T(n,n))\to \cHFL(O_n)$ which can be described explicitly on generators of $\cHFL(T(n,n))$.
\end{theorem}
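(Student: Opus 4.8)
The plan is to combine the composition formulas from Theorem \ref{thm: one crossing intro} with an explicit computation of $\cHFL(T(n,n))$ and $\cHFL(O_n)$. First I would recall that for the unlink $O_n$ the module $\cHFL(O_n)$ is the exterior algebra on $n$ generators $\theta_1,\ldots,\theta_n$ tensored over $\F[U_1,\ldots,U_n,V_1,\ldots,V_n]$, with the $\UU_\ell$ acting freely, and that $\cHFL(T(n,n))$ can be computed — say from the algebraic description of $L$-space links or from an explicit Heegaard diagram — as a module whose generators are indexed by a combinatorial set that I expect to be permutations in $S_n$ (one generator for each "ordering" of the $n$ strands), with Alexander multigrading recorded by the corresponding point of $\Z^n$. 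The key point is that the $\binom{n}{2}$ crossings of $T(n,n)$ that get changed are exactly the pairwise crossings between the $n$ components, and changing all of them from positive to negative untwists the link to $O_n$.

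Next I would set up the iterated crossing-change map. For a fixed choice of $\varepsilon_{ij}\in\{0,-1\}$ for each pair $i<j$, the associated map is the composite $\psi_{\varepsilon}=\psi_{\varepsilon_{i_r j_r}}\circ\cdots\circ\psi_{\varepsilon_{i_1 j_1}}\colon \cHFL(T(n,n))\to\cHFL(O_n)$. Using part (a) of Theorem \ref{thm: one crossing intro} I can absorb all the "higher" $\psi_k$ into the two basic ones, and using part (b), the relations $V_j\psi_0=V_i\psi_{-1}$ and $U_i\psi_0=U_j\psi_{-1}$, I can track how the Alexander multidegree shifts: choosing $\psi_0$ versus $\psi_{-1}$ at the crossing between components $i$ and $j$ changes the $(i,j)$-part of the degree shift by a root vector $e_i-e_j$. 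Summing these $\binom{n}{2}$ choices of $\pm(e_i-e_j)$, the set of total degree shifts is precisely the set of integer points of the zonotope $\sum_{i<j}[-\tfrac12(e_i-e_j),\tfrac12(e_i-e_j)]$ (recentred), which is the standard permutahedron $P_n$ — this is the classical fact that $P_n$ is the zonotopal sum of the segments along the positive roots of type $A_{n-1}$. That identification is the combinatorial heart of the statement and where the permutahedron enters.

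To prove the remaining two assertions — that maps of equal degree coincide, and that each integer point gives an injective, explicitly describable map — I would argue as follows. For "equal degree implies equal map": the composition identities in part (d), $\phi_0\psi_0=V_i$, $\phi_0\psi_{-1}=V_j$, $\phi_1\psi_0=U_j$, $\phi_1\psi_{-1}=U_i$, let me express each $\psi_\varepsilon$ as multiplication by a monomial followed by a "reference" map (e.g. the all-$\psi_0$ composite), modulo the $U,V$-torsion-free part; since $\cHFL(O_n)$ is free over the polynomial ring, two such maps agreeing after multiplication by a nonzero monomial must already agree, and the multidegree pins down the monomial. So the map depends only on the degree. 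For injectivity and explicitness: I would compute the all-$\psi_0$ composite directly on the generators of $\cHFL(T(n,n))$ — each generator (indexed by a permutation $w\in S_n$) should map to $\theta_{w(1)}\wedge\cdots$ up to a monomial in the $U_\ell,V_\ell$ determined by the Alexander degree — verify it is injective by noting the images are linearly independent over the fraction field (equivalently, the map is an isomorphism after inverting all $U_\ell,V_\ell$, since $T(n,n)$ and $O_n$ have the same number of generators $n!$... here I should double-check the generator count, e.g.\ via the Euler characteristic / multivariable Alexander polynomial of $T(n,n)$), and then transport this to the other integer points by multiplying by the appropriate monomials, which preserves injectivity on the torsion-free module $\cHFL(O_n)$.

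The main obstacle I anticipate is the explicit computation of $\cHFL(T(n,n))$ together with the action of the basic maps $\psi_0,\psi_{-1}$ on its generators: Theorem \ref{thm: one crossing intro} gives the algebraic relations the $\psi_k$ satisfy but not their values on a specific basis, so I would need either a suitable Heegaard diagram adapted to the $\binom{n}{2}$ crossings, or an inductive argument peeling off one crossing at a time (reducing $T(n,n)$ towards $T(n-1,n-1)\sqcup O_1$-type pieces or cables), controlling at each stage that the relevant maps stay injective and that no unexpected coincidences or torsion appear. Getting the bookkeeping of the $\Z\oplus\Z^n$-grading right through this induction, and matching the resulting lattice points with the standard coordinates of $P_n$, is the step most likely to require genuine work rather than formal manipulation.
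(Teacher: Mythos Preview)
Your identification of the Alexander degree shifts with integer points of the permutahedron is correct and matches the paper: each choice of $\psi_0$ versus $\psi_{-1}$ at the $(i,j)$-crossing contributes $\pm\tfrac{1}{2}(\ee_i-\ee_j)$, and the zonotope $\sum_{i<j}[\ee_i,\ee_j]$ is precisely $P_n$ (Lemma~\ref{lem: points}, Theorem~\ref{thm: polytope}).

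The rest of your argument rests on incorrect models for both homologies, and this is a genuine gap. In the version $\cHFL$ used in this paper (two basepoints per component), $\cHFL(O_n)$ is \emph{not} an exterior algebra in variables $\theta_i$; it is the ring $R_{UV}$ itself, a single tower generated in Alexander degree $(0,\ldots,0)$. Likewise $\cHFL(T(n,n))$ has exactly $n$ generators $a_0,\ldots,a_{n-1}$ (Theorem~\ref{thm:toruslink}), not $n!$; the generators are indexed by an integer $k$, not by permutations. So your proposed explicit formula $w\mapsto \theta_{w(1)}\wedge\cdots$ has no meaning here, and your injectivity argument via ``same number of generators $n!$, hence isomorphism after inverting $U_\ell,V_\ell$'' does not apply.

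The mechanism you are missing, which replaces all of this, is that $T(n,n)$ is an \emph{L-space link}. This means each graded piece $\cHFL(T(n,n),\k)$ is a single copy of $\F[\UU]$, and the same holds for $O_n$. From this, uniqueness of maps of a given degree is automatic (there is at most one nonzero $\F[\UU]$-map $\F[\UU]\to\F[\UU]$ of a given homological degree; see Corollary~\ref{cor: negative definite} and Theorem~\ref{thm: L space splitting}), and the explicit description on generators falls out by simply matching Alexander and Maslov degrees: $\Omega_{\bf 1}(a_k)$ must be the unique monomial in $R_{UV}$ of the correct bidegree, which one reads off as $V_1^{n-1-k}V_2^{n-2-k}\cdots V_{n-1-k}U_{n+1-k}\cdots U_n^{k}$ (Theorem~\ref{thm: psi monomial}(d)). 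Injectivity then follows from the composition identities in Theorem~\ref{thm: one crossing intro}(d), as you suggest, but applied tower-by-tower rather than to an $n!$-dimensional space.
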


For example, $P_3$ is a hexagon with 6 vertices and 1 interior point, see Figure \ref{fig:polytopes}. To get from $T(3,3)$ to unlink, one needs to change 3 crossings, so there are $2^3=8$ possible splitting maps. Six of them correspond to the vertices of $P_3$ and two remaining ones coincide and correspond to the interior point of $P_3$. We generalize Theorem \ref{thm: intro permutahedron} to arbitrary L-space links in Section \ref{sec: L space splitting}.

\begin{theorem}
Suppose that $L$ is an L-space link. Then:

a) For any choice of crossing changes and the maps $\psi_k,\phi_k$ at the crossings, the resulting map $F:\cHFL(L)\to \cHFL(\Split(L))$ is completely determined by its Alexander and Maslov degrees. 

b) If, in addition, all crossings between the different components of $L$ are positive, the splitting maps  are in bijection with the integer points in a certain  polytope $P_L$ (see Definition \ref{def: zonotope}).
\end{theorem}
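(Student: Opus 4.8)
The plan is to establish (a) by reducing a map to its values on module generators and combining injectivity with a rigidity property, and then to deduce (b) as a combinatorial consequence of (a) and Theorem~\ref{thm: one crossing intro}. Throughout I use that each component of an L-space link is an L-space knot, that $\Split(L)$ is the split union of these $n$ L-space knots, and that $\cHFL(\Split(L))\cong\bigotimes_i\cHFL(L_i)$ by the K\"unneth formula.

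\emph{Part (a).} Since $\cHFL(L)$ is finitely generated over $\F[U_1,\dots,U_n,V_1,\dots,V_n]$, a homogeneous module map $F\colon\cHFL(L)\to\cHFL(\Split(L))$ is determined by its values on a finite set of homogeneous generators $x_1,\dots,x_N$, so it suffices to show that two compositions of crossing-change maps with the same Alexander and Maslov degree agree on each $x_s$. First, I would argue that every such composition is injective: the links obtained from an L-space link by resolving and by changing an inter-component crossing are again L-space links (Section~\ref{sec: L space splitting}), so the three terms of the skein triangle of Theorem~\ref{thm: intro skein} have Poincar\'e series governed by the corresponding $H$-functions, and comparing ranks forces the connecting map to vanish; hence the relevant crossing-change maps, and therefore their compositions, are injective, and in particular $F(x_s)\neq 0$. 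Second, I would show that $F(x_s)$ lies in a bidegree in which $\cHFL(\Split(L))$ is one-dimensional over $\F$: the generators $x_s$ sit at the outermost Alexander gradings of $L$, the maps $\psi_0,\psi_{-1}$ have Maslov degree $0$ (Corollary~\ref{cor: tau intro}), and using the staircase model for $\cHFL(L_i)$ one checks that the image of $x_s$ lands at the top of the relevant Alexander tower of $\bigotimes_i\cHFL(L_i)$, where the module is cyclic. Then $F(x_s)$ and $F'(x_s)$ are nonzero elements of the same one-dimensional $\F$-space, hence equal.

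\emph{Part (b).} When all crossings between distinct components of $L$ are positive, each crossing change $e$ (between components $i(e)$ and $j(e)$) in a splitting diagram is of type $L_+\rightsquigarrow L_-$, so one chooses either $\psi_0$ or $\psi_{-1}$ at $e$. By Theorem~\ref{thm: one crossing intro}(b) the relations $V_{j(e)}\psi_0=V_{i(e)}\psi_{-1}$ and $U_{i(e)}\psi_0=U_{j(e)}\psi_{-1}$ force $\psi_0$ and $\psi_{-1}$ at $e$ to have the same Maslov degree and Alexander degrees differing by the root vector $v_e=\pm(\ee_{i(e)}-\ee_{j(e)})$. Hence all $2^{|E|}$ compositions share a common Maslov degree, and the one using $\psi_0$ precisely on $S\subseteq E$ has Alexander degree $d_0+\sum_{e\in S}v_e$ for a fixed $d_0$. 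Combining with (a), distinct splitting maps correspond bijectively to distinct elements of the set of subset sums $\{\,d_0+\sum_{e\in S}v_e:S\subseteq E\,\}$, and it remains to identify this set with the integer points of the zonotope $P_L=d_0+\sum_{e\in E}[0,v_e]$ of Definition~\ref{def: zonotope}. This is the combinatorial statement that the subset sums of a system of root segments $\ee_i-\ee_j$ exhaust the lattice points of their Minkowski sum, and it simultaneously records the coincidences among the $2^{|E|}$ maps (for the complete graph $K_n$, as for $T(n,n)$, the $2^{\binom{n}{2}}$ subsets collapse onto the lattice points of the permutahedron $P_n$). Together with the explicit description of the maps on generators coming from the staircase model, this yields the stated bijection.

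The main obstacle is the second step of (a): one must pin down the bidegree of $F(x_s)$ sharply enough to land in a one-dimensional piece of $\cHFL(\Split(L))$, because the tensor product $\bigotimes_i\cHFL(L_i)$ is genuinely not rigid below the top of each Alexander tower. If a direct grading count is insufficient, the fallback is a localization argument: after inverting all $U_i,V_i$ the Floer homology of an L-space link is free of small rank, so two injective compositions of the same bidegree differ by a unit of $\F$ and hence coincide. The combinatorial input in (b) is classical for simple graphs but must also be checked for the multigraph produced by a general diagram, where an edge $ij$ may occur with multiplicity.
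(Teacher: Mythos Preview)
Your outline for part (b) is essentially the paper's: once (a) is known, the splitting maps are parametrized by their Alexander degrees, and these are exactly the integer points of the zonotope $P_L$ by the combinatorics of Minkowski sums of root segments (the paper packages this as Theorem~\ref{thm: polytope}, citing Beck--Robins for the lattice-point statement you flag at the end).

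Part (a), however, has a real gap in the injectivity step. You assert that the links obtained from an L-space link by changing or resolving an inter-component crossing are again L-space links, and then compare ranks in the skein triangle to force the connecting map to vanish. Neither claim is supported: nothing in Section~\ref{sec: L space splitting} says that intermediate links in a splitting sequence remain L-space, and in general there is no reason they should; nor is it clear that a rank count in the triangle forces the connecting map to vanish even when all three terms are torsion-free. The paper's route to injectivity is much more direct and avoids the skein triangle entirely: by Theorem~\ref{thm: one crossing intro}(d) (equivalently Proposition~\ref{prop: psi phi}) each $\psi$ or $\phi$ has a reverse partner whose composite with it is multiplication by one of $U_i,U_j,V_i,V_j$. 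Composing $F$ with the product $F'$ of such partners yields $F'\circ F$ equal to a monomial in the $U$'s and $V$'s, which is injective on the $\F[\UU]$-free part of $\cHFL(L,\k)$ (this is Lemma~\ref{lem: many crossing changes}(b)). Since for an L-space link $\cHFL(L,\k)\cong\F[\UU]$ has no torsion, $F$ is injective in every Alexander degree.

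Once injectivity is in hand, your ``one-dimensional'' step is correct but simpler than you make it: you do not need staircases or to locate the image at the top of a tower. Because $\Split(L)$ is a split union of L-space knots it is itself an L-space link, so $\cHFL(\Split(L),\k')\cong\F[\UU]$ as well, and in each fixed (Alexander, Maslov) bidegree the target is at most one-dimensional over $\F$. Hence a nonzero $F(x_s)$ is pinned down automatically. Equivalently, between two copies of $\F[\UU]$ there is at most one nonzero graded map of each Maslov degree, which is exactly how the paper finishes (a), invoking Corollary~\ref{cor: L space tower} and Lemma~\ref{lem: many crossing changes}(b).
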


One can also study the compositions of maps $\Psi_{ij}$ from skein exact sequence \eqref{eq: intro skein} for crossings in $T(n,n)$ between $L_i$ and $L_j$. Let $\J$ be the ideal in $\cHFL(O_n)$ generated by determinants 
$$
\Delta_S=\det\left(
\begin{matrix}
U_1^{a_1}V_1^{b_{1}} & \cdots & U_1^{a_n}V_1^{b_{n}} \\
\vdots & & \vdots\\
U_n^{a_1}V_n^{b_{1}} & \cdots & U_n^{a_n}V_n^{b_{n}} \\
\end{matrix}
\right)
$$ for all possible $n$-element subsets 
$S=\{(a_1,b_1),\ldots,(a_n,b_n)\}\subset \Z_{\ge 0}\times \Z_{\ge 0}$. 
We denote by $\bJ$ the completion of $\J$ in $\cbHFL(O_n)$.

\begin{theorem}
\label{thm: Tnn dets intro}
a) Let $\Omega:\cbHFL(T(n,n))\to \cbHFL(O_n)$ be the composition of the maps $\Psi_{ij}$ from Theorem \ref{thm: intro skein} over all $i<j$. Then $\Omega$ is injective and its image is the ideal  $\bJ$ in $\cbHFL(O_n)$.

b) Let $\Omega^0:\cHFL(T(n,n))\to \cHFL(O_n)$ be the composition of the maps $\Psi^0_{ij}$ from Corollary \ref{cor: tau intro} over all $i<j$. Then $\Omega^0$ is injective and its image is the ideal  $\J$ in $\cHFL(O_n)$.
\end{theorem}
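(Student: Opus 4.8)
The plan is to reduce part (a) to part (b) via Corollary \ref{cor: tau intro}, then prove (b) by an explicit computation using Theorem \ref{thm: intro permutahedron}. For the reduction: each $\Psi_{ij}$ differs from $\Psi^0_{ij}$ by multiplication by the invertible power series $\tau$ (depending on the crossing), so the composition $\Omega$ differs from $\Omega^0$ by an invertible power series in $\F[[U_1,\dots,U_n,V_1,\dots,V_n]]$. Hence $\Omega$ is injective iff $\Omega^0$ is, and $\Imm(\Omega) = \bJ$ iff $\Imm(\Omega^0) = \J$, since $\bJ$ is by definition the completion of $\J$ and multiplication by a unit preserves the ideal. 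So it suffices to compute $\Omega^0$ on the uncompleted level.

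For part (b), I would use that $\Psi^0_{ij} = \psi_0^{(ij)} - \psi_{-1}^{(ij)}$ at each crossing, so expanding the composition over all $\binom{n}{2}$ crossings from $T(n,n)$ to $O_n$ gives $\Omega^0 = \sum (-1)^{\epsilon} F_\epsilon$, a signed sum of the $2^{\binom{n}{2}}$ splitting maps $F_\epsilon$ classified in Theorem \ref{thm: intro permutahedron}. By that theorem each $F_\epsilon$ is injective, determined by its Alexander multidegree, and its value on a generator $\x$ of $\cHFL(T(n,n))$ is an explicit monomial times $\x$'s image in $\cHFL(O_n)$; the multidegrees realized are exactly the integer points of the permutahedron $P_n$. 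The key computation is then to recognize that, on each generator, the alternating sum over the $2^{\binom{n}{2}}$ sign choices of these monomials collapses — after cancellation of the many coinciding interior contributions — into precisely one determinant $\Delta_S$ of the stated form. Concretely, the $n!$ vertex terms of $P_n$ survive with signs governed by the permutation parity, which is exactly the cofactor expansion of a Vandermonde-type determinant in the variables $U_k, V_k$; this identifies $\Omega^0(\x)$ with $\pm \Delta_{S(\x)}$ for a subset $S(\x)$ determined by the Alexander grading of $\x$. Running over all generators $\x$ of $\cHFL(T(n,n))$, the subsets $S(\x)$ range over exactly the $n$-element subsets $S \subset \Z_{\ge 0}\times\Z_{\ge 0}$ indexing the generators of $\J$, giving $\Imm(\Omega^0) = \J$; injectivity of $\Omega^0$ follows since the $\Delta_S$ are distinct nonzero elements and the $\x$ are linearly independent, or alternatively from injectivity of any single $F_\epsilon$ together with the fact that the leading (vertex) term is not cancelled.

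The main obstacle I anticipate is the bookkeeping in the cancellation step: one must show that all contributions from non-vertex integer points of $P_n$ either cancel in pairs under the alternating sum or are absorbed, leaving only the $n!$ vertex monomials with the correct signs. This requires pinning down precisely which sign pattern $\epsilon \in \{0,-1\}^{\binom{n}{2}}$ lands on which lattice point of $P_n$ and with what monomial coefficient — essentially understanding the face structure of the permutahedron as the Newton polytope of the Vandermonde determinant. Theorem \ref{thm: intro permutahedron} supplies the explicit formulas for the $F_\epsilon$ on generators, so this becomes a finite (if intricate) combinatorial identity; I would organize it by first treating $n = 2, 3$ by hand to fix the sign conventions, then proving the general case by identifying the sum with the standard cofactor expansion $\det(M) = \sum_{\sigma} \sgn(\sigma)\prod_k M_{k,\sigma(k)}$ applied to the matrix whose $(k,\ell)$ entry records the effect of the splitting maps on the $\ell$-th coordinate at component $k$.
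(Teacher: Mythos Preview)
Your approach is essentially the same as the paper's (Theorem~\ref{thm: image Psi}): reduce $\Omega$ to $\Omega^0$ via the invertible $\tau$-factors, expand $\Omega^0$ as the signed sum $\sum_{\varepsilon}\sgn(\varepsilon)\Omega_{\varepsilon}$, and argue that only the $n!$ vertex terms of $P_n$ survive to give determinants. The paper handles the cancellation you flag as the obstacle in one line via the generating-function identity
\[
\sum_{\varepsilon}\sgn(\varepsilon)V^{p_{\varepsilon}}=\prod_{i<j}\bigl(\sqrt{V_iV_j}\,V^{\alpha_{ij}}-\sqrt{V_iV_j}\,V^{-\alpha_{ij}}\bigr)=\prod_{i<j}(V_i-V_j),
\]
which exhibits the Vandermonde product directly and makes the interior cancellations automatic; this is cleaner than the case-by-case cofactor bookkeeping you describe, though the underlying idea is the same.

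There is, however, a genuine gap in your image computation. The module $\cHFL(T(n,n))$ has only $n$ generators $a_0,\dots,a_{n-1}$, so $\Omega^0$ produces only $n$ specific determinants $\Omega^0(a_j)=\pm\Delta_{S_j}$, where $S_j=\{(j,0),(j-1,0),\dots,(0,0),(0,1),\dots,(0,n-1-j)\}$. But $\J$ is defined as the ideal generated by $\Delta_S$ for \emph{all} $n$-element subsets $S\subset\Z_{\ge 0}^2$, and these are not in bijection with the generators of $\cHFL(T(n,n))$ as your phrasing suggests. Showing that the $n$ ``consecutive-column'' minors $\Delta_{S_0},\dots,\Delta_{S_{n-1}}$ already generate $\J$ is a separate, nontrivial algebraic fact (Lemma~\ref{lem: minors} in the paper), proved by a Pieri-type induction that expresses any $\Delta_S$ as an $R_{UV}$-combination of these special minors. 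Without this lemma you only get $\Imm(\Omega^0)\subseteq\J$, not equality.
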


\begin{corollary}
We have $\cHFL(T(n,n))\simeq \J$ as modules over $\F[U_1,\ldots,U_n,V_1,\ldots,V_n]$.
\end{corollary}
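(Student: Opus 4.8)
The plan is to deduce this directly from Theorem \ref{thm: Tnn dets intro}(b); the only work is to package that result correctly as a statement about modules. Write $R=\F[U_1,\ldots,U_n,V_1,\ldots,V_n]$, and recall from Corollary \ref{cor: tau intro} that $\Omega^0$ is the composition, over all pairs $i<j$ in some fixed order, of the degree-zero maps $\Psi^0_{ij}=\psi_0-\psi_{-1}$ attached to the $\binom{n}{2}$ crossing changes carrying $T(n,n)$ to $O_n$.

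First I would check that $\Omega^0$ is a homomorphism of $R$-modules. Each $\psi_k$, and hence each $\Psi^0_{ij}$, is a cobordism-induced map in link Floer homology, and such maps are equivariant for the action of all the variables $U_1,\ldots,U_n,V_1,\ldots,V_n$. Moreover a crossing change between two components of $T(n,n)$ does not change the set of $n$ components of the link, so the ground ring $R$ is literally the same for $\cHFL(T(n,n))$, for every intermediate link in the sequence, and for $\cHFL(O_n)$. A composite of $R$-linear maps is $R$-linear, so $\Omega^0\colon\cHFL(T(n,n))\to\cHFL(O_n)$ is a morphism of $R$-modules. (This is exactly why part (b) of Theorem \ref{thm: Tnn dets intro} is phrased with $\Psi^0_{ij}$ rather than $\Psi_{ij}$: the latter only lives over the completed ring, whereas $\Psi^0_{ij}=\psi_0-\psi_{-1}$ is already defined over $R$ and has homological degree $0$.)

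Next, by Theorem \ref{thm: Tnn dets intro}(b), $\Omega^0$ is injective and its image is precisely the ideal $\J\subseteq\cHFL(O_n)$. An injective homomorphism of $R$-modules is an isomorphism onto its image, so $\Omega^0$ corestricts to an isomorphism of $R$-modules $\cHFL(T(n,n))\xrightarrow{\ \sim\ }\J$, which is the assertion.

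There is no genuine obstacle left once Theorem \ref{thm: Tnn dets intro} is available: the only points requiring care are the (standard) $R$-equivariance of the crossing-change maps and the observation that the composite over all pairs is well-defined after fixing an order of the crossing changes. One could instead try a self-contained argument, computing $\cHFL(T(n,n))$ directly from the L-space link structure of $T(n,n)$ and describing $\J$ combinatorially via the determinants $\Delta_S$, but this essentially reproves Theorem \ref{thm: Tnn dets intro} and is strictly more effort; routing through $\Omega^0$ gives the isomorphism for free.
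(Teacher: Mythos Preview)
Your proposal is correct and matches the paper's own approach: the corollary is drawn directly from the injectivity and image description of $\Omega^0$ (Theorem \ref{thm: Tnn dets intro}(b), proved in the body as Theorem \ref{thm: image Psi}), together with the $R$-equivariance of the cobordism maps. The paper simply records the isomorphism as an ``In particular'' at the end of Theorem \ref{thm: image Psi}, so your explicit check of $R$-linearity is a slight elaboration but no departure.
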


Theorem \ref{thm: Tnn dets intro} can be compared with the main result of \cite{GHog} where the ``$y$-ified" triply graded Khovanov-Rozansky homology (also known as HOMFLY homology) of $T(n,n)$ was computed using a very similar ideal to $\J$, see Section \ref{sec: J cousins}.
This suggests a spectral sequence from the ``$y$-ified'' HOMFLY homology to $\cHFL$ which we plan to study in a future work. Such a spectral sequence should generalize the spectral sequences for reduced homology studied in \cite{BPRW,Dowlin,Gilmore}

Finally, we can use the above results to compute the Heegaard Floer homology of certain links in $S^1\times S^2$.

\begin{theorem}
\label{thm: intro Zn}
Let $Z_n$ be the link consisting of $n$ parallel copies of $S^1$ inside $S^1\times S^2$. Then $\cbHFL(S^1\times S^2,Z_n)\simeq \bJ/(\gamma)$ where
$$
\gamma=\mu_0\prod_{i<j}(V_i-V_j)+\mu_{n-1}\prod_{i<j}(U_i-U_j)+
\sum_{j=1}^{n-2} \det\left(\begin{matrix}
U_1^{j} & \cdots & U_1 & 1 & V_1 & \cdots & V_1^{n-1-j}\\
\vdots &  &\vdots & \vdots & \vdots & & \vdots \\
U_n^{j} & \cdots & U_n & 1 & V_n & \cdots & V_n^{n-1-j}
\end{matrix}
\right).
$$
and 
$$
\mu_0=\sum_{k=0}^{\infty}(V_1\cdots V_n)^{k}\UU^{\frac{k(k-1)}{2}},\quad \mu_{n-1}=\sum_{k=0}^{\infty}(U_1\cdots U_n)^{k}\UU^{\frac{k(k-1)}{2}}.
$$
\end{theorem}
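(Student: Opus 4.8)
The plan is to realize $Z_n \subset S^1 \times S^2$ via a surgery/skein description and then apply Theorem \ref{thm: Tnn dets intro} together with a mapping-cone computation. First I would recall that $S^1 \times S^2$ is obtained by $0$-surgery on an unknot in $S^3$, and that $Z_n$ is obtained by placing $n$ parallel strands through the surgery region. Equivalently, following the standard approach to links in $S^1\times S^2$ via $\cbHFL$, there is a surgery exact triangle relating $\cbHFL(S^1\times S^2, Z_n)$ to the mapping cone of a map between $\cbHFL$ of links in $S^3$ coming from a crossing change (or band) that ``closes up'' the extra handle. The natural candidate is: $Z_n$ in $S^1\times S^2$ is the result of the exact triangle whose $S^3$ side involves $T(n,n)$ (the closure that winds all $n$ strands around) and $O_n$, with the cobordism map realized by the composition $\Omega$ of the skein maps $\Psi_{ij}$ over all $i<j$. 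In other words, $\cbHFL(S^1\times S^2, Z_n) \simeq \Cone(\Omega: \cbHFL(T(n,n)) \to \cbHFL(O_n))$, possibly twisted by the power series $\tau$'s which do not affect the homotopy type by Corollary \ref{cor: tau intro}.

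The second step is to compute this cone using part (a) of Theorem \ref{thm: Tnn dets intro}, which tells us $\Omega$ is \emph{injective} with image exactly $\bJ \subset \cbHFL(O_n)$. Injectivity immediately kills the higher homology of the cone, so $H_*(\Cone(\Omega)) \cong \mathrm{coker}(\Omega) = \cbHFL(O_n)/\bJ$ as an $\F[[U_1,\ldots,U_n,V_1,\ldots,V_n]]$-module — \emph{except} that the $S^1\times S^2$ surgery formula typically produces not the naive cone of a single map but a cone of $\Omega - \mathrm{(shift)}\cdot\Omega'$ over infinitely many $\Spin^c$ structures, where the shift corresponds to sliding a strand all the way around the $S^1$ factor once. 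This is exactly where the series $\mu_0$ and $\mu_{n-1}$ and the ``staircase'' of determinants in $\gamma$ will enter: the element $\gamma$ should be precisely the image under $\Omega$ of a distinguished generator of $\cbHFL(T(n,n))$ that is \emph{not} hit once one passes to the $0$-surgered manifold, i.e. the extra relation one must impose is $\Omega(\text{(that generator)}) = \gamma \equiv 0$. So the target identification is $\cbHFL(S^1\times S^2,Z_n) \simeq \bJ/(\gamma)$, matching the statement.

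Concretely I would carry out the following steps in order. (i) Set up the surgery triangle: express $(S^1\times S^2, Z_n)$ as $0$-surgery and identify the cobordism map with $\Omega$ up to an invertible twist; here I would use the already-established machinery of Section \ref{sec: surgery} and the exact triangle of Theorem \ref{thm: intro skein}, iterated $\binom{n}{2}$ times. (ii) Identify the ``extra'' differential coming from the $S^1$ direction: track how a full loop around $S^1$ acts on generators of $\cbHFL(T(n,n))$ and push this through the explicit description of $\Omega$ on generators given by Theorem \ref{thm: Tnn dets intro}(a) / Theorem \ref{thm: intro permutahedron}. (iii) Compute $\Omega$ of the relevant generator explicitly; this is a determinant/Vandermonde-type computation and should reproduce $\gamma$ with the series coefficients $\mu_0, \mu_{n-1}$ arising from the $\psi_k = (V_iU_j)^k \UU^{k(k-1)/2}\psi_0$ recursion of Theorem \ref{thm: one crossing intro}(a) summed over $k \ge 0$ — note $\mu_0 = \sum_k (V_1\cdots V_n)^k \UU^{k(k-1)/2}$ has exactly the shape of such a geometric-type sum of $\psi_0$-contributions across all $n$ crossings. (iv) Conclude $H_*(\Cone) \cong \bJ/(\gamma)$ by injectivity of $\Omega$ on the quotient complex.

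The main obstacle, I expect, is step (ii)–(iii): correctly pinning down the single extra relation $\gamma$ and showing the $S^1\times S^2$ surgery complex collapses to a \emph{one}-relation quotient of $\bJ$ rather than something more complicated. This requires care with (a) the infinitely many $\Spin^c$ structures on $S^1\times S^2$ and the associated infinite direct sums in the mapping cone, (b) the precise generator of $\cbHFL(T(n,n))$ that becomes null-homologous, and (c) verifying that the determinant of the relevant ``staircase'' matrix, with the $\mu_0, \mu_{n-1}$ tails from the $k\to\pm\infty$ ends of the $\psi_k$ recursion, assembles into exactly the stated $\gamma$. The algebraic identity $\gamma \in \bJ$ (so that $\bJ/(\gamma)$ makes sense and the quotient is the cokernel) is a determinant-expansion check that should follow from multilinearity, expanding each staircase determinant along its middle ``$1$'' column into the Vandermonde-type generators of $\J$.
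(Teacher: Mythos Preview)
Your proposal has a genuine gap in step (i), and it propagates through the rest of the argument. You want to realize $\cbHFL(S^1\times S^2,Z_n)$ as the cone of the splitting map $\Omega:\cbHFL(T(n,n))\to\cbHFL(O_n)$, obtained by iterating the skein triangle of Theorem~\ref{thm: intro skein} $\binom{n}{2}$ times. But iterating exact triangles does not produce a single exact triangle; each application introduces a third term $H_*(\cCFL(L_0)\otimes\cK)$, and after $\binom{n}{2}$ iterations you would land in a complicated cube of resolutions, not $\Cone(\Omega)$. Moreover, the third term in each of those triangles lives in $S^3$ (it is the oriented resolution tensored with $\cK$), not in $S^1\times S^2$, so there is no mechanism by which $Z_n$ appears. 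Your subsequent attempt to repair this with an ``extra differential coming from the $S^1$ direction'' is really an acknowledgment that the cone you wrote down is not the right object.

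The paper's approach is structurally different and much cleaner. One uses a \emph{single} surgery triangle, coming from $(-1)$- versus $0$-surgery on one unknot $K$ encircling all $n$ strands at once (Section~\ref{subsec: gen crossing change}). The $(-1)$-surgery adds a full twist, turning $O_n$ into $T(n,n)$; the $0$-surgery produces $(S^1\times S^2,Z_n)$. So the triangle reads
\[
\cbHFL(S^3,O_n)\xrightarrow{\ \Phi\ }\cbHFL(S^3,T(n,n))\longrightarrow\cbHFL(S^1\times S^2,Z_n)\longrightarrow\cdots
\]
where $\Phi=\sum_{k\in\Z}\phi_k^n$ is the sum of the full-twist maps $\phi_k^n:\cHFL(O_n)\to\cHFL(T(n,n))$ over all $\Spin$ structures. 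Note the direction: the relevant map goes \emph{from} $O_n$ \emph{to} $T(n,n)$, opposite to your $\Omega$. Since each $\phi_k^n$ is injective with distinct Alexander degree, $\Phi$ is injective, and $\cbHFL(S^1\times S^2,Z_n)\cong\cbHFL(T(n,n))/(\Phi(1))$. Example~\ref{ex:toruslink-phi} and Proposition~\ref{prop:toruslink-ends} give $\phi_k^n(1)=a_k$ for $0\le k\le n-1$ and express $\phi_k^n(1)$ for $k<0$ (resp.\ $k>n-1$) as multiples of $a_0$ (resp.\ $a_{n-1}$); summing these tails is exactly what produces $\mu_0$ and $\mu_{n-1}$. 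Only \emph{after} this is the map $\Omega^0$ invoked, purely as the isomorphism $\cHFL(T(n,n))\cong\J$ from Theorem~\ref{thm: image Psi}, to transport the single relation $\Phi(1)=\mu_0a_0+a_1+\cdots+a_{n-2}+\mu_{n-1}a_{n-1}$ to $\gamma\in\bJ$. So $\Omega$ plays no role in the exact triangle itself; it is only the dictionary at the end.
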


\section*{Acknowledgments}

We are grateful to Daren Chen, Matthew Hedden, Peter Kronheimer, Tye Lidman, Robert Lipshitz, Ciprian Manolescu, Lisa Piccirillo and Ian Zemke for useful discussions. A. A. and E. G. were partially supported by  the NSF
grant DMS-1928930 while they were in residence at the Simons Laufer Mathematical Sciences
Institute (previously known as MSRI) in Berkeley, California, during the Fall 2022 semester. A. A. was also partially supported by NSF grants DMS-2000506 and DMS- 2238103. E. G. was also partially supported by the NSF grant DMS-1760329. B. L. is partially supported by the NSF grant DMS-2203237. 

\section{Background}

\subsection{Lattices}

We will work with the lattice $\Z^n$ and its translates. We define a partial order on $\Z^n$ by
$$
\mathbf{u}\preceq \mathbf{v}\ \Leftrightarrow\ u_i\le v_i\ \text{for all}\ i.
$$
We will denote the basis vectors by $\ee_i=(0,\ldots0,1,0,\ldots,0)$. Given a vector $\mathbf{k}=(k_1,\ldots,k_n)\in \Z^n$, and a set of variables $U_1,\ldots, U_n$ (resp. $V_1,\ldots V_n$), we write
$$
U^{\mathbf{k}}=U_1^{k_1}\cdots U_n^{k_n},\quad V^{\mathbf{k}}=V_1^{k_1}\cdots V_n^{k_n}.
$$

\subsection{Variables and gradings}
We will be working with links in $S^3$ and the ``full" version of Heegaard Floer complex $\cCFL$ for links, defined in \cite{Zemke}. The coefficients are in $\F=\Z/2\Z$. 
Let $L=L_1\cup\ldots\cup L_n$ be an oriented link with $n$ components. Unless stated otherwise, we will assume that each component $L_i$ has exactly two marked points $z_i$ and $w_i$. The corresponding link homology $\cHFL(L)$ is a module over the polynomial ring $R=\F[U_1, \cdots, U_n, V_1, \cdots, V_n]$. We let $R_{UV}$ denote the ring  in variables $U_1,\ldots,U_n,V_1,\ldots,V_n,\UU$ satisfying the relations  
$$
U_1V_1=\ldots=U_nV_n=\UU.
$$  
The actions of $U_iV_i$ on the complex $\cCFL(L)$ are pairwise homotopic, and the action of $R$ on $\cHFL(L)$ factors through $R_{UV}$. 

Further, define
 \[\cCFL^\infty(L):=\cCFL(L)\otimes_R\F[U_1,U_1^{-1},\cdots, U_n,U_n^{-1}, V_1,V_1^{-1},\cdots,V_n,V_{n}^{-1}]\]
 and $\cHFL^\infty(L):=H_*(\cCFL^{\infty}(L))$.

We denote by $\lk(L_i,L_j)$ the linking number between the components $L_i$ and $L_j$, and write $\ell_i=\sum_{j\neq i}\lk(L_i,L_j)$. Moreover, we let $\ell_{L}=\dfrac{1}{2}(\ell_1, \cdots, \ell_n)$.  

The link Floer homology has an {\bf Alexander grading} $A=(A_1,\ldots,A_n)$ valued in the lattice
$$
\HH_L=\Z^n+\frac{1}{2}(\ell_1,\ldots,\ell_n).
$$
It also has a {\bf homological} (or {\bf Maslov}) grading $\gr_{\w}$ and an additional grading $\gr_{\z}$ satisfying
$$
A_1+\ldots+A_n=\frac{1}{2}(\gr_{\w}-\gr_{\z}).
$$
Thanks to the relation between $A$, $\gr_{\w}$ and $\gr_{\z}$, we can determine $\gr_{\z}$ from the Alexander and Maslov gradings. 
Note that the differential on the chain complex $\cCFL(L)$ preserves Alexander multi-grading and changes the Maslov grading by 1. So, for any $\k\in\HH_{L}$, let $\cCFL(L,\k)$ denote the subcomplex of $\cCFL(L)$ generated by the elements of $A(x)=\k$. The variable $U_i$ decreases $A_i$ by $1$, decreases $\gr_{\w}$ by $2$ and preserves $\gr_{\z}$, while the variable $V_i$ increases $A_{i}$ by $1$, preserves $\gr_{\w}$ and decreases $\gr_{\z}$ by $2$. Therefore, the coefficient ring for the subcomplex $\cCFL(L,\k)$ is the subring $\F[U_1V_1,U_2V_2,\cdots,U_nV_n]$ and so $\cHFL(L,\k)$ is an $\F[\UU]$-module.

For example, the homology of the unlink with $n$ components has one generator in Alexander degree $(0,\ldots,0)$ and Maslov degree 0, and is isomorphic to the ground ring $R_{UV}$. 

Sometimes we will need to work with the completion $\cbHFL(L)$ which is a module over the power series ring $\F[[U_1,\ldots,U_n,V_1,\ldots,V_n]]$.

\subsection{Specializing $V_i$}

We will need to compare the above construction of Heegaard Floer homology with more ``classical" ones \cite{OS1,OS2,MO}. This is done by specializing $V_i$ in various ways.

First, we specialize $V_i=1$ for all $i$ and denote the specialized complex by $\CFL^-$ following \cite{OS2}. The specialized complex still has commuting actions of $U_i$, which are all homotopic to $\UU$. Since $\gr_{\w}(V_i)=0$, the specialized complex has a homological grading given by $\gr_{\w}$. On the other hand, the Alexander grading becomes {\bf Alexander filtration}, as follows:

\begin{proposition}\label{prop:filter}
For all $\k\in \HH_L$,
there is a bijection between the generators of $\CFL^-$ of Alexander grading $\preceq \k$ and the generators of $\cCFL$ of Alexander grading exactly $\k$ i.e. generators of $\cCFL(L,\k)$:
$$
x\leftrightarrow V^{\k-A(x)}x,\ A(x)\preceq \k.
$$ 
The span of such generators, denoted by $\A^-(\k)=\A^-(L;\k)$, is a subcomplex of $\CFL^-$, and such subcomplexes yield a $\Z^n$-filtration on $\CFL^-$.
\end{proposition}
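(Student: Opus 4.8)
The plan is to make the bijection $x \leftrightarrow V^{\k - A(x)}x$ into an isomorphism of filtered complexes, step by step. First I would recall the setup: $\CFL^-$ is obtained from $\cCFL$ by setting $V_i = 1$ for all $i$, so as an $\F[U_1,\ldots,U_n]$-module it is freely generated by the images of the generators of $\cCFL$ (over $R_{UV}$, or rather over $\F[U_1V_1,\ldots,U_nV_n]$ within each Alexander-graded piece). Fix $\k \in \HH_L$. The key observation is that, since $V_i$ increases $A_i$ by $1$ and is now invertible-free (acts as $1$), multiplying a generator $x$ with $A(x) \preceq \k$ by $V^{\k - A(x)}$ is a well-defined operation landing in $\CFL^-$, and conversely every $\F[U_1,\ldots,U_n]$-module generator of $\CFL^-$ of Alexander value $\preceq \k$ arises uniquely this way: given such a generator $y = U^{\mathbf{a}}\cdot[x]$ with Alexander value $\preceq \k$, compare with the unique generator $x$ of $\cCFL$ in its $R_{UV}$-orbit and read off the exponent. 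This gives the claimed bijection on the nose.

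Next I would show that $\A^-(\k)$, the $\F$-span of $\{V^{\k-A(x)}x : A(x)\preceq \k\}$, is a subcomplex of $\CFL^-$. This is where the Alexander-grading bookkeeping does the real work. In $\cCFL$ the differential $\partial$ preserves the Alexander multigrading; a term in $\partial x$ has the form $U^{\mathbf{p}}V^{\mathbf{q}} x'$ where $A_i(x) = A_i(x') - p_i + q_i$ for all $i$, i.e. $A(x') = A(x) + \mathbf{p} - \mathbf{q}$. After specializing $V_i = 1$ and applying the bijection, the differential of $V^{\k - A(x)}x$ is a sum of terms $U^{\mathbf{p}}\, V^{\k - A(x)} x' = U^{\mathbf{p}} V^{\k - A(x)} x'$; I need to check that the image of each such term under the inverse bijection has Alexander value $\preceq \k$, equivalently that this term is an $\F[U_\bullet]$-multiple of $V^{\k - A(x')}x'$ with a \emph{nonnegative} power of each $U_i$ left over. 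Since $V_i = 1$, the term equals $U^{\mathbf{p}} \cdot (V^{\k - A(x')}x')$ up to the relation; the exponent of $U_i$ we carry is $p_i \ge 0$, and the corresponding Alexander value is $A(x') + (\text{something} \preceq \k - A(x'))$... more carefully: the monomial on $x'$ in $\CFL^-$ has Alexander $i$-value $A_i(x') - p_i \le A_i(x') - p_i + q_i = A_i(x) \le k_i$, so it lies in $\A^-(\k)$. Hence $\partial$ preserves $\A^-(\k)$.

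Finally I would observe that $\A^-(\k) \subseteq \A^-(\k')$ whenever $\k \preceq \k'$ — this is immediate from the defining spanning sets, since $A(x)\preceq\k\preceq\k'$ and $V^{\k'-A(x)}x = V^{\k'-\k}\cdot V^{\k-A(x)}x$ — and that $\bigcup_{\k} \A^-(\k) = \CFL^-$, so the family $\{\A^-(\k)\}_{\k \in \HH_L}$ is an exhaustive $\Z^n$-indexed (up to the fixed shift by $\ell_L$) filtration by subcomplexes. I expect the main obstacle to be purely notational: keeping the shift $V^{\k - A(x)}$ consistent through the differential and verifying the nonnegativity of exponents, i.e. that no term of $\partial$ forces a negative power of some $V_i$ before specialization would have been needed — but this is exactly guaranteed because the differential on $\cCFL$ only involves nonnegative powers of the $V_i$ and preserves the Alexander grading, which is the content one reads off from the $\w$- and $\z$-basepoint count. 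Everything else is a direct translation of "setting $V_i=1$ turns a $\Z^n$-grading into a $\Z^n$-filtration," the standard relationship between $\cCFL$ and $\CFL^-$.
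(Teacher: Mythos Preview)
Your argument is correct. The paper does not actually supply a proof of this proposition: it is stated as a self-evident observation about what happens when one specializes $V_i=1$, and the text immediately moves on. Your verification that the differential preserves $\A^-(\k)$---via the inequality $A_i(x')-p_i=A_i(x)-q_i\le A_i(x)\le k_i$, using that $\partial$ preserves Alexander grading in $\cCFL$ and $q_i\ge 0$---is exactly the right computation, and the filtration claim is immediate as you say. The only comment is that your discussion of the bijection is slightly tangled: here ``generators'' should be read as $\F$-basis elements (monomials times intersection points), and the map $y\mapsto V^{\k-A(y)}y$ is then a bijection on the nose between those $\F$-generators of $\CFL^-$ with $A(y)\preceq\k$ and the $\F$-generators of $\cCFL$ in Alexander grading exactly $\k$; there is no need to invoke $R_{UV}$-orbits.
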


Another specialization is $V_i=0$ for all $i$. Similarly to the above, one immediately verifies that this is equivalent to considering the associated graded complex $\gr \CFL^-$ with respect to the Alexander filtration.

\subsection{Large surgery and L-space links}

We recall the large surgery theorem of Manolescu and Ozsv\'ath:

\begin{theorem}[\cite{MO}]
Let $L=L_1\cup \ldots \cup L_n$ be an $n$-component link in the three-sphere. For $\mathbf{d}=(d_1,\ldots,d_n)\in\Z^n$ denote the $3$-manifold obtained by performing $d_i$-surgery on $L_i$ for all $1\le i\le n$ by 
$$
Y_{\d}=S^3_{\d}(L).
$$
Then, for $d_i\gg 0$ and arbitrary $\k$ we have an isomorphism of graded $\F[\UU]$-modules, up to a grading shift:  
$$
\A^-(L;\k)\simeq \mathit{CF}^-(Y_{\d};\s_{\k})
$$ 
where $\s_{\k}$ is a $\Spin$-structure on $Y_{\d}$ determined by $\k$.
\end{theorem}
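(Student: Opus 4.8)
The plan is to reprove this by the classical ``winding region'' argument of Ozsv\'ath--Szab\'o from \cite{OS1}, in the form generalized to links by Manolescu--Ozsv\'ath in \cite{MO}. First I would fix a convenient surgery Heegaard diagram. Start from a multi-pointed diagram $\mathcal{H}=(\Sigma,\alphas,\betas,\w,\z)$ for $L\subset S^3$ in which, for each $i$, one $\beta$-curve is a meridian $\mu_i$ of $L_i$ meeting a single $\alpha$-curve $\alpha_i$ transversally in two points with the basepoints $w_i,z_i$ on opposite sides of $\mu_i$; this can be arranged after stabilization. To realize $d_i$-surgery on $L_i$, replace $\mu_i$ by a curve $\gamma_i$ representing a $d_i$-framed longitude of $L_i$, isotoped so that it wraps $d_i$ times around a small \emph{winding region} $D_i\subset\Sigma$, inside which it meets $\alpha_i$ in roughly $2d_i$ transverse points. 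Then $(\Sigma,\alphas,\gammas,\w)$, retaining only the $w$-basepoints, is a Heegaard diagram for $Y_{\d}=S^3_{\d}(L)$.

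Next I would set up the $\Spin$-structure bookkeeping. Intersection points of $\mathbb{T}_{\alphas}$ with $\mathbb{T}_{\gammas}$ are pairs $\x[\mathbf{s}]$ consisting of a generator $\x$ of $\cCFL(L)$ together with a choice of crossing index $s_i$ in each winding region. The $\Spin$-structure of $\x[\mathbf{s}]$ is determined by the Alexander multigrading $A(\x)$ together with $\mathbf{s}$, so fixing the target $\Spin$-structure $\s_{\k}$ picks out, for each $\x$, a unique index vector $\mathbf{s}=\mathbf{s}(\x,\k)$, and such a vector exists precisely when $A(\x)\preceq\k$ in a suitable normalization. The $i$-th winding region is moreover arranged so that, under the correspondence of Proposition \ref{prop:filter}, the symbol $\x[\mathbf{s}(\x,\k)]$ is matched with the generator $V^{\k-A(\x)}\x$ of $\CFL^-$.

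The crux is the large-coefficient holomorphic disk estimate. For an index-$1$ Whitney disk contributing to the differential on $\mathit{CF}^-(\Sigma,\alphas,\gammas,\w;\s_{\k})$, I would combine positivity of domains with an area function on $\Sigma$ that is very large on the winding regions to show, following Ozsv\'ath--Szab\'o's key lemma, that there is a constant $N=N(\mathcal{H})$ with the property that if $d_i>N$ for all $i$, then the only disks that survive are: (i) disks disjoint from all winding regions, which are exactly the disks counted in $\cCFL^\infty(L)$ and carry the powers of $U_j,V_j$ recorded by their multiplicities at $w_j,z_j$; and (ii) ``small'' disks supported in a single winding region, which shift one crossing index by one and have the effect of multiplying by $U_i$ or $V_i$. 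Any other disk would have a domain whose area exceeds what the $\Spin$-structure constraint permits. This identifies $\mathit{CF}^-(Y_{\d},\s_{\k})$, up to chain homotopy equivalence, with the span of the $\x[\mathbf{s}(\x,\k)]$ over all $\x$ with $A(\x)\preceq\k$, equipped with the differential induced from $\cCFL^\infty(L)$. By Proposition \ref{prop:filter} this span is exactly $\A^-(L;\k)$ with its differential, which yields the asserted isomorphism of $\F[\UU]$-modules. Finally, the absolute Maslov grading on $\mathit{CF}^-(Y_{\d},\s_{\k})$ differs from the internal $\gr_{\w}$-grading on $\A^-(L;\k)$ by a global constant depending only on $\d$, $\k$ and the linking numbers; I would pin it down by comparing the gradings of a distinguished pair of intersection points, but since the statement only asks for an isomorphism up to a grading shift, it is enough to record that this constant exists.

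The hard part will be the large-coefficient estimate carried out simultaneously over all $n$ components. One must check that winding one meridian deeply does not create new unwanted holomorphic disks interacting with another winding region --- disjointness of the $D_i$ is what makes this work, but it still has to be verified for domains that meet several winding regions at once --- and one must handle the two basepoints $w_i,z_i$ on each component carefully, so that the distinction between powers of $U_i$ and powers of $V_i$, as well as the precise exponents in $V^{\k-A(\x)}$, come out correctly.
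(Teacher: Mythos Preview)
The paper does not give its own proof of this statement: it is recorded as the large surgery theorem of Manolescu--Ozsv\'ath and simply cited as \cite{MO}, so there is nothing in the paper to compare your argument against. Your outline is a faithful sketch of the winding-region proof as it appears in \cite{MO} (extending the knot case from \cite{OS1}): build a surgery diagram by replacing each meridian $\mu_i$ by a $d_i$-framed longitude wound many times through a local region, match generators in a fixed $\Spin$-structure with generators of $\A^-(L;\k)$, and use an area/energy bound to show that for $d_i$ large the only contributing holomorphic disks are the ones already present in the link complex together with the small winding-region disks.

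Two minor points of accuracy. First, in the standard construction the curve $\gamma_i$ meets $\alpha_i$ in roughly $d_i$ points inside the winding region, not $2d_i$; this does not affect the argument but is worth getting right. Second, your claim that ``such a vector exists precisely when $A(\x)\preceq\k$'' is not quite how the matching works: for $d_i$ large, every generator $\x$ of the link diagram contributes a unique intersection point $\x[\mathbf{s}(\x,\k)]$ in the $\s_\k$-summand (the winding region is deep enough that the required index is always available), and the identification with $\A^-(L;\k)$ comes from tracking the $U$- and $V$-powers rather than from a support condition on $\x$. With those adjustments your plan matches the cited proof.
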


\begin{corollary}
As a graded $\F[\UU]$-module,  the homology $\cHFL(L,\k)$ splits as a direct sum of one copy of $\F[\UU]$ and some $\UU$-torsion.\end{corollary}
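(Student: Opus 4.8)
The plan is to deduce the statement from the large surgery theorem together with the well-known structure of $\HF^-$ of a rational homology sphere.

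First, by Proposition~\ref{prop:filter} the subcomplex $\cCFL(L,\k)$ is identified, as a complex of graded $\F[\UU]$-modules, with $\A^-(L;\k)$, the operator $\UU=U_iV_i$ going over to the action of any single $U_i$ (all of which are homotopic on $\CFL^-$). Hence $\cHFL(L,\k)\cong H_*(\A^-(L;\k))$ as graded $\F[\UU]$-modules. Now choose $\d=(d_1,\dots,d_n)$ with all $d_i\gg 0$; the surgery framing matrix is then diagonally dominant, hence nonsingular, so $Y_{\d}=S^3_{\d}(L)$ is a rational homology sphere and $\s_{\k}$ is a torsion $\Spin$-structure. The large surgery theorem of Manolescu--Ozsv\'ath gives an isomorphism of graded $\F[\UU]$-modules, up to an overall grading shift, $\A^-(L;\k)\simeq \mathit{CF}^-(Y_{\d};\s_{\k})$, so
$$\cHFL(L,\k)\cong \HF^-(Y_{\d};\s_{\k})$$
as $\F[\UU]$-modules, the grading shift being immaterial for the module structure.

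It then remains to recall the structure of $\HF^-$ for a rational homology sphere $Y$ and a torsion $\Spin$-structure $\s$. The group $\HF^-(Y;\s)$ is a finitely generated module over the PID $\F[\UU]$, so it splits as $\F[\UU]^{\oplus r}\oplus T$ with $T$ a torsion module; moreover $\HF^\infty(Y;\s)\cong \HF^-(Y;\s)\otimes_{\F[\UU]}\F[\UU,\UU^{-1}]$, since $\mathit{CF}^\infty=\mathit{CF}^-\otimes_{\F[\UU]}\F[\UU,\UU^{-1}]$ and localization at $\UU$ is exact. By Ozsv\'ath--Szab\'o, $\HF^\infty(Y;\s)\cong\F[\UU,\UU^{-1}]$ for every torsion $\Spin$-structure; comparing ranks forces $r=1$ and shows that $T$ becomes zero after inverting $\UU$, i.e. $T$ is $\UU$-power torsion. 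Putting this together, $\cHFL(L,\k)\cong\F[\UU]\oplus(\UU\text{-torsion})$, which is the claim.

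The only genuinely delicate point is the first reduction: one must check that the bijection of Proposition~\ref{prop:filter} really is an isomorphism of complexes of $\F[\UU]$-modules, that is, that it intertwines the differentials and carries the $\UU=U_iV_i$-action to the (homotopy class of the) $U_i$-action on $\A^-(L;\k)$. Once that is secured, the rest is a direct application of the large surgery theorem and the standard computation $\HF^\infty(Y;\s)\cong\F[\UU,\UU^{-1}]$ for torsion $\Spin$-structures.
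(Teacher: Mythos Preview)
Your argument is correct and is exactly the reasoning the paper has in mind: the corollary is stated without proof as an immediate consequence of the Manolescu--Ozsv\'ath large surgery theorem, and you have simply spelled out the implicit steps (the identification $\cCFL(L,\k)\cong\A^-(L;\k)$ via Proposition~\ref{prop:filter}, the fact that $Y_{\d}$ is a rational homology sphere, and the standard structure of $\HF^-$ for such manifolds). Your final caveat about checking that the bijection of Proposition~\ref{prop:filter} intertwines differentials and module actions is appropriate caution, and the verification is indeed routine: if $x$ has $A(x)\preceq\k$ then $U_i x\leftrightarrow V^{\k-A(x)+\ee_i}U_ix=U_iV_i\cdot V^{\k-A(x)}x$, so $U_i$ on $\A^-(\k)$ corresponds to $\UU$ on $\cCFL(L,\k)$.
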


The link invariant $h(\k)$, known as the $h$-function, is defined as the $-\frac{1}{2}\gr_{\w}$ for the generator of the $\F[\UU]$-summand of $\cHFL(L,\k)$.

An oriented, connected, closed 3-manifold $M$ is an \emph{L-space} if it is a rational homology sphere, and for each $\Spin$-structure $\mathfrak{s}$ on $M$, one has $HF^{-}(M, \mathfrak{s})\cong \F[\UU]$.  A link $L$ is called an \emph{L-space link} if $S^3_{\d}(L)$ is an L-space for $\d\gg 0$.   Since Dehn surgery does not depend on the orientations of the link, being an L-space link is independent of the orientations on the components of the link.

\begin{corollary}
\label{cor: L space tower}
For an L-space link, we have  $\cHFL(L,\k)=\F[\UU][-2h(\k)]$ for all $\k$.
\end{corollary}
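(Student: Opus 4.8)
The plan is to combine the large surgery theorem with the definition of an L-space link, and then read off the grading shift from the definition of the $h$-function. Fix $\k\in\HH_L$. First I would recall that, by Proposition~\ref{prop:filter} together with the preceding discussion of the $\F[\UU]$-action, the homology $\cHFL(L,\k)$ is identified with $H_*(\A^-(L;\k))$ as a graded $\F[\UU]$-module (up to an overall shift to be pinned down later): the bijection $x\leftrightarrow V^{\k-A(x)}x$ intertwines the $U_i$-action on $\A^-(\k)\subset\CFL^-$ with the $U_iV_i$-action on $\cCFL(L,\k)$, and under $U_i\simeq \UU\simeq U_iV_i$ this becomes an isomorphism of $\F[\UU]$-chain complexes. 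By the Manolescu--Ozsv\'ath large surgery theorem, for $\d=(d_1,\ldots,d_n)$ with all $d_i$ sufficiently large we have a graded $\F[\UU]$-module isomorphism, up to a grading shift,
\[
\A^-(L;\k)\simeq \mathit{CF}^-(Y_{\d};\s_{\k}),
\]
where $Y_{\d}=S^3_{\d}(L)$ and $\s_{\k}$ is the $\Spin$-structure determined by $\k$.

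Next I would invoke the hypothesis. Since $L$ is an L-space link, $Y_{\d}$ is an L-space for $\d\gg 0$, so in particular $HF^-(Y_{\d};\s_{\k})\cong\F[\UU]$ as an $\F[\UU]$-module. Choosing $\d$ large enough that both the large surgery isomorphism and the L-space property hold simultaneously --- each being an ``all $d_i$ sufficiently large'' condition, their conjunction is one as well --- we conclude that $\cHFL(L,\k)\cong\F[\UU]$ as an $\F[\UU]$-module, up to a grading shift. In particular $\cHFL(L,\k)$ is $\UU$-torsion free; comparing with the preceding corollary, which splits $\cHFL(L,\k)$ as one copy of $\F[\UU]$ plus $\UU$-torsion, forces the torsion summand to vanish, so $\cHFL(L,\k)=\F[\UU][s]$ for a single $s\in\Z$.

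It then remains only to identify $s$. By definition, $h(\k)=-\tfrac12\gr_{\w}$ of the generator of the $\F[\UU]$-summand of $\cHFL(L,\k)$; since that summand is now all of $\cHFL(L,\k)$, its bottom generator sits in homological degree $\gr_{\w}=-2h(\k)$, which is precisely the assertion $\cHFL(L,\k)=\F[\UU][-2h(\k)]$.

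The only point that genuinely requires attention is the bookkeeping of grading shifts: the large surgery theorem is stated only ``up to a grading shift,'' so one must check that the $\Z\oplus\Z^n$-grading conventions are compatible enough that the conclusion $\cHFL(L,\k)\cong\F[\UU]$ holds as stated (as a rank-one free $\F[\UU]$-module, not merely after an unspecified rescaling), and that the remaining shift is then completely fixed by the definition of $h(\k)$ rather than carrying an ambiguity. Everything else is a direct application of the results already quoted, so I do not anticipate any substantive difficulty.
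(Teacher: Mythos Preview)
Your proposal is correct and follows precisely the argument the paper intends: the corollary is stated without proof because it is meant to be an immediate consequence of the large surgery theorem, the previous corollary (the $\F[\UU]\oplus\text{torsion}$ splitting), and the definition of an L-space link, with the grading fixed tautologically by the definition of $h(\k)$. Your write-up simply makes these steps explicit, and your caution about the grading shift is well-placed but, as you note, resolved by the fact that $h(\k)$ is defined from $\cHFL(L,\k)$ itself.
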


This is a useful way to characterize L-space links \cite{Yajing}. That is, a link $L\subset S^{3}$ is an L-space link if and only if the link Floer homology $\cHFL(L)$ is torsion free as an $\F[\UU]$-module. 

\begin{example}
Let $O_n$ be the unlink with $n$ components. Then, in Alexander grading $\k=(k_1,\ldots,k_n)$ we have
$$
\cCFL(L,\k)=\prod_{i=1}^{r}U_i^{-[k_i]_{-}}V_i^{[k_i]_{+}}\cdot \F[\UU],
$$
where $[k]_{+}=\max(k,0)$ and $[k]_{-}=\min(k,0)$. Note that $\prod_{i=1}^{r}U_i^{-[k_i]_{-}}V_i^{[k_i]_{+}}$ has homological degree $\gr_{\w}=2\sum_{i=1}^{k}[k_i]_{-}$, so $h(\k)=-\sum_{i=1}^{k}[k_i]_{-}$.
\end{example}

\begin{example}\label{ex:L-space links}
The Hopf link $T(2, 2)$ with linking number $1$ and the negative Hopf link $-T(2, 2)$ with linking number $-1$ are both L-space links. In \cite{BLZ}, there are explicit computations of $\cHFL$ of these two links using the Heegaard diagrams  in Figure \ref{HD-Hopf}. The link Floer chain complex of the positive Hopf link $T(2, 2)$ is a module over $R$ given as follows:
$$\partial a=\partial b=0, \quad \partial c=U_1 a+V_2 b, \quad \partial d=U_2 a+V_1 b$$
where the gradings of $a, b$ are the following:
$$
A(a)=\left(\frac{1}{2},\frac{1}{2}\right),\ \gr_{\w}(a)=0,\quad A(b)=\left(-\frac{1}{2},-\frac{1}{2}\right),\ \gr_{\w}(b)=-2.
$$
The gradings of $c, d$ are as follows 
$$\gr_{\w}(c)=\gr_{\w}(d)=\gr_{\z}(c)=\gr_{\z}(d)=-1.$$
Hence, the  full  homology of the Hopf link $T(2,2)$ is generated by $a, b$ and can be written as 
$$
\cHFL(T(2,2))=\frac{R\langle a,b\rangle}{U_1a=V_2b,U_2a=V_1b}.
$$
 
The link Floer chain complex of the negative Hopf link is the dual complex of  $\cCFL(T(2, 2))$, i.e., 
$$\partial c'=\partial d'=0, \quad, \partial a'=U_1 c'+U_2 d', \quad \partial b'=V_1 d'+V_2 c' $$
where the gradings of $c', d'$  are
$$\gr_{\w}(c')=\gr_{\z}(c')=\gr_{\w}(d')=\gr_{\z}(d')=1, \quad A(c')=\left(\dfrac{1}{2}, -\dfrac{1}{2}\right), \quad A(d')=\left(-\dfrac{1}{2}, \dfrac{1}{2}\right).$$
Hence, the full homology of $-T(2, 2)$ can be written as 
$$
\cHFL(-T(2,2))=\frac{R\langle c', d'\rangle}{U_1c'=U_2d', V_2c'=V_1d'}.
$$

\begin{figure}[H]
\centering
\begin{tikzpicture}
    \node[anchor=south west,inner sep=0] at (0,0) {\includegraphics[width=3in]{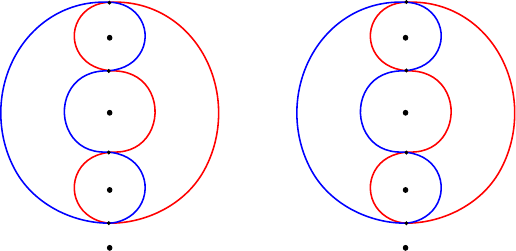}};
    \node[label=below:{\tiny{$\w_2$}}] at (1.7,0.2){};
      \node[label=right:{\tiny{$\w_1$}}] at (1.42,0.9){};
       \node[label=right:{\tiny{$\z_2$}}] at (1.42,2.03){};
       \node[label=right:{\tiny{$\z_1$}}] at (1.42,3.15){};
        \node[label=right:{\tiny{$a$}}] at (1.3,0.28){};
        \node[label=right:{\tiny{$c$}}] at (1.3,1.6){};
        \node[label=right:{\tiny{$b$}}] at (1.3,2.5){};
        \node[label=right:{\tiny{$d$}}] at (1.3,3.85){};

 \node[label=below:{\tiny{$\z_2$}}] at (6.1,0.2){};
      \node[label=right:{\tiny{$\w_1$}}] at (5.82,0.9){};
       \node[label=right:{\tiny{$\w_2$}}] at (5.82,2.03){};
       \node[label=right:{\tiny{$\z_1$}}] at (5.82,3.15){};
       \node[label=right:{\tiny{$c'$}}] at (5.7,0.28){};
        \node[label=right:{\tiny{$a'$}}] at (5.7,1.65){};
        \node[label=right:{\tiny{$d'$}}] at (5.7,2.5){};
        \node[label=right:{\tiny{$b'$}}] at (5.7,3.85){};

\end{tikzpicture}
\caption{Left: genus $0$ Heegaard diagram for $T(2,2)$, Right: genus $0$ Heegaard diagram for $-T(2,2)$} \label{HD-Hopf}
\end{figure}

\end{example}

\subsection{Cobordism maps and link TQFT}

We first review \emph{3-manifolds with multi-based links} and \emph{decorated cobordisms} between them. A 3-manifold with a multi-based link consists of an oriented closed 3-manifold $Y$,  an oriented, embedded link $L\subset Y$ together with disjoint collection of basepoints $\w$ and $\z$ on $L$ such that each component $L_i$ of $L$ has at least two basepoints $z_i, w_i$, and the basepoints alternate between those in $\w$ and those in $\z$ when one traverses a component of $L$. The basepoints $w_i$ and $z_i$ correspond to the variables $U_i$ and $V_i$ in a polynomial ring $\F[U_{\w},V_{\z}]=\F[U_1,U_2,\cdots,U_m,V_{1},V_2,\cdots,V_m]$ where $m=|\w|=|\z|$. Then, $\cCFL(L,\w,\z)$ is defined as a curved complex over $\F[U_{\w},V_{\z}]$. 

 In this paper, we mainly consider the case that each component of a link has exactly two basepoints, i.e., the link component  $L_i$ contains $w_i$ and $z_i$ in $\w$ and $\z$, respectively, and $\F[U_{\w},V_{\z}]=R$. Furthermore, for simplicity, we will drop $\w$ and $\z$ from the notation of a multi-based link if the context is clear.

A \emph{coloring} of a multi-based link $(L, \w, \z)$ is a map $\sigma: \w\cup \z\rightarrow \mathrm{P}$, where $\mathrm{P}$ is a finite set, considered as the set of colors. Corresponding to the set of colors $P=\{p_1,p_2,\cdots,p_k\}$, a polynomial ring
\[\mathcal{R}^-_P:=\F[X_{p_1},X_{p_2},\cdots,X_{p_k}]\]
is defined, which clearly is a $\F[U_{\w},V_{\z}]$-module. For a colored multi-based link $(L,\w,\z,\sigma)$
\[\cCFL(L,\w,\z,\sigma)=\cCFL(L,\w,\z)\otimes_{\F[U_{\w},V_{\z}]}\mathcal{R}_{P}^-\]

 \begin{definition}\cite[Definition 1.3]{Zemke2}
 A \emph{decorated link cobordism} from a 3-manifold with a multi-based link $(Y_1, (L_1, \w_1, \z_1))$ to another one $(Y_2, (L_2, \w_2, \z_2))$ consists of a pair $(W, \bF^{\sigma})$ such that 
 \begin{enumerate}
 \item $W$ is a compact 4-manifold with $\partial W=-Y_1\sqcup Y_2$.
 \item $\bF=(\Sigma, A)$ is an oriented, properly embedded surface $\Sigma$ in $W$, along with a properly embedded 1-manifold $A$ in $\Sigma$, called \emph{dividing arcs}. Further, $\Sigma\setminus A$ consists of two disjoint  (possibly disconnected) subsurfaces, $\Sigma_\w$ and $\Sigma_\z$, such that the intersection of the closures of $\Sigma_\w$ and $\Sigma_\z$ is $A$. 
 \item $\partial \Sigma=-L_1\cup L_2$.
 \item Each component of $L_1\setminus A$ (and $L_2\setminus A$) contains exactly one basepoint.
 \item The $\w$ basepoints are all in $\Sigma_\w$ and the $\z$ basepoints are all in $\Sigma_\z$. 
 \item $\bF$ is equipped with a coloring $\sigma$, i.e. a map $\sigma:C(\Sigma\setminus A)\to P$, where $C(\Sigma\setminus A)$ denotes the set of component of $\Sigma\setminus A$.
 \end{enumerate}

  \end{definition}

To a decorated link cobordism $(W, \bF^{\sigma})$ and a $\Spin$ structure $\mathfrak{s}$ on $W$, Zemke\cite[Theorem A]{Zemke} associated a $\Spin$ functorial chain maps 
$$F_{W, \bF^{\sigma}, \mathfrak{s}}: \cCFL(Y_1, L_1,\w_1,\z_1,\sigma_1, \mathfrak{s}\mid_{Y_1})\rightarrow \cCFL(Y_2, L_2,\w_2,\z_2,\sigma_2, \mathfrak{s}\mid_{Y_{2}}).$$
Here, $\sigma_j$ denotes the colorings on $L_j$ obtained by restricting $\sigma$, for $j=1,2$. The maps are $\mathcal{R}_P^-$-equivariant, $\Z^{P}$-filtered, and are invariants up to $\mathcal{R}_P^-$-equivariant, $\Z^{P}$-filtered chain homotopies.

  Another version of functorial maps for decorated cobordism between links have been independently defined by the first author and Eftekhary in \cite{AE}.

\begin{convention}\label{conv:cob}In this paper, we consider the case that every component of a link has exactly two basepoints (unless when we stabilize them), i.e., the link component $L_i$ contains $w_i$ and $z_i$ in $\w$ and $\z$, respectively, and so $\F[U_{\w},V_{\z}]=R$. Moreover, 
 mostly we work with special cobordisms that every connected component of $\Sigma$ is an annulus, decorated with two parallel vertical dividing arcs. More precisely, for $j=1,2$, $L_j=\coprod_{i=1}^n L_{i,j}$ and $\Sigma=\coprod_{i=1}^n\Sigma_{i}$ where each $\Sigma_i$ is an annulus with $\partial\Sigma_i=-L_{i,1}\sqcup L_{i,2}$. Further, each $A_i=A\cap\Sigma_i$ consists of  two parallel, vertical dividing arcs connecting $L_{i,1}$ to $L_{i,2}$ and dividing $\Sigma_i$ into two rectangles, one containing $w_{i,1}, w_{i,2}$ and another containing $z_{i,1},z_{i,2}$ basepoints. Finally, our coloring set $P$, which is the codomain of $\sigma$, contains exactly $2n$ colors, and $\mathcal{R}_P^{-}\cong R$ such that under this identification $X_{\sigma_j(w_{i,j})}$ and $X_{\sigma_j(z_{i,j})}$ are identified with $U_i$ and $V_i$, respectively. Here, $j=1,2$ and $w_{i,j},\ z_{i,j}$ are the basepoints on $L_{i,j}$. Thus, if we do not emphasis on the basepoints, dividing curves and the colorings, we automatically mean this fixed convention. 
\end{convention}
For a decorated cobordism $(W,\bF)$ as above the cobordism maps $F_{W,\bF,\s}$ are $R$-equivariant and $\Z^{2n}$-filtered. The grading changes under the cobordism maps $F_{W, \bF, \mathfrak{s}}$ are as follows:

\begin{theorem}(Special case of \cite[Theorems 1.4 and 2.14]{Zemke2})
\label{thm:grading}
Suppose $(W, \bF)$  is a decorated link cobordism from $(Y_1, L_1)$ to $(Y_2, L_2)$. Then,\begin{enumerate}
\item If $c_1(\s|_{Y_{1}})$ and $c_{1}(\s|_{Y_{2}})$ are torsion, then $F_{W, \bF, \s}$ is graded with respect to $\gr_{\bf{w}}$, and satisfies 
$$\gr_\w(F_{W, \bF, \s}(x))-\gr_{\w}(x)=\dfrac{c_{1}(\s)^{2}-2\chi(W)-3\sigma(W)}{4}. $$
\item If $c_{1}(\s|_{Y_{1}}-\mathit{PD}\left[ L_1\right])$ and $c_{1}(\s|_{Y_{2}}-\mathit{PD}\left[ L_2\right])$ are torsion, then the map $F_{W, \bF, \s}$ is graded with respect to $\gr_{\z}$, and satisfies
$$\gr_{\z}(F_{W, \bF, \s}(x))-\gr_{\z}(x)=\dfrac{(c_{1}(\s)-\mathit{PD}\left[ \Sigma \right])^{2}-2\chi(W)-3\sigma(W)}{4}. $$
\item Suppose $L_1\subset Y_1$ and $L_2\subset Y_2$ are null-homologous links, i.e. $[L_{i,j}]=0$ in $H_1(Y_{j},\Z)$ for $1\le i\le n$ and $j=1,2$. Moreover, assume both $c_1(\s|_{Y_1})$ and $c_1(\s|_{Y_2})$ are torsion. Then, 
$$A_{i}(F_{W, \bF, \s}(x))-A_{i}(x)=\dfrac{\left\langle c_{1}(\s), \left[\widehat{\Sigma}_{i} \right] \right\rangle-\left[\widehat{\Sigma} \right]\cdot \left[\widehat{\Sigma}_{i} \right]}{2},$$
where $\widehat{\Sigma}_{i}$ denotes the closure of $\Sigma_i$ by adding arbitrary Seifert surfaces of $L_{i,1}\subset Y_1$ and $L_{i,2}\subset Y_2$, and $\left[\widehat{\Sigma}\right]=\sum_{i=1}^{n} \left[ \widehat{\Sigma}_i\right].$ 

\end{enumerate}

\end{theorem}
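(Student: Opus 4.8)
The plan is to derive the three formulas as specializations of Zemke's general grading-change results \cite[Theorems 1.4 and 2.14]{Zemke2} to the decorated cobordisms fixed in Convention \ref{conv:cob}. Recall that in \cite{Zemke2} a colored decorated cobordism $(W,\bF^{\sigma},\s)$ induces an $\mathcal{R}_P^-$-equivariant, $\Z^P$-filtered chain map $F_{W,\bF^{\sigma},\s}$, whose effect on the Maslov ($\w$-)grading equals the closed-$4$-manifold term $\tfrac14\big(c_1(\s)^2-2\chi(W)-3\sigma(W)\big)$ corrected by a quantity that depends only on $\chi(\Sigma_\w)$ and the basepoint counts $|\w_1|,|\w_2|$; this holds once $c_1(\s|_{Y_1})$ and $c_1(\s|_{Y_2})$ are torsion, so that the relative square $c_1(\s)^2$ is defined. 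First I would fix the dictionary between Zemke's colored framework and ours: under Convention \ref{conv:cob} the color set $P$ has $2n$ elements, $\mathcal{R}_P^-\cong R$ via $X_{\sigma_j(w_{i,j})}\mapsto U_i$ and $X_{\sigma_j(z_{i,j})}\mapsto V_i$, the surface $\Sigma$ is a disjoint union of $n$ annuli $\Sigma_i$, each cut by two parallel vertical arcs into two rectangles $\Sigma_{i,\w}$ and $\Sigma_{i,\z}$, and every component of $L_j\setminus A$ carries exactly one $\w$- and one $\z$-basepoint. Substituting $\chi(\Sigma_{i,\w})=1$ for each $i$ and $|\w_1|=|\w_2|=n$ into Zemke's correction term, I would check that it vanishes, which yields formula (1); this is precisely the sense in which the cobordisms of Convention \ref{conv:cob} are the ``neutral'' ones whose grading shift reduces to the closed-manifold expression.

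For (2) I would appeal to the $\w\leftrightarrow\z$ symmetry of the link TQFT: interchanging the roles of the $\w$- and $\z$-basepoints and of $\Sigma_\w$ and $\Sigma_\z$ turns $\gr_\w$ into $\gr_\z$, turns the torsion hypothesis on $c_1(\s|_{Y_j})$ into the one on $c_1(\s|_{Y_j})-\mathit{PD}[L_j]$, and replaces the relevant relative $\Spin$ structure by the one obtained by subtracting $\mathit{PD}[\Sigma]$; formula (2) is then formula (1) applied with $c_1(\s)$ replaced by $c_1(\s)-\mathit{PD}[\Sigma]$. For (3), under the null-homology hypothesis I would close up each annulus $\Sigma_i$ by Seifert surfaces for $L_{i,1}$ and $L_{i,2}$ to obtain a well-defined class $[\widehat{\Sigma}_i]$ in the homology of the closed-up cobordism (to which $\s$ extends), set $[\widehat{\Sigma}]=\sum_i[\widehat{\Sigma}_i]$, and use that the $i$-th Alexander filtration $A_i$ records only the $U_i,V_i$-filtration level and hence depends only on the piece $\Sigma_i$. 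Zemke's surface-relative grading formula \cite[Theorem 2.14]{Zemke2} then specializes to the stated expression $\tfrac12\big(\langle c_1(\s),[\widehat{\Sigma}_i]\rangle-[\widehat{\Sigma}]\cdot[\widehat{\Sigma}_i]\big)$; as a sanity check one verifies that summing over $i$ is compatible with the identity $A_1+\ldots+A_n=\tfrac12(\gr_\w-\gr_\z)$ and with formulas (1) and (2).

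The hard part will be the first step: honestly matching Zemke's colored, $\Z^P$-filtered machinery with the $\Z^{2n}$-filtration of Convention \ref{conv:cob}, and confirming that the Euler-characteristic and basepoint-count corrections in his general formulas genuinely cancel for annular cobordisms decorated with two parallel vertical dividing arcs. Everything after that is bookkeeping, provided one is careful with the relative-square and Poincar\'e-dual conventions on a $4$-manifold with boundary; the torsion hypotheses on $c_1(\s|_{Y_j})$ (respectively on $c_1(\s|_{Y_j})-\mathit{PD}[L_j]$) are exactly what make $c_1(\s)^2$ (respectively $(c_1(\s)-\mathit{PD}[\Sigma])^2$) and the pairings appearing in (3) well defined.
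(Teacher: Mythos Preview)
The paper does not give a proof of this theorem at all: it is stated as a direct citation of Zemke's results, with the parenthetical ``Special case of \cite[Theorems 1.4 and 2.14]{Zemke2}'' in the heading and no proof block following. Your proposal---specializing Zemke's general grading formulas to the annular cobordisms of Convention~\ref{conv:cob} and checking that the Euler-characteristic and basepoint corrections vanish---is exactly the right way to justify that citation, and is correct in outline.
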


\begin{theorem}
Assume that $(W, \bF): (S^3, L_1)\rightarrow (S^3, L_2)$ is a decorated  link cobordism with $b_2^{+}(W)=0$ as in Convention \ref{conv:cob}. Then for all $\s$ and $\k\in\HH_{L_1}$ the induced map on homology
\[F_{W, \bF, \s}:\cHFL^{\infty}(L_1,\k)\to\cHFL^\infty(L_2,\k+\d)\]
 is an isomorphism, where $\d$ is the Alexander multi-degree of $F_{W, \bF, \s}$. 
\end{theorem}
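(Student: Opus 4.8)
The plan is to reduce the assertion to the statement that $F_{W,\bF,\s}$ does not vanish on $\cHFL^\infty(L_1,\k)$, and then to extract this nonvanishing from the known behaviour of $\HF^\infty$ under negative definite cobordisms.

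I would first pin down the source and target as modules. By the large surgery theorem and its corollary, after inverting $\UU$, for every $\k'$ the module $\cHFL^\infty(L_i,\k')$ is isomorphic, as a $\Z$-graded module over $\F[\UU,\UU^{-1}]$, to a single free module of rank one: all $\UU$-torsion in $\cHFL(L_i,\k')$ is killed by the localization, and what survives is the $\HF^\infty$-tower of a rational homology sphere. Since both ends of $(W,\bF)$ are $S^3$, every relevant first Chern class is torsion, so by Theorem \ref{thm:grading}(1) the map $F_{W,\bF,\s}$ is homogeneous for $\gr_{\w}$; it is moreover $R$-equivariant, hence $\F[\UU,\UU^{-1}]$-linear on $\cCFL^\infty$, and therefore restricts on the Alexander-degree-$\k$ summand to a homogeneous $\F[\UU,\UU^{-1}]$-module homomorphism between two free graded rank-one modules. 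The only homogeneous elements of $\F[\UU,\UU^{-1}]$ are $0$ and the monomials $\UU^{m}$, each of which is a unit, so such a homomorphism is either $0$ or an isomorphism; hence it suffices to prove $F_{W,\bF,\s}\neq 0$ on $\cHFL^\infty(L_1,\k)$.

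Next I would bring in the hypothesis $b_2^+(W)=0$. Because $\partial W=S^3\sqcup S^3$ consists of rational homology spheres, the intersection form on $H^2(W;\mathbb{Q})$ is nondegenerate (its radical would inject into $H^2(\partial W;\mathbb{Q})=0$), so $b_2^+(W)=0$ means $W$ is negative definite. Fixing $\d\gg 0$ and using the Manolescu--Ozsv\'ath identification $\A^-(L_i;\k')\simeq \mathit{CF}^-(S^3_{\d}(L_i),\s_{\k'})$, I would observe that, since by Convention \ref{conv:cob} the surface $\Sigma\subset W$ is a union of product annuli carrying a concordance from $L_1$ to $L_2$, the decorated cobordism $(W,\bF)$ determines a $\Spin$ cobordism $\widehat W\colon S^3_{\d}(L_1)\to S^3_{\d}(L_2)$ obtained by surgering $W$ along $\Sigma$, and that — by naturality of the large surgery isomorphism with respect to Zemke's cobordism maps — the map $F_{W,\bF,\s}$, after inverting $\UU$, is intertwined with $F^\infty_{\widehat W,\widehat\s}$ up to the shift $\d$. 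Choosing the sign of the surgery so that the handles attached along $\Sigma$ are negative makes $\widehat W$ negative semidefinite, and then the standard fact that a negative semidefinite cobordism between rational homology spheres induces an isomorphism on $\HF^\infty$ in each $\Spin$ structure (Ozsv\'ath--Szab\'o) shows $F^\infty_{\widehat W,\widehat\s}$ is an isomorphism; transporting back gives $F_{W,\bF,\s}\neq 0$ on $\cHFL^\infty(L_1,\k)$, which together with the previous step proves the theorem.

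The hard part will be this last step: making the large surgery identification genuinely natural with respect to the link cobordism maps, i.e. producing $\widehat W$, checking that $F_{W,\bF,\s}$ really corresponds to $F^\infty_{\widehat W,\widehat\s}$, and bookkeeping the $\Spin$ structures and $b_2^+(\widehat W)$ (a nonzero $b_1(W)$ is harmless here, since the $H^1$-action on $\HF^\infty(S^3)$ is trivial). An alternative I would also consider is to avoid surgeries entirely and invoke a ``negative definite implies isomorphism on the $\infty$-flavour'' statement directly at the level of the link cobordism maps, which should follow from Zemke's functoriality by the same formal mechanism as in the closed case; then the only thing left to check is that the hypotheses $b_2^+(W)=0$ and $\partial W=S^3\sqcup S^3$ are exactly what that mechanism consumes.
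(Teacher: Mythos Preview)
Your reduction in the first paragraph is fine, but the route you take to establish nonvanishing is far more complicated than what the paper does, and the hard step you flag is genuinely hard and unnecessary.

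The paper avoids large surgery and any auxiliary cobordism $\widehat W$ entirely. The key observation you are missing is that once all $U_i,V_i$ are inverted, the link structure disappears: for each $\k$ the assignment $x\mapsto V^{\k-A(x)}x$ defines a chain isomorphism $\mathit{CF}^\infty(S^3,\w_j)\xrightarrow{\ \cong\ }\cCFL^\infty(L_j,\k)$ (this is the infinity analogue of Proposition~\ref{prop:filter}, where instead of a filtered subcomplex you now get the whole thing). Moreover, because the decoration of $\bF$ is the product one of Convention~\ref{conv:cob}, the link cobordism map $F_{W,\bF,\s}$ is, by construction, the triangle-counting map for the underlying $4$-manifold cobordism with the $\w$ basepoints, i.e.\ it agrees with the classical Ozsv\'ath--Szab\'o map $F_{W,\s}:\mathit{CF}^\infty(S^3,\w_1)\to\mathit{CF}^\infty(S^3,\w_2)$ under these vertical isomorphisms. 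So one has a strictly commuting square on the nose, and the theorem follows immediately from the classical fact (proof of \cite[Theorem~9.6]{OS-dinvariant}) that $F_{W,\s}$ is an isomorphism on $\HF^\infty$ when $b_2^+(W)=0$.

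By contrast, your proposed argument requires (i) naturality of the large surgery isomorphism with respect to Zemke's link cobordism maps, which is not in the literature in the form you need and would itself be a substantial lemma, and (ii) controlling $b_2^+(\widehat W)$ after surgering $W$ along $\Sigma$; note that the framings here are forced by the large surgery condition $\d\gg 0$, so you cannot simply ``choose the sign'' to make the new handles negative. Neither obstacle is insurmountable, but both are real work, and the paper's direct identification with $\mathit{CF}^\infty(S^3)$ sidesteps them completely. Your closing alternative is in fact exactly the paper's argument: the ``formal mechanism'' you allude to is just the commuting square above.
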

\begin{proof} Consider the diagram 

\[
\begin{tikzcd}
  \mathit{CF}^\infty(S^3,\w_1) \arrow[r, "F_{W,\s}"] \arrow[d]
    & \mathit{CF}^\infty(S^3,\w_2) \arrow[d] \\
 \cCFL^\infty(L_1,\k) \arrow[r,  "F_{W,\bF,\s}"]
& \cCFL^\infty(L_2,\k+\d) \end{tikzcd}
\]
where the left (resp. right) vertical arrow is defined by sending $x\in\mathit{CF}^\infty(S^3,\w_1)$ (resp. $x\in\mathit{CF}^\infty(S^3,\w_2)$) to $V^{\k-A(x)}x\in\cCFL^\infty(L_1,\k)$ (resp. $V^{\k+\d-A(x)}x\in\cCFL^\infty(L_2,\k+\d)$). Moreover, $F_{W,\s}$ is the cobordism map corresponding to $W$ and $\Sigma_{\w}$ as defined in \cite{OS-4m}. Similar to Proposition \ref{prop:filter}, it is easy to see that the vertical maps are chain maps and define an isomorphism between the chain complexes. Moreover, it follows from the definition of the cobordism maps that this diagram commutes. By the proof of \cite[Theorem 9.6]{OS-dinvariant} the induced map on homology by $F_{W,\s}$ is an isomorphism and thus the induced map on homology by $F_{W,\bF,\s}$ is an isomorphism as well.

\end{proof}
\begin{corollary}
\label{cor: negative definite}
Assume $(W, \bF): (S^3, L_1)\rightarrow (S^3, L_2)$ is a decorated link cobordism with $b_2^{+}(W)=0$, and $L_1$ and $L_2$ are $L$-space links. If $F_{W, \bF, \s}$ has Alexander multi-degree $\d$ and homological degree $d$ then for all $\k\in \HH_{L_1}$ the induced map on homology
$$
F_{W, \bF, \s}:\cHFL(L_1,\k)\to \cHFL(L_2,\k+\d)
$$
is injective and completely determined by its homological degree.
\end{corollary}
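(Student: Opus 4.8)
The plan is to reduce the statement to the $\F[\UU]$-module structure of link Floer homology and to invoke the isomorphism on $\cHFL^{\infty}$ established in the theorem immediately preceding this corollary. By Corollary~\ref{cor: L space tower}, since $L_1$ and $L_2$ are L-space links, for any $\k$ the group $\cHFL(L_1,\k)$ is a free $\F[\UU]$-module of rank one with generator in homological degree $-2h_{L_1}(\k)$, and likewise $\cHFL(L_2,\k+\d)\cong\F[\UU][-2h_{L_2}(\k+\d)]$. Since $\cCFL^{\infty}$ is obtained from $\cCFL$ by the flat localization inverting all $U_i$ and $V_i$, we get $\cHFL^{\infty}(L_i,\cdot)\cong\F[\UU,\UU^{-1}]$, and the localization maps $\iota_i\colon\cHFL(L_i,\cdot)\to\cHFL^{\infty}(L_i,\cdot)$ are injective.

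First I would record the commuting square
\[
\begin{tikzcd}
\cHFL(L_1,\k)\arrow[r,"F_{W,\bF,\s}"]\arrow[d,"\iota_1"'] & \cHFL(L_2,\k+\d)\arrow[d,"\iota_2"]\\
\cHFL^\infty(L_1,\k)\arrow[r,"F_{W,\bF,\s}"] & \cHFL^\infty(L_2,\k+\d),
\end{tikzcd}
\]
which commutes because the cobordism maps are $R$-equivariant and hence commute with localization. The bottom arrow is an isomorphism by the preceding theorem, so $\iota_2\circ F_{W,\bF,\s}$ is injective; in particular the top map $F_{W,\bF,\s}\colon\cHFL(L_1,\k)\to\cHFL(L_2,\k+\d)$ is not the zero map. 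This passage from the $\cHFL^{\infty}$ statement to a statement about $\cHFL$ itself is the only slightly delicate point in the argument.

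Now I would use homogeneity. By Theorem~\ref{thm:grading} the map $F_{W,\bF,\s}$ is graded of the prescribed homological degree $d$. A homogeneous $\F[\UU]$-linear map between free rank-one graded $\F[\UU]$-modules is either zero or multiplication by a monomial $\UU^{m}$, and when it is nonzero the exponent is forced by matching gradings of the generators, namely $2m=2h_{L_1}(\k)-2h_{L_2}(\k+\d)-d$, which depends only on $d$, $\k$ and $\d$. Over $\F=\Z/2\Z$ the generators of these rank-one modules are themselves unique, so there is exactly one nonzero homogeneous $\F[\UU]$-linear map of degree $d$ between them. By the previous step $F_{W,\bF,\s}$ must be this map; hence it is multiplication by $\UU^{m}$, in particular injective, and it is completely determined by $d$ (and $\d$), as claimed.

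I do not expect a genuine obstacle: the two ingredients that require a word of justification — flatness of the localization $\cCFL\leadsto\cCFL^{\infty}$ (used to see that $\iota_i$ is injective and that $\cHFL^\infty(L_i,\cdot)\cong\F[\UU,\UU^{-1}]$) and the $R$-equivariance of the cobordism maps (used for the commuting square) — are standard properties already in force in the excerpt, and the rest is elementary algebra over $\F[\UU]$.
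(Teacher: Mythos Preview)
Your proof is correct and follows the same approach as the paper: both use the preceding theorem on $\cHFL^\infty$ together with Corollary~\ref{cor: L space tower} to pin down the map as multiplication by a specific power of $\UU$. Your version is considerably more explicit---you spell out the commuting square with localization and explain why nonvanishing on $\cHFL^\infty$ forces nonvanishing (hence injectivity) on $\cHFL$---whereas the paper simply writes $F_{W,\bF,\s}(z_{L_1}(\k))=\UU^{m(\k)}z_{L_2}(\k+\d)$ with $m(\k)=-\bigl(d+2h_{L_2}(\k+\d)-2h_{L_1}(\k)\bigr)/2$ and leaves the nonvanishing implicit.
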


\begin{proof}
Let $z_{L_j}(\k)$ denote the generator of $\cHFL(L_j,\k)\cong \F[\UU]$  of homological degree $-2h_{L_j}(\k)$, for $j=1,2$.  Then 
$$
F_{W, \bF, \s} \left(z_{L_1}(\k)\right)=\UU^{m(\k)}z_{L_2}(\k+\d).
$$
where 
$$
m(\k)=-\left(d+2h_{L_2}(\k+\d)-2h_{L_1}(\k)\right)/{2}.
$$
\end{proof}




\section{Surgery maps}
\label{sec: surgery}

\subsection{Crossing changes}
As shown in Figure \ref{blowup}, one can locally change a positive or negative crossing by performing $(-1)$-surgery on the specified red unknot. In this section, we associate a link cobordism with a simple decoration to these crossing change surgeries and then study the properties of the corresponding cobordism maps.

\begin{figure}[H]
\centering
\begin{tikzpicture}
    \node[anchor=south west,inner sep=0] at (0,0) {\includegraphics[width=3.0in]{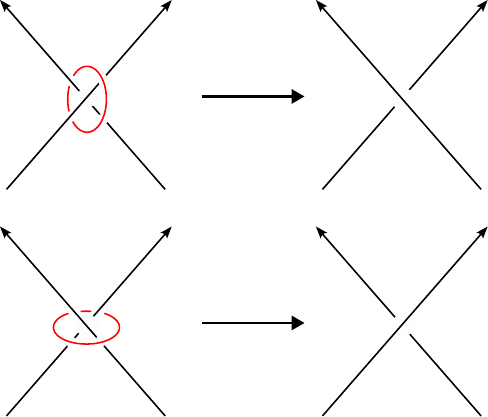}};
    \node[label=above:{$+$}] at (1.4,5.5){};
    \node[label=above:{$-$}] at (1.4,1.8){};
    \node[label=above:{$-$}] at (6.3,5.5){};
    \node[label=above:{$+$}] at (6.3,1.8){};
    \node[label=right:{\color{red}$-1$}] at (1.5,5){};
    \node[label=right:{\color{red}$-1$}] at (1.7,1.4){};
\end{tikzpicture}
\caption{Crossing changes: In top (resp. bottom) figure, $(-1)$-surgery on the red unknot will change the positive (resp. negative) crossing to the negative (resp. positive) crossing.} \label{blowup}
\end{figure}


Suppose $L_+=\coprod_{i=1}^nL_{i,+}$ is an $n$-component link in $S^3$, and $L_-$ is the link obtained from $L_+$ by changing a positive crossing between different components to a negative crossing.   So, $L_-$ will have $n$ components as well. Denote the component of $L_-$ corresponding to $L_{i,+}$ by $L_{i,-}$.  Let $W$ be the cobordism from $S^3$ to $S^3$ obtained by attaching a $2$-handle to $S^3\times\{1\}$ in $S^3\times [0,1]$, along the $(-1)$-framed unknot as in the top of Figure \ref{blowup}. Then, the embedded surface $\Sigma=L_+\times[0,1]$ in $W$ gives a cobordism from $L_+$ to $L_-$. The surface $\Sigma$ consists of $n$ connected components, all of them annuli. Denote the component of $\Sigma$ that bounds $-L_{i,+}$ and $L_{i,-}$ by $\Sigma_i$.  Assume each connected component $L_{i,+}$ of $L_+$ contains exactly two basepoints  $w_{i,+}$, $z_{i,+}$, and denote the corresponding basepoints on $L_{i,-}$ by $w_{i,-}$ and $z_{i,-}$, respectively. Decorate each $\Sigma_i$ with two parallel and vertical arcs $A_i$ to divide $\Sigma_i$ into two rectangles, such that one of these rectangles contains the basepoins $z_{i,\pm}$, and the other one contains $w_{i,\pm}$. Then, for $\bF=(\Sigma,A=\coprod_{i=1}^n A_i)$ colored as in Convention \ref{conv:cob}, the pair $(W,\bF)$ gives a decorated cobordism from $(S^3,L_+)$ to $(S^3,L_-)$. Similarly, we define a decorated cobordism from $(S^3,L_-)$ to $(S^3,L_+)$ using the unknot in the bottom of Figure \ref{blowup} as well.

\begin{proposition}
\label{prop:positive}
Let $(W, \bF): (S^{3}, L_{+})\rightarrow (S^{3}, L_{-})$ be the decorated link cobordism induced from attaching a 2-handle along the $(-1)$-framed unknot $K$ so that a positive crossing  becomes a negative crossing, as above. Suppose that the link components $L_{i,+}, L_{j,+}$ are passing through the unknot $K$ with $i<j$. Let $\s_k$ be the $\Spin$ structure on $W$ such that 
$$\langle c_{1}(\s_k), [S^{2}]\rangle =2k+1$$
where $[S^{2}]$ is the generator of $H_{2}(W)$ corresponding to the attached 2-handle. Define $\psi_k:=F_{W,\bF,\s_k}$, the corresponding cobordism map in link Floer homology.
Then
$$\gr_{\w}(\psi_k)=\gr_{\z}(\psi_k)=-k^{2}-k,$$
and 
$$A_{i}(\psi_k)=- A_{j}(\psi_k)=k+1/2.$$
Moreover, $ A_{l}(\psi_k)=0$ for all  $l\neq i, j$. 
In particular,  $A(\psi_0)=\dfrac{1}{2}(\ee_i-\ee_j)$, while $A(\psi_{-1})=-\dfrac{1}{2}(\ee_i-\ee_j)$ and both $\psi_0,\psi_{-1}$ have homological grading zero. 
\end{proposition}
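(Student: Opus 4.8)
The plan is to read off all three grading shifts directly from Theorem \ref{thm:grading}, applied to the decorated cobordism $(W,\bF)$ built from the $(-1)$-framed $2$-handle attachment. First I would record the topology of $W$: it is $S^3\times[0,1]$ with a single $2$-handle attached along a $(-1)$-framed unknot, so $W$ is $\overline{\C P^2}$ minus two balls, $H_2(W;\Z)\cong \Z$ generated by a class $[S^2]$ with $[S^2]^2=-1$, and $\chi(W)=1$, $\sigma(W)=-1$, $b_2^+(W)=0$. Since $\partial W = -S^3\sqcup S^3$ and both boundary components are $S^3$, all restricted $\Spin$ structures are torsion, so parts (1), (2), (3) of Theorem \ref{thm:grading} all apply.

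For the Maslov shift I would compute $c_1(\s_k)^2$. Writing $c_1(\s_k)$ as the class with $\langle c_1(\s_k),[S^2]\rangle = 2k+1$ in $H^2(W)\cong\Z$, under the intersection form $\langle\cdot,\cdot\rangle$ with $[S^2]^2=-1$ one gets $c_1(\s_k)^2 = -(2k+1)^2$; then part (1) gives
$$
\gr_\w(\psi_k) = \frac{c_1(\s_k)^2 - 2\chi(W) - 3\sigma(W)}{4} = \frac{-(2k+1)^2 - 2 + 3}{4} = \frac{1-(2k+1)^2}{4} = -k^2-k.
$$
For $\gr_\z$ I would use part (2) with $c_1(\s_k) - \mathit{PD}[\Sigma]$ in place of $c_1(\s_k)$. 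Here $\Sigma = L_+\times[0,1]$ is a product cobordism, so each $\widehat\Sigma_i$ is homologically trivial in the closed-up sense relevant to the computation — more precisely, $[\Sigma]$ pairs trivially with $[S^2]$ and $\mathit{PD}[\Sigma]$ contributes nothing to the square since $\Sigma$ is disjoint from the attaching region of the handle. Thus $(c_1(\s_k)-\mathit{PD}[\Sigma])^2 = c_1(\s_k)^2$ and $\gr_\z(\psi_k) = \gr_\w(\psi_k) = -k^2-k$. This is consistent with the identity $A_1+\cdots+A_n = \tfrac12(\gr_\w-\gr_\z)$ only if the total Alexander shift vanishes, which I confirm below.

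For the Alexander multi-degree I would apply part (3). Capping $\Sigma_i$ off with Seifert surfaces at both ends produces a closed surface $\widehat\Sigma_i$ in a closed-up $4$-manifold; the only component $\Sigma_i$ that can acquire nonzero self-intersection or nonzero pairing with $c_1(\s_k)$ are those whose boundary link component $L_{i,+}$ passes through the surgery unknot $K$, namely $i$ and $j$. A standard computation (the unknot $K$ links $L_{i,+}$ and $L_{j,+}$ geometrically once with opposite signs after orienting, and the $(-1)$-framing feeds into the self-intersection of the capped surface) gives $\langle c_1(\s_k),[\widehat\Sigma_i]\rangle - [\widehat\Sigma]\cdot[\widehat\Sigma_i] = 2k+1$ for the $i$-th component and its negative for the $j$-th, and $0$ for all $l\neq i,j$; dividing by $2$ yields $A_i(\psi_k) = k+\tfrac12 = -A_j(\psi_k)$ and $A_l(\psi_k)=0$ otherwise. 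Note this makes $\sum_l A_l(\psi_k) = 0$, matching $\gr_\w = \gr_\z$. Specializing $k=0$ and $k=-1$ gives $A(\psi_0) = \tfrac12(\ee_i-\ee_j)$, $A(\psi_{-1}) = -\tfrac12(\ee_i-\ee_j)$, both with $\gr_\w = 0$.

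The main obstacle I anticipate is the Alexander-degree bookkeeping in part (3): correctly identifying $[S^2]\cdot[\widehat\Sigma_i]$ and $[\widehat\Sigma_i]^2$ requires keeping careful track of how the two strands thread the $(-1)$-framed unknot, how the Seifert-surface caps interact with the $2$-handle core, and the sign conventions relating the orientation of $\Sigma_i$ to the linking number. The cleanest way to pin down these intersection numbers is to use the identity $A_i(\psi_0) - A_i(\psi_{-1}) = \langle c_1(\s_0) - c_1(\s_{-1}), [\widehat\Sigma_i]\rangle/2 = [S^2]\cdot[\widehat\Sigma_i]$ together with an independent determination of one $A_i$ value — for instance from the composition law $\phi_0\psi_0 = V_i$ of Theorem \ref{thm: one crossing intro}(d), whose Alexander degree is $-\ee_i$ — to fix the normalization; everything else is then forced. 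The Maslov and $\gr_\z$ computations are routine given the topology of $W$.
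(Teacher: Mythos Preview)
Your approach is the same as the paper's: apply Theorem \ref{thm:grading} directly to $(W,\bF)$, and your $\gr_\w$ computation is correct. A few of the justifications need tightening, though.

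For $\gr_\z$: it is \emph{not} true that each $\widehat{\Sigma}_l$ is homologically trivial, nor that $\Sigma$ is disjoint from the attaching region of the handle --- the strands $L_{i,+}$ and $L_{j,+}$ do pass through $K$. What is true, and what the paper uses, is that the \emph{total} class $[\widehat{\Sigma}]=\sum_l[\widehat{\Sigma}_l]$ vanishes in $H_2(W)$: the two strands link $K$ once each with opposite signs, so $\lk(K,L_i)+\lk(K,L_j)=0$, and the remaining components do not link $K$ at all. Hence $PD[\Sigma]=0$ and $(c_1(\s_k)-PD[\Sigma])^2=c_1(\s_k)^2=-(2k+1)^2$.

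For the Alexander shift, the paper's argument is cleaner than your sketch suggests. Since $[\widehat{\Sigma}]=0$, the term $[\widehat{\Sigma}]\cdot[\widehat{\Sigma}_i]$ in Theorem \ref{thm:grading}(3) vanishes outright --- no self-intersection of an individual $\widehat{\Sigma}_i$ ever enters, so the $(-1)$-framing plays no role at this step. What remains is
\[
\langle c_1(\s_k),[\widehat{\Sigma}_i]\rangle \;=\; -(2k+1)\,\langle [S^2],[\widehat{\Sigma}_i]\rangle \;=\; -(2k+1)\,\lk(K,L_i),
\]
and with $\lk(K,L_i)=-\lk(K,L_j)=\pm 1$ (read off from Figure \ref{blowup}) one obtains $A_i(\psi_k)=k+\tfrac12=-A_j(\psi_k)$ directly.

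Finally, do not invoke Theorem \ref{thm: one crossing intro}(d) to pin down the sign: in the body of the paper that identity is Proposition \ref{prop: psi phi}, which is proved \emph{after} Proposition \ref{prop:positive} and relies on it, so the reasoning would be circular. The sign is fixed by the orientation conventions of Figure \ref{blowup}, not by any later result.
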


\begin{proof}
By a direct computation, $\chi(W)=1, \sigma(W)=-1$ and 
$$c_{1}(\s_{k})^{2}=(c_{1}(\s_{k})-PD [\Sigma])^{2}=-(2k+1)^{2}. $$
By Theorem \ref{thm:grading}, 
$$\Delta \gr_{\w}=\Delta \gr_{\z}=\dfrac{-(2k+1)^{2}+1}{4}=-k^{2}-k.$$
For the Alexander grading, observe that $[\widehat{\Sigma}]=0\in H_{2}(W)$. Then 
$$\langle c_{1}(\s_{k}), [\widehat{\Sigma}_{i}]\rangle =-(2k+1)\langle [S^{2}], [\widehat{\Sigma}_{i}]\rangle=-(2k+1)\lk(K, L_i).$$
Similarly, $\langle c_{1}(\s_{k}), [\widehat{\Sigma}_{j}]\rangle =-(2k+1) \lk(K, L_j)$. Since, all other components in $L$ do not interact with $K$, by Theorem \ref{thm:grading}, $\Delta A_{l}=0$ for all $l\neq i, j$, and 
$$\Delta A_{i}=-\Delta A_{j}=k+1/2.$$
\end{proof}


\begin{example}\label{ex:Hopf-psi}
By Example \ref{ex:L-space links}, the full  homology of the Hopf link $T(2,2)$ has two generators $a,b$ and is given by
$$
\cHFL(T(2,2))=\frac{R\langle a,b\rangle}{aU_1=bV_2,aU_2=bV_1}.
$$
Cobordism maps $\psi_k:\cHFL(T(2,2))\to \cHFL(O_2)$ are nonzero by Corollary \ref{cor: negative definite}, since $T(2,2)$ and $O_2$ are $L$-space links and $W$ is a nonpositive definite cobordism. Thus, the grading shifts from Proposition \ref{prop:positive} will determine $\psi_k$. Therefore, 
\[\psi_0(a)=V_1,\ \psi_0(b)=U_2;\quad \psi_{-1}(a)=V_2,\ \psi_{-1}(b)=U_1.\]
In general, we have
$$
\psi_k(a)=\begin{cases}
V_1^{k+1}U_2^k\UU^{\frac{k(k-1)}{2}} & \text{if}\ k\ge 0\\
V_2^{-k}U_1^{-1-k}\UU^{\frac{(k+1)(k+2)}{2}}   & \text{if}\ k\le -1,
\end{cases}
\psi_k(b)=\begin{cases}
V_1^{k}U_2^{k+1}\UU^{\frac{k(k-1)}{2}} & \text{if}\ k\ge 0\\
V_2^{-1-k}U_1^{-k}\UU^{\frac{(k+1)(k+2)}{2}}   & \text{if}\ k\le -1.
\end{cases}
$$
\end{example}

\begin{example}
\label{ex:negativeHopf}
One can also regard $L_{+}$ as the 2-component unlink, and $L_{-}$ as the negative Hopf link. By Example \ref{ex:L-space links},  the full homology of $-T(2, 2)$ is generated by $c', d'$ with the relations: 
$$
\cHFL(-T(2,2))=\frac{R\langle c', d'\rangle}{c'U_1=d'U_2, c'V_2=d'V_1}.
$$
By Corollary \ref{cor: negative definite} the cobordism maps $\psi_{k}: \cHFL(O_2)\rightarrow \cHFL(-T(2, 2))$ are nonzero and determined by the grading shift. Therefore,
$$\psi_{0}(1)=c', \quad \psi_{-1}(1)=d'.$$ 
In general, by the grading reasons we have 
$$\psi_k=\begin{cases}
V_1^{k}U_2^k\UU^{\frac{k(k-1)}{2}} \psi_0& \text{if}\ k\ge 0\\
V_2^{-1-k}U_1^{-1-k}\UU^{\frac{(k+1)(k+2)}{2}} \psi_{-1}  & \text{if}\ k\le -1.
\end{cases}$$
\end{example}

\begin{proposition}
\label{prop:psi}
For any link $L=L_+$, the maps $\psi_k$ have the following properties:
\begin{itemize}

\item[(a)] For $k\ge 0$, we have $\psi_k=(V_iU_j)^k\UU^{\frac{k(k-1)}{2}}\psi_0$.

\item[(b)] For $k\le -1$, we have $\psi_k=(V_jU_i)^{-1-k}\UU^{\frac{(k+1)(k+2)}{2}}\psi_{-1}.$

\item[(c)] We have $V_j\psi_0=V_i\psi_{-1}$ and $U_i\psi_0=U_j\psi_{-1}$.
\end{itemize}
\end{proposition}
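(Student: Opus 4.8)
The plan is to reduce all three statements to the grading computation in Proposition \ref{prop:positive} together with functoriality of Zemke's cobordism maps, following the strategy Ozsváth and Szabó use for $\HFK^-$ in \cite{OS4}. The geometric input is that two crossing-change surgeries differing only in which $\Spin^c$ structure is chosen on the $2$-handle cobordism are related by composing with blow-up/blow-down and $1$-handle/$3$-handle maps, whose effect on link Floer homology is multiplication by an appropriate monomial in the $U_i, V_i$ (and hence $\UU$) determined by how the surface $\Sigma$ meets the new sphere.

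First I would set up, for each $k$, the decorated cobordism $(W,\bF)$ of Proposition \ref{prop:positive} and observe that $W$ with the $\Spin^c$-structures $\s_k$ for varying $k$ can all be built from a single $2$-handle attachment, so that $\psi_k$ and $\psi_{k'}$ differ by a composition with a cobordism map supported on a neck where $\Sigma$ is a product annulus punctured by the belt sphere. Concretely, passing from $\s_k$ to $\s_{k+1}$ changes $\langle c_1(\s),[S^2]\rangle$ by $2$, and the corresponding difference cobordism map is, up to a unit, multiplication by $V_i U_j$ (when $k\ge 0$) because the component $\Sigma_i$ meets the relevant sphere positively on the $\z_i$-side and $\Sigma_j$ on the $\w_j$-side, with the opposite roles for $k\le -1$; the powers of $\UU$ are forced by comparing homological gradings. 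This gives (a) and (b) once I check that the resulting map agrees with $\psi_k$ on the nose and not just up to homotopy, which follows from the fact that on each Alexander-graded piece $\cHFL(L,\k)$ the module is a free $\F[\UU]$-module (for L-space links) or, in general, by tracking the actual chain-level cobordism maps and their $R$-equivariance.

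For part (c), the two identities $V_j\psi_0 = V_i\psi_{-1}$ and $U_i\psi_0 = U_j\psi_{-1}$ should come from a ``commutation'' relation between the two ways of sliding basepoints or, more directly, from the observation that $V_j\psi_0$ and $V_i\psi_{-1}$ are both cobordism maps for the \emph{same} underlying smooth cobordism with the \emph{same} $\Spin^c$ structure but with the dividing arcs isotoped across a basepoint: moving the dividing arc past $z_j$ on $\Sigma_j$ multiplies by $V_j$, and one checks the resulting decorated surface is isotopic to the one obtained by moving past $z_i$ on $\Sigma_i$ in the $\psi_{-1}$ picture. Equivalently, one can verify (c) by comparing Alexander and Maslov degrees of both sides and invoking an injectivity/rigidity statement — this is exactly how it works in Example \ref{ex:Hopf-psi}, and in general it reduces to the $\HFK^-$ relation of \cite{OS4} after specializing $V_i=1$, then upgrading back using the grading constraints.

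The main obstacle I expect is the ``on the nose'' versus ``up to homotopy'' issue: Zemke's maps are only well-defined up to filtered chain homotopy, so strictly speaking $\psi_k$ is a homotopy class, and the clean monomial identities in (a)--(c) require either choosing representatives carefully or arguing that the target modules are rigid enough (e.g. the relevant $\mathrm{Hom}$ spaces have no room for a homotopy to change the answer). I would handle this by working Alexander-grading by Alexander-grading, where each $\cHFL(L,\k)$ is an $\F[\UU]$-module and the grading data from Proposition \ref{prop:positive} pins down the map up to a unit; the units are then fixed by a single normalization (e.g. the composition formulas with $\phi_k$, or the Hopf link computation). A secondary subtlety is checking the signs of the intersection numbers $\lk(K,L_i)$ and $\lk(K,L_j)$ so that the monomials $V_iU_j$ versus $V_jU_i$ come out in the stated order — but this is the same bookkeeping already done in the proof of Proposition \ref{prop:positive}.
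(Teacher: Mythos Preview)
Your proposal has a genuine gap: the arguments you sketch only establish the identities for L-space links, not for arbitrary $L_+$. Grading considerations (Proposition \ref{prop:positive}) tell you the degree of each $\psi_k$, but for a general link $\cHFL(L,\k)$ has $\UU$-torsion and there can be many $\F[\UU]$-module maps of a given degree, so ``comparing Alexander and Maslov degrees and invoking rigidity'' does not pin down the map. Your fallback---``tracking the actual chain-level cobordism maps and their $R$-equivariance''---is not an argument either: $R$-equivariance is automatic and does not distinguish $\psi_k$ from, say, $\psi_k + (\text{torsion})$. Similarly, specializing to $\HFK^-$ and ``upgrading back'' loses exactly the information you need. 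The blow-up/difference-of-$\Spin^c$ idea for (a) and (b) is plausible in spirit, but you have not explained what cobordism realizes the passage from $\s_k$ to $\s_{k+1}$ on the nose, nor why its map is the specific monomial rather than that monomial plus torsion.

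The paper sidesteps all of this with a localization trick you are missing. One first checks the proposition in the single local model $L_+=O_2$, $L_-=-T(2,2)$ (Example \ref{ex:negativeHopf}), where both links are L-space and Corollary \ref{cor: negative definite} does determine everything by degree. Then, for arbitrary $L_+$, one quasi-stabilizes the components $L_i,L_j$, decomposes the stabilization cobordism as two births $\mathcal{B}$ (creating a split $O_2$) followed by band attachments $\mathcal{C}_b$, and isotopes the $(-1)$-framed surgery unknot off $L_+$ so that it encircles only the newborn $O_2$. Commuting the $2$-handle past the bands yields a factorization
\[
F_{\widetilde{\mathcal{C}},\s_k}=F_{\mathcal{C}_b}\circ\bigl(\mathrm{Id}\otimes\psi_k^{O}\bigr)\circ F_{\mathcal{B}},
\]
where $\psi_k^{O}$ is the crossing-change map for the local $O_2\to -T(2,2)$. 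Since (a)--(c) hold for $\psi_k^{O}$, they hold after pre- and post-composing with the fixed maps $F_{\mathcal{B}},F_{\mathcal{C}_b}$, and hence for the general $\psi_k$. This is the missing idea: reduce to a local model where rigidity is available, rather than trying to argue rigidity for $L_+$ itself.
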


\begin{proof} 
 The Proposition holds for the special case that $L_+=O_2$ and $L_-=-T(2,2)$ by Example \ref{ex:negativeHopf}. We use the functoriality of link Floer homology and the properties of cobordism maps to show that the general case follows from this special case.

We stabilize $L_{\pm}$ by adding two extra pairs of basepoints $w_{i,\pm}',z_{i,\pm}'$ and $w_{j,\pm}',z_{j,\pm}'$ to $L_{i,\pm}$ and $L_{j,\pm}$, respectively.  Denote the stabilized links by $L'_{\pm}$.  Moreover, we extend the coloring $\sigma$ on $L_{\pm}$ to a coloring $\sigma'$ on $L'_{\pm}$ so that its codomain is ${P}'=P\sqcup \{p'_i,p'_{j}\}$ and 
\[\sigma'(w_{i,\pm}')=p'_i,\quad \sigma'(w'_{j,\pm})=p'_j,\quad \sigma'(z'_{i,\pm})=\sigma(z_i)\quad \text{and}\quad \sigma'(z'_{j,\pm})=\sigma(z_j).\] 
Note that $\sigma'$ restricts to a coloring of $L_{\pm}$ with codomain ${P}'$, and abusing the notation we denote it by $\sigma'$. The isomorphism $\mathcal{R}_{P}^-\cong\F[U_1,\cdots,U_n,V_1,\cdots,V_n]$ extends to an isomorphism 
\[\mathcal{R}_{P'}^-\cong\F[U_1,\cdots,U_n,V_1,\cdots,V_n,U_i',U_j']\]
by sending $X_{p'_i}$ and $X_{p'_j}$ to $U_i'$ and $U_j'$, respectively. Under this isomorphism
\[\cHFL(L_{\pm}^{\sigma'})\cong \cHFL(L_{\pm})\otimes_{\F}\F[U_i',U_j'].\]

By \cite[Section 6]{OS2} (or \cite[Proposition 5.3]{Zemke:basepoint}) we have 
\[\cHFL(L_{\pm}^{'\sigma'})\cong \cHFL(L_{\pm}^{\sigma'})/\langle U_i-U_i',U_j-U_j'\rangle\cong \cHFL(L_{\pm})\]
and under this isomorphism the induced map from $\cHFL(L_{\pm})$ to itself by the quasi-stabilization maps $S_{\pm}$ (see \cite[Section 4.1]{Zemke}) is identity. Note that $S_{\pm}$ corresponds to the quasi-stabilization cobordism $\mathcal{C}_{\pm}$ from $L_{\pm}$ to $L'_{\pm}$ obtained from the product cobordism by adding two dividing arcs on the $i$-th and $j$-th cylinders that split off disks containing $w_{i, \pm}'$ and $w_{j, \pm}'$, as in Figure \ref{fig:quaistab}.

\begin{figure}[ht!]
 \begin{tikzpicture}
    \node[anchor=south west,inner sep=0] at (0,0) {\includegraphics[width=2.0in]{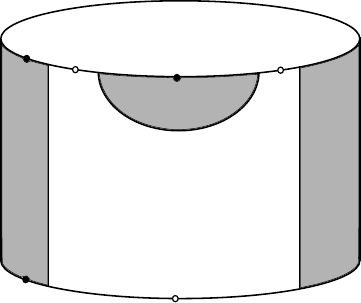}};
    \node[label=right:$w_{i,\pm}$] at (-0.1,0){};
    \node[label=right:$z_{i,\pm}$] at (2,-0.2){};
    \node[label=right:$w_{i,\pm}$] at (-0.1,3.7){};
    \node[label=right:$z_{i,\pm}$] at (0.7,3.5){};
    \node[label=right:$w'_{i,\pm}$] at (2,3.5){};
    \node[label=right:$z'_{i,\pm}$] at (3.3,3.6){};

\end{tikzpicture}
\caption{The decoration on the component $\Sigma_i$ of the quasi-stabilization cobordism $\mathcal{C}_{\pm}$}\label{fig:quaistab}
\end{figure} 

Next, we construct a decorated cobordism $\mathcal{C}'=(W,\bF')$ from $L'_+$ to $L'_-$ by modifying the decoration on $\mathcal{C}$, as follows. Add two parallel, vertical dividing arcs to $\Sigma_i$ (resp. $\Sigma_j$) such that they divide $\Sigma_i$ (resp. $\Sigma_j$) into four rectangles. Moreover, each one of them contains exactly one of  the pairs $w_{i,\pm}$, $z_{i,\pm}$, $w'_{i,\pm}$ and $z'_{i,\pm}$ (resp.$w_{j,\pm}$, $z_{j,\pm}$, $w'_{j,\pm}$ and $z'_{j,\pm}$) on its boundary. Clearly, $\sigma'$ extends to a coloring on $\bF'$. Define \[\widetilde{\mathcal{C}}=\mathcal{C}'\circ \mathcal{C}_+=\mathcal{C}_-\circ\mathcal{C}.\]  Under the aforementioned isomorphism $\cHFL(L_-^{'\sigma'})\cong \cHFL(L_-)$, the homomorphism induced by the cobordism map $F_{\widetilde{\mathcal{C}},\s_k}$ from $\cHFL(L_+)$ to $\cHFL(L_-)$ is equal to $\psi_k$.

 On the other hand, $\mathcal{C}_+$ can be decomposed as a cobordism $\mathcal{B}$ containing two births from $L_+$ to $L_+\coprod O_2$ followed by two band attachments $\mathcal{C}_{b}$ from $L_+\coprod O_2$ to $L'_+$ as in Figure \ref{fig:bands}.

\begin{figure}[ht!]
 \begin{tikzpicture}
    \node[anchor=south west,inner sep=0] at (0,0) {\includegraphics[width=4.0in]{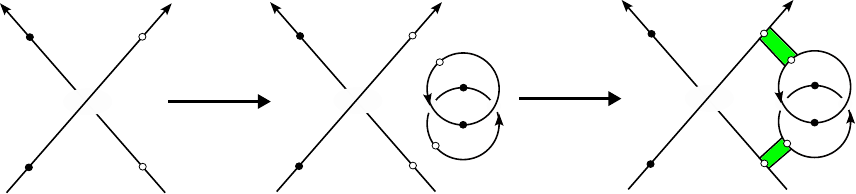}};
    \node[label=right:$\mathcal{B}$] at (2.2,1.4){};
        \node[label=right:$\mathcal{C}_{b}$] at (6.3,1.4){};
\end{tikzpicture}
\caption{Decomposition of the cobordism $\mathcal{C}_+$ as $\mathcal{B}$ followed by $\mathcal{C}_b$.}\label{fig:bands}
\end{figure} 

\begin{figure}[ht!]
\includegraphics{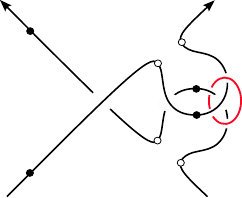}
\caption{}\label{fig:isotopy}
\end{figure} 
 
 \begin{figure}[ht!]
 \begin{tikzpicture}
    \node[anchor=south west,inner sep=0] at (0,0) {\includegraphics[width=6.0in]{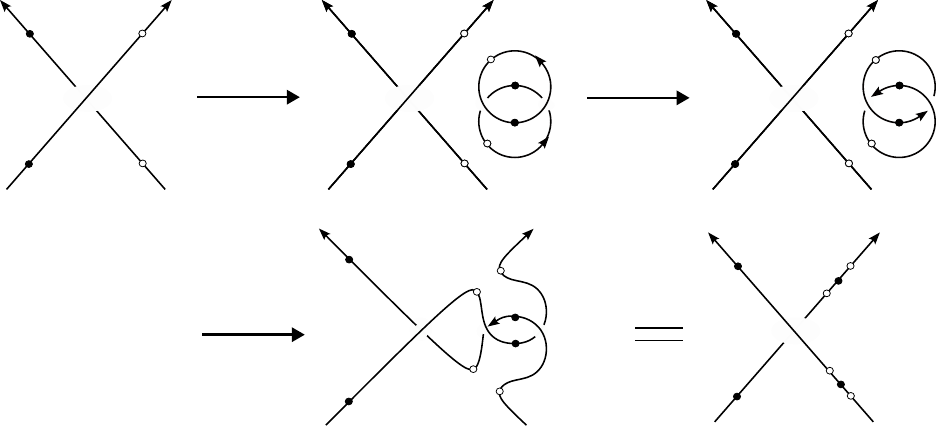}};
    \node[label=right:$\mathcal{B}$] at (3.7,5.7){};
        \node[label=right:$\mathcal{C}_{O}$] at (10,5.7){};
                \node[label=right:$\mathcal{C}_{b}$] at (3.7,1.8){};
\end{tikzpicture}
\caption{}\label{fig:psi-local}
\end{figure}

  On the other hand, one may isotope the attaching circle of the $2$-handle in $\widetilde{\mathcal{C}}$ as in Figure \ref{fig:isotopy}. Then, changing the order of $2$-handle attachment and band attachments as in Figure \ref{fig:psi-local}, we get a decomposition 
 \[\widetilde{\mathcal{C}}=\mathcal{C}_b\circ\mathcal{C}_O\circ\mathcal{B}\] 
 where $\mathcal{C}_O$ denotes the change of crossing cobordism map from $L_+\coprod O_2$ to $L_+\coprod -T(2, 2)$ and $\mathcal{C}_b$ is the band attachment cobordism from $L_+\coprod -T(2, 2)$ to $L'_-$. 
 
By \cite[Theorem B]{Zemke} for any $\Spin$ structure $\s_k$ we have 
\[F_{\widetilde{\mathcal{C}},\s_k}=F_{\mathcal{C}_b}\circ F_{\mathcal{C}_O,\s_k}\circ F_{\mathcal{B}}.\]
Moreover, $\cHFL(L\coprod L')=\cHFL(L)\otimes_{\F}\cHFL(L')$ for any multipointed colored links $L$ and $L'$, and under corresponding identifications 
\[F_{\mathcal{C}_O,s_k}=\mathrm{Id}\otimes\psi_k^O\]
where  $\psi_k^O$ denotes the map $\psi_k$ for the unlink $O_2$. So, the claim holds, because equalities hold for the change of crossing maps for the unlink $O_2$  from Example \ref{ex:negativeHopf}.

\end{proof}

As in the bottom of Figure \ref{blowup}, $(-1)$-surgery on the specified red unknot can change a negative crossing to a positive crossing. Hence, we can also consider the cobordism from $(S^3, L_{-})$ to $(S^{3}, L_+)$ induced by attaching a $2$-handle along this unknot. The embedded surface is a disjoint union of $n$ annuli, and each one of them is equipped with two parallel and vertical dividing arcs. 
\begin{proposition}
\label{prop:negative}
Let $(W, \bF): (S^{3}, L_{-})\rightarrow (S^{3}, L_{+})$ be the decorated link cobordism induced by attaching a 2-handle to the $(-1)$-framed unknot in Figure \ref{blowup} which changes a negative crossing to a positive crossing. Suppose that the link components $L_{i,-}, L_{j,-}$ are passing through the $(-1)$-framed unknot with $i<j$. Let $\s_k$ be the $\Spin$ structure on $W$ satisfying that 
$$\langle c_{1}(\s_k), [S^{2}]\rangle =2k+1$$
where $[S^{2}]$ is the generator of $H_{2}(W)$ corresponding to the attached 2-handle.  Let $\phi_k=F_{W,\bF,\s_k}$ be the corresponding map in link Floer homology.
Then
$$\gr_{\w}(\phi_k)=-k^{2}-k, \quad  \gr_{\z}(\phi_k)=-k^{2}+3k-2$$
and 
$$A_{i}(\phi_k)=A_{j}(\phi_k)=-k+1/2.$$
 Moreover, $ A_{l}(\phi_k)=0$ for all $l\neq i, j$. In particular, $A(\phi_0)=\dfrac{1}{2}(\ee_i+\ee_j)$, $A(\phi_1)=-\dfrac{1}{2}(\ee_i+\ee_j)$ and $\gr_\w(\phi_0)=0$, $\gr_{\w}(\phi_1)=-2$.
\end{proposition}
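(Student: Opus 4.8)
Here is a plan for proving Proposition~\ref{prop:negative}.

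The plan is to repeat, with one essential modification, the computation in the proof of Proposition~\ref{prop:positive}. Here $W$ is obtained from $S^3\times[0,1]$ by attaching a single $2$-handle along a $(-1)$-framed unknot $K$, so the first step is to record the topology of $W$: $\chi(W)=1$, $\sigma(W)=-1$, $H_2(W)\cong\Z$ generated by a sphere $[S^2]$ with $[S^2]\cdot[S^2]=-1$, and $c_1(\s_k)^2=-(2k+1)^2$ for the $\Spin$ structure $\s_k$ with $\langle c_1(\s_k),[S^2]\rangle=2k+1$. Theorem~\ref{thm:grading}(1) then gives
\[
\gr_\w(\phi_k)=\frac{c_1(\s_k)^2-2\chi(W)-3\sigma(W)}{4}=\frac{-(2k+1)^2+1}{4}=-k^2-k,
\]
exactly as for $\psi_k$. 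Everything so far is identical to the positive case.

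The modification, and the step where the real work is, concerns the relative homology class of the decorated surface $\Sigma=\coprod_l\Sigma_l$. In the positive-to-negative change the two strands pass through $K$ with opposite induced orientations, so $\lk(K,L_i)=-\lk(K,L_j)$ and the capped surface $\widehat\Sigma=\sum_l[\widehat\Sigma_l]$ is null-homologous in $W$; in the present negative-to-positive change the two strands pass through $K$ coherently, so $\lk(K,L_i)=\lk(K,L_j)$ and the two contributions reinforce rather than cancel. I would first read off from the bottom of Figure~\ref{blowup} the common value of these linking numbers (up to orientation), determine $[\widehat\Sigma_i]=[\widehat\Sigma_j]$ as the corresponding multiple of $[S^2]$ with $[\widehat\Sigma_l]=0$ for $l\neq i,j$, and hence $[\widehat\Sigma]\neq0$; the sign is fixed by insisting that the two Alexander shifts come out equal, as claimed, rather than opposite. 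Then I would apply Theorem~\ref{thm:grading}(3): for $l\neq i,j$ it gives $A_l(\phi_k)=0$, and for $l\in\{i,j\}$ it gives
\[
A_l(\phi_k)=\frac{\langle c_1(\s_k),[\widehat\Sigma_l]\rangle-[\widehat\Sigma]\cdot[\widehat\Sigma_l]}{2},
\]
which should collapse to $-k+\tfrac12$; here, unlike in the positive case, the correction term $[\widehat\Sigma]\cdot[\widehat\Sigma_l]$ is nonzero and is responsible for the precise form of the constant. Finally, for $\gr_\z$ I would feed the same class into Theorem~\ref{thm:grading}(2): because $\mathit{PD}[\Sigma]\neq0$ now, $\gr_\z(\phi_k)$ is no longer equal to $\gr_\w(\phi_k)$, and expanding
\[
(c_1(\s_k)-\mathit{PD}[\Sigma])^2=c_1(\s_k)^2-2\langle c_1(\s_k),[\widehat\Sigma]\rangle+[\widehat\Sigma]\cdot[\widehat\Sigma]
\]
and substituting into $\gr_\z(\phi_k)=\tfrac14\,((c_1(\s_k)-\mathit{PD}[\Sigma])^2-2\chi(W)-3\sigma(W))$ should yield $-k^2+3k-2$. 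The assertions $A(\phi_0)=\tfrac12(\ee_i+\ee_j)$, $A(\phi_1)=-\tfrac12(\ee_i+\ee_j)$, $\gr_\w(\phi_0)=0$, $\gr_\w(\phi_1)=-2$ are then just the specializations $k=0$ and $k=1$.

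The main obstacle is this middle step: fixing the orientation of $\Sigma$, hence the sign of $[\widehat\Sigma]\in H_2(W)$, and then carefully expanding the twisted square $(c_1(\s_k)-\mathit{PD}[\Sigma])^2$ using the rank-one, negative-definite intersection form of $W$ --- a sign error here perturbs $\gr_\z(\phi_k)$ by a term linear in $k$ and is easy to commit. The remaining ingredients ($\chi(W)$, $\sigma(W)$, $c_1(\s_k)^2$, and the mechanical substitution into Theorem~\ref{thm:grading}) are exactly as in Proposition~\ref{prop:positive}. As consistency checks I would verify that the resulting degree shifts obey the universal relation $A_1+\dots+A_n=\tfrac12(\gr_\w-\gr_\z)$, and that they reproduce, via Corollary~\ref{cor: negative definite}, the maps $\phi_k\colon\cHFL(-T(2,2))\to\cHFL(O_2)$ on the two-generator model of $\cHFL(-T(2,2))$ recorded in Example~\ref{ex:L-space links}.
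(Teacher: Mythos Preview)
Your proposal is correct and follows essentially the same route as the paper: compute $\gr_\w$ exactly as in Proposition~\ref{prop:positive}, then compute the Alexander shifts from Theorem~\ref{thm:grading}(3), noting that now $[\widehat\Sigma]\neq 0$ because the two strands link $K$ coherently. The only difference is in how you obtain $\gr_\z$: you propose to compute it directly from Theorem~\ref{thm:grading}(2) by expanding $(c_1(\s_k)-\mathit{PD}[\Sigma])^2$, using the relation $A_1+\cdots+A_n=\tfrac12(\gr_\w-\gr_\z)$ only as a consistency check, whereas the paper skips the direct computation entirely and simply reads off $\gr_\z(\phi_k)=\gr_\w(\phi_k)-2(A_i(\phi_k)+A_j(\phi_k))=-k^2-k-2(-2k+1)=-k^2+3k-2$. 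The paper's shortcut is cleaner and avoids the sign-tracking you flag as the main obstacle; your direct route works too but is more error-prone for exactly the reason you identify.
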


\begin{proof}
The proof is very similar to the one of Proposition \ref{prop:positive}. 
By the same computation, we get $\gr_{\w}(\phi_k)=-k^{2}-k$. 
For the Alexander gradings, note that 
$$A_i(\phi_k)=\dfrac{\langle c_{1}(\s_{k}), [\widehat{\Sigma}_i]\rangle-[\widehat{\Sigma}]\cdot [\widehat{\Sigma}_{i}]}{2}=\dfrac{-2k-1+2}{2}=-k+1/2.$$
The same computation works for $A_{j}(\phi_k)=-k+1/2$. However, $[\widehat{\Sigma}_l]=0$ for all $l\neq i, j$. Hence, $A_l(\phi_k)=0$ for all such $l$. 
Note that  $\gr_{\w}(\phi_k)-\gr_{\z}(\phi_k)=2(A_1(\phi_k)+\cdots +A_n(\phi_k))=2(A_i(\phi_k)+A_j(\phi_k))=2(-2k+1)$, so $\gr_{\z}(\phi_k)=-k^{2}+3k-2$.

\end{proof}

\begin{example}
\label{ex:Hopf-phi}
By Corollary \ref{cor: negative definite} cobordism maps $\phi_k:\cHFL(O_2)\to\cHFL(T(2,2))$ are non-zero and determined by the grading shift formulas from Proposition \ref{prop:negative}. We compute
\[\phi_0(1)=a\quad\quad\text{and}\quad\quad \phi_1(1)=b.\]
 In general, we have
$$
\phi_k(1)=\begin{cases}
(U_1U_2)^{k-1}\UU^{\frac{(k-1)(k-2)}{2}}b & \text{if}\ k\ge 1\\
(V_1V_2)^{-k}\UU^{\frac{k(k+1)}{2}}a & \text{if}\ k\le 0.
\end{cases}
$$

\end{example} 

\begin{example}
\label{negativeHopf2}
Let $L_{-}=-T(2, 2)$ and $L_{+}=O_2$. Then by Corollary \ref{cor: negative definite} the cobordism maps $\phi_{k}: \cHFL(-T(2, 2))\rightarrow \cHFL(O_2)$ are nonzero and determined by the grading shifts. In particular, 
$$\phi_{0}(c')=V_1, \quad \phi_{1}(c')=U_2, \quad \phi_{0}(d')=V_2, \quad \phi_{1}(d')=U_1.$$
In general, we have $$
\phi_k(c')=\begin{cases}
U_1^{k-1} U_2^{k}\UU^{\frac{(k-1)(k-2)}{2}} & \text{if}\ k\ge 1\\
V_1^{-k+1}V_2^{-k}\UU^{\frac{k(k+1)}{2}}   & \text{if}\ k\le 0,
\end{cases}
\phi_k(d')=\begin{cases}
U_1^{k}U_2^{k-1}\UU^{\frac{(k-1)(k-2)}{2}} & \text{if}\ k\ge 1\\
V_1^{-k}V_2^{-k+1}\UU^{\frac{k(k+1)}{2}}   & \text{if}\ k\le 0.
\end{cases}
$$

\end{example}





\begin{proposition}
\label{prop:phi}
The maps $\phi_k$ satisfy the following properties:
\begin{itemize}
\item[(a)] For $k\ge 1$, we have $\phi_k=(U_iU_j)^{k-1}\UU^{\frac{(k-1)(k-2)}{2}}\phi_1$.
\item[(b)] For $k\le 0$, we have $\phi_k=(V_iV_j)^{-k}\UU^{\frac{k(k+1)}{2}}\phi_{0}.$
\item[(c)] We have $U_i\phi_0=V_j\phi_1$ and $U_j\phi_0=V_i\phi_1$.
\end{itemize}
\end{proposition}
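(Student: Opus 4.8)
The plan is to mirror the proof of Proposition \ref{prop:psi}: first establish the three identities in a universal local model by an explicit computation, and then transfer them to an arbitrary link by functoriality of the cobordism maps.

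For the local model I would take $L_-=O_2$, drawn with one negative and one positive local crossing (so it is indeed the unlink), and $L_+=T(2,2)$, so that $(-1)$-surgery on the framed unknot at the bottom of Figure \ref{blowup} performs the crossing change on the negative crossing. For this model the maps $\phi_k^O\colon\cHFL(O_2)\to\cHFL(T(2,2))$ were already determined in Example \ref{ex:Hopf-phi}: they send the generator $1$ to $a$ for $k=0$, to $b$ for $k=1$, and in general
$$
\phi_k^O(1)=\begin{cases} (U_1U_2)^{k-1}\UU^{\frac{(k-1)(k-2)}{2}}\,b, & k\ge 1,\\ (V_1V_2)^{-k}\UU^{\frac{k(k+1)}{2}}\,a, & k\le 0, \end{cases}
$$
while $\cHFL(T(2,2))=R\langle a,b\rangle/(U_1a=V_2b,\ U_2a=V_1b)$. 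Since $\cHFL(O_2)=R_{UV}\cdot 1$ is cyclic, the displayed formulas give (a) and (b) for $\phi_k^O$ at once, and (c) for $\phi_k^O$ reads $U_1a=V_2b$ and $U_2a=V_1b$, i.e. exactly the two defining relations. (The ``dual'' model $L_-=-T(2,2)$, $L_+=O_2$ of Example \ref{negativeHopf2} would serve equally well.)

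To pass to a general crossing between the components $L_{i,-}$ and $L_{j,-}$ of an arbitrary link I would repeat the stabilization argument of Proposition \ref{prop:psi} verbatim, with the roles of $L_+$ and $L_-$ and the sign of the crossing interchanged: stabilize $L_\pm$ by two extra pairs of basepoints on the $i$-th and $j$-th components, extend the coloring so that $\mathcal{R}_{P'}^-\cong R[U_i',U_j']$ and the quasi-stabilization maps act by the identity on $\cHFL(L_\pm)$, build the decorated cobordism $\widetilde{\mathcal{C}}$ from $L'_-$ to $L'_+$, isotope the $2$-handle attaching circle and reorder the $2$-handle and band attachments as in Figures \ref{fig:isotopy} and \ref{fig:psi-local} to obtain a decomposition $\widetilde{\mathcal{C}}=\mathcal{C}_b\circ\mathcal{C}_O\circ\mathcal{B}$, where $\mathcal{B}$ is a pair of births producing $L_-\coprod O_2$, $\mathcal{C}_O\colon L_-\coprod O_2\to L_-\coprod T(2,2)$ is the local crossing change above, and $\mathcal{C}_b$ is the band cobordism onto $L'_+$. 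Then \cite[Theorem B]{Zemke} yields $F_{\widetilde{\mathcal{C}},\s_k}=F_{\mathcal{C}_b}\circ F_{\mathcal{C}_O,\s_k}\circ F_{\mathcal{B}}$ with $F_{\mathcal{C}_O,\s_k}=\mathrm{Id}\otimes\phi_k^O$. Since $F_{\mathcal{B}}$, $F_{\mathcal{C}_b}$ and all the identifications are $R$-equivariant and independent of $k$, applying $F_{\mathcal{C}_b}\circ(-)\circ F_{\mathcal{B}}$ to the identities for $\phi_k^O$, under the identification of the local variables $U_1,U_2,V_1,V_2$ with $U_i,U_j,V_i,V_j$, produces (a), (b) and (c) for $\phi_k$.

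I do not expect a genuine obstacle here: the geometric input (the births, the isotopy of the attaching circle, commuting the $2$-handle past the bands) is the same as in the $\psi$ case and is insensitive to the sign of the crossing. The only point requiring care is the bookkeeping of the decorations and the coloring, so that under $\cHFL(L_{\pm}^{'\sigma'})\cong\cHFL(L_\pm)$ the two basepoints of the local $T(2,2)$ factor get matched with $w_i,z_i$ and $w_j,z_j$ with no $U\leftrightarrow V$ swap; this is pinned down by Convention \ref{conv:cob} together with the already-known Alexander and Maslov degrees of the $\phi_k$ from Proposition \ref{prop:negative}, and the symmetry of statement (c) under $i\leftrightarrow j$ makes even the labeling of which index plays the role of ``$1$'' in the local model immaterial.
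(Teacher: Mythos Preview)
Your proposal is correct and follows essentially the same route as the paper: verify (a)--(c) in the local model $L_-=O_2$, $L_+=T(2,2)$ via Example \ref{ex:Hopf-phi} (with (c) being exactly the defining relations $U_1a=V_2b$, $U_2a=V_1b$), then reduce the general case to the local one by the stabilization/band-reordering argument that factors $F_{\widetilde{\mathcal C},\s_k}$ through $\mathrm{Id}\otimes\phi_k^O$. One small slip: your $\widetilde{\mathcal C}$ should run from $L_-$ (not $L'_-$) to $L'_+$, since $\mathcal B$ starts at $L_-$; the paper also draws separate Figures \ref{fig:bands-phi} and \ref{fig:phi-local} for this case rather than reusing the $\psi$ figures, but the geometric content is, as you say, identical.
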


\begin{proof}
The proof is very similar to that of Proposition \ref{prop:psi}. By Example \ref{ex:Hopf-phi}, the claim holds for $L_-=O_2$ and $L_+=T(2,2)$. It remains to show that the general case follows from this special case. Let $\mathcal{C}=(W, \mathcal{F})$ denote the decorated crossing change cobordism from $L_{-}$ to $L_+$. Following the notation in the proof of Proposition \ref{prop:psi}, let $L'_{\pm}$ denote the links obtained from $L_{\pm}$ by adding two extra pairs of base points on $L_{i,\pm}$ and $L_{j,\pm}$, and $\mathcal{C}'$ be the decorated cobordism from $L'_{-}$ to $L'_{+}$ obtained from $\mathcal{C}$ by adding two pairs of parallel and vertical dividing arcs. Further, $\CC_{\pm}$ denotes the decorated quasi-stabilization cobordisms from $L_{\pm}$ to $L'_{\pm}$.  As in the proof of Proposition \ref{prop:psi}, for the decorated cobordism $\widetilde{\CC}=\CC'\circ \CC_{-}$, composing $F_{\widetilde{\CC}, \mathfrak{s}_{k}}$ with a specific isomorphism $\cHFL(L_+^{'\sigma'})\cong \cHFL(L_+)$ is equal to identity.



We now still decompose $\CC_{-}$ as a cobordism $\mathcal{B}$ from $L_{-}$ to $L_-\coprod O_2$ followed by two band attachments $\CC_b$ from $L_-\coprod O_2$ to $L'_-$, as in Figure \ref{fig:bands-phi}.

\begin{figure}[ht!]
 \begin{tikzpicture}
    \node[anchor=south west,inner sep=0] at (0,0) {\includegraphics[width=4.0in]{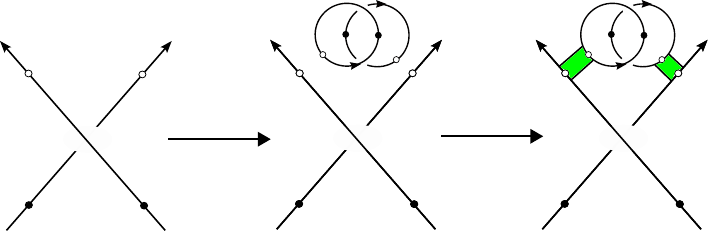}};
    \node[label=right:$\mathcal{B}$] at (2.7,1.6){};
        \node[label=right:$\mathcal{C}_{b}$] at (6.6,1.6){};
\end{tikzpicture}
\caption{Decomposition of the cobordism $\mathcal{C}_-$ as $\mathcal{B}$ followed by $\mathcal{C}_b$.}\label{fig:bands-phi}
\end{figure} 
Changing the order of the $2$-handle attachment in $\mathcal{C}'$ and the band attachments in $\mathcal{C}_b$, we get 
$$\widetilde{\CC}=\CC_b\circ \CC_O\circ \mathcal{B}$$
where $\CC_O$ denotes the cobordism from $L_{-}\coprod O_2$ to $L_{-}\coprod T(2, 2)$ and $\CC_b$ is the band attachment cobordism from $L_{-}\coprod T(2, 2)$ to $L'_{+}$, see Figure \ref{fig:phi-local}. Hence, for any Spin$^{c}$ structure $\mathfrak{s}_{k}$ we have 
$$F_{\widetilde{\CC},\s_k}=F_{\CC_{b}}\circ F_{\CC_{O}, \mathfrak{s}_{k}}\circ F_{\mathcal{B}}.$$
Here $F_{\CC_{O}, \mathfrak{s}_{k}}=\mathrm{Id}\otimes \phi_{k}^{O}$  where $\phi_{k}^{O}$ denotes the map $\phi_k$ for the unlink $O_2$. Hence the claim holds for general links. 

 \begin{figure}[ht!]
 \begin{tikzpicture}
    \node[anchor=south west,inner sep=0] at (0,0) {\includegraphics[width=6.0in]{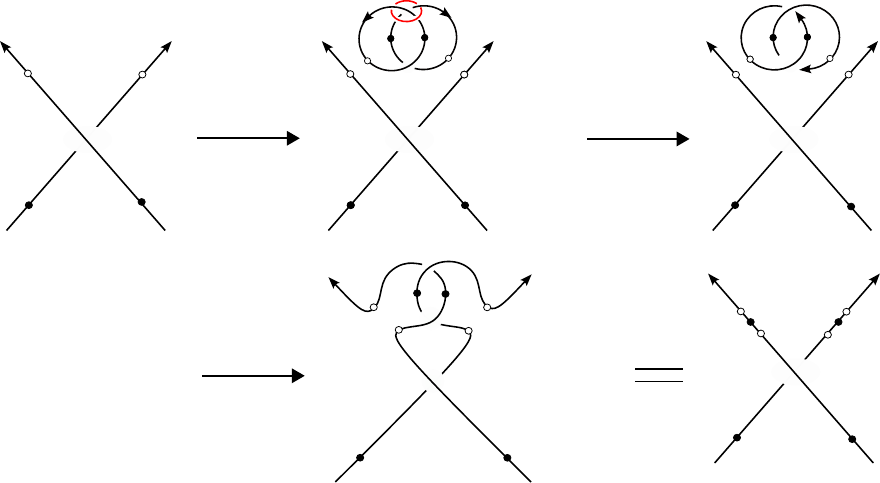}};
    \node[label=right:$\mathcal{B}$] at (3.8,6.2){};
        \node[label=right:$\mathcal{C}_{O}$] at (10.5,6.2){};
                \node[label=right:$\mathcal{C}_{b}$] at (3.8,2.1){};
\end{tikzpicture}
\caption{}\label{fig:phi-local}
\end{figure} 

\end{proof}

\begin{proposition}
\label{prop: psi phi}
The maps $\psi_k$ and $\phi_k$ in Proposition \ref{prop:positive} and Proposition \ref{prop:negative} compose as follows:
$$
\phi_0\psi_0=V_i,\ \phi_0\psi_{-1}=V_j, \ \phi_1\psi_0=U_j, \ \phi_1\psi_{-1}=U_i
$$
$$
\psi_0\phi_0=V_i,\ \psi_{-1}\phi_0=V_j,\ \psi_0\phi_1=U_j,\ \psi_{-1}\phi_1=U_i,
$$
The rest of compositions are determined by these.
\end{proposition}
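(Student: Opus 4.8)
The plan is to reduce the claim, exactly as in the proofs of Propositions~\ref{prop:psi} and \ref{prop:phi}, to an explicit computation of the compositions for crossing changes on the two-component unlink, where $\psi_k$ and $\phi_k$ have already been written out in Examples~\ref{ex:Hopf-psi}, \ref{ex:negativeHopf}, \ref{ex:Hopf-phi} and \ref{negativeHopf2}. The first observation is that each composition in the statement is itself a single cobordism map. Write $\psi_k=F_{W_1,\bF_1,\s_k}$ for the decorated crossing-change cobordism $(W_1,\bF_1)\colon(S^3,L_+)\to(S^3,L_-)$ of Proposition~\ref{prop:positive} and $\phi_\ell=F_{W_2,\bF_2,\s_\ell}$ for $(W_2,\bF_2)\colon(S^3,L_-)\to(S^3,L_+)$ of Proposition~\ref{prop:negative}. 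Stacking these yields a decorated cobordism $(W,\bF)\colon(S^3,L_+)\to(S^3,L_+)$ whose surface $\Sigma=L_+\times[0,2]$ is a disjoint union of product annuli carrying the standard two-arc decoration of Convention~\ref{conv:cob}. Since $W_1$ and $W_2$ are glued along $S^3$, a $\Spin$ structure on $W$ is determined by its restrictions to $W_1$ and $W_2$; hence Zemke's composition law for cobordism maps involves no sum here, and $\phi_\ell\circ\psi_k=F_{W,\bF,\s}$ for the $\Spin$ structure with $\s|_{W_1}=\s_k$ and $\s|_{W_2}=\s_\ell$. Symmetrically, $\psi_k\circ\phi_\ell=F_{W',\bF',\s'}$ for a stacked cobordism $(S^3,L_-)\to(S^3,L_-)$.

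Next I would decompose this stacked cobordism exactly as in the proofs of Propositions~\ref{prop:psi} and \ref{prop:phi}. After stabilizing $L_\pm$ by two auxiliary pairs of basepoints on the $i$-th and $j$-th components and isotoping the two $(-1)$-framed attaching circles, one commutes both $2$-handle attachments past the births and band attachments (as in Figures~\ref{fig:psi-local} and \ref{fig:phi-local}) to write $(W,\bF)$ as: two births producing a split $O_2$, a crossing-change cobordism $O_2\to -T(2,2)$, a crossing-change cobordism $-T(2,2)\to O_2$, and two band attachments merging the result back into $L_+$. Under the identifications $\cHFL(L\sqcup L')\cong\cHFL(L)\otimes_{\F}\cHFL(L')$ and $\cHFL(L_{\pm}^{'\sigma'})\cong\cHFL(L_\pm)$ used there, the births and bands contribute the identity while the two middle pieces contribute $\mathrm{Id}_{\cHFL(L_+)}\otimes(\phi_\ell^{O}\circ\psi_k^{O})$, with $\psi_k^{O}\colon\cHFL(O_2)\to\cHFL(-T(2,2))$ and $\phi_\ell^{O}\colon\cHFL(-T(2,2))\to\cHFL(O_2)$ the unlink maps of Examples~\ref{ex:negativeHopf} and \ref{negativeHopf2}. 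The same argument applied to $(W',\bF')$ gives $\psi_k\circ\phi_\ell=\mathrm{Id}\otimes(\psi_k^{O}\circ\phi_\ell^{O})$, now with $\phi_\ell^{O}\colon\cHFL(O_2)\to\cHFL(T(2,2))$ from Example~\ref{ex:Hopf-phi} and $\psi_k^{O}\colon\cHFL(T(2,2))\to\cHFL(O_2)$ from Example~\ref{ex:Hopf-psi}.

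It then remains to compose the explicit formulas, with the local labelling $i=1<j=2$. Examples~\ref{ex:negativeHopf} and \ref{negativeHopf2} give $\psi_0^{O}(1)=c'$, $\psi_{-1}^{O}(1)=d'$ together with $\phi_0^{O}(c')=V_1$, $\phi_0^{O}(d')=V_2$, $\phi_1^{O}(c')=U_2$, $\phi_1^{O}(d')=U_1$, so that $\phi_0^{O}\psi_0^{O}=V_i$, $\phi_0^{O}\psi_{-1}^{O}=V_j$, $\phi_1^{O}\psi_0^{O}=U_j$ and $\phi_1^{O}\psi_{-1}^{O}=U_i$; similarly Examples~\ref{ex:Hopf-phi} and \ref{ex:Hopf-psi} give $\phi_0^{O}(1)=a$, $\phi_1^{O}(1)=b$ and $\psi_0^{O}(a)=V_1$, $\psi_{-1}^{O}(a)=V_2$, $\psi_0^{O}(b)=U_2$, $\psi_{-1}^{O}(b)=U_1$, whence $\psi_0^{O}\phi_0^{O}=V_i$, $\psi_{-1}^{O}\phi_0^{O}=V_j$, $\psi_0^{O}\phi_1^{O}=U_j$ and $\psi_{-1}^{O}\phi_1^{O}=U_i$. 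Tensoring with the identity yields the eight displayed identities. Finally, the remaining compositions are determined by these: by Proposition~\ref{prop:psi}(a),(b) each $\psi_k$ is a monomial in $U_i,V_i,\UU$ times $\psi_0$ or $\psi_{-1}$, and by Proposition~\ref{prop:phi}(a),(b) each $\phi_\ell$ is such a monomial times $\phi_0$ or $\phi_1$; since all cobordism maps are $R$-equivariant, $\phi_\ell\circ\psi_k$ (resp.\ $\psi_k\circ\phi_\ell$) equals the product of the corresponding monomials with one of the four base compositions just computed.

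The main obstacle is the decomposition in the second paragraph: one must check that the two $(-1)$-framed attaching circles of the stacked cobordism can be simultaneously isotoped and slid past the births and band attachments so that the crossing change and its inverse are confined to the auxiliary split $O_2$, and that the dividing arcs and colorings match across the gluing region $S^3\times\{1\}$. This is the same kind of hands-on cobordism bookkeeping already carried out in Propositions~\ref{prop:psi} and \ref{prop:phi}, now performed once for the stacked cobordism; the accompanying $\Spin$-structure count is immediate precisely because the two pieces are glued along $S^3$.
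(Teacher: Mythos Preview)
Your proposal is correct and follows essentially the same approach as the paper: reduce to the local case $L_\pm=O_2,\,\mp T(2,2)$ via the stabilize--birth--isotope--reorder argument of Propositions~\ref{prop:psi} and \ref{prop:phi}, then read off the eight base compositions from Examples~\ref{ex:Hopf-psi}, \ref{ex:negativeHopf}, \ref{ex:Hopf-phi}, \ref{negativeHopf2}. The paper carries out the first row in detail (with the simultaneous isotopy of both attaching circles recorded in Figure~\ref{fig:composition}) and declares the second row similar; you treat both rows explicitly, which is fine.
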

\begin{proof}
We prove the equalities in the first row, and the proof for the second row is similar.  It is straightforward from Examples \ref{ex:negativeHopf} and \ref{negativeHopf2} that the claim holds for $L_+=O_2$ and $L_-=-T(2, 2)$ the negative Hopf link, because
$$
\phi_0\psi_0(1)=V_1,\ \phi_0\psi_{-1}(1)=V_2,\ \phi_1\psi_0(1)=U_2,\ \phi_1\psi_1(1)=U_1.
$$

The strategy is similar to the proof of Propositions \ref{prop:psi} and \ref{prop:phi} and we fix the same notation. To distinguish the cobordisms defining $\psi_k$ and $\phi_k$ we use subscripts $1$ and $2$, i.e. let $\CC_1=(W_1,\mathcal{F}_1)$ be the decorated crossing change cobordism from $L_+$ to $L_-$ and $\CC_2=(W_2,\mathcal{F}_2)$ be the decorated crossing change cobordism from $L_-$ to $L_+$. As before, we denote the links obtained from $L_{\pm}$ by adding two extra pairs of base points on $L_{i,\pm}$ and $L_{j,\pm}$ by $L'_{\pm}$. Further, we denote the corresponding cobordism from $L_+'$ to $L_-'$ (resp. $L_-'$ to $L_+'$) obtained from $\mathcal{C}_{1}$ (resp. $\mathcal{C}_2$) by adding two pairs of parallel and vertical dividing arcs by $\mathcal{C}_1'$ (resp. $\mathcal{C}_2'$). Moreover, we consider quasi-stabilization cobordisms $\mathcal{C}_{\pm}$ from $L_{\pm}$ to $L'_{\pm}$. 

Let $\widetilde{\CC}=\mathcal{C}'_2\circ\mathcal{C}'_1\circ\mathcal{C}_+=\mathcal{C_+}\circ\mathcal{C}_2\circ\mathcal{C}_1$. For any $k_1,k_2\in\Z$, denote the $\Spin$ structure on $\widetilde{\CC}$ whose restriction to $\mathcal{C}_1$ and $\CC_2$ is equal to $\s_{k_1}$ and $\s_{k_2}$, respectively, by $\s_{k_1,k_2}$. Thus, under the aforementioned isomorphism $\cHFL(L_{+}^{'\sigma'})\cong\cHFL(L_+)$ the cobordism map $F_{\widetilde{C},\s_{k_1,k_2}}$ is equal to $\phi_{k_2}\circ\psi_{k_1}$.

On the other hand, as depicted in Figure \ref{fig:bands}, the cobordism $\CC_+$ can be decomposed as $\CC_+=\CC_b\circ\mathcal{B}$, where $\mathcal{B}$ is the decorated cobordism from $L_+$ to $L_+\coprod O_2$ corresponding to two births, and $\mathcal{C}_b$ is defined by attaching two bands. By Figure \ref{fig:composition}, after an isotopy on the attaching circles of the $2$-handles in $\CC_1$ and $\CC_2$, we may change their order with the band attachments in $\CC_b$  to get another decomposition \[\widetilde{\mathcal{C}}=\mathcal{C}_b\circ\mathcal{C}_2^{O}\circ\mathcal{C}_1^{O}\circ\mathcal{B}\]
Here,  $\mathcal{C}_1^{O}$ denotes the decorated cobordism from  $L_+\coprod O_2$ to $L_+\coprod -T(2, 2)$ corresponding to changing a positive crossing to a negative crossing in $O_2$. Similarly,  $\mathcal{C}_2^{O}$ is the cobordism from $L_+\coprod -T(2, 2)$ to $L_+\coprod O_2$ corresponding to changing a negative crossing to a positive crossing in $-T(2, 2)$. Thus,

\[F_{\widetilde{\mathcal{C}},\s_{k_1,k_2}}=F_{\mathcal{C}_b}\circ F_{\mathcal{C}_2^{O},\s_{k_2}}\circ F_{\mathcal{C}_1^{O},\s_{k_1}}\circ F_{\mathcal{B}},\]
and the claim follows from the special case of $L_+=O_2$ and $L_-=-T(2,2)$.

 \begin{figure}[ht!]
  \includegraphics[width=4in]{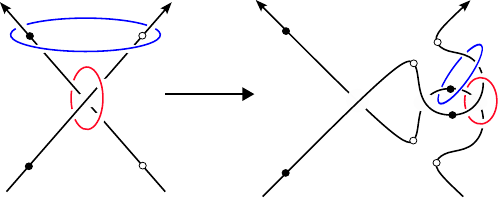}
\caption{Cobordisms $\CC_1$ and $\CC_2$ are define by attaching $2$-handles along the red and the blue unknots, respectively.  }\label{fig:composition}
\end{figure}

\end{proof}

\subsection{Full twists}
\label{subsec: gen crossing change}
In this section, we will apply similar computation as in Proposition \ref{prop:negative} to get the properties of the cobordism map induced by attaching a $2$-handle along a $(-1)$-framed unknot through $n$-strand braid to get a positive full twist. 

 \begin{figure}[ht!]
 \begin{tikzpicture}
    \node[anchor=south west,inner sep=0] at (0,0) {\includegraphics[width=3in]{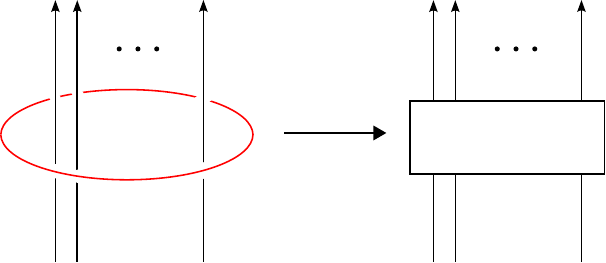}};
    \node[label=right:$n$] at (1.3,3){};
    \node[label=right:{\color{red} $-1$}] at (2.8,2){};
    \node[label=right:$+n$] at (6,1.6){};
   
\end{tikzpicture}
\caption{ $(-1)$-surgery on the red unknot will add a positive full twist.}\label{fig:fulltwist}
\end{figure} 

Using the similar computation as in Proposition \ref{prop:negative}, we have the following:
\begin{proposition}
\label{prop:fulltwist}
Let $(W, \bF): (S^{3}, L)\rightarrow (S^{3}, \bar{L})$ be the decorated link cobordism obtained by attaching a $2$-handle on the $(-1)$-framed unknot which adds a full twist to the $n$ parallel strands as in Figure \ref{fig:fulltwist}. Let $\s_k$ be the $\Spin$ structure on $W$ satisfying that 
$$\langle c_{1}(\s_k), [S^{2}]\rangle =2k+1$$
where $[S^{2}]$ is the generator of $H_{2}(W)$ corresponding to the attached 2-handle.  Let $\phi^n_k=F_{W,\bF,\s_k}$ be the corresponding map in link Floer homology.
Then
$$\gr_{\w}(\phi^n_k)=-k^{2}-k$$
and 
$$A_{i}(\phi^n_k)=-k+(n-1)/2,$$
 for $i=1, 2, \cdots, n$.  
\end{proposition}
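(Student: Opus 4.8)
The plan is to mimic the proof of Proposition \ref{prop:negative} almost verbatim, with the only genuine computational input being the topology of the 4-manifold $W$ and the homology classes of the capped-off annular components $\widehat{\Sigma}_i$. Concretely, I would first record the ambient topology: $W$ is obtained from $S^3\times[0,1]$ by attaching a single $2$-handle along a $(-1)$-framed unknot $K$ encircling the $n$ parallel strands, so $W$ has $\chi(W)=1$, $\sigma(W)=-1$, and $H_2(W)\cong\Z$ generated by the class $[S^2]$ of the core of the $2$-handle union a Seifert disk of $K$, with $[S^2]^2=-1$. The $\Spin$ structure $\s_k$ is pinned down by $\langle c_1(\s_k),[S^2]\rangle=2k+1$, hence $c_1(\s_k)=(2k+1)\,\mathrm{PD}[S^2]$ in $H^2(W)$ modulo torsion, which gives $c_1(\s_k)^2=-(2k+1)^2$.

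Next I would feed this into Theorem \ref{thm:grading}(1) to get the Maslov shift: $\gr_\w(\phi^n_k)=\tfrac{c_1(\s_k)^2-2\chi(W)-3\sigma(W)}{4}=\tfrac{-(2k+1)^2-2+3}{4}=\tfrac{-(2k+1)^2+1}{4}=-k^2-k$, exactly as in Proposition \ref{prop:positive}/\ref{prop:negative}. For the Alexander gradings I would use Theorem \ref{thm:grading}(3). Each $\Sigma_i$ is the product annulus $L_i\times[0,1]$, and capping off with Seifert surfaces yields $\widehat{\Sigma}_i$; one computes $[\widehat{\Sigma}_i]=0$ in $H_2(W)$ (it is a closed surface in the complement of the $2$-handle region after capping, or more precisely $[\widehat\Sigma]\cdot[S^2]=0$), so the self-intersection term $[\widehat\Sigma]\cdot[\widehat\Sigma_i]$ contributes in the same way as before, while $\langle c_1(\s_k),[\widehat\Sigma_i]\rangle=(2k+1)\langle \mathrm{PD}[S^2],[\widehat\Sigma_i]\rangle=-(2k+1)\,\lk(K,L_i)=-(2k+1)$, since each of the $n$ strands passes once through $K$. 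Plugging into $A_i(\phi^n_k)=\tfrac{\langle c_1(\s_k),[\widehat\Sigma_i]\rangle-[\widehat\Sigma]\cdot[\widehat\Sigma_i]}{2}$, and tracking that $[\widehat\Sigma]\cdot[\widehat\Sigma_i]$ now picks up contributions from all $n$ annuli (giving the $(n-1)$ rather than $1$), yields $A_i(\phi^n_k)=-k+(n-1)/2$ for each $i=1,\dots,n$.

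The only step requiring care — and the one I expect to be the main obstacle — is the precise bookkeeping of the self-intersection term $[\widehat\Sigma]\cdot[\widehat\Sigma_i]$, equivalently the framing data for the capped annuli inside $W$. In Proposition \ref{prop:negative} two strands through a $(-1)$-framed unknot produced the constant $+2$ in $A_i$; with $n$ strands one must verify that the linking contributions of $\widehat\Sigma_j$ with $\widehat\Sigma_i$ for $j\neq i$ combine with the framing of $K$ to give the total $(n-1)$. This is a local computation in the obvious genus-zero model (the $n$-strand braid closure region together with the $2$-handle), and I would carry it out by the same Kirby-calculus/linking-number argument used implicitly in Proposition \ref{prop:positive}, namely: after an isotopy making $K$ a small meridian disk, the framing change induced on the product annuli by the $(-1)$-handle attachment is dictated by the pairwise linking numbers of the $n$ strands with $K$ and with each other, all of which equal $1$ in this configuration; the statement $\gr_\w(\phi^n_k)-\gr_\z(\phi^n_k)=2\sum_i A_i(\phi^n_k)=2(n(-k)+\binom n2 \cdot\tfrac{?}{})$ from the relation $A_1+\dots+A_n=\tfrac12(\gr_\w-\gr_\z)$ then serves as a consistency check. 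Since the statement as given only asserts the $\gr_\w$ and $A_i$ formulas (not $\gr_\z$), once the two computations above are in place the proof is complete; everything else is identical to the already-established argument.
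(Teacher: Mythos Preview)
Your approach is exactly the paper's: apply Theorem \ref{thm:grading} to the $2$-handle cobordism $W$ with $\chi(W)=1$, $\sigma(W)=-1$, $c_1(\s_k)^2=-(2k+1)^2$, and read off the grading shifts. The $\gr_\w$ computation is correct and identical to the paper's.

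There is, however, a genuine error in your Alexander grading step. You assert $[\widehat\Sigma_i]=0$ in $H_2(W)$, but this is false here. That vanishing held in Proposition \ref{prop:positive} only because the two strands there pass through $K$ with \emph{opposite} orientations, so their contributions cancel in $[\widehat\Sigma]$. In the full-twist setup all $n$ strands pass through $K$ with the \emph{same} orientation, so $\lk(K,L_i)=1$ for every $i$ and $[\widehat\Sigma_i]=-[S^2]$ in $H_2(W)\cong\Z\cdot[S^2]$ (using $[S^2]^2=-1$). Your own subsequent equation $\langle c_1(\s_k),[\widehat\Sigma_i]\rangle=-(2k+1)$ already contradicts $[\widehat\Sigma_i]=0$. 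Once this is corrected the bookkeeping is immediate: $[\widehat\Sigma]=\sum_j[\widehat\Sigma_j]=-n[S^2]$, hence $[\widehat\Sigma]\cdot[\widehat\Sigma_i]=-n$, and
\[
A_i(\phi^n_k)=\frac{-(2k+1)-(-n)}{2}=\frac{-2k-1+n}{2}=-k+\frac{n-1}{2}.
\]
This is precisely the one-line computation the paper gives. The ``main obstacle'' you flagged dissolves once you drop the incorrect claim $[\widehat\Sigma_i]=0$; no Kirby-calculus argument or $\gr_\z$ consistency check is needed.
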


\begin{proof}
The computation of $\gr_{\w}$ is exactly the same as the one of Proposition \ref{prop:negative}. Hence, $\gr_{\w}(\phi^n_k)=-k^2-k$. For the computation of the Alexander grading, it is also similar to the one of Proposition \ref{prop:negative}, except now for each $i=1, 2, \cdots, n$, we have 
$$A_i(\phi^n_k)=\dfrac{\langle c_{1}(\s_{k}), [\widehat{\Sigma}_i]\rangle-[\widehat{\Sigma}]\cdot [\widehat{\Sigma}_{i}]}{2}=\dfrac{-2k-1+n}{2}=-k+(n-1)/2.$$

\end{proof}

Now we consider the following example where $L=O_n$ and  $\bar{L}=T(n, n)$. It is known \cite{GH} that $T(n,n)$ is an $L$-space link.  We first recall the link Floer homology $\cHFL(T(n, n))$. For the explicit computation, see \cite{BLZ}. 

\begin{theorem}[\cite{BLZ}]
\label{thm:toruslink}
The Heegaard Floer homology  $\mathcal{HFL}(T(n,n))$ has $n$ generators, which we denote by $a_0,\dots,a_{n-1}$ subject to the following relations:
\begin{equation}
\label{eq: Tnn}
 \left( \prod_{i\in I_k}U_i\right) a_{k-1}=\left(\prod_{j\in \{1, \cdots, n\}\setminus I_k}V_j\right) a_{k},\quad 
  U_iV_i a_k=U_jV_j a_k
\end{equation}
Here, $I_k$ is any subset of the set $\{1,\dots,n\}$ of length $k$ (so the first equation has $\binom{n}{k}$ relations for each $k$), and in the second equation $i,j$, and $k$ range from $1$ to $n$.
\end{theorem}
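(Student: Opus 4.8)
The final statement to prove is Theorem \ref{thm:toruslink} on the structure of $\cHFL(T(n,n))$, which is cited from \cite{BLZ}. Since the paper attributes it to an external source, my plan is to reconstruct a proof along the lines that \cite{BLZ} would have used, leveraging the machinery already set up in this excerpt.

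\medskip

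\textbf{Plan of proof.} The plan is to use the fact (stated in the excerpt, and due to \cite{GH}) that $T(n,n)$ is an L-space link, so by Corollary \ref{cor: L space tower} we have $\cHFL(T(n,n),\k)\cong\F[\UU][-2h(\k)]$ in each Alexander multi-grading $\k$, and the entire module is torsion-free over $\F[\UU]$. Thus the whole computation reduces to (i) determining the $h$-function $h(\k)$ of $T(n,n)$, and (ii) identifying the module structure, i.e. how the $\F[\UU]$-towers in adjacent Alexander gradings are glued together by the $U_i$ and $V_i$ actions. For step (i), I would compute $h(\k)$ either from the known Alexander polynomial / Heegaard Floer homology of the lens-space-like large surgeries on $T(n,n)$, or more efficiently by realizing $T(n,n)$ via the cobordism $(W,\bF)\colon(S^3,O_n)\to(S^3,T(n,n))$ from Proposition \ref{prop:fulltwist}: the maps $\phi^n_k$ are injective (Corollary \ref{cor: negative definite}) with known Alexander and Maslov degree shifts, and since $\cHFL(O_n)\cong R_{UV}$ is a cyclic module, their images already generate a large submodule whose structure constrains $h(\k)$. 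In fact the symmetry $h(\k)$ under the hyperelliptic involution plus the values forced along the ``extremal'' rays $\pm\ee_i$ pin down $h$ completely for this link.

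\medskip

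For step (ii), the generators $a_0,\dots,a_{n-1}$ should be taken to be the images $\phi^n_{k}(1)$ for $k=0,1,\dots,n-1$ (up to $\UU$-powers), whose Alexander gradings $A(a_{k})=\bigl(\tfrac{n-1}{2}-k\bigr)(1,\dots,1)$ by Proposition \ref{prop:fulltwist} lie on the main diagonal, spaced one apart; these are exactly the ``corners'' of the staircase-shaped support of $\cHFL$. The relations in \eqref{eq: Tnn} then have a clean interpretation: the first relation $\bigl(\prod_{i\in I_k}U_i\bigr)a_{k-1}=\bigl(\prod_{j\notin I_k}V_j\bigr)a_k$ says that moving $k$ steps down in the coordinates indexed by $I_k$ from $a_{k-1}$, and $n-k$ steps up in the complementary coordinates from $a_k$, lands on the same generator of the same one-dimensional tower $\cHFL(T(n,n),\k)$ with $\k$ the common Alexander grading; the second relation $U_iV_ia_k=U_jV_ja_k$ is automatic since $\UU$ acts uniformly. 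So the content is: (a) these elements and relations are genuinely satisfied in $\cHFL(T(n,n))$ — which follows from matching Alexander/Maslov gradings of both sides against the unique tower generator in that grading, using the $h$-function computed in step (i); and (b) they present the whole module — which follows by a dimension count in each Alexander grading, i.e. showing the $R_{UV}$-module defined by this presentation has exactly one $\F[\UU]$-tower per lattice point $\k$ with the right Maslov degree, hence surjects onto $\cHFL(T(n,n))$, and is torsion-free, hence the surjection is an isomorphism.

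\medskip

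\textbf{Main obstacle.} The hard part will be (b), verifying that the listed relations are a \emph{complete} set of relations rather than merely a valid set. Concretely, one must show that the quotient module $M = R_{UV}\langle a_0,\dots,a_{n-1}\rangle / (\text{relations \eqref{eq: Tnn}})$ has, in each Alexander grading $\k$, a free rank-one $\F[\UU]$-summand and no extra torsion or extra towers. This is a purely combinatorial/algebraic check: one organizes the monomials $U^{\mathbf a}V^{\mathbf b}a_k$ by Alexander grading, uses the $\binom{n}{k}$ relations to rewrite any such monomial in a normal form, and checks the normal forms biject with a single $\F[\UU]$-tower per grading. An alternative, cleaner route to (b) is to avoid presentations entirely: directly build an explicit Heegaard diagram for $T(n,n)$ (as \cite{BLZ} do), or use the Manolescu--Ozsv\'ath surgery/mapping cone description, compute $\cHFL$ as a bigraded vector space, and then observe the module structure is forced by torsion-freeness together with the known action of $U_i,V_i$ on the finitely many generators near the diagonal. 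I would present the argument via the cobordism $\phi^n_k$ and the $h$-function, since that keeps everything within the framework already developed in the preceding sections, and relegate the final normal-form count to a short combinatorial lemma.
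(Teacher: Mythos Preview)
The paper does not prove this theorem; it is stated with attribution to \cite{BLZ} and prefaced by ``For the explicit computation, see \cite{BLZ},'' with no proof environment following. So there is no in-paper argument to compare your proposal against.

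That said, your reconstruction is broadly reasonable but has a circularity to watch: in this paper the identification $\phi^n_k(1)=a_k$ (Example~\ref{ex:toruslink-phi}) is \emph{deduced from} Theorem~\ref{thm:toruslink}, not used to establish it. To run your argument in the other direction you must first obtain the $h$-function of $T(n,n)$ independently---the paper notes this is computed in \cite{GH}, and that computation is a genuine input, not something you can extract from the $\phi^n_k$ maps alone (injectivity of $\phi^n_k$ gives inequalities on $h$, not exact values). Once $h$ is known, your Step~(ii) outline is sound: torsion-freeness plus the $h$-function forces a unique rank-one $\F[\UU]$-tower in each grading, the relations~\eqref{eq: Tnn} are verified by matching gradings, and completeness amounts to the normal-form count you describe. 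The actual argument in \cite{BLZ} proceeds via lattice homology and an explicit plumbing description rather than through the cobordism maps $\phi^n_k$, so your route is different in flavor but arrives at the same place.
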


Now we list the explicit gradings of the generators $a_k$ where $0\leq k \leq n-1$. The Alexander multi-grading of $a_k$ is
$$\left( \dfrac{n-1}{2}-k, \dfrac{n-1}{2}-k, \cdots, \dfrac{n-1}{2}-k \right).$$

The generator $a_k$ has homological grading 
$$(\gr_{\w}(a_k), \gr_{\z}(a_k))=\left(-k(k+1), -k(k+1)-n(n-1)+2kn\right).$$
The Maslov grading $\gr_{\w}$ is obtained from the $H$-function of the torus link $T(n,  n)$, which is computed in \cite{GH}. The computation of $\gr_{\z}$ follows from the relation
$$\dfrac{\gr_{\w}-\gr_{\z}}{2}=A_1+A_2+\cdots+A_n. $$

\begin{example}
\label{ex:toruslink-phi}
By Corollary \ref{cor: negative definite} the cobordism maps $\phi^n_k:\cHFL(O_n)\to\cHFL(T(n,n))$ are non-zero and determined by the grading shift formulas from Proposition \ref{prop:fulltwist}. Recall that $A_i(\phi^n_k)=(-k+\frac{n-1}{2},\cdots, -k+\frac{n-1}{2})$ and $\gr_\w(\phi^n_k)=-k^{2}-k$. Then
\[\phi^n_k(1)=a_{k}\]
for $k=0, 1, \cdots, n-1$. 

 In general, we have
$$
\phi^n_k(1)=\begin{cases}
(U_1\cdots U_n)^{k-(n-1)}\UU^{\frac{(k-(n-1))(k-n)}{2}}a_{n-1} & \text{if}\ k\ge n-1\\
(V_1\cdots V_n)^{-k}\UU^{\frac{k(k+1)}{2}}a_0 & \text{if}\ k\le 0.
\end{cases}
$$

\end{example} 

Similar to Proposition \ref{prop:phi}, the maps $\phi_k^{n}$ satisfy the following properties:
\begin{proposition}
\label{prop:toruslink-ends}
The maps $\phi^{n}_k: \cHFL(L)\rightarrow \cHFL(\bar{L})$ satisfy the following properties: 

a) For $k\ge n-1$, we have $\phi^n_k=(U_1\cdots U_n)^{k-(n-1)}\UU^{\frac{(k-(n-1))(k-n)}{2}}\phi^n_{n-1}$.

b) For $k\le 0$, we have $\phi^n_k=(V_1\cdots V_n)^{-k}\UU^{\frac{k(k+1)}{2}}\phi^n_{0}.$
\end{proposition}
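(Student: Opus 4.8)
The plan is to mimic the proof of Proposition~\ref{prop:phi} almost verbatim, reducing the general statement to the case of the unlink $O_n$ by functoriality, and then to handle the unlink case by the grading computations of Example~\ref{ex:toruslink-phi}. Concretely, first I would observe that part (a) and part (b) follow from the relations between the maps $\phi^n_k$ and $\phi^n_{n-1}$ (resp. $\phi^n_0$) once we know that the corresponding relation holds for $O_n$. For the unlink, $\cHFL(O_n)\cong R_{UV}$ is generated by $1$, so all the $\phi^n_k$ are determined by where they send $1$; combined with Corollary~\ref{cor: negative definite} (which applies since $O_n$ and $T(n,n)$ are both $L$-space links and the relevant cobordism $W$ is nonpositive definite) and the grading shifts of Proposition~\ref{prop:fulltwist} and Theorem~\ref{thm:toruslink}, the formulas in Example~\ref{ex:toruslink-phi} follow, and these are exactly parts (a) and (b) for $O_n$.

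Next, to pass from $O_n$ to a general link $L$, I would repeat the stabilization-and-decomposition argument from the proof of Proposition~\ref{prop:phi}. That is: stabilize $L$ and $\bar L$ by adding extra basepoint pairs on the $n$ strands passing through the $(-1)$-framed unknot, build the modified decorated cobordism $\mathcal{C}'$ with the additional dividing arcs, and set $\widetilde{\mathcal{C}}=\mathcal{C}'\circ\mathcal{C}_{+}$, so that under the identification $\cHFL(L^{'\sigma'})\cong\cHFL(L)$ the map $F_{\widetilde{\mathcal{C}},\s_k}$ becomes $\phi^n_k$. Then decompose $\mathcal{C}_{+}$ as births $\mathcal{B}$ from $L$ to $L\coprod O_n$ followed by $n$ band attachments $\mathcal{C}_b$, isotope the attaching circle of the $2$-handle so that it links only the $O_n$ strands, and reorder to obtain $\widetilde{\mathcal{C}}=\mathcal{C}_b\circ\mathcal{C}_O\circ\mathcal{B}$, where $\mathcal{C}_O$ is the full-twist cobordism from $L\coprod O_n$ to $L\coprod T(n,n)$. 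By \cite[Theorem B]{Zemke} the cobordism maps compose, and using $\cHFL(L\coprod L')=\cHFL(L)\otimes_\F\cHFL(L')$ together with $F_{\mathcal{C}_O,\s_k}=\mathrm{Id}\otimes(\phi^n_k)^{O}$, the identities $\phi^n_k=(U_1\cdots U_n)^{k-(n-1)}\UU^{\frac{(k-(n-1))(k-n)}{2}}\phi^n_{n-1}$ and $\phi^n_k=(V_1\cdots V_n)^{-k}\UU^{\frac{k(k+1)}{2}}\phi^n_0$ for $L$ follow from the corresponding identities for $O_n$.

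The main subtlety — the part that is genuinely more than a copy of Proposition~\ref{prop:phi} — is verifying that the isotopy of the $2$-handle attaching circle past the $n$ band attachments goes through cleanly when there are $n$ strands rather than $2$, i.e. that the resulting local picture is still the full-twist cobordism $\mathcal{C}_O$ for the unlink $O_n$ and that the dividing arcs match up so that the colorings (hence the $R$-module structures) are respected. This is the analogue of Figures~\ref{fig:psi-local} and \ref{fig:phi-local} in the $n$-strand setting; it is essentially a diagrammatic/topological check that nothing obstructs the reordering of handle and band attachments, and it is where one should be careful, but it presents no conceptual difficulty. Once that is in place, everything else is formal, and the proof can legitimately be abbreviated by saying it is ``exactly the same as the proof of Proposition~\ref{prop:phi}'' with $O_2$ replaced by $O_n$ and $T(2,2)$ by $T(n,n)$.
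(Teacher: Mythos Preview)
Your proposal is correct and follows essentially the same approach as the paper's own proof: reduce to the unlink case via the stabilization-and-decomposition argument of Proposition~\ref{prop:phi}, isotoping the $2$-handle attaching circle past the $n$ bands so that the full-twist cobordism factors through $L\coprod O_n\to L\coprod T(n,n)$, and then invoke Example~\ref{ex:toruslink-phi}. The paper's proof is slightly terser but structurally identical, and you have correctly identified the one point requiring care (the $n$-strand analogue of Figures~\ref{fig:psi-local} and~\ref{fig:phi-local}).
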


\begin{proof}
The proof is very similar to the one of Proposition \ref{prop:phi}. As before, we denote $L'$ (resp. $\bar{L}'$) as the link obtained from $L$ (resp. $\bar{L}$) by adding an extra pair of basepoints $w'_i, z'_i$ for each component $L_i$. Let $\CC'$ be the induced decorated cobordism from $L'$ to $\bar{L}'$ induced from the decorated cobordism $\CC=(W, \mathcal{F})$ from $L$ to $\bar{L}$, and $\sigma'$ be the induced coloring on $L'$ and $\bar{L}'$ as in the proof of Proposition \ref{prop:psi}. We still get the isomorphism 
$$\cHFL(L'^{\sigma'})\cong \left(\cHFL(L)\otimes \F[U_1, \cdots, U_n, V_1, \cdots, V_n, U'_1, \cdots, U'_n]\right)/ \langle U_1-U'_1, \cdots, U_n-U'_n\rangle\cong \cHFL(L).$$
Similarly, we also have $\cHFL(\bar{L}'^{\sigma'})\cong \cHFL(\bar{L})$.

As before, we let $\CC_+$ be the decorated cobordism from $L$ to $L'$, which can be decomposed as a cobordism $\mathcal{B}$ from $L$ to $L\coprod O_n$ followed by $n$ band attachments $\CC_b$ from $L\coprod O_n$ to $L'$. Hence, $F_{\widetilde{\CC}, \mathfrak{s}_k}=\phi^n_k$ where $\widetilde{\CC}=\CC'\circ \CC_+$. Now we use the same trick as before to isotope attaching circle of the 2-handle as in Figure \ref{fig:isotopy} so that it encircles the unlink $O_n$ and change the order of the 2-handle attachment and band attachments. Then 
$$\widetilde{\CC}=\CC_{b}\circ \CC_{O_{n}}\circ \mathcal{B}$$
where $\CC_{O_{n}}$ denotes the cobordism obtained by attaching a $2$-handle along $(-1)$-framed unknot from $L\coprod O_n$ to $L\coprod T(n, n)$ and $\CC_{b}$ denotes the band attachment cobordism from $L\coprod T(n, n)$ to $\bar{L}$. Hence, 
$$F_{\CC_{O_{n}}, \mathfrak{s}_k}=\mathrm{Id}\otimes \phi_{k}$$
where $\phi_k$ denotes the map $\phi_{k}^{n}$ for the unlink $O_n$. Since the proposition holds for unlink $O_n$ by Example \ref{ex:toruslink-phi}, the general case follows. 

\end{proof}

\section{Skein exact sequence}
\label{sec: surgery skein}

\subsection{Surgery exact triangle}

Suppose $L$ is a link in an integer homology sphere $Y$, and $K\subset Y\setminus L$ is a knot. Let $(W_1,\bF_1)$ be the decorated link cobordism from $(Y,L)$ to $(Y_{-1}(K),L)$ obtained by attaching a two-handle along $K$ with framing $-1$, and decorated as in Convention \ref{conv:cob}. Similarly, $(W_2,\bF_2)$ and $(W_3,\bF_3)$ denote the cobordisms from $(Y_{-1}(K),L)$ to $(Y_{0}(K),L)$ and $(Y_{0}(K),L)$ to $(Y,L)$, respectively. 

\begin{proposition}
The link cobordism maps $F_{i}=\sum_{\mathfrak{s}\in\Spin(W_i)}F_{W_i,\bF_i,\mathfrak{s}}$ form an exact triangle as follows.
\[
 \begin{tikzcd}
\cbHFL(Y, L) \arrow[rr,"F_{1}"]& &\cbHFL(Y_{-1}(K),L)\arrow[dl, "F_{2}"] \\
  &\cbHFL(Y_{0}(K),L)\arrow[ul, "F_{3}"]&
\end{tikzcd}
\]


\end{proposition}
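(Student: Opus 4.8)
The plan is to derive this surgery exact triangle from the standard integer surgery exact triangle in Heegaard Floer homology (for the three-manifolds $Y$, $Y_{-1}(K)$, $Y_0(K)$) by promoting it to the link Floer setting. The key point is that $K$ is disjoint from $L$, so the three cobordisms $W_1, W_2, W_3$ carry the trivial product surface $L \times [0,1]$ decorated as in Convention~\ref{conv:cob}, and hence the link cobordism maps $F_{W_i, \bF_i, \s}$ are simply the ordinary cobordism maps on $\mathit{CF}$ (summed over $\Spin$ structures) tensored with the identity on the ``link'' directions. Concretely, I would first observe that $\cbCFL(Y,L)$ is obtained from $\widehat{\mathit{CF}}$-type data (or rather $\mathit{CF}^-$-type data) for a Heegaard diagram subordinate to $(Y,L)$, and that one may choose the Heegaard diagrams for the three surgeries so that they differ only in the $\alpha$- (or $\beta$-) curves near $K$, exactly as in the classical surgery triangle, while the part of the diagram recording $L$ and its basepoints is untouched.

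The main steps, in order, would be: (1) Set up a triple Heegaard diagram $(\Sigma, \alphas, \betas, \gammas, \deltas)$ adapted simultaneously to $L$ and to the three framings on $K$, so that the three complexes $\cbCFL(Y,L)$, $\cbCFL(Y_{-1}(K),L)$, $\cbCFL(Y_0(K),L)$ appear as the three vertices; (2) invoke the standard argument (à la Ozsváth--Szabó's surgery exact triangle, in the form used by Zemke for the link TQFT) showing that the corresponding triangle maps fit into an exact triangle of complexes, where the composite of two consecutive maps is null-homotopic via counting certain holomorphic rectangles, and the third vertex of each such composite is a complex computing the ``wrong'' third surgery which is shown to be acyclic/small; (3) identify the triangle maps with $F_1, F_2, F_3$ by the composition law \cite[Theorem~B]{Zemke} and the computation of cobordism maps in terms of triangle counts — this is where one uses that each $W_i$ is (up to the product surface) a single two-handle cobordism, so its map is the sum over $\Spin$ structures of a triangle-counting map; (4) pass to the completion $\cbHFL$, which is harmless since completion is exact here (the modules are finitely generated over the relevant power series ring and the triangle is a triangle of finitely generated complexes). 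Exactness of the triangle on homology then follows formally from the existence of the underlying exact triangle of complexes, i.e. from the fact that the mapping cone of $F_1$ is quasi-isomorphic to $\cbHFL(Y_0(K),L)$ with the connecting map $F_2$, and cyclically.

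I expect the main obstacle to be step (2)--(3): carefully matching the ``link-TQFT'' cobordism maps $F_{W_i, \bF_i, \s}$ with the triangle-counting maps in the exact sequence of complexes, and checking that summing over all $\Spin$ structures on each $W_i$ reproduces exactly the map appearing in the classical triangle. One subtlety is that the classical surgery triangle is usually stated with $\widehat{\mathit{HF}}$ or with twisted/$\mathit{HF}^+$ coefficients; here we need the $\mathit{CF}^-$-flavored, completed version with the full set of $U_i, V_i$ variables, so I would rely on the fact (already implicit in the cobordism grading formulas of Theorem~\ref{thm:grading} and the proof of the large-surgery-type statements above) that the surface is a product and contributes no Alexander-grading or basepoint subtleties beyond an overall bookkeeping. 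A second, more technical point is the convergence/finiteness needed to pass to the power series completion and still have an exact triangle; I would handle this by noting that all three complexes are free and finitely generated over $\F[[U_1,\dots,U_n,V_1,\dots,V_n]]$ (or over $\F[[\UU]]$ in each Alexander grading), so completion commutes with taking homology and with forming mapping cones, and exactness is preserved. Once these identifications are in place, the statement follows immediately from the classical integer surgery exact triangle applied with the extra link data carried along for the ride.
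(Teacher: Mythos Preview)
Your plan matches the paper's proof: both set up a quadruple Heegaard diagram adapted to $L$ and the three surgeries on $K$, identify the triangle-counting maps with the cobordism maps $F_i$, and verify the exact triangle via the two conditions of \cite[Lemma~4.4]{OS-doublecover} (null-homotopies from rectangle counts, and the triple composite with homotopies being a homotopy equivalence via pentagon counts). One point to sharpen: completion is not merely a formal convergence issue as you suggest, but is essential because the pentagon-count computation of $F_{\beta\gamma\delta\beta'}(\Theta_{\beta\gamma}\otimes\Theta_{\gamma\delta}\otimes\Theta_{\delta\beta'})$ produces the factor $\sum_{k\ge 0}\UU^{k(k+1)/2}$, which must be inverted to conclude that $f_3\circ h_1+h_2\circ f_1$ is a homotopy equivalence.
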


\begin{proof}
This is a straightforward generalization of the surgery exact triangle for $\mathit{HF}^+$ in \cite[Section 9]{OS5}. We will outline the proof and highlight the differences here.  
Consider a multi-pointed Heegaard diagram
\[\mathcal{H}=\left(\Sigma,\alphas,\betas=\{\beta_1,\cdots,\beta_k\},\gammas=\{\gamma_1,\cdots,\gamma_k\},\deltas=\{\delta_1,\cdots,\delta_k\},\z,\w \right)\]
where $k=g+n-1$, such that 
\begin{itemize}
\item $\mathcal{H}_{\alpha\beta}=(\Sigma,\alphas,\betas,\z,\w)$, $\mathcal{H}_{\alpha\gamma}=(\Sigma,\alphas,\gammas,\z,\w)$ and $\mathcal{H}_{\alpha\delta}=(\Sigma,\alphas,\deltas,\z,\w)$ are Heegaard diagrams for the link $L$ in $3$-manifolds $Y$, $Y_{-1}(K)$ and $Y_0(K)$, respectively. So, $\z$ and $\w$ consist of $n$ basepoints, where $n$ is the number of connected components of $L$. 
\item For any $1\le i\le k-1$, $\gamma_i$ and $\delta_i$ are small isotopic translations of $\beta_i$ such that they intersect $\beta_i$ transversely in two points and are disjoint from $\beta_j$ for $j\neq i$ . Moreover, $\delta_i$ intersects $\gamma_i$ in two transverse points as well. 
\item Pairwise intersections of $\beta_k$, $\gamma_k$ and $\delta_k$ are single points with signs  $\#(\beta_k\cap\gamma_k)=\#(\gamma_k\cap\delta_k)=\#(\delta_k\cap\beta_k)=-1$. Moreover, $\gamma_k$ is obtained from the juxtaposition of $\beta_k$ and $\delta_k$.
\item Strongly admissible in the sense of \cite[Definition 4.15]{Zemke:graph} which is a multipointed version of \cite[Section 8.4.2]{OS-3m1}.

\end{itemize}

Let $F_{\alpha\beta\gamma}$ be the chain map defined by counting holomorphic triangles as: 
\[\begin{split}
&F_{\alpha\beta\gamma}:\cbCFL(\Sigma,\alphas,\betas,\z,\w)\otimes \cbCFL(\Sigma,\betas,\gammas,\z,\w)\to\cbCFL(\Sigma,\alphas,\gammas,\z,\w)\\
&F_{\alpha\beta\gamma}(\x\otimes\x')=\sum_{\y\in\mathbb{T}_{\alpha}\cap\mathbb{T}_{\gamma}}\sum_{\{\Psi\in\pi_2(\x,\x',\y)|\mu(\Psi)=0\}}\prod_{i=1}^nU_i^{n_{w_i}(\Psi)}V_i^{n_{z_i}(\Psi)}\cdot\y
\end{split}\]
Analogously, we define chain maps $F_{\alpha\gamma\delta}$ and $F_{\alpha\delta\beta}$.

The Heegaard diagram $\mathcal{H}_{\beta\gamma}=(\Sigma,\betas,\gammas,\z,\w)$ represents an $n$ component unlink in $\#^{g-1}(S^1\times S^2)$, denoted by $O_n$. It is straightforward that $\cbHFL(\#^{g-1}(S^1\times S^2), O_n)$ is a free $\mathbf{R}$-module of rank $2^{g-1}$. Moreover, the summand with largest $\gr_{\w}$ has rank one and so it has a unique generator. The Heegaard diagram $\mathcal{H}_{\beta\gamma}$ has an intersection point denoted by $\Theta_{\beta\gamma}$ that generates this top degree homology class, called \emph{top generator}. Specifically, $\Theta_{\beta\gamma}$ is the intersection point that every element of $\pi_2(\x,\Theta_{\beta\gamma})$ has nonzero coefficient in at least one $\z$ or $\w$ basepoint, for all other intersection points $\x$. 
Top generators $\Theta_{\gamma\delta}$ and $\Theta_{\delta\beta}$ for $\cbCFL(\mathcal{H}_{\gamma\delta})$ and $\cbCFL(\mathcal{H}_{\delta\beta})$, respectively, are defined analogously. 

Let 
\[f_1(\cdot)=F_{\alpha\beta\gamma}(\cdot\otimes\Theta_{\beta\gamma}),\quad f_2(\cdot)=F_{\alpha\gamma\delta}(\cdot\otimes\Theta_{\gamma\delta})\quad\text{and}\quad f_3(\cdot)=F_{\alpha\delta\beta}(\cdot\otimes \Theta_{\delta\beta}).\]
By definition of cobordism maps in \cite{Zemke}, for any $1\le i\le 3$ we have
\[F_i=(f_i)_*=\sum_{\mathfrak{s}\in\mathrm{Spin}^c(W_i)}F_{W_i,\bF_i,\mathfrak{s}}.\]

By \cite[Lemma 4.4]{OS-doublecover} to show that they form an exact triangle, we need to check that 
\begin{enumerate}
\item $f_{i+1}\circ f_i$ is chain homotopically trivial by a chain homotopy $h_i$,
\item $f_{i+2}\circ h_i+h_{i+1}\circ f_i$ is a homotopy equivalence,
\end{enumerate} 
where indices are cyclic modulo three. Note that we need a version of \cite[Lemma 4.4]{OS-doublecover} for chain complexes over $\mathbf{R}$, which for example follows from \cite[Lemma 3.3]{AE-minus}. 

First, we check condition $(1)$. Suppose $i=1$. The proof for $i=2$ and $3$ is similar.  Then, 
\[f_2\circ f_1(\cdot)=F_{\alpha\gamma\delta}(F_{\alpha\beta\gamma}(\cdot\otimes\Theta_{\beta\gamma})\otimes\Theta_{\gamma\delta})\simeq F_{\alpha\beta\delta}(\cdot\otimes F_{\beta\gamma\delta}(\Theta_{\beta\gamma}\otimes\Theta_{\gamma\delta})),\]
where the chain homotopy is $h_i(\cdot)=F_{\alpha\beta\gamma\delta}(\cdot\otimes\Theta_{\beta\gamma}\otimes \Theta_{\gamma\delta})$ and

\[\begin{split}
&F_{\alpha\beta\gamma\delta}:\cbCFL(\mathcal{H}_{\alpha\beta})\otimes \cbCFL(\mathcal{H}_{\beta\gamma})\otimes\cbCFL(\mathcal{H}_{\gamma\delta})\to\cbCFL(\mathcal{H}_{\alpha\delta})\\
&F_{\alpha\beta\gamma\delta}(\x\otimes\x'\otimes \x'')=\sum_{\y\in\mathbb{T}_{\alpha}\cap\mathbb{T}_{\delta}}\sum_{\{\phi\in\pi_2(\x,\x',\x'',\y)|\mu(\phi)=-1\}}\prod_{i=1}^nU_i^{n_{w_i}(\phi)}V_i^{n_{z_i}(\phi)}\cdot\y
\end{split}\]
An argument similar to the proof of \cite[Proposition 9.5]{OS5} implies that $F_{\beta\gamma\delta}(\Theta_{\beta\gamma}\otimes\Theta_{\gamma\delta})=0$ and so $f_2\circ f_1\simeq 0$. 

For condition $(2)$, let $\betas'$ be a generic small Hamiltonian isotopic translate of $\betas$, and $F_{\alpha\beta\gamma\delta\beta'}$ be the chain map defined by counting pentagons of Maslov index $-2$, analogous to $F_{\alpha\beta\gamma}$ and $F_{\alpha\beta\gamma\delta}$. Then, $F_{\alpha\beta\gamma\delta\beta'}(\cdot\otimes\Theta_{\beta\gamma}\otimes\Theta_{\gamma\delta}\otimes\Theta_{\delta\beta'})$ gives a chain homotopy between $f_3\circ h_1+h_2\circ f_1$ and $F_{\alpha\beta\beta'}(\cdot\otimes F_{\beta\gamma\delta\beta'}(\Theta_{\beta\gamma}\otimes \Theta_{\gamma\delta}\otimes\Theta_{\delta\beta'}))$. By a standard ``stretching the neck argument" and following the strategy in \cite[Section 2]{OS-lectures} and \cite[Section 4.2]{OS-doublecover} we have 
\[F_{\beta\gamma\delta\beta'}(\Theta_{\beta\gamma}\otimes \Theta_{\gamma\delta}\otimes\Theta_{\delta\beta'})=\sum_{k=0}^\infty \UU^{\frac{k(k+1)}{2}}\Theta_{\beta\beta'}\]
and
$$
F_{\alpha\beta\beta'}(\cdot\otimes F_{\beta\gamma\delta\beta'}(\Theta_{\beta\gamma}\otimes \Theta_{\gamma\delta}\otimes\Theta_{\delta\beta'}))=\left(\sum_{k=0}^\infty \UU^{\frac{k(k+1)}{2}}\right)F_{\alpha\beta\beta'}(\cdot \otimes \Theta_{\beta\beta'})
$$
Since $F_{\alpha\beta\beta'}(\cdot \otimes \Theta_{\beta\beta'})$ is a homotopy equivalence (see the proof of \cite[Theorem 8.6]{AE-minus}) and $\sum_{k=0}^\infty \UU^{\frac{k(k+1)}{2}}$ is invertible, $f_3\circ h_1+h_2\circ f_1$ is a homotopy equivalence.

\end{proof}

Now let us relate the surgery exact triangle with resolutions. Suppose $Y=S^3$, $L=L_+$ is a link in $S^3$ with a fixed positive crossing, and $K\subset S^3\setminus L$ is an unknot as in the top of Figure \ref{blowup}. Then, $(S^3_{-1}(K), L)$ will be identified with $(S^{3}, L_{-})$.
Next we relate $(S^{3}, L_0)$ with $(S^3_{0}(K), L)$, where $L_0$ denotes the oriented resolution at the fixed crossing. Observe that $S^3_{0}(K)=S^{3}\# (S^{2}\times S^{1})$, and $L$ in $S^3_{0}(K)$ still has $n$ components, while $L_{0}$ is an $(n-1)$-component link. Note that we can replace the 2-handle attaching to $K$ with framing $0$ by a 1-handle as in Figure \ref{kirby1}.

 \begin{figure}[h]
\includegraphics{figs/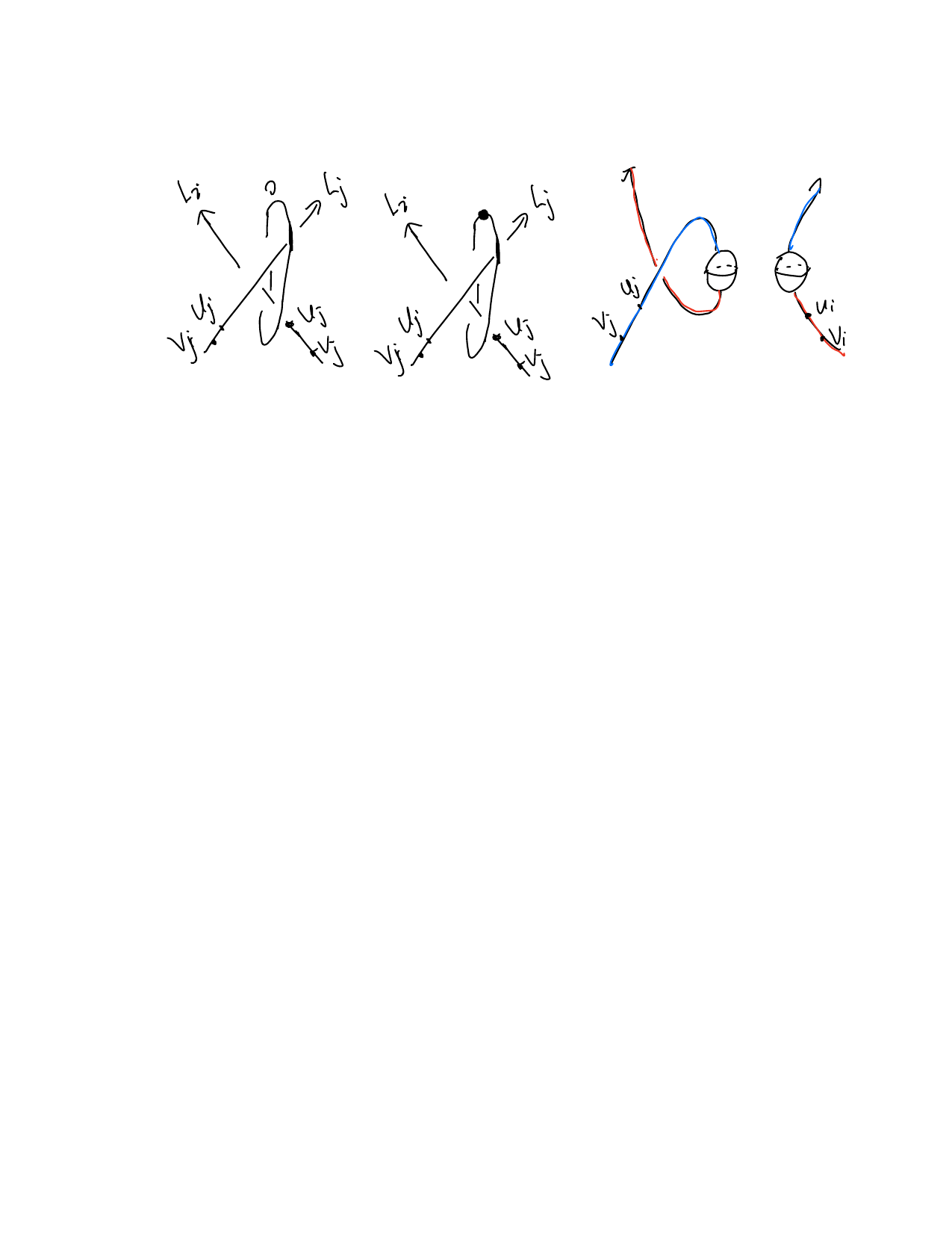}
\caption{}\label{kirby1}
\end{figure}

\begin{lemma}
\label{lemma:unlinkcone}
The link $(S^{3}_{0}(K), L)$ can be identified with $(S^{3}\#(S^{2}\times S^{1}), L_0\# Z_2)$ where $Z_2$ is the 2-component unlink in $S^{2}\times S^{1}$ consisting of two parallel circles representing the homology class of $S^{1}$, and the $\#$ between $L_0$ and $Z_2$ is identified as in Figure \ref{kirby2}.

\end{lemma}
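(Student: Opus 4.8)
The plan is to unwind the Kirby calculus picture carefully. First I would observe that performing $0$-surgery on the unknot $K$ (which is unknotted in $S^3$ and links the two strands of $L$ passing through it algebraically zero times, since the crossing is between two different components and the resolution $L_0$ merges them) produces $S^3 \# (S^2 \times S^1)$. The standard move, already indicated in Figure \ref{kirby1}, replaces the $0$-framed $2$-handle on $K$ by a $1$-handle; I would make this explicit and track what happens to the link $L$ under this replacement. The two strands of $L$ that pass through the disk bounded by $K$ become two parallel strands running over the new $1$-handle, i.e.\ through the $S^2 \times S^1$ summand.

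The key step is then to identify the resulting link. Away from the surgery region, $L \subset S^3_0(K)$ looks like $L_0$ with a small band removed and replaced by two parallel arcs going around the $S^1$ factor of $S^2 \times S^1$; gluing these two arcs to the rest of $L_0$ by the obvious band recovers exactly the connected sum $L_0 \# Z_2$ where $Z_2$ is the two-component unlink of two parallel $S^1 \times \{pt\}$ curves in $S^2 \times S^1$, and the connect-sum band is the one drawn in Figure \ref{kirby2}. I would verify this by isotoping the two strands through the $1$-handle into standard position: each strand, together with the cocore of the $1$-handle, bounds, so each becomes a copy of $S^1$ representing the generator of $H_1(S^2 \times S^1)$, and they are parallel and unlinked, hence form $Z_2$. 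The number of components checks out: $L_0$ has $n-1$ components, $Z_2$ contributes $2$, but the connect sum along one band of $L_0$ with one component of $Z_2$ reduces the count by one, giving $n$ components total, matching the fact that $L \subset S^3_0(K)$ still has $n$ components.

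The main obstacle I expect is being precise about the connected sum convention — specifically, which component of $L_0$ the band in Figure \ref{kirby2} attaches to, and confirming that the homology class of each strand over the $1$-handle is genuinely the $S^1$-generator rather than something nullhomologous (this is where the hypothesis that the original crossing is between \emph{different} components of $L_\pm$ is used: it guarantees that after resolution the two strands lie on the \emph{same} component $L_0$-component only after the band, so before the band they are two separate arcs running over the handle, each nontrivial in $H_1$). I would also need to check that the identification respects the basepoint/decoration data from Convention \ref{conv:cob}, but since the isotopy is supported away from the basepoints on $L_0$ and the two new strands can carry the basepoints of $Z_2$, this is routine. Once the diagrammatic identification is in hand, the lemma follows immediately, and it sets up the computation of $H_*(\cbCFL(L_0) \otimes \boldsymbol{\mathcal{K}})$ as the link Floer homology of $L_0 \# Z_2$ via a Künneth-type formula for connected sums.
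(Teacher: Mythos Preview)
Your approach is essentially the paper's: replace the $0$-framed $2$-handle on $K$ by a $1$-handle and then manipulate the picture to recognize the link as $L_0 \# Z_2$; the paper phrases the key manipulation as sliding the feet of the $1$-handle along $L$, which is dual to your description of isotoping the strands into standard position over the handle. One small slip: the two strands at the crossing pass through the disk bounded by $K$ in the \emph{same} direction (this is what makes the oriented smoothing $L_0$ well-defined there), so $\lk(K,L)=\pm 2$, not zero---but this has no bearing on the identification $S^3_0(K)\cong S^2\times S^1$ or on the rest of your argument.
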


\begin{figure}[ht!]
\includegraphics{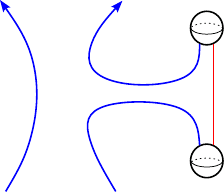}
\caption{Special (local) connected sum between $L_0\subset S^3$ (in blue) and $Z_2\subset S^2\times S^1$ (one component in blue and another component in red) }\label{kirby2}
\end{figure}

\begin{proof}
The proof is depicted in Figure  \ref{kirby1}. Specifically, we regard the $2$-handle for the $0$-surgery on $K$ as a $1$-handle and then we move the feet of 1-handle along the link $L$. At the end, we get the connected sum of $L_0$ with one component of $Z_2$, colored blue, along with the other component of $Z_2$, colored red, in Figure 
\ref{kirby2}.

\end{proof}

Therefore, we have the following theorem:

\begin{theorem}
Given a local positive crossing of the link components $L_i$ and $L_j$ of a link $L_+$ in the integer homology sphere $Y$,  there is a skein exact sequence 
\begin{equation}
\label{eq: skein}
\rightarrow \cHFL(Y, L_{+})\xrightarrow{\Psi} \cHFL(Y, L_{-})\xrightarrow{\alpha} H_*(\cCFL(Y, L_0)\otimes_{R}R_0) \xrightarrow{\beta} \cHFL(Y, L_{+})\rightarrow
\end{equation}
where the map from $\cHFL(Y, L_{+})$ to  $\cHFL(Y, L_{-})$ is given by $\Psi=\sum_{k\in \Z}(-1)^{k}\psi_k$, and 
$$
R_0=\frac{\F[U_1,\ldots,U_n,V_1,\ldots,V_n]}{(U_i-U_j,V_i-V_j)}.
$$

\end{theorem}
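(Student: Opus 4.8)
The plan is to derive this skein exact sequence by combining the surgery exact triangle from the previous Proposition with the two identifications established in Lemma~\ref{lemma:unlinkcone} and the discussion preceding it. First I would set $Y$ an integer homology sphere, $L=L_+$ with a fixed positive crossing between $L_i$ and $L_j$, and $K\subset Y\setminus L$ the $(-1)$-framed unknot encircling the two strands as in the top of Figure~\ref{blowup}. Applying the surgery exact triangle to $K$ gives
\[
\begin{tikzcd}
\cbHFL(Y, L) \arrow[rr,"F_{1}"]& &\cbHFL(Y_{-1}(K),L)\arrow[dl, "F_{2}"] \\
  &\cbHFL(Y_{0}(K),L)\arrow[ul, "F_{3}"]&
\end{tikzcd}
\]
so the main work is to (i) identify the three terms and (ii) identify the map $F_1$ with $\Psi=\sum_k(-1)^k\psi_k$.

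For the terms: by construction $(Y_{-1}(K),L)\cong(Y,L_-)$, so the middle term is $\cbHFL(Y,L_-)$. The first term is $\cbHFL(Y,L_+)$ by definition. For the third term, $Y_0(K)=Y\#(S^1\times S^2)$, and Lemma~\ref{lemma:unlinkcone} identifies $(Y_0(K),L)$ with $(Y\#(S^1\times S^2), L_0\#Z_2)$ where $Z_2$ is the $2$-component homologically essential unlink in $S^1\times S^2$. I would then invoke the K\"unneth-type formula for connected sums together with the computation of $\cHFL(S^1\times S^2, Z_2)$ (a free module whose structure produces precisely the factor $R_0=R/(U_i-U_j, V_i-V_j)$ after the relevant tensor/completion), so that $\cbHFL(Y_0(K),L)\cong H_*(\cbCFL(Y,L_0)\otimes_R \boldsymbol{\mathcal{K}})$ with $\boldsymbol{\mathcal K}$ the completion of the appropriate module — here specializing to the case $R_0$ as in the statement. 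This is the step that requires genuine care: one must check that the connected-sum identification of Lemma~\ref{lemma:unlinkcone} is compatible with the basepoint/coloring conventions (Convention~\ref{conv:cob}) and that the K\"unneth formula for $\cbCFL$ over the power-series ring holds in the form needed; I expect this bookkeeping — matching variables $U_i,V_i$ across the connected sum and tracking which generators of $\cHFL(S^1\times S^2, Z_2)$ survive — to be the main obstacle.

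It then remains to identify the triangle map $F_1=\sum_{\s\in\Spin(W_1)}F_{W_1,\bF_1,\s}$ with $\Psi$. By construction $W_1$ is exactly the $(-1)$-framed $2$-handle cobordism realizing the crossing change, decorated as in Convention~\ref{conv:cob}; its $\Spin^c$-structures are indexed by $k\in\Z$ via $\langle c_1(\s_k),[S^2]\rangle=2k+1$, and by Proposition~\ref{prop:positive} the map $F_{W_1,\bF_1,\s_k}$ is precisely $\psi_k$. Hence $F_1=\sum_k\psi_k$; the sign $(-1)^k$ is cosmetic over $\F=\Z/2\Z$ but I would record it to match the normalization of Theorem~\ref{thm: intro skein}. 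The convergence of the sum $\sum_k\psi_k$ in the completed setting follows from the grading shifts $\gr_\w(\psi_k)=-k^2-k\to-\infty$ together with the formulas of Proposition~\ref{prop:psi}(a),(b), which show $\psi_k$ is divisible by an increasing power of $\UU$ as $|k|\to\infty$; this is why the statement is phrased for $\cbHFL$ (or, via Corollary~\ref{cor: tau intro}, can be reduced to the uncompleted $\Psi^0=\psi_0-\psi_{-1}$). Finally, substituting these identifications into the surgery exact triangle and rotating yields the displayed six-term skein exact sequence \eqref{eq: skein}, with $\alpha$ and $\beta$ the images of $F_2$ and $F_3$ under the identifications above. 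I would close by remarking that the naturality of the K\"unneth and connected-sum isomorphisms guarantees exactness is preserved, so no separate diagram chase is needed.
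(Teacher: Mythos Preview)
Your proposal is correct and follows essentially the same route as the paper: surgery exact triangle for the encircling unknot $K$, identification of $(Y_{-1}(K),L)$ with $(Y,L_-)$ and of $(Y_0(K),L)$ with the $L_0\# Z_2$ picture of Lemma~\ref{lemma:unlinkcone}, a local computation via the Hopf link to pin down the tensor factor, and the recognition $F_1=\sum_k\psi_k$. The one point you leave implicit that the paper makes explicit is the passage from the complex $\mathcal{K}$ to $R_0$: the local computation produces the Koszul-type complex $\mathcal{K}$ in \eqref{def:K}, and the identification $H_*(\cCFL(Y,L_0)\otimes_R\mathcal{K})\simeq H_*(\cCFL(Y,L_0)\otimes_R R_0)$ holds because $\mathcal{K}$ is a free resolution of $R_0$ over $R$ and $\cCFL(Y,L_0)$ is a complex of free $R$-modules---not merely because of a K\"unneth statement about $\cHFL(S^1\times S^2,Z_2)$.
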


\begin{proof}
By Lemma \ref{lemma:unlinkcone}, the cone of $\Psi$ is the homology of the tensor of $\cCFL(Y, L_0)$ with some complex $Z$ corresponding to the unlink $Z_2$ in $S^2\times S^1$. Moreover, $Z_2$ is independent of the pair $(Y, L_+)$. We use the special case that $Y=S^{3}$ and $L_+=T(2, 2)$ to compute $\cCFL(S^2\times S^1, Z_2)$ which gives the module $\cK$. We put the detailed computation of Hopf link in Section \ref{skein:Hopf} (equivalently, see \eqref{coneK}).

The complex $\cK$ is given as follows:
\begin{equation}
\label{def:K}
\cK=
\begin{tikzcd}
R \arrow{r}{U_i-U_j} \arrow[swap]{d}{V_i-V_j}  & R  \arrow{d}{V_i-V_j}   \\
       R \arrow[swap]{r}{U_j-U_i} &  R  \\ 
\end{tikzcd}
\end{equation}
Note that it is a free resolution of $R_0$ over $R$. Since $\cCFL(Y,L_0)$ is a complex of free $R$-modules, we get 
$$
H_*(\cCFL(Y,L_0)\otimes \cK)\simeq H_*(\cCFL(Y,L_0)\otimes R_0). 
$$
\end{proof}

\begin{remark}
In $\HFL^-$ version of Heegaard Floer homology one sets $V_i=V_j=0$, and the complex $\cK$ breaks into a direct sum of two copies of $\F[U_1,\ldots,U_n]\xrightarrow{U_i-U_j} \F[U_1,\ldots,U_n].$ This explains the appearance of a two-dimensional vector space in \cite{OS4}.

Similarly, for $\widehat{\HFL}$ one sets $U_i=U_j=V_i=V_j=0$, and the complex $\cK$ breaks into four copies of $\F$.
\end{remark}

Without loss of generality, we assume that $i=1, j=2$ for the rest of the section.  Recall that $L_+$ and $L_-$ have $n$ components while $L_0$ has $(n-1)$ components. For all $k\in \Z$ we have chain maps $\psi_k: \cHFL(Y, L_{+})\to \cHFL(Y, L_{-})$, and one can consider the formal sum
$$
\Psi=\sum_{k\in \Z}(-1)^{k} \psi_k: \cbHFL(Y, L_{+})\to \cbHFL(Y, L_{-})
$$ 
Note that $\Psi$ is a non-homogeneous map containing terms of various non-positive homological degrees.

As the non-homogeneous map $\Psi$ is hard to deal with, we would like to reduce it to the degree zero piece $\Psi^0=\psi_0-\psi_{-1}$. 

 \begin{lemma}
\label{lem: tau}
Let $L=L_+$ be an arbitrary link in the three-sphere with a fixed positive crossing between its first and second components. For $k\in\Z$, suppose $\psi_k:\cbHFL(L_{+})\to \cbHFL(L_{-})$ is the corresponding crossing change map and $\Psi=\sum_{k\in \Z}(-1)^k\psi_k$. Then in homology we have 
$\Psi=\tau(\psi_{0}-\psi_{-1})$ where 
\begin{multline}
\label{def tau}
\tau=\sum_{k\ge 0}(-1)^{k}\left[(V_1U_2)^{k}+(V_1U_2)^{k-1}(U_1 V_2)\ldots+(U_1V_2)^{k}\right]\UU^{\frac{k(k-1)}{2}}+\\
\sum_{k\ge 1}(-1)^{k}\left[(V_1U_2)^{k-1}+(V_1U_2)^{k-2}(U_1 V_2)\ldots+(U_1V_2)^{k-1}\right]\UU^{\frac{k(k-1)}{2}+1}=1+\ldots.
\end{multline}
In particular, $\tau$ is an invertible power series.
\end{lemma}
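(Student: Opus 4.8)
## Proof Strategy for Lemma \ref{lem: tau}

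The plan is to directly compute the formal sum $\Psi = \sum_{k\in\Z}(-1)^k\psi_k$ using the explicit formulas for $\psi_k$ from Proposition \ref{prop:psi} and then factor out $\psi_0 - \psi_{-1}$. First I would split the sum according to the sign of $k$, writing $\Psi = \sum_{k\ge 0}(-1)^k\psi_k + \sum_{k\le -1}(-1)^k\psi_k$. By Proposition \ref{prop:psi}(a), for $k\ge 0$ we have $\psi_k = (V_1U_2)^k\UU^{k(k-1)/2}\psi_0$, and by part (b), for $k\le -1$ we have $\psi_k = (V_2U_1)^{-1-k}\UU^{(k+1)(k+2)/2}\psi_{-1}$ (using $i=1$, $j=2$). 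So the first subsum is $\left(\sum_{k\ge 0}(-1)^k(V_1U_2)^k\UU^{k(k-1)/2}\right)\psi_0$ and the second is $\left(\sum_{k\ge 0}(-1)^{k-1}(U_1V_2)^k\UU^{k(k-1)/2}\right)\psi_{-1}$ after reindexing $k\mapsto -1-k$. Call these scalar series $\sigma_+$ and $-\sigma_-$ respectively, so that $\Psi = \sigma_+\psi_0 - \sigma_-\psi_{-1}$.

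The key step is then to use relation (c) from Proposition \ref{prop:psi}, namely $V_2\psi_0 = V_1\psi_{-1}$ and $U_1\psi_0 = U_2\psi_{-1}$, to rewrite $\Psi$ in the desired form. The idea is that $\sigma_+\psi_0 - \sigma_-\psi_{-1} = \tau(\psi_0 - \psi_{-1})$ will hold once one checks that $\sigma_+ - \sigma_- $ is divisible, in the appropriate sense using these relations, so that the ``defect'' $(\sigma_+ - \tau)\psi_0 = (\sigma_- - \tau)\psi_{-1}$ is consistent. Concretely, I would set $\tau := \sigma_+ + (\text{correction})$ and verify that $(\sigma_+ - \tau)\psi_0 - (\sigma_- - \tau)\psi_{-1} = 0$ by repeatedly applying the substitutions $V_2\psi_0 \mapsto V_1\psi_{-1}$ etc. The natural guess is to telescope: since $(V_1U_2)^k - (U_1V_2)^k = (V_1U_2 - U_1V_2)\sum_{a+b=k-1}(V_1U_2)^a(U_1V_2)^b$, and since the intermediate monomials $(V_1U_2)^a(U_1V_2)^b$ with both $a,b\ge 1$ can be rewritten using $U_1V_2 \cdot \psi_{-1} = ?$ — I would track exactly which monomials act the same on $\psi_0$ and on $\psi_{-1}$ after applying (c), collecting the coefficient of $(\psi_0 - \psi_{-1})$ as the claimed $\tau$ in \eqref{def tau}, with the second (shifted, $\UU^{k(k-1)/2+1}$) family of terms arising precisely from the ``middle'' products where one has used a relation from (c) that introduces an extra factor $U_iV_i = \UU$.

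Finally, to conclude that $\tau$ is invertible, I observe from the formula \eqref{def tau} that the constant term (the $k=0$ term of the first sum) is $1$, and every other monomial appearing in $\tau$ is a genuine monomial in the maximal ideal of $\F[[U_1,U_2,V_1,V_2,\UU]]/(U_iV_i - \UU)$ — equivalently, $\tau = 1 + (\text{higher order})$ — so $\tau$ is a unit in the completed ring $\F[[U_1,\ldots,U_n,V_1,\ldots,V_n]]$ with inverse $\sum_{m\ge 0}(1-\tau)^m$. This gives the stated conclusion, and in particular $\Cone(\Psi) \simeq \Cone(\psi_0 - \psi_{-1})$ follows formally since multiplication by the unit $\tau$ is a chain isomorphism, proving Corollary \ref{cor: tau intro}.

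The main obstacle I anticipate is the bookkeeping in the middle step: correctly identifying, after applying the relations in Proposition \ref{prop:psi}(c), which partial products collapse onto a common multiple of $\psi_0 - \psi_{-1}$ versus which survive with an extra $\UU$, and matching the resulting coefficient exactly with the two-line expression for $\tau$ in \eqref{def tau} including the signs $(-1)^k$ and the quadratic exponents of $\UU$. This is purely a formal power-series manipulation, but getting the indices and the $\UU$-shifts to line up requires care.
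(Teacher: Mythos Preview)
Your proposal is correct and follows essentially the same route as the paper's proof: split $\Psi$ into the $k\ge 0$ and $k\le -1$ halves via Proposition~\ref{prop:psi}(a),(b), use the telescoping factorization $(V_1U_2)^k-(U_1V_2)^k=(V_1U_2-U_1V_2)\sum_{a+b=k-1}(V_1U_2)^a(U_1V_2)^b$, and then apply the relations in Proposition~\ref{prop:psi}(c) to convert everything into a multiple of $\psi_0-\psi_{-1}$. The paper streamlines your ``bookkeeping'' step via the single clean identity $(V_1U_2+V_1U_1)(\psi_0-\psi_{-1})=(V_1U_2-V_2U_1)\psi_0$, which immediately turns the correction term $\sum_k(-1)^k(V_1U_2-U_1V_2)C_k\UU^{k(k-1)/2}\psi_0$ into the required multiple of $\psi_0-\psi_{-1}$; this is exactly the mechanism you anticipated, just packaged more efficiently.
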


\begin{proof}
First, we introduce notations
$$
A_k=(V_1U_2)^k\UU^{\frac{k(k-1)}{2}},\ B_k=(U_1V_2)^k\UU^{\frac{k(k-1)}{2}},
$$
$$
C_k=(V_1U_2)^{k-1}+\ldots+(U_1V_2)^{k-1}=\sum_{i=0}^{k-1}(V_1U_2)^{i}(U_1V_2)^{k-1-i}.
$$
Clearly,
$$
A_k-B_k=(V_1U_2-U_1V_2)C_k \UU^{\frac{k(k-1)}{2}}$$
and so
$$A_k+U_1V_2C_k \UU^{\frac{k(k-1)}{2}}=B_k+V_1U_2C_k \UU^{\frac{k(k-1)}{2}}=C_{k+1} \UU^{\frac{k(k-1)}{2}},
$$
and
$$
\tau=\sum_{k\ge 0}(-1)^{k}(C_{k+1}+C_k\UU)\UU^{\frac{k(k-1)}{2}}=\sum_{k\ge 0}(-1)^{k}\left(B_k+(V_1U_2+V_1U_1)C_k\UU^{\frac{k(k-1)}{2}}\right).
$$
By Lemma \ref{prop:psi} parts (a) and (b) we have
 $
\psi_k=A_k\psi_0,\ \psi_{-1-k}=B_k\psi_{-1}
$ for $k\ge 0$, and 
therefore
$$
\Psi=\sum_{k\ge 0}(-1)^{k}A_k\psi_0-\sum_{k\ge 0}(-1)^{k}B_k\psi_{-1}=
$$
$$
\sum_{k\ge 0}(-1)^{k}B_k(\psi_0-\psi_{-1})+\sum_{k\ge 0}(-1)^{k}(A_k-B_k)\psi_{0}=
$$
$$
\sum_{k\ge 0}(-1)^{k}B_k(\psi_0-\psi_{-1})+\sum_{k\ge 0}(-1)^{k}\UU^{\frac{k(k-1)}{2}}C_k(V_1U_2-U_1V_2)\psi_{0}.
$$
By Lemma \ref{prop:psi} part (c) we have
$$
V_2\psi_0=V_1\psi_{-1},\ U_1\psi_0=U_2\psi_{-1},
$$
so 
$$
(V_1U_2+V_1U_1)(\psi_0-\psi_{-1})=V_1U_2\psi_0-V_1U_2\psi_{-1}+V_1U_1\psi_0-V_1U_1\psi_{-1}=$$
$$V_1U_2\psi_0-V_1U_1\psi_{0}+V_1U_1\psi_0-V_2U_1\psi_{0}=(V_1U_2-V_2U_1)\psi_0.
$$
Therefore 
$$
\Psi=\sum_{k\ge 0}(-1)^{k}B_k(\psi_0-\psi_{-1})+\sum_{k\ge 0}(-1)^{k}\UU^{\frac{k(k-1)}{2}}C_k(V_1U_2+V_1U_1)(\psi_0-\psi_{-1})=\tau(\psi_0-\psi_{-1}).
$$
\end{proof}

\begin{corollary}
\label{cor: short psi}
The cones of $\Psi$ and of $\psi_0-\psi_{-1}$ are homotopy equivalent.
\end{corollary}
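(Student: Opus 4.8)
The plan is to read this off directly from Lemma~\ref{lem: tau}: once we know $\Psi=\tau\,(\psi_0-\psi_{-1})$ with $\tau$ an invertible power series, the two cones differ only by post-composing the map defining the mapping cone with an automorphism of the target complex, hence are (even canonically) isomorphic.

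In more detail, I would first fix chain-level representatives, regarding $\psi_0$, $\psi_{-1}$ and $\Psi=\sum_{k\in\Z}(-1)^k\psi_k$ as chain maps $\cbCFL(L_+)\to\cbCFL(L_-)$; the infinite sum defining $\Psi$ and the power series $\tau$ both converge in the $\UU$-adic (equivalently, $(U_1,\dots,V_n)$-adic) completion, so $\Cone(\Psi)$ and $\Cone(\psi_0-\psi_{-1})$ are honest complexes of modules over $\F[[U_1,\dots,U_n,V_1,\dots,V_n]]$, well defined up to homotopy. Next I would observe that multiplication by $\tau$ is an \emph{invertible} chain map $T\colon\cbCFL(L_-)\to\cbCFL(L_-)$: the variables $U_i,V_i$ act by chain maps, so multiplication by any element of the completed ground ring commutes with the differential, and since $\tau=1+\cdots$ is a unit, $T^{-1}$ is multiplication by $\tau^{-1}$. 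Then the block map $\mathrm{id}\oplus T\colon\Cone(\psi_0-\psi_{-1})\to\Cone\bigl(T\circ(\psi_0-\psi_{-1})\bigr)$ is an isomorphism of complexes, with inverse $\mathrm{id}\oplus T^{-1}$ — this is the standard fact that post-composing the attaching map of a mapping cone with an isomorphism of the target changes the cone only by an isomorphism. Since $T\circ(\psi_0-\psi_{-1})=\Psi$ by Lemma~\ref{lem: tau}, this exhibits $\Cone(\psi_0-\psi_{-1})\cong\Cone(\Psi)$, which is stronger than the claimed homotopy equivalence.

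The step I expect to require the most care is ensuring that the identity of Lemma~\ref{lem: tau} is available at the level needed here and not merely on homology. I would check that the computation in the proof of Lemma~\ref{lem: tau}, which only uses the relations of Proposition~\ref{prop:psi}(a)--(c) together with basic algebra of the $\UU$-adically convergent sums $A_k$, $B_k$, $C_k$, goes through with those relations read as chain homotopies, yielding $\Psi\simeq T\circ(\psi_0-\psi_{-1})$ as chain maps. Then, using the standard fact that homotopic chain maps have homotopy equivalent mapping cones (valid verbatim over the completed coefficient ring), one still concludes $\Cone(\Psi)\simeq\Cone(\psi_0-\psi_{-1})$. The remaining details are routine mapping-cone bookkeeping.
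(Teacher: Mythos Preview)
Your proposal is correct and matches the paper's (implicit) approach: the corollary is stated without proof immediately after Lemma~\ref{lem: tau}, and the intended argument is exactly the one you give---multiplication by the unit $\tau$ is a chain automorphism of $\cbCFL(L_-)$, so $\Cone(\Psi)\simeq\Cone(\psi_0-\psi_{-1})$ via the block map $\mathrm{id}\oplus\tau$. Your care about the chain-level versus homology-level issue is appropriate and slightly more explicit than the paper; the relations of Proposition~\ref{prop:psi} do hold up to chain homotopy (they come from decompositions of cobordisms and Zemke's functoriality), so the algebra of Lemma~\ref{lem: tau} upgrades to $\Psi\simeq\tau(\psi_0-\psi_{-1})$ at the chain level in the completion, after which the mapping-cone bookkeeping is standard.
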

\begin{remark}
Note that at $V_1=V_2=1$ and $U_1=U_2=\UU$, we get that the invertible factor $\tau$ is
$$
\tau=\sum_{k\ge 0}(-1)^{k}\left((k+1)\UU^{k}\cdot \UU^{\frac{k(k-1)}{2}}+k\UU^{k-1}\cdot \UU^{\frac{k(k-1)}{2}+1}\right)=
\sum_{k\ge 0}(-1)^{k}(2k+1)\UU^{\frac{k(k+1)}{2}}
$$
which agrees with \cite[Theorem 3.7, Blow-up formula]{OS-4m} modulo $2$. That is because in this case $\psi_k$ is equal to the Ozsv\'ath-Szab\'o's cobordism map associated to the blow-up of the product cobordism $\left(S^3\times[0,1]\right)\#\ovl{\mathbb{CP}^2}$ from $S^3$ to $S^3$.

\end{remark}

\subsection{Skein exact sequence for Hopf link}
\label{skein:Hopf}

Next, we compute the skein exact sequence on the chain complex level for the Hopf link. Recall that the full link Floer complex for $H=T(2,2)$ has generators $a,b,c,d$ and the differential 
$$
\partial(c)=U_1a-V_2b,\ \partial(d)=U_2a-V_1b.
$$
The homology is generated by $a,b$ modulo relations $U_1a=V_2b,U_2a=V_1b$ as above.

On the other hand, the link Floer complex for the unknot has generators $1,\xi$ and the differential $\partial(\xi)=(U_1V_1-U_2V_2)$. 
By Example \ref{ex:Hopf-psi}
we have
$$
\psi_0(a)=V_1,\ \psi_0(b)=U_2,\ \psi_{-1}(a)=V_2,\ \psi_{-1}(b)=U_1.
$$
and the maps $\psi_0$ and $\psi_{-1}$ can be uniquely lifted to chain complex level by setting
$$
\psi_0(c)=\xi,\ \psi_0(d)=0, \psi_{-1}(c)=0,\ \psi_{-1}(d)=-\xi.
$$
Furthermore,  for all $k\ge 0$ we have 
$
\psi_k=A_k\psi_0,\ \psi_{-1-k}=B_k\psi_{-1},
$
where we follow the notations in Lemma \ref{lem: tau} and its proof. For concreteness, we can lift the statement of 
 Lemma \ref{lem: tau} to the level of chain complexes.
\begin{lemma}
\label{lem: tau hopf}
The map $\Psi:=\sum_{k\in \Z}(-1)^{k}\psi_k$ is homotopic to $\tau(\psi_0-\psi_{-1})$, where $\tau$ is defined by \eqref{def tau}.
\end{lemma}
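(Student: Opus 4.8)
The strategy is to repeat, at the chain level, the algebraic computation from the proof of Lemma~\ref{lem: tau}, replacing the one place where part~(c) of Proposition~\ref{prop:psi} was used by an explicit null-homotopy. Two facts make this work. First, for the Hopf link the chain-level analogues of parts~(a) and~(b) hold on the nose: this is exactly how the $\psi_k$ with $|k|\ge 1$ were introduced above, as $\psi_k=A_k\psi_0$ for $k\ge 0$ and $\psi_{-1-k}=B_k\psi_{-1}$ for $k\ge 0$, with $A_k,B_k,C_k$ as in the proof of Lemma~\ref{lem: tau}. Second, although the chain-level analogues of part~(c) fail on the nose --- e.g.\ $V_2\psi_0(b)=U_2V_2$ and $V_1\psi_{-1}(b)=U_1V_1$ are distinct in $R$ --- the two sides are chain homotopic by homotopies one can exhibit directly, and this is the only source of the difference between $\Psi$ and $\tau(\psi_0-\psi_{-1})$. (This is why one cannot simply invoke Lemma~\ref{lem: tau}: for maps between these non-acyclic complexes, agreement on homology does not by itself force chain homotopy.)

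So I would first produce $R$-linear maps $H,H'\colon\cCFL(T(2,2))\to\cCFL(O_2)$, of homological degree $+1$ and $-1$ respectively, with
\[
\partial H+H\partial=V_2\psi_0-V_1\psi_{-1},\qquad \partial H'+H'\partial=U_1\psi_0-U_2\psi_{-1}.
\]
Using $\partial c=U_1a-V_2b$, $\partial d=U_2a-V_1b$, $\partial a=\partial b=0$ and $\partial\xi=U_1V_1-U_2V_2$, one checks on the four generators that $H(b)=\xi$ with $H(a)=H(c)=H(d)=0$ solves the first equation, and $H'(a)=\xi$ with $H'(b)=H'(c)=H'(d)=0$ solves the second (all computations are over $\F=\Z/2\Z$, so signs play no role).

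Next I would run the computation of Lemma~\ref{lem: tau} with these homotopies inserted. As there, one has the exact identity
\[
\Psi=\sum_{k\ge 0}(-1)^kB_k(\psi_0-\psi_{-1})+\sum_{k\ge 0}(-1)^k\UU^{\frac{k(k-1)}{2}}C_k\,(V_1U_2-U_1V_2)\psi_0,
\]
and, writing $G:=V_1H'+U_1H$, the two homotopies combine to give the chain-level identity
\[
(V_1U_2+V_1U_1)(\psi_0-\psi_{-1})=(V_1U_2-U_1V_2)\psi_0+\partial G+G\partial.
\]
Substituting this, using that $\partial$ commutes with multiplication by elements of $R$, and grouping the main term into $\tau(\psi_0-\psi_{-1})$ exactly as in the proof of Lemma~\ref{lem: tau}, yields
\[
\Psi+\tau(\psi_0-\psi_{-1})=\partial S+S\partial,\qquad S:=\sum_{k\ge 0}(-1)^k\UU^{\frac{k(k-1)}{2}}C_k\,G,
\]
i.e.\ $S$ is the map $a\mapsto\tau'V_1\xi$, $b\mapsto\tau'U_1\xi$, $c,d\mapsto 0$, where $\tau'=\sum_{k\ge 0}(-1)^k\UU^{\frac{k(k-1)}{2}}C_k$. (One could instead just posit this $S$ and verify $\partial S+S\partial=\Psi+\tau(\psi_0-\psi_{-1})$ by hand on $a,b,c,d$, which is a short finite check.)

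The only genuine subtlety, and the step I expect to require the most care, is convergence: $\UU^{k(k-1)/2}C_k$ lies in increasingly high powers of the maximal ideal of $R$ as $k\to\infty$, so the series defining $S$ --- like those defining $\Psi$ and $\tau$ --- converges in the completion, and $S$ is an honest chain homotopy between $\Psi$ and $\tau(\psi_0-\psi_{-1})$ as maps $\cbCFL(T(2,2))\to\cbCFL(O_2)$. Apart from organizing the double-sum rearrangement so that it takes place in the completed complex, everything is explicit, finite bookkeeping on the four generators $a,b,c,d$.
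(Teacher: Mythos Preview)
Your proof is correct and arrives at exactly the same homotopy as the paper: over $\F$ your $S$ satisfies $S(a)=\tau' V_1\xi$, $S(b)=\tau' U_1\xi$, $S(c)=S(d)=0$, which is precisely the paper's $h$ (defined by $h(a)=h_a\xi$, $h(b)=h_b\xi$, $h(c)=h(d)=0$ with $h_a=V_1\tau'$, $h_b=-U_1\tau'$). The only difference is organizational: the paper simply posits $h$ and verifies $\Psi+\partial h+h\partial=\tau(\psi_0-\psi_{-1})$ on each generator, whereas you first isolate the two ``primitive'' homotopies $H,H'$ witnessing the chain-level failure of Proposition~\ref{prop:psi}(c) and then assemble $S$ from them by rerunning the algebra of Lemma~\ref{lem: tau}.
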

\begin{proof}
We have
$$
\Psi=\sum_{k\in \Z}(-1)^{k}\psi_k=\sum_{k\ge 0}(-1)^{k}(A_k\psi_0-B_k\psi_{-1}).
$$
Define 
$$
h_a=V_1\sum_{k\ge 0}(-1)^{k}C_k\UU^{\frac{k(k-1)}{2}},\ h_b=-U_1\sum_{k\ge 0}(-1)^{k}C_k\UU^{\frac{k(k-1)}{2}}
$$
then 
$$
\tau=\sum_{k\ge 0}(-1)^{k}A_k+U_1h_a-V_2h_b=\sum_{k\ge 0}(-1)^{k}B_k+U_2h_a-V_1h_b.
$$
We define the homotopy $h$ by $h(a)=h_a\xi, h(b)=h_b\xi$ and $h(c)=h(d)=0$, and let $\widetilde{\Psi}=\Psi+\partial h+h\partial.$ Then,
$$
\widetilde{\Psi}(c)=\sum_{k\ge 0}(-1)^{k}A_k\xi+h(U_1a-V_2b)=\tau\xi
$$
$$
\widetilde{\Psi}(d)=\sum_{k\ge 0}(-1)^{k}B_k\xi+h(U_2a-V_1b)=\tau\xi
$$
$$
\widetilde{\Psi}(a)=V_1\sum_{k\ge 0}(-1)^kA_k-V_2\sum_{k\ge 0}(-1)^{k}B_k+\partial(h_a\xi)=
$$
$$
V_1\sum_{k\ge 0}(-1)^kA_k-V_2\sum_{k\ge 0}(-1)^{k}B_k+(U_1V_1-U_2V_2)h_a=(V_1-V_2)\tau 
$$
Similarly, $\widetilde{\Psi}(b)=\tau(U_2-U_1)$ and we conclude that $\widetilde{\Psi}=\tau(\psi_0-\psi_{-1})$.
\end{proof}

By Lemma \ref{lem: tau hopf} we can replace the cone of $\Psi$ by the cone of $\psi_0-\psi_{-1}$ which is isomorphic to the following complex:
\begin{center}
\begin{tikzcd}
 & \xi \arrow[bend left=50]{ddr}{U_1V_1-U_2V_2}&  \\
c \arrow{r}{U_1} \arrow[near end]{ddr}{-V_2} \arrow{ur}{1}&  a \arrow[near start]{dr}{V_1-V_2}&  \\
      &  &  1  \\
d \arrow{r}{-V_1} \arrow[near start]{uur}{U_2} \arrow[bend left=100]{uuur}{1}& b\arrow[swap]{ur}{U_2-U_1} &  \\
\end{tikzcd}
\end{center}

We can define $\xi'=c-d$ and change the basis from $(c,d)$ to $(\xi',d)$:
\begin{center}
\begin{equation}\label{coneK}
\begin{tikzcd}
 & \xi \arrow[bend left=50]{ddr}{U_1V_1-U_2V_2}&  \\
\xi' \arrow{r}{U_1-U_2} \arrow[near end]{ddr}{V_1-V_2} &  a \arrow[near start]{dr}{V_1-V_2}&  \\
      &  &  1  \\
d \arrow{r}{-V_1} \arrow[near start]{uur}{U_2} \arrow[bend left=100]{uuur}{1}& b\arrow[swap]{ur}{U_2-U_1} &  \\
\end{tikzcd}
\end{equation}
$\simeq$
\begin{tikzcd}
\xi' \arrow{r}{U_1-U_2} \arrow[swap]{dr}{V_1-V_2}  &  a \arrow[near start]{dr}{V_1-V_2}&  \\
      & b\arrow[swap]{r}{U_2-U_1} &  1  \\ 
\end{tikzcd}
\end{center}
Here we use the fact that the quotient complex $d\xrightarrow{1} \xi$ is contractible.

\begin{lemma}
Let $\Phi=\sum_{k} (-1)^{k}\phi_k: \cbCFL(O_2)\to \cbCFL(T(2,2))$, then the cone of $\Phi$ is quasi-isomorphic to the cone of $\Psi$ up to relabeling the variables.
\end{lemma}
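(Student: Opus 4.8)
The plan is to reduce each of the two mapping cones to a small explicit complex over $R=\F[U_1,U_2,V_1,V_2]$ and then to match the two complexes under a relabeling of the variables. The first step is to establish the exact analogue of Lemma~\ref{lem: tau hopf} for $\Phi=\sum_k(-1)^k\phi_k$, namely that $\Phi$ is chain homotopic to $\tau'(\phi_0-\phi_1)$ for an invertible power series $\tau'=1+\ldots\in\F[[U_1,U_2,V_1,V_2]]$. The argument is a verbatim repetition of the telescoping computation in Lemmas~\ref{lem: tau} and \ref{lem: tau hopf}, now using parts (a)--(c) of Proposition~\ref{prop:phi} in place of those of Proposition~\ref{prop:psi}: one runs the same manipulation with $A_k,B_k$ the evident analogues built from $(U_iU_j)^k$ and $(V_iV_j)^k$ and with the substitutions dictated by $U_i\phi_0=V_j\phi_1$, $U_j\phi_0=V_i\phi_1$. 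It follows that the cone of $\Phi$ is homotopy equivalent to the cone of the homological-degree-zero map $\phi_0-\phi_1:\cbCFL(O_2)\to\cbCFL(T(2,2))$.

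Next I would write down the chain-level lift of $\phi_0-\phi_1$. Recall $\cbCFL(O_2)=\langle 1,\xi\rangle$ with $\partial\xi=U_1V_1-U_2V_2$, and $\cbCFL(T(2,2))=\langle a,b,c,d\rangle$ with $\partial c=U_1a-V_2b$, $\partial d=U_2a-V_1b$. Using Example~\ref{ex:Hopf-phi} ($\phi_0(1)=a$, $\phi_1(1)=b$) together with the grading shifts of Proposition~\ref{prop:negative}, the chain maps $\phi_0$ and $\phi_1$ are uniquely determined (the only ambiguity would be a cycle, and there are no cycles in the relevant Maslov/Alexander degrees), and one computes $\phi_0(\xi)=V_1c-V_2d$, $\phi_1(\xi)=U_2c-U_1d$; hence $(\phi_0-\phi_1)(1)=a-b$ and $(\phi_0-\phi_1)(\xi)=(V_1-U_2)c+(U_1-V_2)d$. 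Forming the mapping cone and cancelling the acyclic subcomplex spanned by $1$ and $a-b$ (one has $\partial 1=a-b$ and $\partial(a-b)=0$), the cone of $\phi_0-\phi_1$ becomes homotopy equivalent to the complex on generators $\xi,c,d,b$ with
$$
\partial\xi=(V_1-U_2)c+(U_1-V_2)d,\qquad \partial c=(U_1-V_2)b,\qquad \partial d=(U_2-V_1)b,\qquad \partial b=0.
$$

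Finally I would compare this with the cone of $\Psi$. By Lemma~\ref{lem: tau hopf} and the computation ending in \eqref{coneK}, the cone of $\Psi$ is homotopy equivalent to the complex on generators $\xi',a,b,1$ with $\partial\xi'=(U_1-U_2)a+(V_1-V_2)b$, $\partial a=(V_1-V_2)\cdot 1$, $\partial b=(U_2-U_1)\cdot 1$, $\partial 1=0$. The ring automorphism of $R$ interchanging $U_j\leftrightarrow V_j$ (for the Hopf link, $U_2\leftrightarrow V_2$) and fixing the other variables carries this complex onto the complex of the previous paragraph, under the renaming of generators $\xi'\mapsto\xi$, $a\mapsto d$, $b\mapsto c$, $1\mapsto b$; over $\F=\Z/2$ the signs are irrelevant. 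This gives the desired quasi-isomorphism of cones up to relabeling the variables. Conceptually this automorphism is the algebraic shadow of reversing the orientation of the $j$-th strand, which turns the two strands running through the $(-1)$-framed unknot from parallel into antiparallel and thus turns the cobordism defining $\phi_k$ into one of the type defining $\psi_k$ (simultaneously replacing $T(2,2)$ by $-T(2,2)$); one could alternatively run the whole argument around this symmetry, bootstrapping from Examples~\ref{ex:Hopf-psi}, \ref{ex:negativeHopf} and \ref{negativeHopf2}.

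The step I expect to cost the most care is the first one: confirming that the $\tau$-type telescoping of Lemma~\ref{lem: tau hopf} really goes through verbatim for the maps $\phi_k$ and that the resulting series $\tau'$ is invertible. The remaining work is routine Gaussian elimination, provided one keeps in mind that the relevant ``relabeling of variables'' is the asymmetric swap $U_j\leftrightarrow V_j$ rather than the global exchange $U\leftrightarrow V$.
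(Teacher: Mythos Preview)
Your argument is correct, and the final identification via the swap $U_2\leftrightarrow V_2$ matches the paper exactly (this is precisely the content of the remark following the lemma). However, your route differs from the paper's in a meaningful way.

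The paper does \emph{not} first reduce $\Phi$ to $\tau'(\phi_0-\phi_1)$ at the chain level. Instead it works directly on homology: it writes $\Phi(1)=\mu_0 a-\mu_1 b$ with $\mu_0,\mu_1$ invertible power series, observes that the cone of $\Phi$ therefore has homology equal to the cokernel $\cHFL(T(2,2))/\langle\Phi(1)\rangle$, eliminates $b$ using $b=\mu_0\mu_1^{-1}a$, and rewrites the two Hopf-link relations $U_1a=V_2b$, $U_2a=V_1b$ as relations on $a$ alone. Only at this last stage does the telescoping identity $V_2\mu_0-U_1\mu_1=(V_2-U_1)\tau'$ appear, reducing those relations to $a(V_2-U_1)=a(V_1-U_2)=0$. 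The comparison with $\Cone(\Psi)\simeq R/(U_1-U_2,V_1-V_2)$ is then immediate.

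So the paper factors out $\tau'$ from the \emph{relations} in the cokernel, whereas you factor out $\tau'$ from the \emph{map} $\Phi$ itself before forming the cone, and then perform an explicit Gaussian elimination. Your approach buys a chain-level identification of the two cones as Koszul complexes, at the cost of needing the chain-level homotopy $\Phi\simeq\tau'(\phi_0-\phi_1)$ (which, as you note, is the step requiring the most care; it does go through by the same mechanism as Lemma~\ref{lem: tau hopf}). The paper's approach is shorter because it never leaves the homology level, which is harmless here since both $O_2$ and $T(2,2)$ are L--space links with torsion-free $\cHFL$.
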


\begin{proof}
By Example \ref{ex:Hopf-phi} we have $\phi_{-k}(1)=(V_1V_2)^{k}\UU^{\frac{k(k-1)}{2}}a$ and $\phi_{1+k}(1)=(U_1U_2)^{k}\UU^{\frac {k(k-1)}{2}}b$ for $k\ge 0$, so
$$
\Phi(1)=a\sum_{k\ge 0}(-1)^{k}(V_1V_2)^{k}\UU^{\frac{k(k-1)}{2}}-b\sum_{k\ge 0}(-1)^{k}(U_1U_2)^{k}\UU^{\frac{k(k-1)}{2}}.
$$
The homology of the cone of $\Phi$ is generated by $a$ and $b$ modulo relations $U_1a=V_2b,U_2a=V_1b$ and $\Phi(1)=0$. Since the coefficients at $a$ and $b$ in $\Phi(1)$ are invertible, the result is generated by $a$ modulo relations
\begin{equation}\label{eq: cone Phi}
\begin{aligned}
aV_2\sum_{k\ge 0}(-1)^{k}(V_1V_2)^{k}\UU^{\frac{k(k-1)}{2}}&=aU_1\sum_{k\ge 0}(-1)^{k}(U_1U_2)^{k}\UU^{\frac{k(k-1)}{2}},\\
aV_1\sum_{k\ge 0}(-1)^{k}(V_1V_2)^{k}\UU^{\frac{k(k-1)}{2}}&=aU_2\sum_{k\ge 0}(-1)^{k}(U_1U_2)^{k}\UU^{\frac{k(k-1)}{2}}
\end{aligned}
\end{equation}
We claim that 
\begin{equation}\label{eq: tau prime}
V_2\sum_{k\ge 0}(-1)^{k}(V_1V_2)^{k}\UU^{\frac{k(k-1)}{2}}-U_1\sum_{k\ge 0}(-1)^{k}(U_1U_2)^{k}\UU^{\frac{k(k-1)}{2}}=(V_2-U_1)\tau'
\end{equation}
where
\begin{multline*}
\tau'=\sum_{k\ge 0}(-1)^{k}\left((U_1U_2)^{k}+(U_1U_2)^{k-1}(V_1V_2)+\ldots+(V_1V_2)^{k}\right)\UU^{\frac{k(k-1)}{2}}+\\
\sum_{k\ge 0}(-1)^{k}\left((U_1U_2)^{k-1}+(U_1U_2)^{k-2}(V_1V_2)\ldots+(V_1V_2)^{k-1}\right)\UU^{\frac{k(k-1)}{2}+1}.
\end{multline*}
Indeed, let $C'_{k}=\sum_{i=0}^{k-1}(V_1V_2)^i(U_1U_2)^{k-1-i}$, then
$$
V_2(V_1V_2)^{k}-U_1(U_1U_2)^{k}=(V_2-U_1)(V_1V_2)^k-U_1(U_1U_2-V_1V_2)C'_k
$$
and $U_1U_2-V_1V_2=(U_1-V_2)(U_2+V_1)$, hence
$$
(V_2-U_1)\left[(V_1V_2)^k+U_1(U_2+V_1)C'_k\right]=(V_2-U_1)\left[C'_{k+1}+\UU C'_k\right].
$$
Now by \eqref{eq: tau prime} we can rewrite the equations \eqref{eq: cone Phi} as
$$
a(V_2-U_1)\tau'=a(V_1-U_2)\tau'=0
$$
which is equivalent to 
$$
a(V_2-U_1)=a(V_1-U_2)=0.
$$
\end{proof}

\begin{remark}
Note that the above computation is very similar to the one in Lemma \ref{lem: tau}, in particular, $\tau'$ is related to $\tau$ (resp. $C'_k$ is related to $C_k$) by exchanging the variables $V_2\leftrightarrow U_2$ which corresponds to changing the orientation on a link component.
\end{remark}

\section{Link splitting maps}
\label{sec: splitting maps}

\subsection{Splitting maps}

Let $L=L_1\cup\ldots\cup L_n$ be an arbitrary link in the three-sphere. We can change the crossings between different components in $L$ arbitrarily and consider the corresponding maps in Heegaard Floer homology: if we change a  positive crossing to a negative one we can use either $\psi_{-1}$ or $\psi_0$, and  if we change a  negative crossing to a positive one we can use either $\phi_0$ or $\phi_1$.
This does not change the topological type of the components and we will denote the components for all such links by $L_i$. In particular, by such crossing changes we can transform $L$ to the split link $\Split(L)$ obtained by the split union of all link components $L_i$. 

More precisely, we consider two links $L,L'$ related by such crossing changes, and a chain map $F:\cHFL(L)\to \cHFL(L')$
obtained as a composition of: 
\begin{itemize}
\item $P_{ij}^{-1}$ of maps $\psi_{-1}$ associated to positive crossings between $L_i$ and $L_j$;
\item $P_{ij}^{0}$ of maps $\psi_{0}$ associated to positive crossings between $L_i$ and $L_j$;
\item $N_{ij}^{0}$ of maps $\phi_{0}$ associated to negative crossings between $L_i$ and $L_j$;
\item $N_{ij}^{1}$ of maps $\phi_{1}$ associated to negative crossings between $L_i$ and $L_j$.
\end{itemize}
Note that, in principle, $F$ may depend on the order of the crossings and the choice of these crossings (for example, for the Hopf link we can change either one of two crossings to transform it to unlink). Also note that the linking number between $L_i$ and $L_j$ changes by 
$$
\lk_{L'}(L_i,L_j)-\lk_L(L_i,L_j)=N_{ij}^{1}+N_{ij}^{0}-P_{ij}^{0}-P_{ij}^{-1}.
$$
Nevertheless, we have the following general result:
\begin{lemma}
\label{lem: many crossing changes}
Let $L,L'$ be two links related by crossing changes as above. Then:
\begin{itemize}
\item[(a)] The chain map $F:\cHFL(L)\to \cHFL(L')$ has Alexander degree
$$
A(F)=\sum_{i<j}\left[ \frac{1}{2}(P_{ij}^{0}-P_{ij}^{-1})(\ee_i-\ee_j)+\frac{1}{2}(N_{ij}^{0}-N_{ij}^{1})(\ee_i+\ee_j)\right]
$$
and homological degree 
$$
\gr_{\w}(F)=-2\sum_{i<j}N_{ij}^{1}.
$$

\item[(b)] For any $\k\in\HH_L$, the map $F:\cHFL(L,\k)\to \cHFL(L',\k+A(F))$, restricted to the $\F[\UU]$-free summand, is injective and determined by its homological degree. 

\item[(c)] The $h$-functions of $L$ and $L'$ are related by the inequality
$$
h_L(\k)+\sum_{i<j}N_{ij}^{1}\ge h_{L'}(\k+A(F)).
$$ 
\end{itemize}
\end{lemma}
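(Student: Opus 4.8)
For part (a) the plan is to use additivity of the Alexander and Maslov gradings under composition of chain maps. By Proposition~\ref{prop:positive}, a factor $\psi_0$ (resp.\ $\psi_{-1}$) inserted at a positive crossing between $L_i$ and $L_j$ contributes Alexander degree $\tfrac12(\ee_i-\ee_j)$ (resp.\ $-\tfrac12(\ee_i-\ee_j)$) and homological degree $0$; by Proposition~\ref{prop:negative}, a factor $\phi_0$ (resp.\ $\phi_1$) at a negative crossing contributes Alexander degree $\tfrac12(\ee_i+\ee_j)$ (resp.\ $-\tfrac12(\ee_i+\ee_j)$) and homological degree $0$ (resp.\ $-2$). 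For links in $S^3$ the relevant $c_1$'s are torsion, so by Theorem~\ref{thm:grading} each cobordism map shifts the Alexander multigrading by \emph{exactly} its Alexander degree; hence $F$ sends $\cHFL(L,\k)$ into $\cHFL(L',\k+A(F))$ for each $\k$. Summing the contributions above over the $P^0_{ij}$, $P^{-1}_{ij}$, $N^0_{ij}$, $N^1_{ij}$ crossing changes gives the displayed formulas, and only the $\phi_1$ factors move $\gr_{\w}$.

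For parts (b) and (c) I would first record that $F$ is a composition of link cobordism maps $F_{W_t,\bF_t,\s_t}$, where each $W_t$ is obtained by attaching a single $(-1)$-framed $2$-handle as in Figure~\ref{blowup}, so $b_2^+(W_t)=0$. By the theorem preceding Corollary~\ref{cor: negative definite}, each such map is an isomorphism on $\cHFL^\infty$ in every Alexander grading; composing, $F$ induces an isomorphism $F^\infty\colon\cHFL^\infty(L,\k)\xrightarrow{\ \sim\ }\cHFL^\infty(L',\k+A(F))$ for every $\k$. Now fix $\k$, write $\cHFL(L,\k)=\F[\UU]\,z\oplus T$ and $\cHFL(L',\k+A(F))=\F[\UU]\,z'\oplus T'$ with $T,T'$ the $\UU$-torsion (this splitting is the corollary to the large surgery theorem), and let $\iota,\iota'$ denote the localization maps into the corresponding $\cHFL^\infty$. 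The structural facts to use are: $\iota(z)$ and $\iota'(z')$ are generators of the rank-one free $\F[\UU,\UU^{-1}]$-modules $\cHFL^\infty(L,\k)$ and $\cHFL^\infty(L',\k+A(F))$; and the image of $\iota'$ is exactly $\F[\UU]\cdot\iota'(z')$, since $T'$ dies under localization. Being an $\F[\UU,\UU^{-1}]$-module isomorphism, $F^\infty$ sends the generator $\iota(z)$ to a unit multiple of $\iota'(z')$, i.e.\ $F^\infty(\iota(z))=\UU^{c}\iota'(z')$ for a unique $c\in\Z$; and $c$ is pinned down by matching Maslov gradings (using $\gr_{\w}(z)=-2h_L(\k)$, $\gr_{\w}(z')=-2h_{L'}(\k+A(F))$ and the value of $\gr_{\w}(F)$ from part (a)):
\[
c=h_L(\k)-h_{L'}\bigl(\k+A(F)\bigr)+\sum_{i<j}N^1_{ij}.
\]

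To conclude, commutativity $\iota'\circ F=F^\infty\circ\iota$ gives $\iota'(F(z))=\UU^{c}\iota'(z')$; since $\iota'(F(z))$ lies in the image of $\iota'$, which is $\F[\UU]\cdot\iota'(z')$, this forces $c\ge 0$, and $c\ge 0$ is exactly the inequality $h_L(\k)+\sum_{i<j}N^1_{ij}\ge h_{L'}(\k+A(F))$ of part (c). The same computation gives $\iota'(F(\UU^{t}z))=\UU^{t+c}\iota'(z')\neq 0$ for all $t\ge 0$, so $F$ restricted to $\F[\UU]\cdot z$ is injective, and $F(z)\equiv\UU^{c}z'$ modulo $T'$ with $c$ depending only on $\gr_{\w}(F)$ and the $h$-functions, which is the "determined by its homological degree" assertion of part (b). I expect the only point requiring real care, as opposed to grading bookkeeping, to be that $L$ and $L'$ are \emph{not} assumed to be L-space links, so Corollary~\ref{cor: negative definite} cannot be quoted verbatim; instead one must run the localization argument above and correctly identify the image of $\cHFL$ inside $\cHFL^\infty$ as the $\F[\UU]$-span of the tower generator, after which everything reduces to the grading formulas of Section~\ref{sec: surgery}.
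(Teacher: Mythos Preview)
Your argument for part (a) matches the paper's. For parts (b) and (c), your approach is correct but genuinely different from the paper's. You pass to $\cHFL^\infty$ and use the theorem preceding Corollary~\ref{cor: negative definite} (each $(-1)$-handle attachment has $b_2^+=0$, so the induced map is an isomorphism on $\cHFL^\infty$ in each Alexander grading); injectivity on the tower and the $h$-function inequality then drop out of the commutativity $\iota'\circ F=F^\infty\circ\iota$ and the identification of $\mathrm{Im}\,\iota'$ with $\F[\UU]\cdot\iota'(z')$. The paper instead avoids $\cHFL^\infty$ entirely: it invokes Proposition~\ref{prop: psi phi} to pair each factor of $F$ with a reverse crossing-change map so that the composite $F'\circ F$ is multiplication by an explicit monomial in the $U_i,V_i$; since such a monomial is injective on the $\F[\UU]$-free part (any monomial divides a power of $\UU$), $F$ must be injective there, and (c) follows by comparing gradings. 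Your route is more conceptual and would apply to any composition of $b_2^+=0$ link cobordisms between links in $S^3$, while the paper's route is more hands-on and yields the concrete monomial identities for $F'\circ F$ that are reused verbatim in Theorem~\ref{thm:torsionkill} to produce torsion-annihilating monomials. The one point you should state explicitly is that $\cHFL^\infty(L,\k)$ is free of rank one over $\F[\UU,\UU^{-1}]$ with $\iota(z)$ as a generator; this follows from the identification $\cCFL^\infty(L,\k)\cong \mathit{CF}^\infty(S^3,\w)$ used in the proof of that theorem, but it is worth making precise.
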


\begin{proof}
Part (a) is clear from the degrees of maps $\psi_{-1},\psi_0,\phi_0,\phi_1$ computed in Propositions \ref{prop:positive} and \ref{prop:negative}.

Part (b) follows from Proposition \ref{prop: psi phi}. Note that  $F$ is a composition of crossing change maps. For each crossing change map $\phi_0, \phi_1, \psi_0, \psi_{-1}$ associated to the link components $L_i, L_j$, by Proposition \ref{prop: psi phi}, one can choose a corresponding crossing change map such that the composition of these two crossing change maps is one of the monomials $U_i, U_j, V_i, V_j$. Hence, 
 we can compose the map $F$ with another map $F':\cHFL(L')\to \cHFL(L)$ such that the composition $F'\circ F$ is given by a monomial in variables $U_1,\cdots, U_n, V_1, \cdots, V_n$. Therefore,  the map  $F: \cHFL(L, \k)\rightarrow \cHFL(\L', \k+A(F))$ is injective when restricted to the $\F[\UU]$-free part of  $\cHFL(L, \k)$.

Now part (c) follows from (b): indeed, $F$ sends $\F[\UU][-2h_L(\k)]$ to  $$\F[\UU]\left[-2h_L(\k)-2\sum_{i<j}N_{ij}^{1}\right]$$ which should inject into $\F[\UU][-2h_{L'}(\k+A(F))].$ Hence,
$$
-2h_L(\k)-2\sum_{i<j}N_{ij}^{1}\le -2h_{L'}(\k+A(F))
$$
which yields the desired inequality. 
\end{proof}

If $L$ and $L'$ differ by a single positive crossing change, that is, $L'$ is obtained by changing a negative crossing of $L$ into a positive one, we can recover the comparison between $h_L$ and $h_L'$ in item (b) of Theorem 6.20 in \cite{BG}. 

\begin{corollary}
\label{coro:onecrossing}
Let $L$ be an $n$-component link in $S^3$. Suppose $L'$ is obtained by changing a negative crossing between the components $L_i$ and $L_j$ of $L$ into a positive one. Then 
$$\max \{h_{L'}(\k+\dfrac{1}{2}(\ee_i+\ee_j)), h_{L'}(\k-\dfrac{1}{2}(\ee_i+\ee_j))-1\}\leq h_{L}(\k)\leq \min\{h_{L'}(\k\pm \dfrac{1}{2}(\ee_i-\ee_j))\}$$
for all $\k\in \HH_L$. 

\end{corollary}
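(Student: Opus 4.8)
The plan is to derive all four inequalities from Lemma \ref{lem: many crossing changes}(c), applied to the single crossing change relating $L$ and $L'$, run once in each direction and using each of the two available crossing-change maps in turn. Before doing anything else I would record the relevant lattice bookkeeping: changing a negative crossing between $L_i$ and $L_j$ into a positive one raises $\lk(L_i,L_j)$, and hence $\ell_i$ and $\ell_j$, by one, so $\HH_{L'}=\HH_L+\tfrac12(\ee_i+\ee_j)$. Combined with $\ee_i,\ee_j\in\Z^n$, this identity is what guarantees that all the shifted Alexander gradings appearing below lie in the correct lattices, and it is the only point at which the change of linking number enters.

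For the lower bound I would set $L=L_-$ and $L'=L_+$ and apply Lemma \ref{lem: many crossing changes} to the crossing change from $L$ to $L'$, realized in turn by $\phi_0$ and by $\phi_1$ (so that $N_{ij}^{0}+N_{ij}^{1}=1$ and all other counts vanish). By Proposition \ref{prop:negative}, the map built from $\phi_0$ has $A(F)=\tfrac12(\ee_i+\ee_j)$ and $\sum_{i<j}N_{ij}^{1}=0$, while the one built from $\phi_1$ has $A(F)=-\tfrac12(\ee_i+\ee_j)$ and $\sum_{i<j}N_{ij}^{1}=1$. Part (c) of the lemma then gives $h_L(\k)\ge h_{L'}(\k+\tfrac12(\ee_i+\ee_j))$ and $h_L(\k)+1\ge h_{L'}(\k-\tfrac12(\ee_i+\ee_j))$ for all $\k\in\HH_L$, and taking the maximum of these two produces the left-hand inequality.

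For the upper bound I would use that $L$ is equally well obtained from $L'$ by switching the (now positive) crossing back to a negative one, and apply Lemma \ref{lem: many crossing changes} to the crossing change from $L'$ to $L$, realized by $\psi_0$ and by $\psi_{-1}$ (so that $P_{ij}^{0}+P_{ij}^{-1}=1$ and all other counts, in particular every $N_{ij}^{1}$, vanish). By Proposition \ref{prop:positive}, these maps have $A(F)=\pm\tfrac12(\ee_i-\ee_j)$ and homological degree $0$, so part (c) of the lemma gives $h_{L'}(\k')\ge h_L(\k'\pm\tfrac12(\ee_i-\ee_j))$ for every $\k'\in\HH_{L'}$. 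Substituting $\k'=\k\mp\tfrac12(\ee_i-\ee_j)$ and taking the minimum of the two resulting inequalities yields the right-hand inequality, which completes the proof.

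I do not expect any genuine obstacle here: the whole content is already packaged in Lemma \ref{lem: many crossing changes}, and what remains is bookkeeping — keeping track of which of $L,L'$ plays the role of $L_+$ and which of $L_-$ in each of the four invocations, pinning down the sign of the Alexander shift $A(F)$ for each map via Propositions \ref{prop:positive} and \ref{prop:negative}, and carrying out the index substitutions that rephrase everything in terms of a single $\k\in\HH_L$. The one place demanding a little care is the interplay of the lattices $\HH_L$ and $\HH_{L'}$. As a consistency check, this estimate should specialize to the comparison of $h$-functions under a single positive crossing change recorded in \cite[Theorem 6.20(b)]{BG}, as noted just before the statement.
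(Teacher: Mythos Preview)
Your proposal is correct and follows essentially the same approach as the paper: both derive the four inequalities by applying Lemma~\ref{lem: many crossing changes}(c) to the single crossing change in each direction, using $\phi_0,\phi_1$ for the lower bound and $\psi_0,\psi_{-1}$ for the upper bound. You are in fact slightly more careful than the paper about the lattice shift $\HH_{L'}=\HH_L+\tfrac12(\ee_i+\ee_j)$ and about the substitution $\k'=\k\mp\tfrac12(\ee_i-\ee_j)$ needed to rewrite the upper bound in terms of $\k\in\HH_L$.
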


\begin{proof}
Note that $L'$ is obtained from $L$ by changing a negative crossing to a positive crossing, which induces a cobordism map $\phi_{0}$ or $\phi_{1}$. So we can make  $N^{0}_{ij}=1$ or $N^{1}_{ij}=1$. By  Lemma \ref{lem: many crossing changes}, if $N^{0}_{ij}=1$ then 
$$h_{L}(\k)\geq h_{L'}(\k+\dfrac{1}{2}(\ee_i+\ee_j)).$$
If $N^{1}_{ij}=1$ then 
$$h_{L}(\k)+1\geq h_{L'}(\k-\dfrac{1}{2}(\ee_i+\ee_j)).$$
Conversely, $L$ can be obtained from $L'$ by changing a positive crossing to a negative crossing. Then we can make $P^{0}_{ij}=1$ or $P^{-1}_{ij}=1$. By Lemma  \ref{lem: many crossing changes} again, if $P^{0}_{ij}=1$, then 
$$h_{L'}(\k)\geq h_{L}(\k+\dfrac{1}{2}(\ee_i-\ee_j)).$$
If $P^{-1}_{ij}=1$, then 
$$h_{L'}(\k)\geq h_{L}(\k-\dfrac{1}{2}(\ee_i-\ee_j)). $$

\end{proof}

Note that \cite[Theorem 6.20]{BG} uses the $J$-function, which is some normalizations of the $h$-function in the following formula:
$$J_{L}(\m)=h_{L}(\m+\frac{1}{2}(\ell_1, \cdots, \ell_n))$$
for $\m\in \mathbb{Z}^{n}$. Let $\mathbf{\ell}_{L}=\frac{1}{2}(\ell_1, \cdots, \ell_n)$. Since $L'$ is obtained from $L$ by changing a negative crossing to a positive crossing between $L_i$ and $L_j$. Then $\mathbf{\ell}_{L'}=\ell_L+\dfrac{1}{2}(\ee_i+\ee_j)$. Then the inequalities in Corollary \ref{coro:onecrossing} become
$$\max \{J_{L'}(\m), J_{L'}(\m-\ee_i-\ee_j)-1\}\leq J_{L}(\m)\leq \min \{J_{L'}(\m-\ee_i), J_{L'}(\m-\ee_j)\}.$$
So we recover Theorem 6.20 in \cite{BG}. Moreover, we have an extra inequality that $J_{L'}(\m-\ee_i-\ee_j)-1\leq J_{L}(\m)$. Note that $h_{L}(\k)=h_{L}(\k+\ee_i)$ or $h_{L}(\k)=h_{L}(\k+\ee_i)+1$ for all links $L$ and all $i$. So the above inequalities imply that $J_{L}(\m)=J_{L'}(\m)$ or $J_{L}(\m)=J_{L'}(\m)+1$.

\subsection{Positive links}

Suppose that all crossings between different components of $L$ are positive (in particular, this holds if $L$ is a positive link). Then there are exactly $2\lk(L_i,L_j)$ crossings between $L_i$ and $L_j$, and we need to change  $\lk(L_i,L_j)$ of them to split the components $L_i$ and $L_j$. 
We can encode crossing change maps as above, with $N_{ij}^{0}=N_{ij}^{1}=0$ and 
$$
P_{ij}^{-1}+P_{ij}^{0}=\lk(L_i,L_j)=\ell_{ij}. 
$$
Then Lemma \ref{lem: many crossing changes} simplifies dramatically, and we get the following
\begin{corollary}
Suppose that all crossings between different components of $L$ are positive.
Let $F:\cHFL(L)\to \cHFL(\Split(L))$ be a composition of $P_{ij}^{-1}$ maps of type $\psi_{-1}$ and $P_{ij}^{0}$ maps of type $\psi_0$ between the components $L_i$ and $L_j$ for all $i<j$. Define $\varepsilon_{ij}=P_{ij}^{0}-P_{ij}^{-1}$, then $F$ has Alexander degree
\begin{equation}
\label{eq: deg alex positive}
A(F)=\sum_{i<j}\frac{1}{2}\varepsilon_{ij}(\ee_i-\ee_j)
\end{equation}
and homological degree zero.
\end{corollary}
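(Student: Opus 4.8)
The plan is to derive the statement directly from Lemma~\ref{lem: many crossing changes}(a). Since every crossing between distinct components of $L$ is positive, the crossing changes used to pass from $L$ to $\Split(L)$ are all positive-to-negative changes, each realized by a map of type $\psi_0$ or $\psi_{-1}$; no negative-to-positive change occurs, so $N_{ij}^{0}=N_{ij}^{1}=0$ for all $i<j$. Concretely, between $L_i$ and $L_j$ there are $2\lk(L_i,L_j)$ crossings, and changing $\ell_{ij}=\lk(L_i,L_j)$ of them splits the two components; this is consistent with the linking-number change formula $N_{ij}^{1}+N_{ij}^{0}-P_{ij}^{0}-P_{ij}^{-1}=-\ell_{ij}$ recorded before the corollary.

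First I would substitute $N_{ij}^{0}=N_{ij}^{1}=0$ into the Alexander-degree formula of Lemma~\ref{lem: many crossing changes}(a):
$$
A(F)=\sum_{i<j}\left[\frac{1}{2}(P_{ij}^{0}-P_{ij}^{-1})(\ee_i-\ee_j)+\frac{1}{2}(N_{ij}^{0}-N_{ij}^{1})(\ee_i+\ee_j)\right]=\sum_{i<j}\frac{1}{2}\varepsilon_{ij}(\ee_i-\ee_j),
$$
using the definition $\varepsilon_{ij}=P_{ij}^{0}-P_{ij}^{-1}$. For the Maslov degree, the same lemma gives $\gr_{\w}(F)=-2\sum_{i<j}N_{ij}^{1}=0$; equivalently, each of $\psi_0$ and $\psi_{-1}$ has homological grading zero by Proposition~\ref{prop:positive}, hence so does any composition of such maps.

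Since this is an immediate specialization of an already-established lemma, there is no genuine obstacle here. The only point worth a remark is that $\Split(L)$ can be reached using only $\psi$-type maps precisely because all inter-component crossings are positive, which is exactly what makes the $N$-contributions vanish. It is also worth recording (as in the discussion preceding the corollary) that, although the map $F$ may a priori depend on which crossings are changed and in what order, its bigrading does not; this independence is precisely the content of the displayed formulas.
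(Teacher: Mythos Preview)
Your proof is correct and matches the paper's approach exactly: the paper presents this as an immediate specialization of Lemma~\ref{lem: many crossing changes}(a) with $N_{ij}^{0}=N_{ij}^{1}=0$, and does not even write out a separate proof. Your added remarks (that only $\psi$-type maps are needed because all inter-component crossings are positive, and that the bigrading is independent of the order and choice of crossings) are consistent with the surrounding discussion in the paper.
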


We can visualize the degrees of such maps as follows. 

\begin{definition}
\label{def: zonotope}
Suppose that $L$ is a link such that all crossings between different components are positive, and $\ell_{ij}=\lk(L_i,L_j)\ge 0$.
We define the {\bf link zonotope} $P_L$ as the Minkowski sum of the intervals $[\ell_{ij}\ee_i,\ell_{ij}\ee_j]$ for all $i<j$.
\end{definition}

Note that $P_L$ is an $(n-1)$-dimensional polytope contained in the hyperplane $$\left\{\sum_{i=1}^{n} x_i=\sum_{i<j}\ell_{ij}\right\}\subset \R^{r}.$$ It is centrally symmetric around the point $\frac{1}{2}(\ell_1,\ldots,\ell_n)$ where $\ell_i=\sum_{j\neq i}\ell_{ij}$.

\begin{example}
For $n=2$ the polytope $P_L$ is a segment $[\ell_{12}\ee_1,\ell_{12}\ee_2]$ with $\ell_{12}+1$ integer points on it. 
\end{example} 

\begin{example}
For $n=3$, the polytope $P_L$ is a hexagon where the opposite sides are parallel to each other and both contain $\ell_{ij}+1$ integer points. The vertices of $P_L$ are:
$$
(0,\ell_{12},\ell_{13}+\ell_{23}),(\ell_{12},0,\ell_{13}+\ell_{23}),(\ell_{12}+\ell_{13},0,\ell_{23}),
$$
$$
(\ell_{12}+\ell_{13},\ell_{23},0),(\ell_{13},\ell_{12}+\ell_{23},0),(0,\ell_{12}+\ell_{23},\ell_{13}).
$$
\end{example}

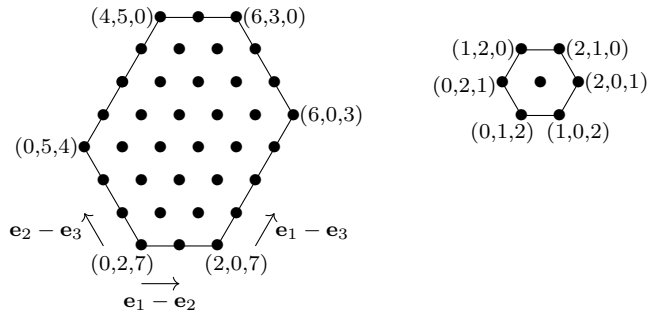
\begin{figure}[ht!]
 \begin{tikzpicture}[scale=0.5]
\draw (0,0)--(2,0)--(4,3.46)--(2.5,6.06)--(0.5,6.06)--(-1.5,2.6)--(0,0);
\draw (0,0) node {$\bullet$};
\draw (1,0) node {$\bullet$};
\draw (2,0) node {$\bullet$};
\draw (-0.5,0.87) node {$\bullet$};
\draw (0.5,0.87) node {$\bullet$};
\draw (1.5,0.87) node {$\bullet$};
\draw (2.5,0.87) node {$\bullet$};
\draw (-1,1.73) node {$\bullet$};
\draw (0,1.73) node {$\bullet$};
\draw (1,1.73) node {$\bullet$};
\draw (2,1.73) node {$\bullet$};
\draw (3,1.73) node {$\bullet$};
\draw (-1.5,2.6) node {$\bullet$};
\draw (-0.5,2.6) node {$\bullet$};
\draw (0.5,2.6) node {$\bullet$};
\draw (1.5,2.6) node {$\bullet$};
\draw (2.5,2.6) node {$\bullet$};
\draw (3.5,2.6) node {$\bullet$};
\draw (-1,3.46) node {$\bullet$};
\draw (0,3.46) node {$\bullet$};
\draw (1,3.46) node {$\bullet$};
\draw (2,3.46) node {$\bullet$};
\draw (3,3.46) node {$\bullet$};
\draw (4,3.46) node {$\bullet$};
\draw (-0.5,4.33) node {$\bullet$};
\draw (0.5,4.33) node {$\bullet$};
\draw (1.5,4.33) node {$\bullet$};
\draw (2.5,4.33) node {$\bullet$};
\draw (3.5,4.33) node {$\bullet$};
\draw (-0.5,4.33) node {$\bullet$};
\draw (0,5.20) node {$\bullet$};
\draw (1,5.20) node {$\bullet$};
\draw (2,5.20) node {$\bullet$};
\draw (3,5.20) node {$\bullet$};
\draw (0.5,6.06) node {$\bullet$};
\draw (1.5,6.06) node {$\bullet$};
\draw (2.5,6.06) node {$\bullet$};

\draw [->] (0,-1)--(1,-1);
\draw (0.5,-1.5) node {\scriptsize $\ee_1-\ee_2$};
\draw  [->] (-1,0)--(-1.5,0.87);
\draw (-2.5,0.4) node {\scriptsize $\ee_2-\ee_3$};

\draw  [->] (3,0)--(3.5,0.87);
\draw (4.5,0.4) node {\scriptsize $\ee_1-\ee_3$};

\draw (-0.5,-0.5) node {\scriptsize (0,2,7)};
\draw (2.5,-0.5) node {\scriptsize (2,0,7)};
\draw (5,3.46) node {\scriptsize (6,0,3)};
\draw (3.5,6.06) node {\scriptsize (6,3,0)};
\draw (-0.5,6.06) node {\scriptsize (4,5,0)};
\draw (-2.5,2.6) node {\scriptsize (0,5,4)};

\draw (10,3.46)--(11,3.46)--(11.5,4.33)--(11,5.20)--(10,5.20)--(9.5,4.33)--(10,3.46);

\draw (10,3.46) node {$\bullet$};
\draw (11,3.46) node {$\bullet$};
\draw (9.5,4.33) node {$\bullet$};
\draw (10.5,4.33) node {$\bullet$};
\draw (11.5,4.33) node {$\bullet$};
\draw (10,5.20) node {$\bullet$};
\draw (11,5.20) node {$\bullet$};

\draw (9.5,3) node {\scriptsize (0,1,2)};
\draw (11.5,3) node {\scriptsize (1,0,2)};
\draw (12.5,4.33) node {\scriptsize (2,0,1)};
\draw (12,5.20) node {\scriptsize (2,1,0)};
\draw (9,5.20) node {\scriptsize (1,2,0)};
\draw (8.5,4.22) node {\scriptsize (0,2,1)};

\end{tikzpicture}
\caption{Polytopes for 3-component positive links with $(\ell_{12},\ell_{23},\ell_{13})=(2,3,4)$ (left) and $(1,1,1)$ (right).
These are Minkowski sums of intervals $[(2,0,0),(0,2,0)]+[(0,3,0),(0,0,3)]+[(4,0,0),(0,0,4)]$ and 
$[(1,0,0),(0,1,0)]+[(0,1,0),(0,0,1)]+[(1,0,0),(0,0,1)]$, respectively.}
\label{fig:polytopes}
\end{figure}

\begin{theorem}
\label{thm: polytope}
After a shift by the vector $\frac{1}{2}(\ell_1,\ldots,\ell_n)$, the Alexander degrees of splitting maps \eqref{eq: deg alex positive} of a positive link
correspond to the integer points in the polytope $P_L$. Conversely, any such integer point corresponds to at least one nontrivial splitting map.
\end{theorem}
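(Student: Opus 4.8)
The plan is to separate Theorem~\ref{thm: polytope} into the easy inclusion — every splitting map has shifted Alexander degree at a lattice point of $P_L$ — and the substantive converse that every such lattice point is hit by a nontrivial map, the latter reducing to a combinatorial fact about graphic zonotopes. For the easy direction I would first record that $\tfrac12(\ell_1,\ldots,\ell_n)=\sum_{i<j}\tfrac12\ell_{ij}(\ee_i+\ee_j)$, so that combining the degree formula \eqref{eq: deg alex positive} with the identities $\varepsilon_{ij}=P_{ij}^{0}-P_{ij}^{-1}$ and $\ell_{ij}=P_{ij}^{0}+P_{ij}^{-1}$ gives, for a splitting map $F$,
\[
A(F)+\tfrac12(\ell_1,\ldots,\ell_n)=\sum_{i<j}\bigl(P_{ij}^{0}\,\ee_i+P_{ij}^{-1}\,\ee_j\bigr).
\]
Each summand equals $(\ell_{ij}-P_{ij}^{-1})\ee_i+P_{ij}^{-1}\ee_j$, a lattice point of the segment $[\ell_{ij}\ee_i,\ell_{ij}\ee_j]$, so the shifted degree is automatically a lattice point of the Minkowski sum $P_L$.

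For the converse I would refine each generating segment as a Minkowski sum of $\ell_{ij}$ unit segments $[\ee_i,\ee_j]$, exhibiting $P_L$ as the graphic zonotope of the multigraph $G$ on $\{1,\dots,n\}$ with $\ell_{ij}$ edges between $i$ and $j$; explicitly $P_L=\mathbf{c}+M\bigl([0,1]^{E}\bigr)$, where $E$ is the edge (multi)set, $\mathbf{c}=\sum_{e=(i,j),\,i<j}\ee_j$, and $M$ is the signed incidence matrix with columns $\ee_i-\ee_j$. Given a lattice point $\mathbf{d}\in P_L\cap\Z^n$, the fibre $\{x\in[0,1]^{E}:Mx=\mathbf{d}-\mathbf{c}\}$ is a nonempty polytope cut out by box constraints and the totally unimodular matrix $M$, hence an integral polytope, so it has a vertex $x\in\{0,1\}^{E}$. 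Reading off $P_{ij}^{0}=\#\{e\in E\text{ joining }i,j:x_e=1\}$ and $P_{ij}^{-1}=\ell_{ij}-P_{ij}^{0}$, I would pick any $\ell_{ij}$ of the crossings between $L_i$ and $L_j$ and take $F$ to be the composition of $\psi_0$ at $P_{ij}^{0}$ of them and $\psi_{-1}$ at the remaining $P_{ij}^{-1}$, over all $i<j$. The first paragraph then shows $F$ has shifted Alexander degree exactly $\mathbf{d}$, and by Lemma~\ref{lem: many crossing changes}(b) the map $F$ is injective on the $\F[\UU]$-free summand of each $\cHFL(L,\k)$, hence nonzero; so it is a nontrivial splitting map.

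The only nonformal point is the assertion that every lattice point of the zonotope decomposes as a sum of lattice points of its defining segments, and this is where the graphic structure is genuinely used: for a zonotope generated by arbitrary lattice segments the analogous claim is false (for instance $[0,(2,1)]+[0,(1,2)]$ has interior lattice points realizable by no choice of endpoints), so the proof must exploit that our segments all have the form $[\ee_i,\ee_j]$. I expect this — phrased either via total unimodularity of the incidence matrix as above, or equivalently via the classical Hall/Hoffman-type description of in-degree sequences of orientations of a multigraph, which is exactly the facet description of $P_L$ — to be the main obstacle; everything else is bookkeeping. I would also note, as the examples after Definition~\ref{def: zonotope} illustrate, that different vertices $x$ (i.e.\ different tuples $P_{ij}^{0}$) can produce the same lattice point, so the correspondence asserted by the theorem is many-to-one.
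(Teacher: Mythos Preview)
Your proof is correct. The forward direction and the nontriviality claim match the paper's argument essentially verbatim.

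For the converse, you and the paper take genuinely different routes to the same combinatorial fact. The paper quotes the Beck--Robins zonotope decomposition: $P_L$ is tiled by half-open parallelepipeds indexed by forests on $n$ vertices, each of relative volume $1$, so every lattice point of $P_L$ lies in some parallelepiped and can be written as a vertex plus an integer combination of the edge vectors $\ee_i-\ee_j$. Your argument instead refines $P_L$ as the image of a cube under the signed incidence matrix of a multigraph and invokes total unimodularity of that matrix to produce a $\{0,1\}$-preimage of any lattice point. Both arguments are standard; yours is arguably more self-contained (total unimodularity of graphic incidence matrices is a one-line induction), while the paper's version has the advantage of making contact with the forest enumeration that reappears later for $T(n,n)$ and the permutahedron. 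Your cautionary example $[0,(2,1)]+[0,(1,2)]$ is a nice touch, pinpointing exactly why the argument needs the segments to be of graphic type.

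One small wording issue: when you say ``pick any $\ell_{ij}$ of the crossings between $L_i$ and $L_j$'', you should instead fix at the outset a specific splitting sequence (which by hypothesis exists and uses exactly $\ell_{ij}$ crossings between each pair) and then assign $\psi_0$ or $\psi_{-1}$ to those crossings; an arbitrary choice of $\ell_{ij}$ crossings need not yield a split link. This does not affect the substance of your argument, only the phrasing.
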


\begin{proof}
By Lemma \ref{lem: many crossing changes} all compositions of splitting maps are nontrivial, and for positive links all such maps have homological degree zero, but we need to understand their Alexander degrees. 

 By varying $P_{ij}^{0}$ and $P_{ij}^{-1}$, the terms
$\frac{1}{2}\varepsilon_{ij}(\ee_i-\ee_j)$ can have the values
$$
\frac{1}{2}\ell_{ij}(\ee_i-\ee_j),\frac{1}{2}(\ell_{ij}-2)(\ee_i-\ee_j),\ldots,-\frac{1}{2}\ell_{ij}(\ee_i-\ee_j).
$$
By shifting these by $\frac{1}{2}\ell_{ij}(\ee_i+\ee_j)$, we get the points
$$
\ell_{ij}\ee_i,\ (\ell_{ij}-1)\ee_i+\ee_j,\ldots,\ell_{ij}\ee_j
$$
which coincide with the set of integer points on the interval $[\ell_{ij}\ee_i,\ell_{ij}\ee_j]$. By adding these degrees over all $i<j$, we obtain an integer point in $P_L$, and the overall shift equals
$$
\sum_{i<j}\frac{1}{2}\ell_{ij}(\ee_i+\ee_j)=\frac{1}{2}(\ell_1,\ldots,\ell_n).
$$  

It remains to prove that any integer point in $P_L$ can be obtained as a sum of integer points in the intervals $[\ell_{ij}\ee_i,\ell_{ij}\ee_j]$. This follows from the combinatorial results in \cite[Section 9]{BR}. Indeed, by \cite[Lemma 9.1]{BR} the polytope $P_L$ can be decomposed into a disjoint union of parallelepipeds of various dimensions labeled by the linearly independent subsets of the set $\{\ee_i-\ee_j\ :\ i<j\}$ (the edges of each parallelepiped have integer length $\ell_{ij}$), see Figure \ref{fig: decompose P}. These parallelepipeds can be themselves decomposed into smaller parallelepipeds with edges of integer length 1. It is easy to see \cite[Lemma 9.6]{BR} that the linearly independent subsets correspond to forests on $n$ vertices, and by \cite[Theorem 9.5]{BR} the relative volume of each small parallelepiped equals 1. Therefore every vector connecting an integer point inside each large parallelepiped with a vertex can be written as a linear combination of vectors along the edges and the result follows. 
\end{proof}

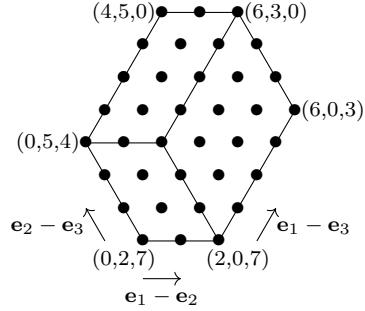
\begin{figure}[ht!]
 \begin{tikzpicture}[scale=0.5]
\draw (0,0)--(2,0)--(4,3.46)--(2.5,6.06)--(0.5,6.06)--(-1.5,2.6)--(0,0);
\draw (0,0) node {$\bullet$};
\draw (1,0) node {$\bullet$};
\draw (2,0) node {$\bullet$};
\draw (-0.5,0.87) node {$\bullet$};
\draw (0.5,0.87) node {$\bullet$};
\draw (1.5,0.87) node {$\bullet$};
\draw (2.5,0.87) node {$\bullet$};
\draw (-1,1.73) node {$\bullet$};
\draw (0,1.73) node {$\bullet$};
\draw (1,1.73) node {$\bullet$};
\draw (2,1.73) node {$\bullet$};
\draw (3,1.73) node {$\bullet$};
\draw (-1.5,2.6) node {$\bullet$};
\draw (-0.5,2.6) node {$\bullet$};
\draw (0.5,2.6) node {$\bullet$};
\draw (1.5,2.6) node {$\bullet$};
\draw (2.5,2.6) node {$\bullet$};
\draw (3.5,2.6) node {$\bullet$};
\draw (-1,3.46) node {$\bullet$};
\draw (0,3.46) node {$\bullet$};
\draw (1,3.46) node {$\bullet$};
\draw (2,3.46) node {$\bullet$};
\draw (3,3.46) node {$\bullet$};
\draw (4,3.46) node {$\bullet$};
\draw (-0.5,4.33) node {$\bullet$};
\draw (0.5,4.33) node {$\bullet$};
\draw (1.5,4.33) node {$\bullet$};
\draw (2.5,4.33) node {$\bullet$};
\draw (3.5,4.33) node {$\bullet$};
\draw (-0.5,4.33) node {$\bullet$};
\draw (0,5.20) node {$\bullet$};
\draw (1,5.20) node {$\bullet$};
\draw (2,5.20) node {$\bullet$};
\draw (3,5.20) node {$\bullet$};
\draw (0.5,6.06) node {$\bullet$};
\draw (1.5,6.06) node {$\bullet$};
\draw (2.5,6.06) node {$\bullet$};

\draw [->] (0,-1)--(1,-1);
\draw (0.5,-1.5) node {\scriptsize $\ee_1-\ee_2$};
\draw  [->] (-1,0)--(-1.5,0.87);
\draw (-2.5,0.4) node {\scriptsize $\ee_2-\ee_3$};

\draw  [->] (3,0)--(3.5,0.87);
\draw (4.5,0.4) node {\scriptsize $\ee_1-\ee_3$};

\draw (-0.5,-0.5) node {\scriptsize (0,2,7)};
\draw (2.5,-0.5) node {\scriptsize (2,0,7)};
\draw (5,3.46) node {\scriptsize (6,0,3)};
\draw (3.5,6.06) node {\scriptsize (6,3,0)};
\draw (-0.5,6.06) node {\scriptsize (4,5,0)};
\draw (-2.5,2.6) node {\scriptsize (0,5,4)};

\draw (-1.5,2.6)--(0.5,2.6)--(2,0);
\draw (0.5,2.6)--(2.5,6.06);
\end{tikzpicture}
\caption{A decomposition of $P_L$ into parallelepipeds.}
\label{fig: decompose P}
\end{figure}

\begin{remark}
In principle, there could be several splitting maps of the same degree. For example, for $(\ell_{12},\ell_{23},\ell_{13})=(1,1,1)$ 
we can write the point $(1,1,1)$ at the center as a sum of integer points on intervals in two ways:
$$
(1,1,1)=(1,0,0)+(0,1,0)+(0,0,1)=(0,1,0)+(0,0,1)+(1,0,0).
$$
\end{remark}

\subsection{L-space links}
\label{sec: L space splitting}

If $L$ is an L-space link, then some results simplify significantly.

\begin{theorem}
\label{thm: L space splitting}
Suppose $L$ is an L-space link. Then:
\begin{itemize}
\item[(a)] For any choice of crossing changes and the maps $\psi_k,\phi_k$ at the crossings, the resulting map $F:\cHFL(L)\to \cHFL(\Split(L))$ is completely determined by its Alexander and Maslov degrees. 

\item[(b)] If, in addition, all crossings between the different components of $L$ are positive, the splitting maps all have homological degree zero and are in bijection with the integer points in the polytope $P_L$. Two splitting maps of the same Alexander degree coincide. 
\end{itemize}
\end{theorem}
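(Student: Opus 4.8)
The plan is to deduce both parts from Corollary~\ref{cor: negative definite}, after arranging its hypotheses. For part~(a), I would first observe that by construction $F$ is a composition of the crossing-change maps $\psi_0,\psi_{-1},\phi_0,\phi_1$, each of which is the cobordism map $F_{W_i,\bF_i,\s_i}$ of a $2$-handle attachment along a $(-1)$-framed unknot, decorated as in Convention~\ref{conv:cob}. Stacking these cobordisms produces a single decorated cobordism $(W,\bF)\colon(S^3,L)\to(S^3,\Split(L))$ whose surface is again a disjoint union of $n$ annuli, each carrying two parallel vertical dividing arcs, so Convention~\ref{conv:cob} applies to $(W,\bF)$. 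Because the $3$-manifolds interpolating the stack are all copies of $S^3$, a Mayer--Vietoris computation gives $H_2(W)\cong\bigoplus_i H_2(W_i)$ with intersection form the orthogonal direct sum of the forms of the pieces; each $W_i$ carries the negative-definite form $(-1)$, so $b_2^+(W)=0$, and the same splitting provides a unique $\Spin$ structure $\s$ on $W$ restricting to each chosen $\s_i$, whence $F=F_{W,\bF,\s}$ by the composition law for cobordism maps (\cite[Theorem~B]{Zemke}). It is precisely this maneuver that matters: the intermediate links in the sequence of crossing changes need not be L-space links, so one cannot apply Corollary~\ref{cor: negative definite} crossing-by-crossing.

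Next I would check that $\Split(L)$ is an L-space link. Each component $L_i$ of $L$ is an L-space knot since any sublink of an L-space link is an L-space link; hence for $\d\gg0$ one has $S^3_{\d}(\Split(L))\cong\#_{i=1}^n S^3_{d_i}(L_i)$, a connected sum of L-spaces, which is an L-space. Now Corollary~\ref{cor: negative definite} applies to $(W,\bF,\s)$: $F$ is injective on each summand $\cHFL(L,\k)$ and, on the $\F[\UU]$-tower there, is multiplication by a power of $\UU$ whose exponent is an explicit function of $A(F)$, $\gr_{\w}(F)$ and the $h$-functions of $L$ and $\Split(L)$. In particular $F$ is completely determined by $A(F)$ and $\gr_{\w}(F)$, proving part~(a).

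For part~(b), assume moreover that every crossing between distinct components of $L$ is positive, so $F$ is assembled only from maps of type $\psi_0$ and $\psi_{-1}$. In the notation of Lemma~\ref{lem: many crossing changes} this forces $N^{0}_{ij}=N^{1}_{ij}=0$ for all $i<j$, so part~(a) of that lemma gives $\gr_{\w}(F)=-2\sum_{i<j}N^{1}_{ij}=0$; thus every splitting map has homological degree zero, and combined with part~(a) above any two splitting maps of the same Alexander degree coincide. By Lemma~\ref{lem: many crossing changes}(a) the Alexander degree of such an $F$ is $\sum_{i<j}\tfrac{1}{2}(P^{0}_{ij}-P^{-1}_{ij})(\ee_i-\ee_j)$ with $P^{0}_{ij}+P^{-1}_{ij}=\lk(L_i,L_j)$, and Theorem~\ref{thm: polytope} identifies the collection of these degrees, shifted by $\tfrac{1}{2}(\ell_1,\dots,\ell_n)$, with the set of integer points of the link zonotope $P_L$ of Definition~\ref{def: zonotope}, every one of which is realized by at least one splitting map (automatically nontrivial by Lemma~\ref{lem: many crossing changes}(b)). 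Therefore $F\mapsto A(F)+\tfrac{1}{2}(\ell_1,\dots,\ell_n)$ is a well-defined surjection from the set of splitting maps onto $P_L\cap\Z^n$, and it is injective by the coincidence just established; this is the asserted bijection.

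The genuinely load-bearing steps are the two inputs to Corollary~\ref{cor: negative definite}: presenting $F$ as a single decorated cobordism map with $b_2^+=0$, via stacking and the Mayer--Vietoris splitting of the intersection form, and verifying that $\Split(L)$ is an L-space link. Once these hold, parts~(a) and~(b) follow almost formally, the nontrivial combinatorics of~(b) --- that every lattice point of $P_L$ is hit --- having been dealt with in Theorem~\ref{thm: polytope} through the decomposition of $P_L$ into unit parallelepipeds and the relative-volume computations of \cite{BR}.
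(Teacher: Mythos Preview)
Your proof is correct but takes a different route to part~(a) than the paper. The paper argues via Lemma~\ref{lem: many crossing changes}(b), whose proof uses the algebraic composition identities of Proposition~\ref{prop: psi phi} (that $\phi_k\psi_l$ and $\psi_l\phi_k$ are multiplications by monomials) to build a one-sided inverse up to a monomial and conclude the map is nonzero on the $\F[\UU]$-free part; since for L-space links this free part is the whole of $\cHFL(L,\k)$ by Corollary~\ref{cor: L space tower}, uniqueness of nonzero degree-$d$ maps $\F[\UU]\to\F[\UU]$ finishes. You instead bypass Proposition~\ref{prop: psi phi} entirely by gluing the handle attachments into a single cobordism $(W,\bF)$, checking $b_2^+(W)=0$ via the orthogonal splitting of the intersection form, and invoking Corollary~\ref{cor: negative definite} once. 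Your observation that the intermediate links need not be L-space links --- so Corollary~\ref{cor: negative definite} cannot be applied step by step --- is exactly why the gluing is necessary in your approach; the paper's route via Lemma~\ref{lem: many crossing changes}(b) holds for arbitrary intermediate links and sidesteps this issue. What your argument buys is independence from the composition formulas in Proposition~\ref{prop: psi phi}; what the paper's buys is that no cobordism-gluing or $\Spin$-structure bookkeeping is needed. Part~(b) is handled the same way in both proofs, reducing to Theorem~\ref{thm: polytope}.
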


\begin{proof}
If $L$ is an L-space link then by \cite{Yajing} all its components $L_i$ are L-space knots. In particular, the corresponding split link 
$\Split(L)$ is an $L$-space link as well.

By Corollary \ref{cor: L space tower} this means that $\cHFL(L,\k)$ and $\cHFL(\Split(L),\k+A(F))$ are isomorphic to $\F[\UU]$, and there is a unique nonzero map between two copies of $\F[\UU]$ of a given degree. The splitting map is nonzero by Lemma \ref{lem: many crossing changes}, so this implies (a).

Part (b) follows from (a) and Theorem \ref{thm: polytope}.
\end{proof}

\subsection{Torsion estimates}
Recall that  the \emph{splitting number} $sp(L)$ is the minimal number of crossings between different components of a link $L$ that should be changed to turn $L$ into the split link.   Let $n_{ij}$ be the number of crossing changes between the $i$-th and $j$-th components, so that the resulting link is the split link. 

Consider $(2n)$-dimensional lattice $\Z^{2n}$ with basis $\ee_1,\ff_1,\ee_2,\ff_2,\ldots,\ee_n,\ff_n$. A point in this lattice parametrizes a monomial in $U_1,V_1,U_2,V_2,\ldots,U_n,V_n$.
For each pair $i<j$ we consider the 3-dimensional tetrahedron:
$$
T_{ij}:=\left\{a\ee_i+b\ff_i+c\ee_j+d\ff_j\ :\ a+b+c+d=n_{ij},\ a,b,c,d\ge 0\right\}.
$$
Next, we consider the Minkowski sum
$$
T=\sum_{i<j} T_{ij}\subset \Z^{2n}.
$$
\begin{remark}
Note that the intersection of $T$ with the $n$-dimensional sublattice $\mathrm{span}\{\ee_1,\ee_2,\cdots,\ee_n\}$ is the Minkowski sum of segments $[n_{ij}\ee_i,n_{ij}\ee_j]$ which is similar to the generalized permutahedra $P_L$ above.
\end{remark}

\begin{theorem}
\label{thm:torsionkill}
Suppose that $L$ is an $n$-component  link where the components are L-space knots. Suppose that  $n_{ij}$ is the number of crossing changes between the $i$-th and $j$-th components so that the resulting link is a split link. Then:
\begin{itemize}

\item[(a)] For any integer point in the polytope $T \subset \Z^{2n}$ the corresponding monomial in $U_i,V_i$ annihilates any torsion element in $\cHFL(L)$.

\item[(b)] The monomial $\UU^{\sum_{i<j} \left\lceil \frac{n_{ij}}{2} \right\rceil}$ annihilates any torsion element in $\cHFL(L)$.

\end{itemize}


\end{theorem}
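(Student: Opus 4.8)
The plan is to derive both parts from a single observation. First, $\Split(L)$ is the split union of the L-space knots $L_1,\dots,L_n$, so large surgery on it is a connected sum of the L-spaces $S^3_{d_i}(L_i)$ (with $d_i\gg 0$), hence again an L-space; thus $\Split(L)$ is an L-space link and, by Corollary \ref{cor: L space tower}, $\cHFL(\Split(L))$ is torsion free over $\F[\UU]$. Now fix an ordering of the $N=\sum_{i<j}n_{ij}$ changed crossings, and for any choice of crossing change maps among $\psi_0,\psi_{-1},\phi_0,\phi_1$ let $F\colon\cHFL(L)\to\cHFL(\Split(L))$ be their composition; let $F'\colon\cHFL(\Split(L))\to\cHFL(L)$ be the composition of the reverse crossing changes taken in the opposite order. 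Peeling off one crossing at a time from the inside, and using $R$-equivariance together with the composition formulas of Proposition \ref{prop: psi phi}, one obtains $F'\circ F=M\cdot\mathrm{id}$, where $M=\prod_s m_s$ and the factor $m_s$ contributed by the crossing between $L_i$ and $L_j$ may be prescribed to be \emph{any} one of $U_i,V_i,U_j,V_j$: indeed $\phi_0\psi_0,\phi_0\psi_{-1},\phi_1\psi_0,\phi_1\psi_{-1}$ equal $V_i,V_j,U_j,U_i$ respectively, and the four compositions $\psi_a\phi_b$ likewise. Since $F$ is $R$-equivariant and $\cHFL(\Split(L))$ is torsion free, $F(x)=0$ for every torsion element $x\in\cHFL(L)$, whence $Mx=F'(F(x))=0$. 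So every monomial $M$ arising from such a round trip annihilates all torsion in $\cHFL(L)$.

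For part (a): letting $s$ range over the $n_{ij}$ crossings between $L_i$ and $L_j$, the product $\prod_s m_s$ realizes precisely the monomials $U_i^{a}V_i^{b}U_j^{c}V_j^{d}$ with $a+b+c+d=n_{ij}$, that is, the lattice points of $T_{ij}$. Multiplying over all pairs, the monomials $M$ that arise as round trips are exactly those corresponding to lattice points of $T$ which are sums of lattice points of the summands $T_{ij}$, so it remains to show that \emph{every} lattice point of $T$ decomposes this way. Each $T_{ij}=n_{ij}\cdot\mathrm{conv}(\ee_i,\ff_i,\ee_j,\ff_j)$ is a dilated coordinate simplex in $\R^{2n}$ whose lattice points are exactly the degree-$n_{ij}$ monomials in $U_i,V_i,U_j,V_j$; the lattice points of a Minkowski sum of such simplices are precisely the sums of lattice points of the summands, since a Minkowski sum of discrete polymatroids is again a discrete polymatroid with the expected lattice points (alternatively, this can be seen by a direct exchange argument applied to $q\in T\cap\Z^{2n}$). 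This combinatorial statement is the main obstacle in part (a); granting it, (a) follows from the round-trip principle above.

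For part (b): group the $n_{ij}$ crossings between $L_i$ and $L_j$ into $\lfloor n_{ij}/2\rfloor$ pairs of crossings, together with one leftover crossing when $n_{ij}$ is odd. For each pair of crossings arrange the round-trip factors to be $U_i$ and $V_i$, so that pair contributes $U_iV_i=\UU$; for the leftover crossing use $U_i$. By the round-trip principle, $M=\UU^{\sum_{i<j}\lfloor n_{ij}/2\rfloor}\cdot\prod_{i<j,\ n_{ij}\ \mathrm{odd}}U_i$ annihilates every torsion element $x\in\cHFL(L)$. Finally I would invoke the elementary fact that if $W_1\cdots W_r\,y=0$ with each $W_k$ one of the variables $U_1,\dots,U_n,V_1,\dots,V_n$, then $\UU^r y=0$ (induction on $r$: if $W_1z=0$ and $W_1\in\{U_s,V_s\}$ then $\UU z=U_sV_sz=0$, and one applies the case $r-1$ to $\UU y$). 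Applying this with $y=\UU^{\sum_{i<j}\lfloor n_{ij}/2\rfloor}x$ and $r=\#\{(i,j):i<j,\ n_{ij}\ \mathrm{odd}\}$ gives $\UU^{\sum_{i<j}\lceil n_{ij}/2\rceil}x=0$, since $\lceil n/2\rceil=\lfloor n/2\rfloor+1$ for odd $n$; thus part (b) is essentially elementary once the round-trip principle is in hand.
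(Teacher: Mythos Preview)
Your argument is correct and follows the same ``round-trip'' strategy as the paper: compose a splitting map $F$ with a reverse map $F'$, identify $F'\circ F$ as a prescribed monomial via Proposition~\ref{prop: psi phi}, and use that $\cHFL(\Split(L))$ is torsion free to conclude that $F$ (hence the monomial) kills torsion.

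On two points you are actually more careful than the paper. For part (a), the paper begins ``given an integer point in $T$'' and then immediately writes it as a sum of integer points of the $T_{ij}$ without justification; you correctly flag this as the integer decomposition property for the Minkowski sum and supply a reason (each $T_{ij}$ is a dilated coordinate simplex, hence an M-convex set / discrete polymatroid, and Minkowski sums of such have IDP). For part (b), the paper sets $a_{ij}=b_{ij}=\lceil n_{ij}/2\rceil$, $c_{ij}=d_{ij}=0$ and invokes (a), but when $n_{ij}$ is odd this violates $a_{ij}+b_{ij}+c_{ij}+d_{ij}=n_{ij}$; the intended fix is to take $b_{ij}=\lfloor n_{ij}/2\rfloor$ and note that $\UU^{\lceil n_{ij}/2\rceil}=(U_iV_i)^{\lceil n_{ij}/2\rceil}$ is $V_i$ times the resulting monomial. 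Your alternative route through the elementary implication ``$W_1\cdots W_r y=0\Rightarrow \UU^r y=0$'' (using $\UU=U_sV_s$ on homology) is a clean way to handle the leftover odd crossings and avoids this bookkeeping.
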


\begin{proof}
(a) Given an integer point in $T$, we construct two maps  
$$F: \cHFL(L)\rightarrow \cHFL(\Split(L)),\ F': \cHFL(\Split(L))\rightarrow \cHFL(L)$$
as follows. Each time one changes a positive crossing to a negative crossing between $L_i$ and $L_j$, we choose either $\psi_{0}$ or $\psi_{-1}$ for $F$ and  either $\phi_{0}$ or $\phi_{1}$ for $F'$; if one changes a negative crossing to a positive, the roles of $F$ and $F'$ are switched. Specifically, given nonnegative integers $a_{ij},b_{ij},c_{ij},d_{ij}$ such that $a_{ij}+b_{ij}+c_{ij}+d_{ij}=n_{ij}$ (or an integer point in the tetrahedron $T_{ij}$) we can arrange the maps  so that 
\begin{itemize}
\item To $a_{ij}$ crossings between $L_i,L_j$ we associate $\psi_{-1},\phi_1$
\item To $b_{ij}$ crossings between $L_i,L_j$ we associate $\psi_{0},\phi_0$
\item To $c_{ij}$ crossings between $L_i,L_j$ we associate $\psi_{0},\phi_1$
\item To $d_{ij}$ crossings between $L_i,L_j$ we associate $\psi_{-1},\phi_0$
\end{itemize}
By composing those crossing change maps, we obtain $F$ and $F'$. 
By Proposition \ref{prop: psi phi} we get
$$
F'\circ F=\prod_{i<j}\left(U_i^{a_{ij}}V_i^{b_{ij}}U_j^{c_{ij}}V_{j}^{d_{ij}}\right).
$$
The right hand side defines a monomial corresponding to a point in $\sum_{i<j} T_{ij}=T$.
Note that the composition $F'\circ F$ does not depend on the order of the crossings, just the number of crossings of each type.

 If $x$ is a torsion element in $\cHFL(L)$, then $F(x)=0$ since there are no torsion elements in $\cHFL(\Split(L))$, therefore
$$
\prod_{i<j}U_i^{a_{ij}}V_i^{b_{ij}}U_j^{c_{ij}}V_{j}^{d_{ij}}\cdot x=0.
$$

(b) We can choose  $a_{ij}=b_{ij}=\left\lceil \frac{n_{ij}}{2} \right\rceil, c_{ij}=d_{ij}=0$, then $a_{ij}+b_{ij}\ge n_{ij}$ and 
$$
\left(U_i^{a_{ij}}V_i^{b_{ij}}U_j^{c_{ij}}V_{j}^{d_{ij}}\right)=\UU^{\left\lceil \frac{n_{ij}}{2} \right\rceil}.
$$ By part (a),
the monomial 
$$
\prod_{i<j}\left(U_i^{a_{ij}}V_i^{b_{ij}}U_j^{c_{ij}}V_{j}^{d_{ij}}\right)=\UU^{\sum_{i<j} \left\lceil \frac{n_{ij}}{2} \right\rceil}
$$ 
annihilates any torsion element in $\cHFL(L)$.

\end{proof}

\begin{corollary}
\label{coro:2com}
Suppose that $L=L_1\cup L_2$ is a 2-component link with splitting number $sp(L)=n$, and $L_1,L_2$ are L-space knots. Then for any $a,b,c,d\ge 0$ such that $a+b+c+d=n$ the monomial $U_1^{a}V_1^{b}U_2^{c}V_2^{d}$ annihilates any torsion element in $\cHFL(L)$. 
\end{corollary}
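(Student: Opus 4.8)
The plan is to deduce Corollary~\ref{coro:2com} directly from Theorem~\ref{thm:torsionkill}(a), so the argument is essentially a matter of specializing that result to the case of two components and unwinding the notation. First I would observe that, since $L=L_1\cup L_2$ has exactly two components, there is a single unordered pair of distinct components, and a choice of crossing changes turning $L$ into a split link is recorded by one nonnegative integer $n_{12}$; choosing a sequence that realizes the splitting number, we may take $n_{12}=sp(L)=n$. Changing self-crossings of $L_1$ or of $L_2$ never helps to split the link, so all $n$ crossing changes may indeed be taken between $L_1$ and $L_2$. The hypothesis that $L_1$ and $L_2$ are L-space knots is exactly what is needed to invoke Theorem~\ref{thm:torsionkill}.

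With two components, the ambient lattice of that theorem is $\Z^{4}$ with basis $\ee_1,\ff_1,\ee_2,\ff_2$ (corresponding to the variables $U_1,V_1,U_2,V_2$), and the Minkowski sum $T=\sum_{i<j}T_{ij}$ has a single summand, so it reduces to the tetrahedron
$$
T=T_{12}=\{\,a\ee_1+b\ff_1+c\ee_2+d\ff_2\ :\ a+b+c+d=n,\ a,b,c,d\ge 0\,\}.
$$
Its integer points are precisely the quadruples $(a,b,c,d)$ of nonnegative integers with $a+b+c+d=n$, and under the dictionary between lattice points and monomials each such point corresponds to $U_1^{a}V_1^{b}U_2^{c}V_2^{d}$.

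Applying Theorem~\ref{thm:torsionkill}(a) to these points then yields that every such monomial annihilates every $\F[\UU]$-torsion element of $\cHFL(L)$, which is exactly the statement of the corollary. I do not expect any real obstacle: the only step that calls for a moment's care is the bookkeeping remark that the splitting number of a two-component link is realized purely by crossing changes between its two distinct components, which is what licenses setting $n_{12}=n$ in Theorem~\ref{thm:torsionkill}.
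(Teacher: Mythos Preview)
Your proposal is correct and is exactly the intended argument: the paper states Corollary~\ref{coro:2com} without a separate proof precisely because it is the $2$-component specialization of Theorem~\ref{thm:torsionkill}(a), with $T=T_{12}$ the single tetrahedron whose integer points are the quadruples $(a,b,c,d)$ with $a+b+c+d=n$. One small remark: your aside that ``changing self-crossings never helps to split the link'' is unnecessary, since the paper's definition of $sp(L)$ already counts only crossings between distinct components, so $n_{12}=sp(L)$ holds by definition.
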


\begin{example}
We consider the following boundary links. Given any knot $K$ as in Figure \ref{bing},  let $B(K, n)$ be the new 2-component link obtained by applying an $n$-twisted Bing doubling to $K$. Observe that $B(K, n)$ is a boundary link with unknotted components. The linking number of $B(K, n)$ is $0$, consisting of a positive crossing and a negative crossing between the link components. So one has to change both of the crossings to split the link, and the splitting number is 2 automatically. Hence, by Corollary \ref{coro:2com} any monomial $U_1^a V_1^b U_2^c V_2^d$ where $a+b+c+d=2$ annihilates any torsion element of $\cHFL(B(K, n))$  for all $K$ and $n$. In particular, the $U_i$-torsion and $V_i$-torsion are bounded by $2$ for all $K$ and $n$ where $i=1, 2$. 
\end{example}
\begin{figure}[h]
\centering
\begin{tikzpicture}
    \node[anchor=south west,inner sep=0] at (0,0) {\includegraphics[width=4in]{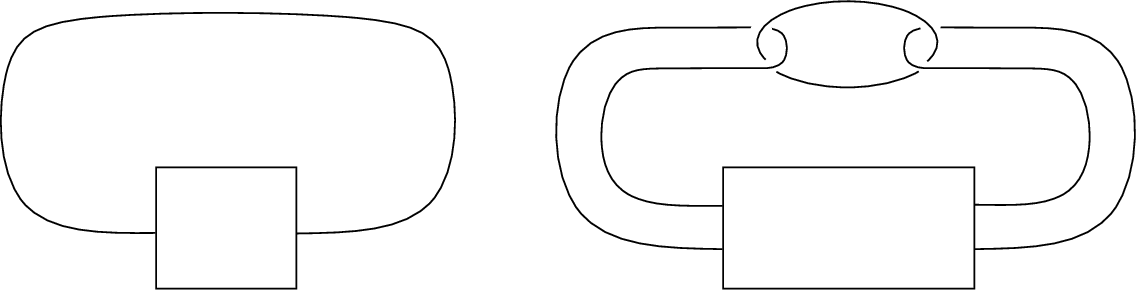}};
    \node[label=right:{$K$}] at (1.6,0.5){};
    \node[label=right:{$K,n$}] at (7,0.5){};
\end{tikzpicture}
\caption{The left figure is $K$ and right figure is the $n$-twisted Bing doubling of $K$, $B(K,n)$.} \label{bing}
\end{figure}

\section{Example: $T(n, n)$}

We illustrate all of the above constructions in detail for the torus link $T(n,n)$ with $n$ components. By \cite{GH} it is an L-space link and its Heegaard Floer homology is given by Theorem \ref{thm:toruslink}.

\subsection{Integer points in permutahedra and splitting maps}

 The {\em permutahedron} $P_n$ is the convex hull of points $(\sigma(n)-1,\ldots,\sigma(1)-1)$ for all permutations $\sigma\in S_n$. It is a convex polytope of dimension $n-1$, and it is easy to see that the center of $P_n$ is at the point $\left(\frac{n-1}{2},\ldots,\frac{n-1}{2}\right)$. By \cite[Theorem 9.4]{BR} the permutahedron $P_n$ is the Minkowski sum of segments $[\ee_i,\ee_j]$ and hence agrees with the link zonotope $P_{T(n,n)}$.   

Following Theorem \ref{thm: polytope} we will be interested in the integer points in $P_n$. For example,
$P_2$ is a segment connecting $(1,0)$ and $(0,1)$. Next, $P_3$ is a hexagon with vertices obtained by permutations of $(2,1,0)$
which contains one more integer point $(1,1,1)$ at its center, see Figure \ref{fig:polytopes} (right). 

For $n=4$ we have a 3-dimensional polytope with 24 vertices corresponding to permutations of $(3,2,1,0)$. Additionally, it contains 6 permutations of 
$$
(2,2,1,1)=\frac{1}{2}(3,1,2,0)+\frac{1}{2}(1,3,0,2),
$$
and 4 permutations of both
$$
(2,2,2,0)=\frac{1}{2}(3,2,1,0)+\frac{1}{2}(1,2,3,0),\quad (3,1,1,1)=\frac{1}{2}(3,2,1,0)+\frac{1}{2}(3,0,1,2).
$$
In total we get $38=24+6+4+4$ integer points in $P_4$. In general, it is known that the integer points in $P_n$ correspond to {\bf forests} on $n$ labeled vertices, but we will not need this. We refer to \cite[Chapter 9]{BR} for more information on  permutahedra and integer points in them.

For all $i<j$ we define the vector\footnote{The reader might recognize the root system of type $A_{n-1}$.}:
$$
\alpha_{ij}=\left(0,\ldots,0,\frac{1}{2},0\ldots,0,-\frac{1}{2},0,\ldots,0\right)=\frac{1}{2}(\ee_i-\ee_j).
$$
\begin{lemma}\label{lem: points} For any integer $n>0$,
\begin{itemize}
\item[(a)] We have
$$
\left(\frac{n-1}{2},\ldots,\frac{n-1}{2}\right)+\sum_{i<j}\alpha_{ij}=(n-1,n-2,\ldots,1,0).
$$
\item[(b)] For any choice of signs $\varepsilon=(\varepsilon_{ij})_{i<j}\in \{\pm 1\}^{\binom{n}{2}}$ define
$$
p_{\varepsilon}=\left(\frac{n-1}{2},\ldots,\frac{n-1}{2}\right)+\sum_{i<j}\varepsilon_{ij}\alpha_{ij}
$$
Then $p_{\varepsilon}$ is an integer point in the permutahedron $P_n$ and all integer points in $P_n$ can be obtained this way.
\end{itemize}

\end{lemma}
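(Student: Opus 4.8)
The plan is to dispatch part (a) by a direct coordinate computation and part (b) in three stages: integrality of $p_\varepsilon$, the inclusion $p_\varepsilon\in P_n$, and surjectivity onto the integer points of $P_n$.

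For part (a), I would compute the $k$-th coordinate of $\sum_{i<j}\alpha_{ij}=\tfrac12\sum_{i<j}(\ee_i-\ee_j)$ directly. Coordinate $k$ picks up $+\tfrac12$ from each $\alpha_{kj}$ with $j>k$ (there are $n-k$ of these) and $-\tfrac12$ from each $\alpha_{ik}$ with $i<k$ (there are $k-1$ of these), so it equals $\tfrac12\big((n-k)-(k-1)\big)=\tfrac{n+1}{2}-k$; adding $\tfrac{n-1}{2}$ gives $n-k$, so the total is $(n-1,n-2,\dots,1,0)$.

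For part (b), first I would observe that the $k$-th coordinate of $\sum_{i<j}\varepsilon_{ij}\alpha_{ij}$ is $\tfrac12$ times a sum of exactly $n-1$ signs, hence of the form $m/2$ with $m\equiv n-1\pmod 2$; adding $\tfrac{n-1}{2}$ then produces an integer, so $p_\varepsilon\in\Z^n$. The key observation is that $\tfrac12(\ee_i+\ee_j)+\varepsilon_{ij}\alpha_{ij}$ equals $\ee_i$ if $\varepsilon_{ij}=1$ and $\ee_j$ if $\varepsilon_{ij}=-1$, i.e.\ it is always an endpoint of the segment $[\ee_i,\ee_j]$. Summing over $i<j$ and using that each index lies in $n-1$ pairs, so that $\sum_{i<j}\tfrac12(\ee_i+\ee_j)=\big(\tfrac{n-1}{2},\dots,\tfrac{n-1}{2}\big)$, one obtains $p_\varepsilon=\sum_{i<j}\big(\text{an endpoint of }[\ee_i,\ee_j]\big)$. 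Since $P_n$ is the Minkowski sum of the segments $[\ee_i,\ee_j]$ by \cite[Theorem 9.4]{BR} (recorded above as $P_n=P_{T(n,n)}$), this exhibits $p_\varepsilon$ as a point of $P_n$.

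The main work — and the step I expect to be the only non-formal one — is the converse: every integer point of $P_n$ is some $p_\varepsilon$. Here I would argue exactly as in the proof of Theorem \ref{thm: polytope}, using the decomposition of $P_n$ into the parallelepipeds of \cite[Lemma 9.1]{BR} spanned by linearly independent subsets of $\{\ee_i-\ee_j\}$; since each segment $[\ee_i,\ee_j]$ has only its two endpoints as integer points, these parallelepipeds have unit edges, so any integer point of $P_n$ is a vertex of one of them, namely a sum of one chosen endpoint of $[\ee_i,\ee_j]$ for each $i<j$, and reading off the chosen endpoints recovers a sign vector $\varepsilon$ with $p=p_\varepsilon$. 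Alternatively, one can simply invoke Theorem \ref{thm: polytope} for $L=T(n,n)$: there $\ell_{ij}=1$, so the sign $\varepsilon_{ij}=P_{ij}^{0}-P_{ij}^{-1}$ ranges exactly over $\{\pm1\}$, and that theorem identifies the integer points of $P_{T(n,n)}=P_n$ with the vectors $\big(\tfrac{n-1}{2},\dots,\tfrac{n-1}{2}\big)+\sum_{i<j}\varepsilon_{ij}\alpha_{ij}$. The underlying subtlety is that for a general lattice zonotope not every integer point need be a sum of integer points of the generating segments; it is the type $A_{n-1}$ structure (total unimodularity) that makes it work, and this is precisely what \cite{BR} supplies. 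Everything else is bookkeeping.
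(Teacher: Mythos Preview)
Your proposal is correct and follows essentially the same approach as the paper: both rely on the description of $P_n$ as the Minkowski sum of the segments $[\ee_i,\ee_j]=\tfrac12(\ee_i+\ee_j)+[\alpha_{ij},-\alpha_{ij}]$ from \cite[Theorems 9.4, 9.5]{BR}. The paper's own proof is even terser (it simply records this rewriting and cites \cite{BR}), while you unpack the coordinate computation for (a), the integrality/endpoint observation for (b), and spell out the surjectivity step via the parallelepiped decomposition or Theorem~\ref{thm: polytope}; but the underlying argument is the same.
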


\begin{proof}
Part (a) is clear. Part (b) follows from the description of $P_n$ as a zonotope, that is, Minkowski sum of segments 
$$
[\ee_i,\ee_j]=\frac{1}{2}(\ee_i+\ee_j)+[\alpha_{ij},-\alpha_{ij}],
$$
see \cite[Theorems 9.4, 9.5]{BR}. 

\end{proof}

 Recall that by Theorem \ref{thm:toruslink} $\cHFL(T(n,n))$ has $n$ generators $a_0,\ldots,a_{n-1}$.

\begin{theorem}
\label{thm: psi monomial}
Let $\varepsilon=(\varepsilon_{ij})_{i<j}$ be a choice of signs as above. Choose a minimal sequence of crossings changes that splits $T(n,n)$. For any $1\le i<j\le n$, this sequence contains exactly one crossing change between $L_i$ and $L_j$. Consider the local crossing change map
$$
\Psi_{\varepsilon_{ij}}=\begin{cases}
\psi_{0}\  & \text{if}\ \varepsilon_{ij}=1\\
\psi_{-1}\  & \text{if}\ \varepsilon_{ij}=-1.
\end{cases}
$$
and compose them to define the splitting map $\Omega_{\varepsilon}:\cHFL(T(n,n))\to \cHFL(O_n)$. Then $\Omega_{\varepsilon}$ is independent of the crossing change sequence and satisfies the following properties:
\begin{itemize}
\item[(a)] The Alexander multi-degree of $\Omega_{\varepsilon}$ equals $\sum_{i<j}\varepsilon_{ij}\alpha_{ij}$ and the homological degree  $\gr_{\w}(\Omega_{\varepsilon})$ is zero. 

\item[(b)] $\Omega_{\varepsilon}$ sends the generator $a_0\in \cHFL(T(n,n))$ to $V^{p_{\varepsilon}}$ and every other generator $a_k$ to some other monomials in $R_{UV}$.

\item[(c)] If $p_{\varepsilon}=p_{\varepsilon'}$ then $\Omega_{\varepsilon}=\Omega_{\varepsilon'}$. In other words, the splitting maps for $T(n,n)$ are parametrized by the integer points in the permutahedron $P_n$.

\item[(d)] For $\varepsilon=(+1,\ldots,+1)$  the map $\Omega_{\bf 1}=\Omega_{+1,\ldots,+1}$ is defined on generators by the equation:
$$
\Omega_{\bf 1}(a_k)=V_1^{n-1-k}V_2^{n-2-k}\cdots V_{n-1-k}U_{n+1-k}\cdots U_n^{k}.
$$

\item[(e)] For any permutation $\sigma\in S_n$  there is a map $\Omega_{\sigma}$ corresponding to a vertex of $P_n$. It is obtained from $\Omega_{\bf 1}$  by permuting the indices of $U_i$ and $V_i$ by $\sigma$:
$$
\Omega_{\sigma}(a_k)=V_{\sigma(1)}^{n-1-k}V_{\sigma(2)}^{n-2-k}\cdots V_{\sigma(n-1-k)}U_{\sigma(n+1-k)}\cdots U_{\sigma(n)}^{k}.
$$
\end{itemize}
\end{theorem}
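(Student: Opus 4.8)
The plan is to reduce every assertion to a degree computation, exploiting that by \cite{GH} the link $T(n,n)$ is an L-space link and that the $n$-component unlink $O_n$ is one as well. By Theorem \ref{thm: L space splitting}(a) together with Lemma \ref{lem: many crossing changes}(b), any composition of the crossing-change maps $\psi_0,\psi_{-1},\phi_0,\phi_1$ between two L-space links is injective on the (torsion-free) module $\cHFL$ in each Alexander grading and is completely determined by its Alexander and Maslov degrees; in particular it does not depend on the order of the crossing changes, which is exactly what makes $\Omega_\varepsilon$ well defined. First I would record the two elementary facts behind the set-up: every pair of components of $T(n,n)$ has linking number $1$, so producing linking number $0$ between $L_i$ and $L_j$ requires at least one crossing change between them, hence at least $\binom{n}{2}$ changes in total; and in the standard full-twist diagram the two crossings between the $i$-th and $j$-th strands form the pure-braid generator $A_{ij}$, so changing one crossing in each $A_{ij}$ trivializes the braid and yields $O_n$. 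Thus $sp(T(n,n))=\binom{n}{2}$ and a minimal splitting sequence changes exactly one crossing between $L_i$ and $L_j$ for every $i<j$. Part (a) is then immediate from Lemma \ref{lem: many crossing changes}(a): all crossing changes go from positive to negative, so $N^0_{ij}=N^1_{ij}=0$, while $P^0_{ij}+P^{-1}_{ij}=\lk(L_i,L_j)=1$ and $P^0_{ij}-P^{-1}_{ij}=\varepsilon_{ij}$, giving $A(\Omega_\varepsilon)=\sum_{i<j}\frac12\varepsilon_{ij}(\ee_i-\ee_j)=\sum_{i<j}\varepsilon_{ij}\alpha_{ij}$ and $\gr_\w(\Omega_\varepsilon)=0$.

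Part (c) then follows at once: if $p_\varepsilon=p_{\varepsilon'}$, subtracting the common center $\left(\frac{n-1}{2},\dots,\frac{n-1}{2}\right)$ gives $\sum_{i<j}\varepsilon_{ij}\alpha_{ij}=\sum_{i<j}\varepsilon'_{ij}\alpha_{ij}$, so $\Omega_\varepsilon$ and $\Omega_{\varepsilon'}$ share the same Alexander and Maslov degrees and hence coincide by the first paragraph. For part (b) I would compute $\Omega_\varepsilon$ on each generator by degree reasons: $\Omega_\varepsilon(a_k)$ lies in $\cHFL(O_n,\k')$ with $\k'=A(a_k)+A(\Omega_\varepsilon)$ and homological degree $\gr_\w(a_k)$, and $\cHFL(O_n,\k')$ is free of rank one over $\F[\UU]$, generated by the monomial $\prod_i U_i^{-[k'_i]_-}V_i^{[k'_i]_+}$. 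Since $a_k$ is non-torsion (the link Floer homology of an L-space link is torsion free) and $\Omega_\varepsilon$ is injective, $\Omega_\varepsilon(a_k)\neq 0$, so it equals $\UU^m$ times that generator for some $m\ge 0$, which is again a monomial. Taking $a_0$, whose Alexander grading is $\left(\frac{n-1}{2},\dots,\frac{n-1}{2}\right)$, Lemma \ref{lem: points}(b) gives $\k'=p_\varepsilon$; since $p_\varepsilon\in P_n$ has nonnegative coordinates, the generator of $\cHFL(O_n,p_\varepsilon)$ is $V^{p_\varepsilon}$, which has $\gr_\w=0=\gr_\w(\Omega_\varepsilon(a_0))$, so no power of $\UU$ can occur and $\Omega_\varepsilon(a_0)=V^{p_\varepsilon}$.

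For part (d) I would run this computation explicitly with $\varepsilon=\mathbf 1$: the $i$-th coordinate of $A(\Omega_{\mathbf 1})=\sum_{i<j}\alpha_{ij}$ is $\frac12\bigl((n-i)-(i-1)\bigr)=\frac{n+1}{2}-i$, so $\Omega_{\mathbf 1}(a_k)$ sits in the Alexander grading whose $i$-th coordinate is $n-i-k$; the generator of $\cHFL(O_n)$ there is $\prod_{i\le n-1-k}V_i^{\,n-i-k}\,\prod_{i\ge n+1-k}U_i^{\,i-n+k}=V_1^{n-1-k}V_2^{n-2-k}\cdots V_{n-1-k}\,U_{n+1-k}\cdots U_n^{k}$, whose homological degree is $-2\sum_{j=1}^k j=-k(k+1)=\gr_\w(a_k)$, and as in the $a_0$ case this forces $\Omega_{\mathbf 1}(a_k)$ to be exactly that generator. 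For part (e) I would first identify the $n!$ vertices of $P_n$, the permutations of $(n-1,\dots,1,0)$, with extreme sign vectors via the zonotope description of $P_n$ (\cite[Theorems 9.4, 9.5]{BR}, Lemma \ref{lem: points}(b)), and define $\Omega_\sigma$ to be the splitting map associated to the vertex with $i$-th coordinate $n-\sigma^{-1}(i)$. Then I would use that the permutation $\sigma$ of the components of $T(n,n)$ (and of $O_n$) is realized by an ambient diffeomorphism of $S^3$, whose induced automorphism $\sigma_*$ of $\cHFL$ acts on coefficients by $U_i\mapsto U_{\sigma(i)}$, $V_i\mapsto V_{\sigma(i)}$, fixes every $a_k$ (it preserves the gradings and takes free generators to free generators), and conjugates the crossing-change cobordism for a positive crossing between $L_i,L_j$ into the one between $L_{\sigma(i)},L_{\sigma(j)}$, carrying $\psi_0$ to $\psi_0$. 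Hence $\sigma_*\circ\Omega_{\mathbf 1}\circ\sigma_*^{-1}$ is again a composition of crossing-change maps for a minimal splitting sequence, with Alexander degree $\sigma\cdot A(\Omega_{\mathbf 1})$ (that is, $A(\Omega_{\mathbf 1})$ with coordinates permuted by $\sigma$) and Maslov degree $0$, so by part (c) it equals $\Omega_\sigma$; since $\sigma_*^{-1}(a_k)=a_k$ this gives $\Omega_\sigma(a_k)=\sigma_*\bigl(\Omega_{\mathbf 1}(a_k)\bigr)=V_{\sigma(1)}^{n-1-k}V_{\sigma(2)}^{n-2-k}\cdots V_{\sigma(n-1-k)}\,U_{\sigma(n+1-k)}\cdots U_{\sigma(n)}^{k}$.

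\textbf{Main obstacle.} There is no analytic content here: the L-space-link hypothesis collapses every map to a degree count combined with the rank-one-ness of $\cHFL(O_n)$ in each Alexander grading. The work is bookkeeping of two kinds. First, the combinatorial dictionary between sign vectors $\varepsilon$, integer points $p_\varepsilon$, and the vertices of the permutahedron $P_n$, which I would import from \cite[Chapter 9]{BR} through Lemma \ref{lem: points}. Second — and this is the step I expect to require genuine care — in part (e) one must verify that the permutation of the components of $T(n,n)$ is induced by an honest ambient diffeomorphism compatible with the link cobordism TQFT and with the chosen minimal sequence of crossing changes, so that the conjugation identity $\sigma_*\circ\Omega_{\mathbf 1}\circ\sigma_*^{-1}=\Omega_\sigma$ is legitimate and $\sigma_*$ really fixes the generators $a_k$ on the nose.
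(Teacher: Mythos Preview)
Your proof is correct and, for parts (a)--(d), essentially identical to the paper's: both reduce everything to degree computations, using that $T(n,n)$ and $O_n$ are L-space links so that Theorem \ref{thm: L space splitting} forces each $\Omega_\varepsilon$ to be determined by its Alexander and Maslov degrees. Your write-up is more explicit, but the content matches.

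The only genuine difference is part (e). You argue via an ambient diffeomorphism realizing the permutation of components, conjugating $\Omega_{\mathbf 1}$ to $\Omega_\sigma$, and you flag the verification of this conjugation identity as the ``main obstacle.'' The paper sidesteps this entirely: it simply says ``(e) follows from (d) by permuting the variables,'' meaning one repeats the degree computation of (d) for the sign vector $\varepsilon$ corresponding to the vertex $\sigma$. Concretely, the Alexander multi-degree of $\Omega_\sigma(a_k)$ is the $\sigma$-permutation of that of $\Omega_{\mathbf 1}(a_k)$, namely $(n-\sigma^{-1}(i)-k)_i$, and the monomial $V_{\sigma(1)}^{n-1-k}\cdots U_{\sigma(n)}^{k}$ is the unique element of $\cHFL(O_n)$ with that Alexander grading and Maslov grading $-k(k+1)$; by Theorem \ref{thm: L space splitting} this forces equality. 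So your diffeomorphism argument is valid but unnecessary, and the ``genuine care'' you anticipate is not actually required. The paper's route is shorter; yours has the mild advantage of making the $S_n$-equivariance of the whole picture explicit, which is conceptually pleasant even if not logically needed here.
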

 
\begin{proof}
Part (a) is clear and (c) follows from Theorem \ref{thm: L space splitting}. Part (b) is immediate from (a) since $a_0$ has Alexander degree $\left(\frac{n-1}{2},\ldots,\frac{n-1}{2}\right)$ and $\gr_{\w}(a_0)=0$.  This agrees with the normalization of the generator of $\cHFL(O_n)$ and neither $\Omega_{\varepsilon}$ nor $V_i$ change $\gr_{\w}$.

Part (d) also follows from Theorem \ref{thm: L space splitting} since we can compare the Alexander and Maslov degrees on both sides. Indeed, 
\[
\begin{split}
A(\Omega_{\bf 1}(a_k))&=\sum_{i<j}\alpha_{ij}+A(a_k)=\sum_{i<j}\alpha_{ij}+\left(\frac{n-1}{2},\ldots,\frac{n-1}{2}\right)-(k,\ldots,k)\\
&=(n-1-k,n-2-k,\ldots,1-k,-k).
\end{split}
\]
Here the last equation follows from Lemma \ref{lem: points}(a). Furthermore,
$$
\gr_{\w}(\Omega_{\bf 1}(a_k))=\gr_{\w}(a_k)=-k(k+1),
$$
while
$$
\gr_{\w}\left(V_1^{n-1-k}V_2^{n-2-k}\cdots V_{n-1-k}U_{n+1-k}\cdots U_n^{k}\right)=-2(1+\ldots+k)=-k(k+1).
$$
Part (e) follows from (d) by permuting the variables.
\end{proof}

\begin{example}
For $n=2$ we have $\Omega_{\bf 1}(a_0)=V_1,\Omega_{\bf 1}(a_1)=U_2$ as in Example \ref{ex:Hopf-psi}.
\end{example}

\begin{example}
For $n=3$, the signs $\varepsilon=(+1,+1,+1)$ correspond to the point $(2,1,0)\in P_3$ and  the map 
$\Omega_{\bf 1}(a_0)=V_1^2V_2,\ \Omega_{\bf 1}(a_1)=V_1U_3,\ \Omega_{\bf 1}(a_2)=U_2U_3^2$. 
The maps for other vertices of $P_3$ can be obtained from it by permuting the variables.

The maps  for $(\varepsilon_{12}=+1,\varepsilon_{13}=-1,\varepsilon_{23}=+1)$ and $(\varepsilon_{12}=-1,\varepsilon_{13}=+1,\varepsilon_{23}=-1)$ both correspond to the central point $(1,1,1)\in P_3$ and the corresponding splitting map is given by following:
$$
\Omega_{\varepsilon}(a_0)=V_1V_2V_3,\ \Omega_{\varepsilon}(a_1)=\UU,\ \Omega_{\varepsilon}(a_2)=U_1U_2U_3.
$$
\end{example}

\subsection{Surgery map}

In this section, we study the map $\Omega:\cbHFL(T(n,n))\to \cbHFL(O_n)$ obtained by composing the surgery maps $\Psi$ from Section \ref{sec: surgery skein} for any sequence of crossing changes. Specifically, we choose a minimal sequence of crossing changes that splits $T(n,n)$ and we compose the local surgery maps $\Psi$ associated to each crossing change to define $\Omega$. Below we will show that (at least on the level of homology) the choice of crossing change sequence does not matter and resulting maps agree. 

To specify the link components involved in a crossing change, we will use subscripts for $\Psi$ i.e. for a positive crossing between $L_i$ and $L_j$ we denote the local crossing change map by $\Psi_{ij}$. 
Topologically, each  map $\Psi_{ij}$ corresponds to a cobordism $W_{ij}$ obtained attaching a 2-handle, and their composition $\Omega$ corresponds to the composition $W$ of  cobordisms $W_{ij}$. We have $H_2(W_{ij})=\Z$ and $H_2(W)=\Z^{\binom{n}{2}}$.
A choice of a $\Spin$-structure on $W_{ij}$ corresponds to a choice of an integer $m_{ij}$ and a map $\psi_{m_{ij}}$ defined as in Section \ref{sec: surgery}, so that $\Psi_{ij}=\sum_{m_{ij}\in \Z} (-1)^{m_{ij}}\psi_{m_{ij}}$. Similarly, a choice of a $\Spin$-structure on $W$ corresponds to a choice of a vector $(m_{ij})_{i<j}$ in the  $\binom{n}{2}$-dimensional lattice, and
$$
\Omega=\sum_{(m_{ij})\in \Z^{\binom{n}{2}}}\prod_{i<j}  (-1)^{m_{ij}}\psi_{m_{ij}}.
$$
The choices of $m_{ij}=0$ and $m_{ij}=-1$ correspond, respectively, to choices of binary sequences $\varepsilon_{ij}=1$ and $\varepsilon_{ij}=-1$ in previous section. In Theorem \ref{thm: image Psi} below we  prove that $\Omega$ is injective on homology. To describe its image explicitly, we need to introduce some algebraic notations.

\begin{definition}
Let $S\subset \Z_{\ge 0}^{2}$ be a subset of cardinality $n$, order its elements as $s_1=(a_1,b_1),\ldots,s_n=(a_n,b_n)$.  Then we can define the  polynomial
$$
\Delta_S=\sum_{\sigma\in S_n}(-1)^{\sigma}U_1^{a_{\sigma(1)}}V_1^{b_{\sigma(1)}}\cdots U_n^{a_{\sigma(n)}}V_n^{b_{\sigma(n)}}=\det \left(
\begin{matrix}
U_1^{a_1}V_1^{b_1} & \cdots & U_1^{a_n}V_1^{b_n}\\
\vdots & \ddots  & \vdots \\
U_n^{a_1}V_n^{b_1} & \cdots & U_n^{a_n}V_n^{b_n}\\
\end{matrix}
\right).
$$
Reordering the elements of $S$ changes the sign of $\Delta_S$. Sometimes we will use notation $\Delta_S$ for $n$-tuples $S$ where some elements are repeated, in this case $\Delta_S=0$.
\end{definition}

\begin{definition}
We define $\J\subset \cHFL(O_n)=R_{UV}$ as the ideal generated by $\Delta_S$ for all possible $n$-element subsets $S$. 
\end{definition}

\begin{remark}
The polynomials $\Delta_S$ and the ideal in $\C[U_1,\ldots,U_n,V_1,\ldots,V_n]$ generated by $\Delta_S$ were first introduced by Haiman in his work on Hilbert scheme of points on the plane \cite{Haiman}.
\end{remark}

The following lemma gives a useful characterization of the ideal $\J$, we postpone its proof until Section \ref{sec: proof lemma minors}. It can be used as an alternative definition of $\J$.

\begin{lemma}
\label{lem: minors}
The ideal $\J$ is generated by the $n$ maximal minors corresponding to $n$-tuples of consecutive columns in the matrix
$$
\left(
\begin{matrix}
U_1^{n-1} & U_1^{n-2} & \cdots & U_1 & 1 & V_1 & \cdots & V_1^{n-2} & V_1^{n-1}\\
\vdots & \vdots & & \vdots & \vdots & \vdots  & & \vdots & \vdots \\
U_n^{n-1} & U_n^{n-2} & \cdots & U_n & 1 & V_n & \cdots & V_n^{n-2} & V_n^{n-1}\\
\end{matrix}
\right)
$$ 
\end{lemma}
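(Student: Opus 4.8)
The inclusion of the ideal generated by the $n$ consecutive maximal minors of the displayed matrix into $\J$ is immediate, since each such minor is one of the $\Delta_S$. So the content of the lemma is the reverse inclusion, and the plan is to show that every $\Delta_S$ lies in the ideal $\J_{\mathrm{st}}$ generated by the $n$ ``staircase'' minors $\Delta_{C_0},\dots,\Delta_{C_{n-1}}$, where $C_m$ is the window consisting of the columns $U^{n-1-m},\dots,U,1,V,\dots,V^{m}$ of the matrix. Note that the two extreme windows give $\Delta_{C_0}=\pm\prod_{i<j}(U_i-U_j)$ and $\Delta_{C_{n-1}}=\pm\prod_{i<j}(V_i-V_j)$.

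\textbf{Reduction to axis monomials.} In $R_{UV}$ the relation $U_iV_i=\UU$ gives $U_i^aV_i^b=\UU^{\min(a,b)}U_i^{[a-b]_+}V_i^{[b-a]_+}$, and the exponent $\min(a_j,b_j)$ depends only on the column index $j$ of $S=\{(a_1,b_1),\dots,(a_n,b_n)\}$, so it can be pulled out of the $j$-th column of the determinant: $\Delta_S=\UU^{\sum_j\min(a_j,b_j)}\,\Delta_{S'}$ with $S'$ a set of $n$ pairs of the form $(c,0)$ or $(0,d)$. Hence it suffices to treat $\Delta_T$ for $T$ a set of ``axis'' monomials, which I will encode as an $n$-element subset $T\subset\Z$ where $t<0$ records the column $U^{-t}$, $t>0$ the column $V^{t}$, and $t=0$ the column $1$. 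In the \emph{pure} cases this is easy: if $T\subset\Z_{\ge0}$ then $\Delta_T=\det(V_i^{t_j})_{i,j}$, and by the bialternant identity this equals $\pm\,s_\lambda(V_1,\dots,V_n)\prod_{i<j}(V_i-V_j)$ for the partition $\lambda_k=t_{n+1-k}-(n-k)$; since $s_\lambda$ has integer coefficients it lies in $R_{UV}$, so $\Delta_T\in(\Delta_{C_{n-1}})\subset\J_{\mathrm{st}}$, and symmetrically $\Delta_T\in(\Delta_{C_0})\subset\J_{\mathrm{st}}$ when $T\subset\Z_{\le0}$.

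\textbf{The mixed case.} Suppose $T$ has $U$-exponents $a_1>\dots>a_p\ge1$ and $V$-exponents $0\le b_1<\dots<b_q$ with $p,q\ge1$. The key manoeuvre is to scale the $i$-th \emph{row} of the determinant by $U_i^{b_q}$: since $b_q\ge b_l$ for all $l$, in $R_{UV}$ one has $U_i^{b_q}V_i^{b_l}=\UU^{b_l}U_i^{b_q-b_l}$, so after the scaling every column becomes a constant power of $\UU$ times a column of powers of the $U_i$, and pulling those powers of $\UU$ out of the columns gives
\[
\Bigl(\textstyle\prod_iU_i^{b_q}\Bigr)\Delta_T=\UU^{\,b_1+\dots+b_q}\det\bigl(U_i^{e_j}\bigr)_{i,j}
\]
for $n$ distinct non-negative integers $e_j$. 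By the pure-$U$ case this determinant lies in $(\Delta_{C_0})$, hence $\bigl(\prod_iU_i^{b_q}\bigr)\Delta_T\in\J_{\mathrm{st}}$; multiplying by $\prod_iV_i^{b_q}$ and using $\UU^{nb_q}=\bigl(\prod_iU_i^{b_q}\bigr)\bigl(\prod_iV_i^{b_q}\bigr)$ yields $\UU^{N}\Delta_T\in\J_{\mathrm{st}}$ with $N=nb_q$. Combined with the pure cases, this shows that every generator $\Delta_S$ of $\J$ becomes $\UU$-torsion in $R_{UV}/\J_{\mathrm{st}}$; in other words $\J/\J_{\mathrm{st}}$ is annihilated by a power of $\UU$.

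\textbf{The crux: removing the power of $\UU$.} It then remains to prove that $\UU$ is a non-zero-divisor on $R_{UV}/\J_{\mathrm{st}}$, so that its $\UU$-torsion submodule — which contains $\J/\J_{\mathrm{st}}$ — vanishes; this gives $\J\subseteq\J_{\mathrm{st}}$ and finishes the proof. I would establish this by analysing $\J_{\mathrm{st}}$ inside the binomial ring $R_{UV}=\F[U_1,V_1,\dots,U_n,V_n]/(U_iV_i-U_jV_j)$. After inverting $\UU$ each generator $\Delta_{C_m}$ becomes a unit multiple of $\prod_{i<j}(U_i-U_j)$, so $\J_{\mathrm{st}}[\UU^{-1}]=\bigl(\prod_{i<j}(U_i-U_j)\bigr)$ is radical; the task is to show $\J_{\mathrm{st}}$ has no associated prime contained in the hypersurface $\{\UU=0\}$, so that it coincides with the contraction of $\J_{\mathrm{st}}[\UU^{-1}]$ and is in particular $\UU$-saturated. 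Concretely this means ruling out the ``spurious'' primes over $\{\UU=0\}$ arising from the factorizations $(U_i-U_j)=\mathfrak p_{ij}\cap\mathfrak q_{ij}$ in $R_{UV}$ with $\mathfrak q_{ij}\subset\{\UU=0\}$ — these are not contained in $\J_{\mathrm{st}}$ because of the generator $\prod_{i<j}(V_i-V_j)$ — together with excluding embedded primes inside $\{\UU=0\}$; the cleanest execution is probably to produce a term order for which the $n$ consecutive minors already form a Gröbner basis of $\J_{\mathrm{st}}$ with squarefree initial ideal, which would simultaneously give the saturation statement and an independent ``straightening-law'' proof of the reverse inclusion. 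This Gröbner-basis/saturation input is where I expect essentially all of the real work to be.
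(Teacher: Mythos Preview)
Your reduction to axis subsets and the treatment of the pure cases via the bialternant formula are correct, and the row-scaling trick in the mixed case does give $\UU^{N}\Delta_T\in\J_{\mathrm{st}}$. But the proof is genuinely incomplete at the point you yourself call the crux: you have not shown that $\UU$ is a non-zero-divisor on $R_{UV}/\J_{\mathrm{st}}$. The Gr\"obner-basis or primary-decomposition sketch you offer is only a strategy, not an argument, and there is no evident reason it should be easier than the lemma itself; in particular, producing a term order for which the $n$ consecutive minors form a Gr\"obner basis in the quotient ring $R_{UV}$ (which is not a polynomial ring) would itself require nontrivial work that you have not supplied.

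The paper avoids this saturation question entirely by a direct induction. After the same reduction to axis subsets $\widetilde S$, it orders such subsets by ``sliding boxes toward the origin'' and uses a Pieri-type identity: $e_m(U)\cdot\Delta_{\widetilde S_1}=\sum_{S'}\Delta_{S'}$, the sum running over all ways of shifting $m$ distinct elements of $\widetilde S_1$ by $(1,0)$. Taking $\widetilde S_1$ to be $\widetilde S$ with the rightmost gap among its $U$-exponents closed, and $m$ the length of the top consecutive block, exactly one summand equals $\Delta_{\widetilde S}$ and every other summand reduces (via the $\UU$-factor lemma) to $\Delta_{\widetilde{S'}}$ with $\widetilde{S'}$ strictly below $\widetilde S$ in the partial order. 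Hence $\Delta_{\widetilde S}$ lies in the ideal generated by $\Delta$'s for strictly smaller subsets; iterating, and then running the symmetric argument with $e_m(V)$ to densify the $V$-side, terminates at the $n$ dense staircase minors with no torsion step to undo.
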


It is clear that all the minors in Lemma \ref{lem: minors} are of the form $\Delta_S$ for some choices of subsets $S$.

\begin{example}
For $n=2$ we have two determinants
$$
\det\left(\begin{matrix}
U_1 & 1\\
U_2 & 1
\end{matrix}
\right)=U_1-U_2,
\det\left(\begin{matrix}
1 & V_1\\
1 & V_2
\end{matrix}
\right)=V_2-V_1.
$$
\end{example}

\begin{example}
For $n=3$ we have three determinants
$$
\det\left(\begin{matrix}
U_1^2 & U_1 & 1\\
U_2^2 & U_2 & 1\\
U^2_3 & U_3 & 1
\end{matrix}
\right),
\det\left(\begin{matrix}
U_1 & 1 & V_1\\
U_2 & 1 & V_2\\
U_3 & 1 & V_3
\end{matrix}
\right),\
\det\left(\begin{matrix}
1 & V_1 & V_1^2\\
1 & V_2 & V_2^2\\
1 & V_3 & V_3^2
\end{matrix}
\right).
$$
\end{example}

Now we are ready to state the main theorem of this section.

\begin{theorem}
\label{thm: image Psi}
The surgery map $\Omega:\cbHFL(T(n,n))\to \cbHFL(O_n)$ is injective  and its image coincides with the (completed) ideal $\bJ$. The map does not depend on the order and choices of crossing changes. It particular, 
$$
\cHFL(T(n,n))\simeq \J
$$  
as modules over $\F[U_1,\ldots,U_n,V_1,\ldots,V_n]$.
\end{theorem}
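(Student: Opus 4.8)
The plan is to combine the algebraic description of the splitting maps $\Omega_\varepsilon$ from Theorem \ref{thm: psi monomial} with the reduction $\Psi_{ij} = \tau_{ij}(\psi_0 - \psi_{-1})$ from Corollary \ref{cor: tau intro}. First I would observe that $\Omega$ is the composition of the skein maps $\Psi_{ij}$ over all $i<j$, and by the formula $\Omega = \sum_{(m_{ij})} \prod_{i<j} (-1)^{m_{ij}} \psi_{m_{ij}}$, expanding each $\Psi_{ij}$ using Proposition \ref{prop:psi}(a)--(b) shows that $\Omega = \big(\prod_{i<j}\tau_{ij}\big)\, \Omega^0$ in homology, where $\Omega^0 = \prod_{i<j}(\psi_0 - \psi_{-1})$ and each $\tau_{ij}$ is the invertible power series of Lemma \ref{lem: tau} attached to the $(i,j)$ crossing. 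Since the product of invertible power series is invertible, it suffices to identify the image of $\Omega^0$, i.e. to prove the statement for $\Omega^0$ and its completed counterpart $\bJ$; this also reduces part (b) of Theorem \ref{thm: Tnn dets intro} to part (a).

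Next I would compute $\Omega^0$ explicitly on the generators $a_0,\dots,a_{n-1}$ of $\cHFL(T(n,n))$ given by Theorem \ref{thm:toruslink}. Expanding $\Omega^0 = \prod_{i<j}(\psi_0 - \psi_{-1})$ and using Proposition \ref{prop: psi phi} together with the monomial formulas of Theorem \ref{thm: psi monomial}(b), the image $\Omega^0(a_k)$ is an alternating sum over sign choices $\varepsilon$ of the monomials $V^{p_\varepsilon + \text{(shift depending on }k)}$; collecting terms, $\Omega^0(a_k)$ is (up to a monomial prefactor) precisely the determinant $\Delta_{S_k}$, where $S_k$ is the $n$-tuple of exponent vectors $\{(n-1,0),(n-2,0),\dots\}$ shifted appropriately — i.e. exactly the maximal minor on $n$ consecutive columns of the Vandermonde-type matrix in Lemma \ref{lem: minors}. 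The antisymmetrization is what produces the determinant: the map $\psi_0 - \psi_{-1}$ at the $(i,j)$ crossing introduces an antisymmetry in the variables $(U_i,V_i) \leftrightarrow (U_j,V_j)$, and composing over all $i<j$ antisymmetrizes fully, yielding $\Delta_S$. So $\Omega^0$ maps $\cHFL(T(n,n))$ onto the $R_{UV}$-submodule generated by the $n$ minors of Lemma \ref{lem: minors}, which equals $\J$ by that lemma.

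For injectivity I would argue as follows: $\cHFL(T(n,n))$ is torsion-free over $\F[\UU]$ by Corollary \ref{cor: L space tower} since $T(n,n)$ is an L-space link, and $\Omega^0$ restricted to each Alexander-graded piece $\cHFL(T(n,n),\k)\cong\F[\UU]$ is injective by Lemma \ref{lem: many crossing changes}(b) (it is a composition of crossing-change maps, each of which can be post-composed to a monomial via Proposition \ref{prop: psi phi}). A nonzero map of $\F[\UU]$-modules $\F[\UU]\to R_{UV}$ that is injective on each graded piece is injective, so $\Omega^0$ is injective; hence it is an isomorphism onto its image $\J$. Independence of the order and choice of crossing changes follows because all resulting maps have the same Alexander and Maslov bidegrees (Lemma \ref{lem: many crossing changes}(a)) and Theorem \ref{thm: L space splitting}(a) says such maps coincide. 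The main obstacle I anticipate is the bookkeeping in the second paragraph: correctly matching the alternating sum coming from $\prod_{i<j}(\psi_0-\psi_{-1})$ with the determinant expansion $\Delta_{S_k}$, including tracking the monomial prefactors and the precise column-index shift in $k$, and confirming that the resulting set of $n$ determinants is exactly the generating set of Lemma \ref{lem: minors} rather than a larger or smaller one — everything else is formal once this identification is pinned down.
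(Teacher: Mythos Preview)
Your overall strategy matches the paper's: reduce $\Omega$ to $\Omega^0$ via the invertible factors $\tau_{ij}$, expand $\Omega^0=\sum_{\varepsilon}\sgn(\varepsilon)\,\Omega_\varepsilon$, identify $\Omega^0(a_k)$ with a specific minor, and invoke Lemma \ref{lem: minors}. However, there is a genuine gap in your injectivity argument, and it is tied to a step you flag as ``bookkeeping'' but which is actually the crux of the proof.

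You claim that $\Omega^0$ restricted to each Alexander degree is injective ``by Lemma \ref{lem: many crossing changes}(b) (it is a composition of crossing-change maps \ldots)''. This is false as stated: $\Omega^0$ is \emph{not} a single composition of crossing-change maps but a signed sum of $2^{\binom{n}{2}}$ such compositions $\Omega_\varepsilon$, and Lemma \ref{lem: many crossing changes}(b) applies only to each $\Omega_\varepsilon$ individually. Different $\varepsilon$ can (and do) yield the same Alexander degree $p_\varepsilon$, and by Theorem \ref{thm: psi monomial}(c) such terms coincide, so over $\F$ they may cancel in the sum. Until you control this cancellation you cannot conclude $\Omega^0$ is injective, nor can you cleanly identify the image. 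The same issue undercuts your independence argument: $\Omega^0$ is not homogeneous in Alexander degree, so Theorem \ref{thm: L space splitting}(a) cannot be applied to it directly; it must be applied to each $\Omega_\varepsilon$ and then summed.

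The paper resolves this with a short but essential computation (its Step 3): summing the generating monomials one gets
\[
\sum_{\varepsilon}\sgn(\varepsilon)\,V^{p_\varepsilon}
=\prod_{i<j}(V_i-V_j)
=\pm\sum_{\sigma\in S_n}(-1)^{\sigma}V_1^{\sigma(1)-1}\cdots V_n^{\sigma(n)-1},
\]
so after cancellation only the $n!$ terms $\Omega_\sigma$ attached to the \emph{vertices} of the permutahedron survive, and these have pairwise distinct Alexander degrees. With this in hand, injectivity is immediate (the image of a generator is a sum of $n!$ nonzero elements sitting in $n!$ distinct Alexander degrees of $\cHFL(O_n)$), and the determinant identification $\Omega^0(a_k)=\pm\Delta_{S_k}$ drops out of Theorem \ref{thm: psi monomial}(d),(e) with no further monomial prefactors to track. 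Your intuition that $\psi_0-\psi_{-1}$ ``antisymmetrizes'' is exactly this Vandermonde identity; once you make it explicit, everything else in your outline goes through.
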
 

\begin{proof}
For the reader's convenience, we break the proof into several steps.

{\bf Step 1:} By Lemma \ref{lem: tau}, each map $\Psi_{ij}=\sum_{m_{ij}\in \Z} (-1)^{m_{ij}}\psi_{m_{ij}}$ is proportional, up to an explicit invertible factor $\tau_{ij}$, to $\Psi_{ij}^0=\psi_{0}-\psi_{-1}.$ The factors $\tau_{ij}$ do not depend on the order of crossing changes, and do not affect the injectivity or the image (which is an $\F[U_1,\ldots,U_n,V_1,\ldots,V_n]$-submodule of $\cHFL(O_n)$), so we can ignore them from now on and focus on $m_{ij}\in \{0,-1\}$.

{\bf Step 2:} By following the notations of Theorem \ref{thm: psi monomial}, we can then rewrite the composition of $\Psi_{ij}^0=\psi_{0}-\psi_{-1}$ as
\begin{equation}
\label{eq: psi short}
\Omega^{0}=\sum_{\varepsilon\in \{\pm 1\}^{\binom{n}{2}}}\sgn(\varepsilon)\Omega_{\varepsilon},\quad \sgn(\varepsilon)=\prod_{i<j}\varepsilon_{ij}.
\end{equation}
By Theorem \ref{thm: psi monomial}, for any given $\varepsilon$ the order of composition does not matter.

{\bf Step 3:} The terms in \eqref{eq: psi short} are parametrized by the integer points in the permutahedron $P_n$. However, some points will appear several times for different choices of $\varepsilon$, and by Theorem \ref{thm: psi monomial}(c) the corresponding terms in \eqref{eq: psi short} might cancel as long as they have the same Alexander degree. We claim that in fact the terms for all points will cancel except for  the vertices of $P_n$. To show this, consider the generating function
$$
\sum_{\varepsilon}\sgn(\varepsilon)V^{p_{\varepsilon}}=\sum_{\varepsilon}\sgn(\varepsilon)V_1^{\frac{n-1}{2}}\cdots V_n^{\frac{n-1}{2}}V^{\sum_{i<j}\varepsilon_{ij}\alpha_{ij}}=\sum_{\varepsilon}\prod_{i<j} \varepsilon_{ij}\sqrt{V_iV_j}V^{\varepsilon_{ij}\alpha_{ij}}=\prod_{i<j}(V_i-V_j).
$$
Here, we used that $V_i=\sqrt{V_iV_j}V^{\alpha_{ij}}$ and $-V_j=-\sqrt{V_iV_j}V^{-\alpha_{ij}}$.
On the other hand, we have the Vandermonde determinant
$$
\prod_{i<j}(V_i-V_j)=\pm\det\left(
\begin{matrix}
1 & V_1 & \cdots & V_1^{n-1}\\
\vdots & \vdots & & \vdots\\
1 & V_n & \cdots & V_n^{n-1}
\end{matrix}
\right)=
\pm\sum_{\sigma\in S_n}(-1)^{\sigma}V_1^{\sigma(1)-1}\cdots V_n^{\sigma(n)-1}.
$$

As a conclusion of this step, we can write
$$
\Omega^{0}=\sum_{\varepsilon}\sgn(\varepsilon)\Omega_{\varepsilon}=\pm\sum_{\sigma\in S_n}(-1)^{\sgn(\sigma)}\Omega_{\sigma}.
$$

{\bf Step 4:} We are ready to compute the image of $\Omega$ or, equivalently, of $\Omega^{0}$. Indeed, by Theorem \ref{thm: psi monomial}(d),(e) we get 
\[
\begin{split}
\Omega^{0}(a_j)&=\pm\sum_{\sigma\in S_n}(-1)^{\sgn(\sigma)}\Omega_{\sigma}(a_j)=
\pm\sum_{\sigma\in S_n}(-1)^{\sgn(\sigma)}V_{\sigma(1)}^{n-1-j}V_{\sigma(2)}^{n-2-j}\cdots V_{\sigma(n-1-j)}U_{\sigma(n+1-j)}\cdots U_{\sigma(n)}^{j}\\
&=\pm \det\left(\begin{matrix}
U_1^{j} & \cdots & U_1 & 1 & V_1 & \cdots & V_1^{n-1-j}\\
\vdots &  &\vdots & \vdots & \vdots & & \vdots \\
U_n^{j} & \cdots & U_n & 1 & V_n & \cdots & V_n^{n-1-j}
\end{matrix}
\right),\quad j=0,\ldots,n-1.
\end{split}
\] 
By Lemma \ref{lem: minors} these determinants generate the ideal $\J$.

{\bf Step 5:} It remains to prove that $\Omega^{0}$ is injective. Indeed, $T(n,n)$ is an $L$-space link, so in each Alexander multi-degree $\k$ we have $\cHFL(T(n,n),\k)\cong\F[\UU]$. By the above, the image of any element of this tower under $\Omega^{0}$ is a sum of   elements in  $n!$ towers in $\cHFL(O_n)$ located at the vertices of a permutahedron centered at $\k$. Since all these elements appear with nonzero coefficients, their sum is also nonzero.
\end{proof}

\subsection{Proof of Lemma \ref{lem: minors}}
\label{sec: proof lemma minors}

We start with several results which allow us to simplify the determinants $\Delta_S$. 
Given a subset $S=\{(a_1,b_1),\ldots,(a_n,b_n)\}$, we define $m_i=\min(a_i,b_i)$ and
$$
\widetilde{S}=\left\{(a_1-m_1,b_1-m_1),\ldots,(a_n-m_n,b_n-m_n)\right\}.
$$ 
Note that some elements of $\widetilde{S}$ may coincide even if all elements of $S$ are distinct. The subset $\widetilde{S}$ is contained in the union of the horizontal strip $\{b=0\}$ and the vertical strip $\{a=0\}$, dashed in Figure \ref{fig: S tilde}.

\begin{lemma}
\label{lem: shift down}
We have $\Delta_S=\UU^{N}\Delta_{\widetilde{S}}$ for $N=m_1+\ldots+m_n$.
\end{lemma}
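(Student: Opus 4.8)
The statement is a direct consequence of the multilinearity of the determinant in its columns together with the defining relations $U_iV_i=\UU$ in $R_{UV}$. First I would fix a column index $k\in\{1,\dots,n\}$ and write $a_k=(a_k-m_k)+m_k$ and $b_k=(b_k-m_k)+m_k$, where by definition $m_k=\min(a_k,b_k)$, so that both exponents $a_k-m_k$ and $b_k-m_k$ are nonnegative (indeed one of them is $0$). Then for every row index $i$ the $(i,k)$-entry of the matrix defining $\Delta_S$ factors as
$$
U_i^{a_k}V_i^{b_k}=U_i^{a_k-m_k}V_i^{b_k-m_k}\,(U_iV_i)^{m_k}=\UU^{m_k}\cdot U_i^{a_k-m_k}V_i^{b_k-m_k},
$$
using that $U_iV_i=\UU$ for each $i$ in $R_{UV}$. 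Note the scalar $\UU^{m_k}$ pulled out is the \emph{same} for all rows $i$, so the $k$-th column of the matrix defining $\Delta_S$ is exactly $\UU^{m_k}$ times the $k$-th column of the matrix defining $\Delta_{\widetilde S}$.

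Next, since $R_{UV}$ is a commutative ring and the determinant is multilinear in the columns, I would pull the scalar $\UU^{m_k}$ out of column $k$ for each $k=1,\dots,n$ in turn. This yields
$$
\Delta_S=\Big(\prod_{k=1}^{n}\UU^{m_k}\Big)\Delta_{\widetilde S}=\UU^{m_1+\cdots+m_n}\,\Delta_{\widetilde S}=\UU^{N}\Delta_{\widetilde S},
$$
which is the claimed identity. (If some elements of $\widetilde S$ coincide the right-hand side is $\UU^N\cdot 0=0$, consistently with the convention that $\Delta_{\widetilde S}=0$ for $n$-tuples with a repeated entry; but note this situation is not excluded a priori even when all elements of $S$ are distinct.)

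There is essentially no obstacle here: the only thing being used beyond elementary linear algebra over a commutative ring is the relation $U_iV_i=\UU$, which holds uniformly in $i$ and is exactly what makes the factor $\UU^{m_k}$ independent of the row. I would keep the write-up to the few lines above.
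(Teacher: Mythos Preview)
Your proof is correct and is essentially identical to the paper's own argument: both factor each matrix entry as $U_i^{a_k}V_i^{b_k}=\UU^{m_k}U_i^{a_k-m_k}V_i^{b_k-m_k}$ using $U_iV_i=\UU$, then pull the common scalar $\UU^{m_k}$ out of each column by multilinearity of the determinant.
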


\begin{proof}
For all $i,j$ we have
$$
U_j^{a_i}V_j^{b_i}=U_j^{a_i-m_i}V_j^{b_i-m_i}(U_jV_j)^{m_i}=U_j^{a_i-m_i}V_j^{b_i-m_i}\UU^{m_i}
$$
and
$$
\Delta_S=
\det \left(
\begin{matrix}
U_1^{a_1}V_1^{b_1} & \cdots & U_1^{a_n}V_1^{b_n}\\
\vdots & \ddots  & \vdots \\
U_n^{a_1}V_n^{b_1} & \cdots & U_n^{a_n}V_n^{b_n}\\
\end{matrix}
\right)=
\det \left(
\begin{matrix}
U_1^{a_1-m_1}V_1^{b_1-m_1}\UU^{m_1} & \cdots & U_1^{a_n-m_n}V_1^{b_n-m_n}\UU^{m_n}\\
\vdots & \ddots  & \vdots \\
U_n^{a_1-m_1}V_n^{b_1-m_1}\UU^{m_1} & \cdots & U_n^{a_n-m_n}V_n^{b_n-m_n}\UU^{m_n}\\
\end{matrix}
\right)=
$$
$$
\UU^{m_1+\ldots+m_n}\det \left(
\begin{matrix}
U_1^{a_1-m_1}V_1^{b_1-m_1} & \cdots & U_1^{a_n-m_n}V_1^{b_n-m_n}\\
\vdots & \ddots  & \vdots \\
U_n^{a_1-m_1}V_n^{b_1-m_1} & \cdots & U_n^{a_n-m_n}V_n^{b_n-m_n}\\
\end{matrix}
\right)=\UU^{m_1+\ldots+m_n}\Delta_{\widetilde{S}}.
$$
\end{proof}

\begin{example}
For $S=\{(0,0),(1,2),(3,5),(6,4)\}$ we have $\widetilde{S}=\{(0,0),(0,1),(0,2),(2,0)\}$, see Figure \ref{fig: S tilde}.
In this case $N=8$ and we have
$$
\Delta_S=\det\left(\begin{matrix}
1 & U_1V_1^{2} & U_1^3V_1^5 & U_1^6V_1^4\\
1 & U_2V_2^{2} & U_2^3V_2^5 & U_2^6V_2^4\\
1 & U_3V_3^{2} & U_3^3V_3^5 & U_3^6V_3^4\\
1 & U_4V_4^{2} & U_4^3V_4^5 & U_4^6V_4^4\\
\end{matrix}\right)=\UU^8\det\left(\begin{matrix}
1 & V_1 & V_1^2 & U_1^2\\
1 & V_2 & V_2^2 & U_2^2\\
1 & V_3 & V_3^2 & U_3^2\\
1 & V_4 & V_4^2 & U_4^2\\
\end{matrix}\right)=\UU^8\Delta_{\widetilde{S}}.
$$

\end{example}

\begin{figure}
\begin{tikzpicture}
\filldraw[lightgray] (0,0)--(0,6)--(1,6)--(1,1)--(7,1)--(7,0)--(0,0);
\draw (0,0)--(7,0);
\draw (0,1)--(7,1);
\draw (0,2)--(7,2);
\draw (0,3)--(7,3);
\draw (0,4)--(7,4);
\draw (0,5)--(7,5);
\draw (0,6)--(7,6);
\draw (0,0)--(0,6);
\draw (1,0)--(1,6);
\draw (2,0)--(2,6);
\draw (3,0)--(3,6);
\draw (4,0)--(4,6);
\draw (5,0)--(5,6);
\draw (6,0)--(6,6);
\draw (7,0)--(7,6);
\draw (0.5,-0.3) node {$0$};
\draw (1.5,-0.3) node {$1$};
\draw (2.5,-0.3) node {$2$};
\draw (3.5,-0.3) node {$3$};
\draw (4.5,-0.3) node {$4$};
\draw (5.5,-0.3) node {$5$};
\draw (6.5,-0.3) node {$6$};
\draw (-0.3,0.5) node {$0$};
\draw (-0.3,1.5) node {$1$};
\draw (-0.3,2.5) node {$2$};
\draw (-0.3,3.5) node {$3$};
\draw (-0.3,4.5) node {$4$};
\draw (-0.3,5.5) node {$5$};

\draw (0.5,0.5) node {$\bullet$};
\draw (1.5,2.5) node {$\bullet$};
\draw (3.5,5.5) node {$\bullet$};
\draw (6.5,4.5) node {$\bullet$};

\draw [red] (0.5,0.5) node {$\bullet$};
\draw  [red] (0.5,1.5) node {$\bullet$};
\draw  [red] (0.5,2.5) node {$\bullet$};
\draw  [red] (2.5,0.5) node {$\bullet$};

\draw [dotted,->] (1.5,2.5)--(0.6,1.6);
\draw [dotted,->] (3.5,5.5)--(0.6,2.6);
\draw [dotted,->] (6.5,4.5)--(2.6,0.6);
\end{tikzpicture}
\caption{The sets $S$ and $\widetilde{S}$}
\label{fig: S tilde}
\end{figure}
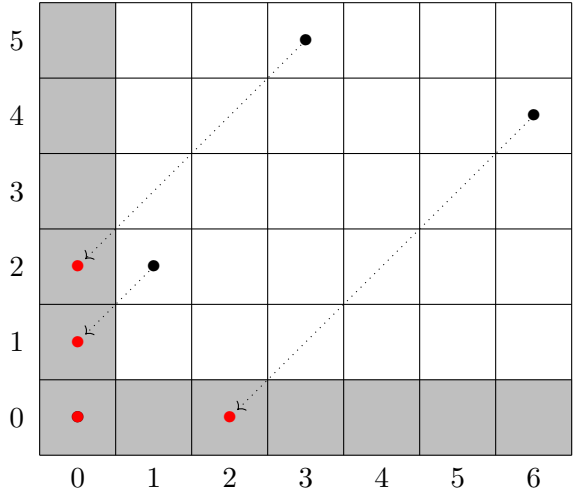

Let $e_k(U)=\sum_{i_1<\ldots<i_k}U_{i_1}\cdots U_{i_k}$ be the $k$-th elementary symmetric function.
We have the following analogue of the Pieri rule for Schur functions \cite{Macd}.

\begin{lemma}
\label{lem: pieri}
We have 
$$
e_k(U)\Delta_S=\sum_{S'}\Delta_{S'}
$$
where $S'$ is obtained   by adding $(1,0)$ to $k$ distinct elements of $S$ and leaving other elements unchanged.
\end{lemma}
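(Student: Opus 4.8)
The plan is to imitate the classical proof of the Pieri rule for Schur polynomials: expand both sides as alternating sums over $S_n$ and match terms. First I would write the determinant explicitly as
$\Delta_S=\sum_{\sigma\in S_n}(-1)^{\sigma}\prod_{l=1}^{n}U_l^{a_{\sigma(l)}}V_l^{b_{\sigma(l)}}$
(so that row $l$ of the matrix carries the variable $U_l,V_l$ and column $i$ carries the exponents $(a_i,b_i)$), and expand $e_k(U)=\sum_{|T|=k}\prod_{l\in T}U_l$ over $k$-element subsets $T\subseteq\{1,\dots,n\}$. Multiplying these out gives
\[
e_k(U)\Delta_S=\sum_{|T|=k}\ \sum_{\sigma\in S_n}(-1)^{\sigma}\Bigl(\prod_{l\in T}U_l^{a_{\sigma(l)}+1}V_l^{b_{\sigma(l)}}\Bigr)\Bigl(\prod_{l\notin T}U_l^{a_{\sigma(l)}}V_l^{b_{\sigma(l)}}\Bigr).
\]

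Next, for each fixed $\sigma$ I would reindex the inner sum by the set of elements of $S$ whose first coordinate is incremented, namely $T'=\sigma(T)$ (the factor $U_l$ attached to row $l$ bumps the exponent $a_{\sigma(l)}$ in column $\sigma(l)$). Since $\sigma$ is a bijection, $T\mapsto\sigma(T)$ is a bijection on $k$-element subsets, so after this substitution the right-hand side becomes $\sum_{|T'|=k}\sum_{\sigma\in S_n}(-1)^{\sigma}\prod_{l=1}^{n}U_l^{a'_{\sigma(l)}}V_l^{b_{\sigma(l)}}$, where $a'_i=a_i+1$ for $i\in T'$ and $a'_i=a_i$ otherwise. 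By definition this inner alternating sum is exactly $\Delta_{S'}$, where $S'$ is the $n$-tuple obtained from $S$ by replacing $(a_i,b_i)$ with $(a_i+1,b_i)$ for every $i\in T'$, i.e.\ by adding $(1,0)$ to the $k$ elements of $S$ indexed by $T'$. Hence $e_k(U)\Delta_S=\sum_{|T'|=k}\Delta_{S'}$.

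Finally I would invoke the convention recorded in the definition of $\Delta_S$: $\Delta_{S'}=0$ whenever $S'$ has a repeated element, which here is precisely when incrementing the chosen entries produces a collision (two equal columns). Discarding these vanishing terms leaves exactly the sum over those $S'$ that are genuine $n$-element subsets obtained by adding $(1,0)$ to $k$ distinct elements of $S$, which is the asserted identity. I do not expect a genuine obstacle here; the only point that requires care is the bookkeeping in the reindexing step — keeping straight that $T$ records which rows the factor $U_l$ multiplies, while $T'=\sigma(T)$ records which elements of $S$ get $(1,0)$ added — and the observation that this makes the final sum manifestly independent of the order in which one would sum over $T$.
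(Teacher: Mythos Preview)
Your proof is correct and is essentially the same argument as the paper's. The paper packages the bookkeeping using an antisymmetrization operator $\mathrm{Alt}(f)=\sum_{\sigma}(-1)^{\sigma}f(U_{\sigma(1)},\ldots,V_{\sigma(n)})$ and the observation $\mathrm{Alt}(fg)=\mathrm{Alt}(f)\,g$ for symmetric $g$, which lets them pull $e_k(U)$ inside $\mathrm{Alt}$ before expanding and thereby bypass your reindexing step $T\mapsto T'=\sigma(T)$; unwinding their shortcut gives exactly your computation.
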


\begin{proof}
Given a polynomial $f(U_1,\ldots,U_n,V_1,\ldots,V_n)$, we define
$$
\Alt(f)=\sum_{\sigma\in S_n}(-1)^{\sgn(\sigma)}f(U_{\sigma(1)},\ldots,U_{\sigma(n)},V_{\sigma(1)},\ldots,V_{\sigma(n)}). 
$$
For $S=\{(a_1,b_1),\cdots (a_n,b_n)\}$ we get
$
\Delta_S=\Alt\left(U_1^{a_1}V_1^{b_1}\cdots U_n^{a_n}V_n^{b_n}\right).
$
Clearly, $\Alt(f)$ is linear in $f$ and $\Alt(fg)=\Alt(f)g$ for any symmetric polynomial $g$. Since $e_k(U)$ is symmetric, we get
$$
e_k(U)\Delta_S=e_k(U)\Alt\left(U_1^{a_1}V_1^{b_1}\cdots U_n^{a_n}V_n^{b_n}\right)=\Alt\left(e_k(U)U_1^{a_1}V_1^{b_1}\cdots U_n^{a_n}V_n^{b_n}\right)=
$$
$$
\sum_{i_1<\ldots<i_k}\Alt\left(U_{i_1}\cdots U_{i_k}\cdot U_1^{a_1}V_1^{b_1}\cdots U_n^{a_n}V_n^{b_n}\right)=\sum_{S'}\Delta_{S'}
$$
where 
$S'=\{(a'_1,b'_1),\cdots (a'_n,b'_n)\},$
$$
\quad (a'_{i_s},b'_{i_s})=(a_{i_s},b_{i_s})+(1,0),\quad
(a'_j,b'_j)=(a_j,b_j)\quad j\notin\{i_1,\ldots,i_k\}.
$$
\end{proof}

\begin{example}
\label{ex: pieri}
We have 
$$
e_1(U)\Delta_{\{(0,0),(0,1),(0,2),(1,0)\}}=\Delta_{\{(1,0),(0,1),(0,2),(1,0)\}}+\Delta_{\{(0,0),(1,1),(0,2),(1,0)\}}+$$
$$\Delta_{\{(0,0),(0,1),(1,2),(1,0)\}}+\Delta_{\{(0,0),(0,1),(0,2),(2,0)\}}.
$$
The first term vanishes since $(1,0)$ is repeated twice. By Lemma \ref{lem: shift down} the second term equals 
$$
\Delta_{\{(0,0),(1,1),(0,2),(1,0)\}}=\UU\Delta_{\{(0,0),(0,0),(0,2),(1,0)\}}=0
$$
and the third term equals
$$
\Delta_{\{(0,0),(0,1),(1,2),(1,0)\}}=\UU\Delta_{\{(0,0),(0,1),(0,1),(1,0)\}}=0.
$$
Therefore 
$$
e_1(U)\Delta_{\{(0,0),(0,1),(0,2),(1,0)\}}=\Delta_{\{(0,0),(0,1),(0,2),(2,0)\}}.
$$
\end{example}

\begin{example}
\label{ex: pieri 2}
Similarly, we have 
$$
e_2(U)\Delta_{\{(0,0),(0,1),(0,2),(2,0),(3,0))\}}=\Delta_{\{(1,0),(1,1),(0,2),(2,0),(3,0))\}}+ $$
$$
+\Delta_{\{(1,0),(0,1),(0,2),(2,0),(4,0))\}}+\Delta_{\{(0,0),(0,1),(0,2),(3,0),(4,0))\}}
$$
and all other terms vanish. The first term can be simplified as
$$
\Delta_{\{(1,0),(1,1),(0,2),(2,0),(3,0))\}}=\UU\Delta_{\{(1,0),(0,0),(0,2),(2,0),(3,0))\}}.
$$
\end{example}

\begin{proof}[Proof of Lemma \ref{lem: minors}]
Recall that $\J$ is generated by the determinants $\Delta_S$ for arbitrary subsets $S$.  We need to prove that it is generated by $\Delta_{S_k}$ where
$$
S_k=\left\{(0,0),(1,0),\ldots,(k-1,0),(0,1),\ldots,(0,n-k) \right\},\quad\quad 1\le k\le n.
$$

By Lemma \ref{lem: shift down} we have 
$\Delta_{S}$ proportional to $\Delta_{\widetilde{S}}$ where all elements of $\widetilde{S}$ have the form $(a,0)$ or $(0,b)$ (that is, $\widetilde{S}$ is contained in the dashed region in Figure \ref{fig: S tilde}), so after reordering its elements we can write
$$
\widetilde{S}=\{(a_1,0),\ldots,(a_k,0),(0,b_{k+1})\ldots, (0,b_{n})\},\quad 0\le a_1<\ldots<a_k,\ \ \ 0<b_{k+1}<\ldots<b_n.
$$
It remains to prove that, in fact, it is sufficient to only consider the ``dense" subsets where $a_i=i-1$ and $b_i=i-k$. There is a natural partial order on such $\widetilde{S}$ where $\widetilde{S}'\prec \widetilde{S}''$ if $\widetilde{S}'$ is obtained  
by ``sliding" some elements of $\widetilde{S}''$ left or down. Clearly, ``dense" subsets are minimal in this order.

Assume that $0<a_{j}-1$, $(a_j-1,0)\notin \widetilde{S}$ and $j$ is maximal with this property, that is, $a_j$ bounds the rightmost gap in $\widetilde{S}$.
 This implies that $a_{i}=a_{j}+i-j$ for $j+1\le i\le k$. Let 
$$
\widetilde{S_1}=\{(a_1,0),\ldots,(a_j-1,0),\ldots,(a_k-1,0),(0,b_{k+1})\ldots, (0,b_{n})\}.
$$

In Example \ref{ex: pieri} we have 
$$
\widetilde{S}=\{(0,0),(0,1),(0,2),(2,0)\},\ a_j=2\ \mathrm{and}\ \widetilde{S_1}=\{(0,0),(0,1),(0,2),(1,0)\},
$$
 in Example \ref{ex: pieri 2} we have 
$$
\widetilde{S}=\{(0,0),(0,1),(0,2),(3,0),(4,0)\},\ a_j=3\ \mathrm{and}\ \widetilde{S_1}=\{(0,0),(0,1),(0,2),(2,0),(3,0)\}.
$$
Then by Lemmas \ref{lem: pieri} and   \ref{lem: shift down} (see also Examples \ref{ex: pieri} and \ref{ex: pieri 2}) we have  
$$
e_{k-j+1}(U)\Delta_{\widetilde{S}_1}=\Delta_{\widetilde{S}}+\sum_{S'}\Delta_{S'},\quad \Delta_{S'}=\UU^N\Delta_{\widetilde{S'}}
$$
where $\widetilde{S'}\prec \widetilde{S}$. Therefore $\Delta_{\widetilde{S}}$ belongs to the ideal generated by $\Delta_{\widetilde{S}_1}$ and $\Delta_{\widetilde{S'}}$. We can proceed by induction in the above partial order until $a_j$ are dense. Then repeat the same argument swapping $U_i$ and $V_i$ and using a version of Lemma \ref{lem: pieri} for elementary symmetric functions in $V_i$, this will ensure that $b_j$ are dense as well. 
\end{proof}

\subsection{More on ideal $\J$ and its cousins}
\label{sec: J cousins}

In this section we collect some further facts on the ideal $\J$ and discuss some analogues of Theorem \ref{thm: image Psi}  for other homology theories. 

\begin{definition}
A polynomial $f(U_1,\ldots,U_n,V_1,\ldots,V_n)$ is called antisymmetric if
$$
f(U_{\sigma(1)},\ldots,U_{\sigma(n)},V_{\sigma(1)},\ldots,V_{\sigma(n)})=(-1)^{\sigma}f(U_1,\ldots,U_n,V_1,\ldots,V_n).
$$
\end{definition}

Note that over our ground field $\F$ of characteristic 2, any antisymmetric polynomial is also symmetric. However, for other ground fields there is an important distinction.

\begin{lemma}
Let $\ch\ \mathbf{k}\neq 2$. Then any antisymmetric polynomial with coefficients in $\mathbf{k}$  is a linear combination of minors $\Delta_S$.
\end{lemma}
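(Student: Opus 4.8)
The claim is a $\mathbf{k}$-linear statement about the space of antisymmetric polynomials in $U_1,\ldots,U_n,V_1,\ldots,V_n$, so the natural strategy is to work one multidegree at a time. Fix a total degree and, more precisely, fix the multidegree in the ``diagonal'' variables so that the $S_n$-action permutes a finite basis of monomials. Any antisymmetric polynomial $f$ is then a linear combination of $S_n$-orbits of monomials; I will show each such orbit sum, when it is genuinely antisymmetric (i.e. the stabilizer of the monomial acts by even permutations only, equivalently the monomial has pairwise distinct exponent-pairs $(a_i,b_i)$), is exactly one of the $\Delta_S$, and when the monomial has a repeated exponent-pair its antisymmetrization vanishes. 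Concretely: write $f=\sum_{\m}c_{\m}\,U_1^{a_1}V_1^{b_1}\cdots U_n^{a_n}V_n^{b_n}$ where $\m$ runs over exponent vectors; grouping the sum into $S_n$-orbits and using antisymmetry of $f$ forces the coefficients within an orbit to agree up to sign, so that the contribution of the orbit of a monomial with distinct pairs $s_i=(a_i,b_i)$ is $c\cdot\Alt(U_1^{a_1}V_1^{b_1}\cdots U_n^{a_n}V_n^{b_n})=c\,\Delta_S$ with $S=\{s_1,\ldots,s_n\}$, in the notation of the proof of Lemma~\ref{lem: pieri}. Orbits of monomials with a repeated pair have a stabilizer containing a transposition, which is odd, so antisymmetry forces their coefficient to be zero; equivalently $\Delta_S=0$ when $S$ is a multiset with repetition, as already noted in the paper.

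The steps, in order, are: (1) recall $\Delta_S=\Alt(U_1^{a_1}V_1^{b_1}\cdots U_n^{a_n}V_n^{b_n})$ and that each $\Delta_S$ is antisymmetric (immediate from the definition of $\Alt$ and the sign of a composition of permutations); (2) decompose the monomial basis of polynomials into $S_n$-orbits, noting that the ``label'' of a monomial is precisely an $n$-tuple $(s_1,\ldots,s_n)$ of exponent-pairs, on which $S_n$ acts by permuting the positions (equivalently, by permuting which variable-pair $(U_k,V_k)$ carries which exponent-pair); (3) given antisymmetric $f$, compare the coefficient of a monomial $\m$ with that of $\sigma\cdot\m$ and conclude $c_{\sigma\cdot\m}=(-1)^{\sigma}c_{\m}$; in particular if $\sigma$ stabilizes $\m$ and is odd then $c_{\m}=0$, which happens exactly when two of the $s_i$ coincide, since then the transposition swapping those two positions fixes $\m$ and $\ch\,\mathbf{k}\neq 2$ makes $-1\neq 1$; (4) for each remaining orbit, pick a representative with distinct pairs, and observe that $\sum_{\sigma}c_{\sigma\cdot\m}\,\sigma\cdot\m = c_{\m}\sum_{\sigma}(-1)^{\sigma}\sigma\cdot\m = c_{\m}\,\Delta_S$; (5) sum over orbits to express $f$ as a $\mathbf{k}$-linear combination of the $\Delta_S$.

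The only genuinely non-formal point is step~(3)–(4): one must be careful that the $S_n$-action on monomials is \emph{free} on the orbit-representatives with distinct exponent-pairs (so that the antisymmetrization sum $\Alt$ does not collapse), and that the stabilizer condition ``contains an odd permutation'' is equivalent to ``contains a transposition,'' which for the action by permuting coordinates is equivalent to ``two exponent-pairs coincide.'' This is elementary but is where the hypothesis $\ch\,\mathbf{k}\neq 2$ is used in an essential way — in characteristic $2$ a monomial with a repeated pair antisymmetrizes to a nonzero symmetric polynomial (e.g. $\sum_i U_i$ type expressions), so the statement genuinely fails, as the paper already remarks. Everything else is bookkeeping with finite sums in a fixed multidegree, and there is no convergence or completion issue since we are working in the polynomial ring. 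I do not expect any serious obstacle; the proof is short and combinatorial.
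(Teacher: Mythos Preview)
Your proposal is correct and follows essentially the same argument as the paper: pick a monomial with nonzero coefficient, use antisymmetry under the transposition $(i\ j)$ together with $\ch\,\mathbf{k}\neq 2$ to rule out repeated exponent-pairs, and then identify the $S_n$-orbit sum with $\Delta_S$. The paper's proof is a two-sentence version of exactly your steps (3)--(5).
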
 

\begin{proof}
Let $f$ be an antisymmetric polynomial, and $U_1^{a_1}V_1^{b_1}\cdots U_n^{a_n}V_n^{b_n}$ be a monomial in $f$ with nonzero coefficient. If $a_i=a_j$ and $b_i=b_j$ then the transposition $(i\ j)$ fixes this monomial, but since $f$ is antisymmetric it must change its sign, contradiction. Therefore all pairs $(a_1,b_1),\ldots,(a_n,b_n)$ are distinct and the $S_n$-orbit of this monomial adds up to $\Delta_S$ for $S=\{ (a_1,b_1),\ldots,(a_n,b_n)\}$.
\end{proof}

Informally, we can think of $\J$ as a characteristic 2 reduction of the ideal generated by antisymmetric polynomials in $\cHFL(O_n)$.
Next, we check directly that Theorem \ref{thm: image Psi} is compatible with Theorem  \ref{thm:toruslink}. By Lemma \ref{lem: minors} the ideal $\J$ is generated by $n$ minors $\Omega^0(a_k)$ and it is sufficient to check the relations between them.

\begin{lemma}
The determinants  
$$
\Omega^0(a_j)=\pm \det\left(\begin{matrix}
U_1^{j} & \cdots & U_1 & 1 & V_1 & \cdots & V_1^{n-1-j}\\
\vdots &  &\vdots & \vdots & \vdots & & \vdots \\
U_n^{j} & \cdots & U_n & 1 & V_n & \cdots & V_n^{n-1-j}
\end{matrix}
\right)
$$ 
satisfy the relations \eqref{eq: Tnn} (up to signs).
\end{lemma}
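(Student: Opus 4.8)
The plan is to work throughout inside $R_{UV}=\cHFL(O_n)$, where $U_iV_i=\UU$ for every $i$, and to treat the two families of relations in \eqref{eq: Tnn} separately. The relations $U_iV_i\,\Omega^0(a_k)=U_jV_j\,\Omega^0(a_k)$ need nothing: both sides equal $\UU\cdot\Omega^0(a_k)$. So all the content lies in the first family,
\[
\Bigl(\prod_{i\in I}U_i\Bigr)\Omega^0(a_{k-1})=\pm\Bigl(\prod_{j\notin I}V_j\Bigr)\Omega^0(a_k)
\qquad\bigl(1\le k\le n-1,\ I\subseteq\{1,\dots,n\},\ |I|=k\bigr).
\]
Writing $D_j=\pm\Omega^0(a_j)$, I would first record, using Lemma~\ref{lem: minors} and the way it identifies the $\Omega^0(a_j)$ with consecutive minors, that $D_j$ is (up to sign) the maximal minor of the matrix $B=\bigl(U_i^{\,n-1},\dots,U_i,1,V_i,\dots,V_i^{\,n-1}\bigr)_{i=1}^{n}$ taken on the window of $n$ consecutive columns running from the column $U^{\,j}$ to the column $V^{\,n-1-j}$. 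Thus $D_{k-1}$ and $D_k$ are minors of $B$ on adjacent windows, the one for $D_k$ obtained from that for $D_{k-1}$ by sliding one column toward the $U$-side.

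The first step is to realize both sides of the relation as genuine $n\times n$ determinants. Multiplying $D_{k-1}$ by $\prod_{i\in I}U_i$ is the same as scaling, in the $n\times n$ submatrix $M_{k-1}$ of $B$ that computes $D_{k-1}$, each of the $k$ rows indexed by $I$ by the corresponding scalar $U_i$; rewriting the rescaled entries with $U_iV_i=\UU$ turns $M_{k-1}$ into a matrix $B'$ whose $I$-rows read $(U_i^{\,k},\dots,U_i,\UU,\UU V_i,\dots,\UU V_i^{\,n-k-1})$ and whose remaining rows stay $(U_i^{\,k-1},\dots,U_i,1,V_i,\dots,V_i^{\,n-k})$. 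Symmetrically, $\prod_{j\notin I}V_j\cdot D_k=\pm\det B''$, where in the submatrix $M_k$ of $B$ computing $D_k$ one scales the $n-k$ rows outside $I$ by the scalars $V_j$; those rows become $(\UU U_j^{\,k-1},\dots,\UU,V_j,\dots,V_j^{\,n-k})$, while the $I$-rows stay $(U_i^{\,k},\dots,U_i,1,V_i,\dots,V_i^{\,n-1-k})$. So it remains to prove $\det B'=\pm\det B''$.

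The comparison $\det B'=\det B''$ is the heart of the argument, and I would carry it out by the generalized Laplace expansion along the $k$ rows indexed by $I$. Split the columns of both matrices into a first block of $k$ columns (the ``$U$-block'') and a last block of $n-k$ columns (the ``$V$-block''). From the row formulas above one reads off three facts: in $B'$ the factors $\UU$ occur precisely on the $V$-block entries of the $I$-rows and nowhere on the $I^c$-rows; in $B''$ the factors $\UU$ occur precisely on the $U$-block entries of the $I^c$-rows and nowhere on the $I$-rows; and after erasing those factors the $I$-rows (respectively, the $I^c$-rows) of $B'$ and of $B''$ coincide verbatim. Hence, for a $k$-element set $J$ of columns, if $m$ denotes the number of columns of $J$ lying in the $V$-block, then the minor of $B'$ on rows $I$ and columns $J$ equals $\UU^{m}$ times the minor of $B''$ on rows $I$ and columns $J$, while the minor of $B''$ on rows $I^c$ and columns $J^c$ equals $\UU^{m'}$ times the one of $B'$, where $m'$ is the number of columns of $J^c$ in the $U$-block. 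Since $|I|=|J|=k$, the number of columns of $J$ in the $U$-block is $k-m$, so $m'=k-(k-m)=m$; the two powers of $\UU$ cancel, and as the Laplace coefficients $\pm1$ depend only on $I$ and $J$ (not on the entries), every term in the expansion of $\det B'$ equals the corresponding term in that of $\det B''$. Therefore $\det B'=\det B''$, which gives the first family of relations up to sign.

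I expect the only real obstacle to be the $\UU$-bookkeeping above: the factor $\prod_{i\in I}U_i$ is not symmetric, so one must first convert it into a row rescaling and then track exactly where the newly created $\UU$'s land relative to the Laplace blocks; everything else (the identification of each $D_j$ with a consecutive minor, multilinearity of the determinant, the generalized Laplace formula) is routine. As a sanity check on the normalization, for $n=2$ the relation degenerates to $U_1V_1=U_2V_2$, which holds in $R_{UV}$ by definition.
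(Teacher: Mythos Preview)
Your proof is correct, but it takes a genuinely different route from the paper's. The paper exploits the fact (established in Theorem~\ref{thm: psi monomial}) that $\Omega^0(a_k)$ is the antisymmetrization of the single monomial $\Omega_{\bf 1}(a_k)=V_1^{n-1-k}\cdots V_{n-1-k}\,U_{n+1-k}\cdots U_n^{k}$. It then checks the relation $U_I\,\Omega_{\bf 1}(a_k)=V_{\overline I}\,\Omega_{\bf 1}(a_{k+1})$ directly at the monomial level: splitting $I$ according to its intersection with $\{1,\ldots,n-1-k\}$ and $\{n-k,\ldots,n\}$, this reduces to $U_{I_1}V_{I_1}=U_{\overline I_2}V_{\overline I_2}$, which holds because both are $\UU^{|I_1|}=\UU^{|\overline I_2|}$. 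Since the relations \eqref{eq: Tnn} are $S_n$-equivariant, the identity for the monomials $\Omega_{\bf 1}(a_k)$ immediately implies the identity for their antisymmetrizations $\Omega^0(a_k)$.

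Your argument instead stays at the level of the full determinants: you recast $U_I D_{k-1}$ and $V_{\overline I}D_k$ as determinants $\det B'$, $\det B''$ of $n\times n$ matrices via row scaling and the relation $U_iV_i=\UU$, and then run a generalized Laplace expansion along the rows in $I$, observing that the $\UU$-factor picked up by the $(I,J)$-minor on one side exactly matches the $\UU$-factor on the complementary minor on the other side because $|J\cap V\text{-block}|=|J^c\cap U\text{-block}|$. This is a clean, self-contained determinant identity that makes no use of the antisymmetrization description. The paper's approach is shorter because the symmetry does most of the work; yours is more direct and would apply equally well in settings where one does not have a preferred monomial whose orbit generates the determinant.
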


\begin{proof}
Recall that the determinants $\Omega^0(a_k)$ are antisymmetrizations of monomials
$$
\Omega_{\bf 1}(a_k)=V_1^{n-1-k}\cdots V_{n-1-k}U_{n+1-k}\cdots U_n^{k},\ \Omega_{\bf 1}(a_{k+1})=V_1^{n-2-k}\cdots V_{n-2-k}U_{n-k}\cdots U_n^{k+1},
$$
let us check the relations \eqref{eq: Tnn} between them for all possible $I$. Let us first pick $I=\{n-k,\ldots,n\}$, then $\overline{I}=\{1,\ldots,n-k-1\}$ and
$$
\Omega_{\bf 1}(a_k)U_I=\left(V_1^{n-1-k}\cdots V_{n-1-k}U_{n+1-k}\cdots U_n^{k}\right)\times U_{n-k}\cdots U_n=
$$
$$
\left(V_1^{n-2-k}\cdots V_{n-2-k}U_{n-k}\cdots U_n^{k+1}\right)\times V_1\cdots V_{n-k-1}=\Omega_{\bf 1}(a_{k+1})V_{\overline{I}}.
$$  
More generally, for an arbitrary $I$ define 
$$
I_1=I\cap \{1,\ldots,n-1-k\},\ I_2=I\cap \{n-k,\ldots,n\},
$$ 
$$
\overline{I}_1=\{1,\ldots,n-1-k\}\setminus I,\ \overline{I}_2=\{n-k,\ldots,n\}\setminus I.
$$
Let $X_k=V_1^{n-2-k}\cdots V_{n-2-k}U_{n+1-k}\cdots U_n^{k}$, then
$$
\Omega_{\bf 1}(a_k)U_I=X_kV_{\{1,\ldots,n-1-k\}}U_I=X_k\left(V_{I_1}V_{\overline{I}_1}\right)\left(U_{I_1}U_{I_2}\right)=X_kV_{\overline{I}_1}U_{I_2}\left(V_{I_1}U_{I_1}\right)=$$
$$X_kV_{\overline{I}_1}U_{I_2}\left(V_{\overline{I}_2}U_{\overline{I}_2}\right)=X_k\left(V_{\overline{I}_1}V_{\overline{I}_2}\right)\left(U_{I_2}U_{\overline{I}_2}\right)=X_kV_{\overline{I}}U_{\{n-k,\ldots,n\}}=\Omega_{\bf 1}(a_{k+1})V_{\overline{I}}.
$$ 
Here we used the relation $V_{I_1}U_{I_1}=V_{\overline{I}_2}U_{\overline{I}_2}$.

Since the relations \eqref{eq: Tnn} are $S_n$-equivariant, the relations for $\Omega_{\bf 1}(a_k)$ imply the same relations for $\Omega^0(a_k)$.
\end{proof}

Next, we study the relations between the ideals corresponding to the link $T(n,n)$ and its sublinks.

\begin{lemma}
For any subset $I\subset \{1,\ldots,n\}$ with $|I|\ge 2$ let $\J_I$ be the ideal generated by the monomial minors in variables $U_i,V_i,i\in I$. Then $\J\subset \J_I$.
\end{lemma}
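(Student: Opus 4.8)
The plan is to prove the containment generator by generator. Since $\J$ is generated by the determinants $\Delta_S$ for $n$-element subsets $S=\{(a_1,b_1),\dots,(a_n,b_n)\}\subset\Z_{\ge 0}^2$, it suffices to show that each such $\Delta_S$ lies in $\J_I$ (which we regard as the ideal of $R_{UV}=\cHFL(O_n)$ generated by the $|I|\times|I|$ monomial minors in the variables $U_i,V_i$, $i\in I$). To this end I would write $\Delta_S=\det M$, where $M$ is the $n\times n$ matrix with $(l,t)$-entry $U_l^{a_t}V_l^{b_t}$, $1\le l,t\le n$.

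The key step will be the generalized Laplace expansion of $\det M$ along the set of rows indexed by $I$. Writing $k=|I|$, this gives
$$\Delta_S=\det M=\sum_{T}\varepsilon_T\,\det\bigl(M[I;T]\bigr)\,\det\bigl(M[\,\overline I\,;\overline T\,]\bigr),$$
where $T$ ranges over all $k$-element subsets of $\{1,\dots,n\}$ (the column indices), $\overline I$ and $\overline T$ denote complements, $M[I;T]$ is the submatrix of $M$ with rows in $I$ and columns in $T$, and $\varepsilon_T\in\{\pm 1\}$ is the usual Laplace sign (irrelevant over $\F$). For each $T=\{t_1<\dots<t_k\}$ the minor $\det(M[I;T])$ is, up to sign, exactly the monomial minor of the matrix $(U_i^{a}V_i^{b})_{i\in I,\,(a,b)\in S_T}$ attached to the $k$-element set $S_T=\{(a_{t_1},b_{t_1}),\dots,(a_{t_k},b_{t_k})\}$, hence one of the generators of $\J_I$ (if the elements of $S_T$ coincide the minor simply vanishes, which is harmless). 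The complementary factor $\det(M[\overline I;\overline T])$ is just some element of $R_{UV}$. Therefore every summand lies in $\J_I$, so $\Delta_S\in\J_I$, and we conclude $\J\subseteq\J_I$.

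There is essentially no obstacle here: the statement is a direct application of Laplace expansion. The only points requiring care are bookkeeping — identifying $\det(M[I;T])$ with a generator $\Delta_{S_T}$ of $\J_I$ and observing that the complementary minor lives in the ambient ring — and noting that the Laplace identity is first an identity in the polynomial ring $R=\F[U_1,\dots,U_n,V_1,\dots,V_n]$ which then descends along the surjection $R\twoheadrightarrow R_{UV}$; the $\UU$-relations used elsewhere play no role in this particular statement.
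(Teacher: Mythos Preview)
Your proof is correct and is essentially identical to the paper's: both expand $\Delta_S$ via the generalized Laplace expansion along the rows indexed by $I$, observe that each resulting minor $\det(M[I;T])$ is a generator of $\J_I$, and conclude that $\Delta_S\in\J_I$. The paper's version is more terse, but the argument is the same.
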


\begin{proof}
Let $S=\{ (a_1,b_1),\ldots,(a_n,b_n)\}$, then one can write the minor $\Delta_S$ as follows:
$$
\Delta_S=\sum_{L}\pm \det(U_i^{a_j}V_i^{b_j})_{i\in I,j\in L}\det(U_i^{a_j}V_i^{b_j})_{i\notin I,j\notin L}
$$
where the sum runs over all $|I|$-element subsets $L\subset \{1,\ldots,n\}$. Since  $\det(U_i^{a_j}V_i^{b_j})_{i\in I,j\in L}\in \J_I$, we get $\Delta_S\in \J_I$ and $\J\subset \J_I$.
\end{proof}

\begin{remark}
Topologically, the ideal $\J_I$ corresponds to the union of the sublink $\L_I$ formed by the components $L_i,i\in I$ of $T(n,n)$ with $n-|I|$ unknotted disjoint circles. Since the splitting map for $T(n,n)$ does not depend on the order of crossing changes, we can first split off the components with indices not in $I$, and the splitting map $\Omega$ (respectively, $\Omega^0$) will factor through the splitting map $\Omega_I$ (resp. $\Omega_I^0$) for the resulting link.  Therefore the image of $\Omega$ is contained in the image of $\Omega_I$. 
\end{remark}

\begin{corollary}
We have $$\J\subset \bigcap_{i<j}(U_i-U_j,V_i-V_j)\subset \cHFL(O_n)=\frac{\F[U_1,\ldots,U_n,V_1,\ldots,V_n]}{(U_iV_i=U_jV_j,\ i\neq j)}.$$
\end{corollary}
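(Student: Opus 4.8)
The plan is to reduce the inclusion to a statement about two-component sublinks. Since $\J$ is generated by the determinants $\Delta_S$ and the target is an intersection over pairs, it suffices to show $\J\subset(U_i-U_j,V_i-V_j)$ for each fixed pair $i<j$. For this I would invoke the preceding lemma with $I=\{i,j\}$, which gives $\J\subset\J_{\{i,j\}}$, where $\J_{\{i,j\}}$ is the ideal generated by the $2\times 2$ monomial minors built from the rows indexed by $i$ and $j$; this is exactly the ideal $\J$ for the two-component unlink in the four variables $U_i,V_i,U_j,V_j$.

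Next I would apply Lemma \ref{lem: minors} in the case $n=2$: it says that this ideal is generated by the two maximal minors formed from consecutive columns of the matrix with rows $(U_i,1,V_i)$ and $(U_j,1,V_j)$, namely $U_i-U_j$ and $V_j-V_i$ (this is precisely the $n=2$ example spelled out after that lemma). Hence $\J_{\{i,j\}}=(U_i-U_j,V_i-V_j)$, and combining with the previous step yields $\J\subset(U_i-U_j,V_i-V_j)$ for every $i<j$; intersecting over all pairs gives the corollary.

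As a sanity check — and as a self-contained alternative that avoids even quoting the sublink lemma — I would note the direct argument: for any $n$-element subset $S=\{(a_1,b_1),\ldots,(a_n,b_n)\}$, setting $U_i\equiv U_j$ and $V_i\equiv V_j$ makes the $i$-th and $j$-th rows of the matrix $\bigl(U_\ell^{a_k}V_\ell^{b_k}\bigr)_{1\le \ell,k\le n}$ coincide, so $\Delta_S\equiv 0\pmod{(U_i-U_j,V_i-V_j)}$. There is essentially no obstacle here; the only point requiring a moment's care is that this congruence already holds in the polynomial ring $\F[U_1,\ldots,U_n,V_1,\ldots,V_n]$ before passing to the quotient by the relations $U_\ell V_\ell=U_m V_m$, so it descends to $\cHFL(O_n)$ exactly as stated.
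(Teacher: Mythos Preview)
Your proposal is correct and follows essentially the same route as the paper: invoke the preceding sublink lemma with $I=\{i,j\}$ to get $\J\subset\J_{\{i,j\}}$, then use the $n=2$ case of Lemma~\ref{lem: minors} to identify $\J_{\{i,j\}}=(U_i-U_j,V_i-V_j)$. Your additional direct argument via equal rows is a nice self-contained alternative that the paper does not include.
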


\begin{proof}
By Lemma \ref{lem: minors}, for a two-component sublink $L_i\cup L_j$ we have $\J_{ij}=(U_i-U_j,V_i-V_j)$, and $\J\subset \J_{ij}$ for all $i<j$. 
\end{proof}

The analogues of the ideal $\J$ and of Theorem \ref{thm: image Psi} appeared in triply graded Khovanov-Rozansky homology. Recall that the triply graded homology of the unknot is $\HHH(O_1)=\C[x,\theta]$ where $x$ is an even and $\theta$ is an odd variable. There is also a skein exact triangle 
$$
\begin{tikzcd}
\HHH(L_+) \arrow{rr}{\Psi_{ij}^{\HHH}}& & \HHH(L_-)\arrow{dl}\\
 & \left(\HHH(L_0)\to  \HHH(L_0)\right) \arrow{ul}& 
\end{tikzcd}
$$
analogous to the skein exact triangle \eqref{eq: skein}. The map $\Psi_{ij}^{\HHH}$ can be used to define a splitting map $\Omega^{\HHH}:\HHH(T(n,n))\to \HHH(O_n)=\C[x_1,\ldots,x_n,\theta_1,\ldots,\theta_n]$ which is, unfortunately, not injective.

To resolve this problem, second author and Hogancamp introduced in \cite{GHog} a deformation, or $y$-ification of Khovanov-Rozansky homology $\HY(L)$. The skein exact triangle can be defined in this deformed theory, and there is a (unique up to homotopy) splitting map $\Psi_{ij}^{\HY}:\HY(L_+)\to \HY(L_-)$. By composing these, one obtains a 
splitting map 
$$
\Omega^{\HY}:\HY(T(n,n))\to \HY(O_n)=\C[x_1,\ldots,x_n,y_1,\ldots,y_n,\theta_1,\ldots,\theta_n].
$$
\begin{theorem}[\cite{GHog}]
The map $\Omega_{\HY}$ is injective, and its image coincides with the ideal $\J^{\HY}$ generated by antisymmetric polynomials in $\HY(O_n)$. Furthermore, the image of $\Omega_{\HY}$ coincides with the ideal
$$
\bigcap_{i<j}\J^{\HY}_{ij}=\bigcap_{i<j}(x_i-x_j,y_i-y_j,\theta_i-\theta_j)\subset \C[x_1,\ldots,x_n,y_1,\ldots,y_n,\theta_1,\ldots,\theta_n].
$$
\end{theorem}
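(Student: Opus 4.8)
This is the main theorem of \cite{GHog}, so strictly speaking nothing new is required here; for completeness I outline the strategy, which is a close parallel of our proof of Theorem \ref{thm: image Psi}. There are three ingredients: reducing the $y$-ified skein maps to their degree-zero parts, running the permutahedral cancellation argument on the known computation of $\HY(T(n,n))$, and a purely (super)commutative-algebra identification of the two descriptions of the image.

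First I would establish the $\HY$-analogues of Propositions \ref{prop:psi}, \ref{prop:phi} and \ref{prop: psi phi}: the various one-crossing maps on $y$-ified homology are obtained from $\psi_0^{\HY},\psi_{-1}^{\HY}$ (resp. $\phi_0^{\HY},\phi_1^{\HY}$) by multiplication by explicit monomials in $x_i,y_i,\theta_i$, and their pairwise compositions are monomials. As in Section \ref{sec: surgery}, these follow from functoriality of the $y$-ified skein exact triangle together with the computation on the Hopf link. This gives the $\HY$-version of Corollary \ref{cor: tau intro}: each $\Psi_{ij}^{\HY}$ differs from its homogeneous degree-zero part $\Psi_{ij}^{\HY,0}$ by an invertible power series in the even variables, so for the purposes of injectivity and of computing the image one may replace $\Omega^{\HY}$ by $\Omega^{\HY,0}=\prod_{i<j}\Psi_{ij}^{\HY,0}$.

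Next I would import the computation of $\HY(T(n,n))$ from \cite{GHog}, which presents it concretely by generators and a $y$-ified form of the relations \eqref{eq: Tnn}, and then transcribe Steps 2--5 of the proof of Theorem \ref{thm: image Psi} almost verbatim. The composition $\Omega^{\HY,0}$ is a signed sum $\sum_{\varepsilon}\sgn(\varepsilon)\,\Omega^{\HY}_{\varepsilon}$ over sign vectors $\varepsilon\in\{\pm 1\}^{\binom{n}{2}}$; recording Alexander multidegrees, these terms are indexed by the integer points of the permutahedron $P_n$, and all of them cancel except the ones attached to the $n!$ vertices, by the Vandermonde identity $\sum_{\varepsilon}\sgn(\varepsilon)\,x^{p_\varepsilon}=\prod_{i<j}(x_i-x_j)$ together with its $y$- and $\theta$-variants. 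Evaluating on the generators of $\HY(T(n,n))$ produces the super-Specht-type polynomials that, by the $\HY$-analogue of Lemma \ref{lem: minors}, generate the module $\J^{\HY}$ of antisymmetric polynomials. Injectivity follows exactly as in Step 5: in each multidegree $\HY(T(n,n))$ is free (by the parity statement of \cite{GHog}), and the image of a free generator is a sum, with nonzero coefficients, of free generators of $\HY(O_n)$ sitting at the vertices of a permutahedron, hence nonzero.

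Finally, the identification $\J^{\HY}=\bigcap_{i<j}(x_i-x_j,y_i-y_j,\theta_i-\theta_j)$ is a statement in $\C[x_1,\ldots,x_n,y_1,\ldots,y_n]\otimes\Lambda[\theta_1,\ldots,\theta_n]$. The inclusion $\subseteq$ is immediate: the ideal $(x_i-x_j,y_i-y_j,\theta_i-\theta_j)$ is stable under the transposition $(i\,j)$, which acts as the identity on the quotient, so the image there of any antisymmetric polynomial both equals and negates itself and therefore vanishes in characteristic $0$. The reverse inclusion is the one genuinely new point, and the step I expect to be the main obstacle if one wanted to reprove the theorem from scratch: one must show that a polynomial lying in every pairwise ``small-diagonal'' ideal is a sum of antisymmetric ones, which in \cite{GHog} is handled by a localization argument on the complement of the diagonals, reducing to the classical two-variable case. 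Everything else is a mechanical transcription of Sections \ref{sec: surgery skein} and \ref{sec: J cousins} into $y$-ified Khovanov--Rozansky homology.
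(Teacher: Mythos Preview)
The paper does not prove this theorem; it is quoted as a result of \cite{GHog} with no argument given, precisely because it belongs to a different homology theory and is included only as motivation and analogy for Theorem~\ref{thm: image Psi}. So there is nothing to compare your sketch against here: the ``paper's own proof'' is simply a citation.

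That said, your sketch misrepresents how the result is actually proved in \cite{GHog}. You have transported the Heegaard Floer machinery wholesale---maps $\psi_k^{\HY}$ indexed by $\Spin$ structures, invertible power series $\tau$, completions---into $y$-ified Khovanov--Rozansky homology, where none of this structure exists. In $\HY$ the skein map $\Psi_{ij}^{\HY}$ is a single homogeneous map coming from a Rouquier complex morphism, not an infinite sum over $\Spin$ structures, so there is no ``degree-zero part'' to extract and no analogue of Lemma~\ref{lem: tau} to prove. The actual argument in \cite{GHog} proceeds through Soergel bimodules and an explicit computation of the $y$-ified full twist complex, together with commutative algebra on the isospectral Hilbert scheme; the permutahedral cancellation you describe does not appear there. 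Your outline is better read as a heuristic for why the two theorems \emph{look} parallel than as a sketch of either proof.
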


\begin{remark}
Recently Hogancamp, Rose and Wedrich \cite{HRW} studied $y$-ification and the splitting maps for colored triply graded homology, and obtained similar ideals. See \cite[Conjecture 8.12,Theorem 9.33]{HRW} for more details.
\end{remark}
 
In \cite{BS} Batson and Seed defined a deformation of Khovanov homology, which was generalized in \cite{CK} by Cautis and Kamnitzer to $\mathfrak{sl}(N)$ Khovanov-Rozansky homology. Their constructions predate and motivate the construction of $\HY$, and we commonly refer to them as $y$-ified link homology.

\begin{problem}
Define the analogues of the splitting map $\Omega_{Kh},\Omega_{\mathfrak{sl}(N)}:T(n,n)\to O_n$ for $y$-ified Khovanov and $\mathfrak{sl}(N)$ homologies. Is it possible to describe their images as some determinantal ideals in the homology of unlink?
\end{problem}

\begin{problem}
The main result of \cite{BPRW} (following the earlier work in \cite{Dowlin,Gilmore}) defines a spectral sequence from the reduced version of $\HHH$ to $\widehat{\HFK}$ for knots. Is it possible to extend this to a spectral sequence from $\HY$ to $\cHFL$ for arbitrary links?
\end{problem}

\subsection{Unlinks in $S^1\times S^2$}

As an application of the above results, we can compute the Heegaard Floer homology of certain links in $S^1\times S^2$ and prove Theorem \ref{thm: intro Zn}. 
 Let $Z_n$ be the union of the $n$ parallel copies of $S^1$ inside $S^1\times S^2$. Recall from Section \ref{subsec: gen crossing change} that we have  generalized crossing change maps
$$
\phi_k^n: \cHFL(O_n)\to \cHFL(T(n,n))
$$
defined by attaching a 2-handle along the $(-1)$-framed meridian. The index $k$ corresponds to the choice of a $\Spin$ structure on the corresponding cobordism, see Proposition \ref{prop:fulltwist}. The surgery exact sequence immediately implies the following:

\begin{lemma}
\label{lem: exact seq Zn}
There is a long exact sequence 
$$
\rightarrow \cbHFL(S^3,O_n)\xrightarrow{\Phi} \cbHFL(S^3,T(n,n))\rightarrow \cbHFL(S^1\times S^2,Z_n)\rightarrow \cbHFL(S^3,O_n)\rightarrow
$$
where $\Phi=\sum_{k\in \Z}\phi_k^n$ is the  sum of $\phi_k^n$ over all $\Spin$ structures.
\end{lemma}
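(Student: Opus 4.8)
The plan is to apply the surgery exact triangle of Section~\ref{sec: surgery skein} to the $(-1)$-framed unknot $K$ encircling the $n$ parallel strands in Figure~\ref{fig:fulltwist}. Concretely, take $Y=S^3$, $L=O_n$, and let $K\subset S^3\setminus O_n$ be the unknot meeting each component of $O_n$ once. By the discussion of Section~\ref{subsec: gen crossing change}, $(-1)$-surgery on $K$ inserts a positive full twist into the $n$ strands, so $(Y_{-1}(K),L)$ is identified with $(S^3,T(n,n))$, and the trace $W_1$ of this surgery together with its decoration is exactly the decorated cobordism $(W,\bF)$ used to define the maps $\phi_k^n$ in Proposition~\ref{prop:fulltwist}.

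Next I would identify $(Y_0(K),L)$ with $(S^1\times S^2,Z_n)$. Since $K$ is an unknot, $0$-surgery yields $S^1\times S^2$; replacing the $0$-framed $2$-handle by a $1$-handle and sliding its feet along the strands, exactly as in the proof of Lemma~\ref{lemma:unlinkcone}, the $n$ coherently oriented strands passing through $K$ become $n$ parallel copies of the $S^1$-factor, i.e.\ the link $Z_n$. Because all strands carry the same orientation, no band attachments and no oriented resolution appear — this is the one place where the present situation is genuinely simpler than Lemma~\ref{lemma:unlinkcone}. The annular cobordism components, each decorated by two vertical dividing arcs as in Convention~\ref{conv:cob}, are carried along unchanged under these identifications.

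With these identifications in place, the exact triangle furnished by the surgery exact triangle Proposition becomes
\[
\cbHFL(S^3,O_n)\xrightarrow{F_1}\cbHFL(S^3,T(n,n))\xrightarrow{F_2}\cbHFL(S^1\times S^2,Z_n)\xrightarrow{F_3}\cbHFL(S^3,O_n),
\]
and unrolling it produces the long exact sequence in the statement. It remains only to recognize $F_1$ as $\Phi$. By definition $F_1=\sum_{\s\in\Spin(W_1)}F_{W_1,\bF_1,\s}$, and since $H_2(W_1)=\Z\langle[S^2]\rangle$ with $[S^2]^2=-1$, the pairing $\langle c_1(\s),[S^2]\rangle$ runs over all odd integers as $\s$ ranges over $\Spin(W_1)$; writing $\langle c_1(\s_k),[S^2]\rangle=2k+1$ identifies $\Spin(W_1)$ with $\{\s_k:k\in\Z\}$, which is precisely the indexing set of the maps $\phi_k^n=F_{W,\bF,\s_k}$ of Proposition~\ref{prop:fulltwist}. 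Hence $F_1=\sum_{k\in\Z}\phi_k^n=\Phi$ (signs being irrelevant over $\F=\Z/2\Z$), and the infinite sum converges in the completion for the usual degree reasons.

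The main obstacle is the Kirby-calculus identification $(Y_0(K),L)\cong(S^1\times S^2,Z_n)$ together with the verification that the induced decoration agrees with the one used to define $\cbHFL(S^1\times S^2,Z_n)$; once this is in hand — and it is a direct variant of the argument already carried out in Lemma~\ref{lemma:unlinkcone} — the rest is an immediate specialization of the surgery exact triangle of Section~\ref{sec: surgery skein} and of Proposition~\ref{prop:fulltwist}.
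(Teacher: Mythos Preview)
Your argument is correct and is exactly the intended one: the paper simply asserts that the lemma follows immediately from the surgery exact triangle, and you have spelled out the relevant identifications $(Y_{-1}(K),L)\cong(S^3,T(n,n))$, $(Y_0(K),L)\cong(S^1\times S^2,Z_n)$, and $F_1=\sum_k\phi_k^n$ that make this work.
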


\begin{remark}
As above, if we work with integer coefficients  then $\phi_n^k$ would acquire some signs in the sum. Instead of guessing the signs, we simply work over $\F$.
\end{remark}

Define two series
$$
\mu_0=\sum_{k=0}^{\infty}(V_1\cdots V_n)^{k}\UU^{\frac{k(k-1)}{2}},\quad \mu_{n-1}=\sum_{k=0}^{\infty}(U_1\cdots U_n)^{k}\UU^{\frac{k(k-1)}{2}}.
$$

\begin{theorem}
The homology $\cbHFL(S^1\times S^2,Z_n)$ admits two equivalent descriptions:

a) $$\cbHFL(S^1\times S^2,Z_n)\simeq \frac{\cbHFL(T(n,n))}{(\mu_0a_0+a_1+ \ldots +a_{n-2}+\mu_{n-1}a_{n-1})}$$ where $a_i$ are the generators of $\cbHFL(T(n,n))$ from Theorem \ref{thm:toruslink}.

b) $\cbHFL(S^1\times S^2,Z_n)\simeq \bJ/(\gamma)$ where $\bJ$ is the (completed) determinantal ideal from Theorem \ref{thm: image Psi} and
$$
\gamma=\mu_0\prod_{i<j}(V_i-V_j)+\mu_{n-1}\prod_{i<j}(U_i-U_j)+
\sum_{j=1}^{n-2} \det\left(\begin{matrix}
U_1^{j} & \cdots & U_1 & 1 & V_1 & \cdots & V_1^{n-1-j}\\
\vdots &  &\vdots & \vdots & \vdots & & \vdots \\
U_n^{j} & \cdots & U_n & 1 & V_n & \cdots & V_n^{n-1-j}
\end{matrix}
\right).
$$
\end{theorem}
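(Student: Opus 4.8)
The plan is to extract both statements from the long exact sequence of Lemma \ref{lem: exact seq Zn}. Set $\xi=\mu_0a_0+a_1+\cdots+a_{n-2}+\mu_{n-1}a_{n-1}\in\cbHFL(T(n,n))$, with $a_0,\ldots,a_{n-1}$ the generators of Theorem \ref{thm:toruslink}. The first step is to evaluate $\Phi=\sum_{k\in\Z}\phi^n_k$ on the free generator $1\in\cbHFL(O_n)=R_{UV}$. By Example \ref{ex:toruslink-phi} (see also Proposition \ref{prop:toruslink-ends}), $\phi^n_k(1)=a_k$ for $0\le k\le n-1$, while $\phi^n_k(1)=(V_1\cdots V_n)^{-k}\UU^{k(k+1)/2}a_0$ for $k\le 0$ and $\phi^n_k(1)=(U_1\cdots U_n)^{k-(n-1)}\UU^{(k-(n-1))(k-n)/2}a_{n-1}$ for $k\ge n-1$. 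Breaking $\Z$ into the three ranges $k\le 0$, $1\le k\le n-2$, $k\ge n-1$ and reindexing the two infinite tails by $j=-k$ and $j=k-(n-1)$ (so that both $\UU$-exponents become $j(j-1)/2$), the tails sum to $\mu_0a_0$ and $\mu_{n-1}a_{n-1}$ respectively. Hence $\Phi(1)=\xi$, so $\mathrm{Im}(\Phi)$ is the cyclic submodule $R_{UV}\cdot\xi$.

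The second step proves that $\Phi$ is injective. Granting this, the long exact sequence degenerates into the short exact sequence
$$0\longrightarrow\cbHFL(O_n)\xrightarrow{\ \Phi\ }\cbHFL(T(n,n))\longrightarrow\cbHFL(S^1\times S^2,Z_n)\longrightarrow 0,$$
which is exactly part (a). For injectivity I would push the question through the isomorphism $\Omega\colon\cbHFL(T(n,n))\xrightarrow{\ \sim\ }\bJ$ of Theorem \ref{thm: image Psi}: the composite $\Omega\circ\Phi\colon\cbHFL(O_n)\to\cbHFL(O_n)$ is $R_{UV}$-linear and sends $1$ to $\Omega(\xi)$, hence is multiplication by $\Omega(\xi)$, so $\ker\Phi=\ker(\Omega\circ\Phi)=\mathrm{Ann}_{R_{UV}}\!\big(\Omega(\xi)\big)$. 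Now $\xi\ne 0$ (its component in Alexander multidegree $(\tfrac{n-1}{2},\ldots,\tfrac{n-1}{2})$ is the generator $a_0$), so $\Omega(\xi)\ne 0$; since $\cbHFL(O_n)=R_{UV}$ is an integral domain (a normal complete intersection, so its completion is again a domain), $\Omega(\xi)$ is a non-zero-divisor and $\ker\Phi=0$.

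The third step computes $\Omega(\xi)$ and produces part (b). By Step 1 in the proof of Theorem \ref{thm: image Psi} together with Corollary \ref{cor: tau intro} one has $\Omega=\tau\,\Omega^0$ for an invertible power series $\tau$, and by Step 4 of that proof
$$\Omega^0(a_j)=\pm\det\left(\begin{matrix} U_1^{j} & \cdots & U_1 & 1 & V_1 & \cdots & V_1^{n-1-j}\\ \vdots & & \vdots & \vdots & \vdots & & \vdots \\ U_n^{j} & \cdots & U_n & 1 & V_n & \cdots & V_n^{n-1-j} \end{matrix}\right),\qquad 0\le j\le n-1.$$
For $j=0$ this determinant is the Vandermonde in $V_1,\ldots,V_n$, namely $\pm\prod_{i<j}(V_i-V_j)$; for $j=n-1$ it is the Vandermonde in $U_1,\ldots,U_n$, namely $\pm\prod_{i<j}(U_i-U_j)$; and for $1\le j\le n-2$ it is precisely the $j$-th determinant appearing in the definition of $\gamma$. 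Working over $\F=\Z/2\Z$ the signs are irrelevant, so $\Omega^0(\xi)=\gamma$ and $\Omega(\xi)=\tau\gamma$. As $\tau$ is a unit, $\Omega$ maps the cyclic submodule $R_{UV}\cdot\xi\subset\cbHFL(T(n,n))$ isomorphically onto $R_{UV}\cdot\gamma=(\gamma)\subset\bJ$, and the induced isomorphism on quotients gives $\cbHFL(S^1\times S^2,Z_n)\simeq\cbHFL(T(n,n))/(\xi)\xrightarrow{\ \sim\ }\bJ/(\gamma)$, which is part (b).

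The hardest part will be the bookkeeping in the first step: one has to keep the $\Spin$-structure index $k$ and the $\UU$-powers in $\phi^n_k(1)$ under control for all $k\in\Z$ so that the two infinite tails collapse to exactly $\mu_0$ and $\mu_{n-1}$, and not to a shifted or rescaled variant. A secondary, purely algebraic point is to justify that $R_{UV}$ and its completion are integral domains, which is what makes the injectivity argument in the second step work; once these are in hand, the remainder is a formal consequence of the surgery exact triangle and of Theorem \ref{thm: image Psi}.
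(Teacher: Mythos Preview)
Your proof follows the paper's argument: both use the exact sequence of Lemma \ref{lem: exact seq Zn}, compute $\Phi(1)$ from Example \ref{ex:toruslink-phi} and Proposition \ref{prop:toruslink-ends}, prove $\Phi$ injective, and then apply the isomorphism $\cHFL(T(n,n))\simeq \J$ from Theorem \ref{thm: image Psi}. The only difference is the injectivity step: the paper observes that the individual maps $\phi_k^n$ are injective (Corollary \ref{cor: negative definite}) and have pairwise distinct Alexander multidegrees, so their sum is injective; your route via $\Omega\circ\Phi$ and the domain property of the completed $R_{UV}$ is also valid, though your parenthetical justification (``normal complete intersection'') is not quite the right one---what you actually need is that $R_{UV}$ is a graded integral domain, so its completion at the irrelevant ideal is again a domain. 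Two minor remarks: the bookkeeping you flag as hardest is already carried out in full in Example \ref{ex:toruslink-phi}, and the unit $\tau$ in your third step is really the product $\prod_{i<j}\tau_{ij}$.
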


The second part of the theorem implies Theorem \ref{thm: intro Zn}. 

\begin{proof}
a) By Lemma \ref{lem: exact seq Zn} we can write 
$$
\cbHFL(S^1\times S^2,Z_n)\simeq \Cone\left[\cbHFL(S^3,O_n)\xrightarrow{\Phi} \cbHFL(S^3,T(n,n))\right]
$$
Since all $\phi_k^n$ are injective on homology and have pairwise different Alexander degrees, $\Phi=\sum_{k\in \Z}\phi_k^n$  is injective as well, and we can write
$$
\cbHFL(S^1\times S^2,Z_n)\simeq \cbHFL(S^3,T(n,n))/\Imm(\Phi).
$$
Now part (a) follows from Example \ref{ex:toruslink-phi} and Proposition \ref{prop:toruslink-ends}.

b) By the proof of Theorem \ref{thm: image Psi} the  map $\Omega^{0}$ provides an isomorphism $\cHFL(S^3,T(n,n))\simeq \J$ and $$\Omega^{0}(a_i)=\pm\det\left(\begin{matrix}
U_1^{j} & \cdots & U_1 & 1 & V_1 & \cdots & V_1^{n-1-j}\\
\vdots &  &\vdots & \vdots & \vdots & & \vdots \\
U_n^{j} & \cdots & U_n & 1 & V_n & \cdots & V_n^{n-1-j}
\end{matrix}
\right),\quad 0\le i\le n-1, 
$$
in particular
$$
\Omega^{0}(a_0)=\pm\det\left(\begin{matrix}
1 & V_1 & \cdots & V_1^{n-1}\\
 \vdots & \vdots & & \vdots \\
 1 & V_n & \cdots & V_n^{n-1}
\end{matrix}
\right)=\pm \prod_{i<j}(V_i-V_j)
$$
and 
$$
\Omega^{0}(a_{n-1})=\pm\det\left(\begin{matrix}
U_1^{n-1} & \cdots & U_1 & 1  \\
\vdots &  &\vdots & \vdots  \\
U_n^{n-1} & \cdots & U_n & 1 
\end{matrix}
\right)=\pm \prod_{i<j}(U_i-U_j).
$$
Therefore 
$$
\Omega^{0}(\mu_0a_0+a_1+ \ldots +a_{n-2}+\mu_{n-1}a_{n-1})=\gamma
$$
and the result follows.
\end{proof}

\begin{remark}
It would be interesting to compare this result to the recent work of Kronheimer and Mrowka \cite{KM}. Their main result expresses the (deformed) instanton homology $I(Z_n,\Gamma)$ of the $n$-component unlink $Z_n$ in $S^1\times S^2$ (with local coefficients) as a quotient of the polynomial ring by a certain determinantal ideal $\J_{inst}$.  
\end{remark}


\begin{thebibliography}{99}

\bibitem{AE-minus} A. Alishahi, E. Eftekhary. A refinement of sutured Floer homology. J. Symplectic Geom. 13 (2015), no. 3, 609–743.


\bibitem{AE} A. Alishahi, E. Eftekhary. Tangle Floer homology and cobordisms between tangles. J. Topol. 13 (2020), no. 4, 1582--1657.

\bibitem{BS} J. Batson and C. Seed. A link-splitting spectral sequence in Khovanov homology. Duke Math. J. 164.5 (2015), pp. 801–841.

\bibitem{BR} M. Beck, S. Robins. Computing the continuous discretely. Integer-point enumeration in polyhedra. Second edition. With illustrations by David Austin. Undergraduate Texts in Mathematics. Springer, New York, 2015.

\bibitem{BPRW}  A. Beliakova, K. Putyra, L.-H. Robert, E. Wagner. A proof of Dunfield-Gukov-Rasmussen Conjecture. arXiv:2210.00878

\bibitem{BG} M. Borodzik, E. Gorsky. Immersed concordances of links and Heegaard Floer homology. 
Indiana Univ. Math. J. 67 (2018), no. 3, 1039--1083.

\bibitem{BLZ} M. Borodzik, B. Liu, I. Zemke. Lattice homology, formality, and plumbed L--space links. arXiv:2210.15792 

\bibitem{BLZ2} M. Borodzik, B. Liu, I. Zemke. Heegaard Floer homology and plane curves with non-cuspidal singularities.
arXiv:2104.13709

\bibitem{CK}  S. Cautis, J.Kamnitzer. Knot homology via derived categories of coherent sheaves IV, coloured links. Quantum Topol. 8 (2017), no. 2, 381--411.

\bibitem{Dowlin} N. Dowlin. A Categorification of the HOMFLY-PT Polynomial with a Spectral Sequence to Knot
Floer Homology, 2017. arXiv:1703.01401.

\bibitem{Gilmore} A. Gilmore. Invariance and the knot Floer cube of resolutions. Quantum Topol., 7(1):107--183, 2016.

\bibitem{GHog} E. Gorsky, M. Hogancamp. Hilbert schemes and  $y$-ification of Khovanov-Rozansky homology.
Geom. Topol. 26 (2022), no. 2, 587--678.

\bibitem{GH} E. Gorsky, J. Hom. Cable links and L--space surgeries. 
Quantum Topol. 8 (2017), no. 4, 629--666.


\bibitem{Haiman} M. Haiman.  $t,q$-Catalan numbers and the Hilbert scheme.   Discrete Math. 193 (1998), no. 1--3, 201--224.

\bibitem{HRW} M. Hogancamp, D. E. V. Rose, P. Wedrich. Link splitting deformation of colored Khovanov--Rozansky homology. arXiv: 2107.09590

\bibitem{KM} P. Kronheimer, T. Morwka. Relations in singular instanton homology. arXiv:2210.07059 

\bibitem{LP} Y. Lekili, T. Perutz. Fukaya categories of the torus and Dehn surgery.  
Proc. Natl. Acad. Sci. USA {\bf 108} (2011), no. 20, 8106--8113.

\bibitem{Yajing} Y. Liu. L-space surgeries on links. Quantum Topol. 8 (2017), no. 3, 505--570.

\bibitem{Macd}  Macdonald, I. G. Symmetric functions and Hall polynomials. Second edition. With contributions by A. Zelevinsky. Oxford Mathematical Monographs. Oxford Science Publications. The Clarendon Press, Oxford University Press, New York, 1995.

\bibitem{MO}C. Manolescu, P. Ozsv\'ath. Heegaard Floer homology and integer surgeries on links. arXiv:1011.1317 


\bibitem{OS3} P. Ozsv\'ath, Z. Szab\'o. A cube of resolutions for knot Floer homology. J. Topol. 2 (2009), no. 4, 865--910.

\bibitem{OS-dinvariant} P. Ozsv\'ath, Z. Szab\'o. Absolutely graded Floer homologies and intersection forms for four-manifolds with boundary. Adv. in Math. 173 (2003), no. 2, 179--261.


\bibitem{OS1} P. Ozsv\'ath, Z. Szab\'o. Holomorphic disks and knot invariants. Adv. Math. 186 (2004), no. 1, 58--116.

\bibitem{OS5} P. Ozsv\'ath, Z. Szab\'o. Holomorphic disks and three-manifold invariants: properties and applications.  Annals of Mathematics, 159 (2004), 1159–1245.

\bibitem{OS-3m1} P. Ozsv\'ath, Z. Szab\'o. Holomorphic disks and topological invariants for closed three-manifolds. Ann. Math. 159 (2004), 1027--1158.

\bibitem{OS2} P. Ozsv\'ath, Z. Szab\'o.  Holomorphic disks, link invariants and the multi-variable Alexander polynomial. Algebr. Geom. Topol. 8 (2008), no. 2, 615--692.


\bibitem{OS-4m} P. Ozsv\'ath, Z. Szab\'o. Holomorphic triangles and invariants for smooth four-manifolds.  Adv. Math. 202 (2006), no. 2, 326--400.


\bibitem{OS-lectures} P. Ozsv\'ath, Z. Szab\'o. Lectures on Heegaard Floer homology. Clay Math. Proc. 5 (2007), 29--70. 

\bibitem{OS-doublecover} P. Ozsv\'ath, Z. Szab\'o. On the Heegaard Floer homology of branched double-covers. 
Adv. Math. 194 (2005), no. 1, 1–33. 






\bibitem{OS4} P. Ozsv\'ath, Z. Szab\'o. On the skein exact sequence for knot Floer homology. arXiv:0707.1165.

\bibitem{Zemke:graph} I. Zemke. Graph cobordisms and Heegaard Floer homology. arXiv:1512.01184.

\bibitem{Zemke} I. Zemke. Link cobordisms and functoriality in link Floer homology. J. Topol. 12 (2019), no. 1, 94--220. 

\bibitem{Zemke2} I. Zemke. Link cobordisms and absolute gradings on link Floer homology. Quantum Topol. 10 (2019), no. 2, 207--323. 

\bibitem{Zemke:basepoint} I. Zemke. Quasistabilization and basepoint moving maps in link Floer homology. Algebraic \& Geometric Topology 17 (2017), no. 6, 3461 -- 3518.





\end{thebibliography}
\end{document}